\documentclass[10pt]{amsart}
\usepackage{geometry}
\usepackage{mathtools}
\usepackage[utf8]{inputenc}
\usepackage[T1]{fontenc}
\usepackage{amsmath}
\usepackage{amsfonts}
\usepackage{amssymb}
\usepackage{mathtools}
\usepackage{extarrows}
\usepackage{amsthm}
\usepackage{mathrsfs}
\usepackage[mathcal]{eucal}
\usepackage{graphicx}
\usepackage{xcolor}
\usepackage[a-1b]{pdfx}
\usepackage[british]{babel}
\usepackage{tikz-cd}
\usepackage{csquotes}
\usepackage[backend=biber]{biblatex}
\usepackage[cmtip, all]{xy}
\usepackage{accents}

\DeclareRobustCommand{\SkipTocEntry}[5]{}

\usetikzlibrary{decorations.pathmorphing}
\addbibresource{bibliography.bib}
\theoremstyle{definition}
\newtheorem{defn}{Definition}[section]
\newtheorem{constr}[defn]{Construction}
\newtheorem{exmp}[defn]{Example}
\newtheorem{rmk}[defn]{Remark}

\newtheorem{defnprop}[defn]{Definition/Proposition}
\theoremstyle{plain}
\newtheorem{introthm}{Theorem}

\newtheorem{thm}[defn]{Theorem}
\newtheorem{prop}[defn]{Proposition}
\newtheorem{corollary}[defn]{Corollary}
\newtheorem{lemma}[defn]{Lemma}
\numberwithin{equation}{section}

\newcommand{\sE}{{\mathcal E}}

\newcommand{\sH}{{\mathcal H}}

\newcommand{\sK}{{\mathcal K}}

\newcommand{\sO}{{\mathcal O}}

\newcommand{\sS}{{\mathcal S}}

\newcommand{\sU}{{\mathcal U}}
\newcommand{\sV}{{\mathcal V}}
\newcommand{\sW}{{\mathcal W}}

\newcommand{\A}{{\mathbb A}}

\newcommand{\C}{{\mathbb C}}
\newcommand{\E}{{\mathbb E}}
\newcommand{\F}{{\mathbb F}}
\newcommand{\G}{{\mathbb G}}

\renewcommand{\L}{{\mathbb L}}

\newcommand{\N}{{\mathbb N}}
\renewcommand{\P}{{\mathbb P}}

\newcommand{\V}{{\mathbb V}}

\newcommand{\Z}{{\mathbb Z}}

\newcommand{\MGL}{{\operatorname{MGL}}}

\newcommand{\MSL}{{\operatorname{MSL}}}
\newcommand{\MSp}{{\operatorname{MSp}}}
\newcommand{\BGL}{{\operatorname{BGL}}}
\newcommand{\BSp}{{\operatorname{BSp}}}
\newcommand{\GL}{{\operatorname{GL}}}
\newcommand{\Sp}{{\operatorname{Sp}}}
\newcommand{\Gr}{{\operatorname{Gr}}}
\newcommand{\HGr}{{\operatorname{HGr}}}
\newcommand{\HP}{{\operatorname{HP}}}
\newcommand{\Spec}{{\operatorname{Spec}}}

\newcommand{\SH}{{\operatorname{SH}}} 
\newcommand{\DM}{{\operatorname{DM}}}
\newcommand{\Th}{{\operatorname{Th}}}
\newcommand{\Spc}{{\operatorname{Spc}}}
\newcommand{\Sm}{{\operatorname{Sm}}}
\newcommand{\CH}{{\operatorname{CH}}}
\newcommand{\Hom}{{\operatorname{Hom}}}
\newcommand{\Map}{{\operatorname{Map}}}
\renewcommand{\th}{{\operatorname{th}}} 
\renewcommand{\deg}{{\operatorname{deg}}}
\renewcommand{\dim}{{\operatorname{dim}}} 
\renewcommand{\top}{{\operatorname{top}}}
\newcommand{\Mod}{{\operatorname{Mod}}}
\newcommand{\rnk}{{\operatorname{rank}}}
\newcommand{\Aut}{{\operatorname{Aut}}}
\newcommand{\Sym}{{\operatorname{Sym}}}
\newcommand{\id}{{\operatorname{id}}}
\newcommand{\op}{{\operatorname{op}}}
\newcommand{\Zar}{{\operatorname{Zar}}}
\newcommand{\colim}{{\operatorname*{colim}}}
\newcommand{\Anan}{{\operatorname{Anan}}}
\newcommand{\Ext}{{\operatorname{Ext}}}

\newcommand{\CF}{{\operatorname{CF}}}
\newcommand{\chr}{{\operatorname{char}}}

\newcommand{\Laz}{{\mathbb{L} \text{az}}}

\newcommand{\Sch}{{\operatorname{Sch}}}

\newcommand{\Obj}{{\operatorname{Obj}}}
\newcommand{\Ker}{{\operatorname{Ker}}}
\newcommand{\Locfree}{{\operatorname{LocFree}}}
\newcommand{\SPP}{{\mathcal{S}\text{p}}}
\newcommand{\hs}{{\operatorname{hs}}}
\newcommand{\Fl}{{\operatorname{Fl}}}
\newcommand{\MU}{{\operatorname{MU}}}

\title{Generators of the Algebraic Symplectic Bordism Ring}
\date{}
\author{Pietro Gigli}
\address{Université de Bourgogne Europe,
Institut de Mathématiques de Bourgogne, 21078 Dijon, France}
\email{Pietro.Gigli@u-bourgogne.fr}

\begin{document}

\begin{abstract}
    In this paper, we study the $\eta$-completed part of the motivic spectrum MSp constructed by Panin and Walter, representing the universal $\Sp$-oriented cohomology theory. In particular, we investigate the inclusion $(\MSp^\wedge_\eta)^*\hookrightarrow \MGL^*$ of the cofficient rings, by studying the motivic Adams spectral sequence associated to MSp, mimiking a strategy used by Levine,Yang, Zhao for $\MSL^*$. In order to give a description of $(\MSp^\wedge_\eta)^*$, we refine the Pontryagin-Thom construction in a way that allows one to obtain symplectic bordism classes from a large family of varieties that carry a certain "symplectic twist", and we prove a criterion to select generators among these classes.
\end{abstract}

\thanks{The author was partially supported by the DFG through the grant  LE 2259/8-1 and by the ERC through the project QUADAG.  This paper is part of a project that has received funding from the European Research Council (ERC) under the European Union's Horizon 2020 research and innovation programme (grant agreement No. 832833).\\
\includegraphics[scale=0.08]{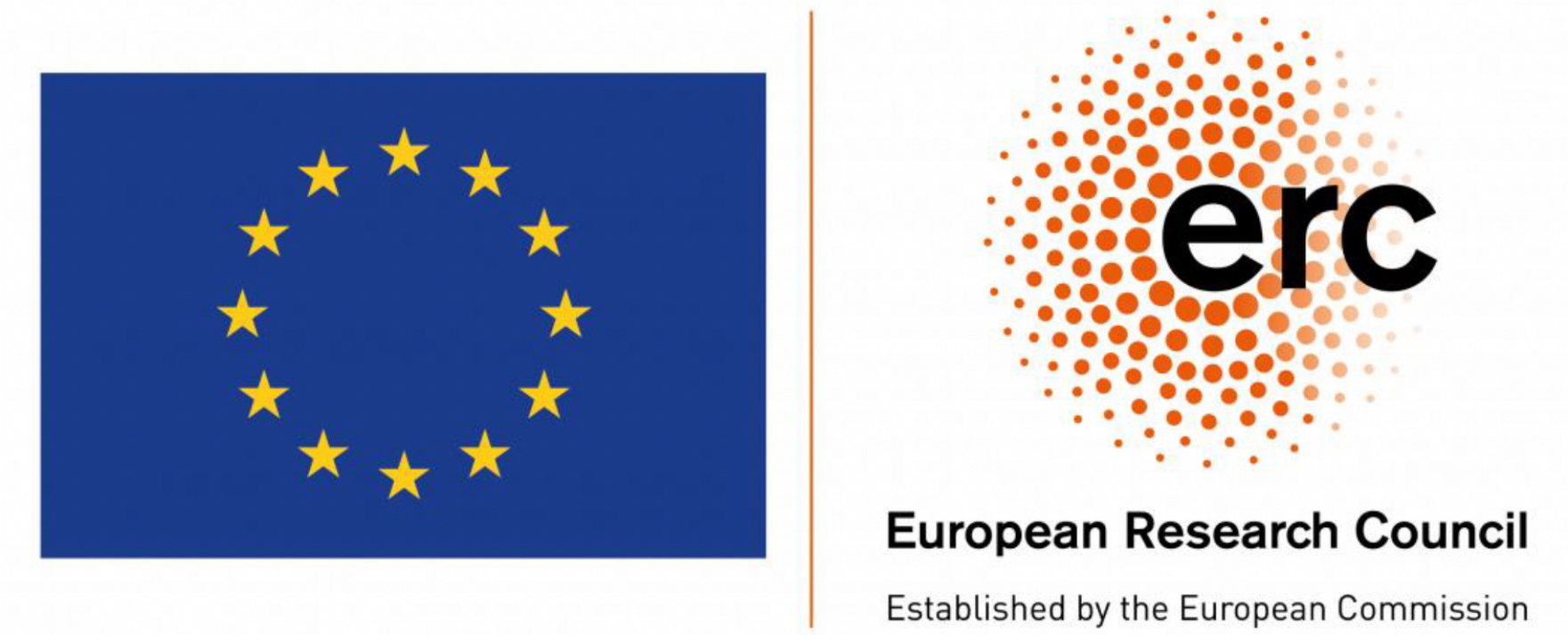}}

\maketitle

\tableofcontents

\section{Introduction}

\addtocontents{toc}{\SkipTocEntry}
\subsection*{Motivation}
We work over a perfect field $k$ of characteristic not 2, and consider cohomology theories over smooth $k$-schemes. The main goal of this paper is to obtain a partial computation of the coefficient ring of algebraic symplectic cobordism, the universal symplectically oriented cohomology theory for schemes. A complete description of this ring is at present unknown.

The strongest notion of "orientation" for a cohomology theory is that of $\GL$-orientation, which corresponds to the data of Thom classes for vector bundles, with appropriate compatibilities among them. Voevodsky's algebraic cobordism spectrum $\MGL \in \SH(k)$ \cite[Section 6.3]{voe:homotopy_theory} is the most natural analogue of the complex cobordism spectrum $\MU \in \SH$ \cite[Chapter 1, Section 5]{C-F:Cobordism} in our setting, and has a natural $\GL$-orientation by construction. Just as $\MU$ represents the universal complex oriented cohomology theory (\cite[Lemma 4.1.13]{ravenel:cobordism}), similarly $\MGL$ is universal among $\GL$-oriented theories, namely, every $\GL$-oriented cohomology theory $\sE^{*,*}(-)$ is identified by a map $\MGL\to \sE$ of commutative ring spectra in $\SH(k)$. Hoyois \cite{Hoyois:AlgCob} and Spitzweck \cite{Spitzweck:AlgCob} showed that, after inverting $\chr k$ if $\chr k \neq 0$, the algebraic diagonal $\MGL^*:=\MGL^{2*,*}(\Spec k)$ of the coefficient ring of $\MGL$ is isomorphic to the Lazard ring $\L$, the universal coefficient ring for formal group laws, which is known to be the polynomial ring in infinite variables $\Z[x_1,x_2,\ldots]$ \cite[Part II, Theorem 7.1]{Adams:homotopy}. Moreover, through a Pontryagin-Thom construction, one can associate to each smooth proper variety $X$ over $k$ a class $[X]_\MGL \in \MGL^{-\dim X}$ (see \cite[Definition 3.1]{lev:ellcoh}), and $\MGL^*$ is generated by such classes (\cite[Theorem 3.4 (4)]{lev:ellcoh}). In summary, we have the following.

\begin{introthm}
\label{introthm:MGL*}
    Let $p$ be the exponential characteristic of $k$, namely $p=\chr k$ if $\chr k>0$ and $p=1$ otherwise. Then we have an isomorphism of graded ring
    $$\MGL^*[1/p]\simeq \Z[1/p][x_1,x_2, \ldots]$$
    with $x_i$ in degree $-i$, and $\MGL^*[1/p]$ is generated by classes $[X]_\MGL$, $X$ a smooth and proper $k$-scheme.
\end{introthm}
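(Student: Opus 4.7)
The plan is to assemble the theorem from three ingredients already discussed above: the Hoyois--Spitzweck identification of $\MGL^*[1/p]$ with the Lazard ring, Lazard's classical structure theorem, and Levine's result on generators.

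First I would invoke the theorems of Hoyois \cite{Hoyois:AlgCob} and Spitzweck \cite{Spitzweck:AlgCob}, which together yield a graded ring isomorphism $\MGL^*[1/p] \simeq \L[1/p]$ classifying the formal group law of the $\GL$-oriented theory $\MGL[1/p]$. Composing this with Lazard's classical computation \cite[Part II, Theorem 7.1]{Adams:homotopy}, namely $\L \simeq \Z[x_1, x_2, \ldots]$ with $x_i$ placed in degree $-2i$, and tensoring with $\Z[1/p]$, will produce the desired polynomial ring description.

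For the statement that $\MGL^*[1/p]$ is generated by Pontryagin--Thom classes, I would appeal to \cite[Theorem 3.4 (4)]{lev:ellcoh}, where Levine shows that $\MGL^*$ is generated as a ring by the classes $[X]_\MGL$ of smooth and proper $k$-schemes (crucially using that the Pontryagin--Thom construction respects products, so that $[X \times Y]_\MGL = [X]_\MGL \cdot [Y]_\MGL$). Inverting $p$ then preserves this generation statement.

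The subtle point, were one not content to cite and instead wished to give an internal argument, would be to verify that the Hoyois--Spitzweck isomorphism interacts correctly with the Pontryagin--Thom construction, so that from the pool of classes $[X]_\MGL$ one can actually extract a set mapping to polynomial generators of $\L[1/p]$. This Milnor--Novikov style extraction, which in topology proceeds via the logarithm of the formal group law together with Chern number computations on projective bundles and smooth hypersurfaces, is the part where most of the genuine work lies; everything else is an assembly of results already in the literature.
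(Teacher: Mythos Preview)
Your proposal is correct and matches the paper's treatment: Theorem~\ref{introthm:MGL*} is stated in the introduction as a summary of known results, and the paper does not give an independent proof but rather cites exactly the same sources you do---Hoyois--Spitzweck for the Lazard ring identification (restated later as Theorem~\ref{thm:comparisonLazard}), Adams for the polynomial structure of $\L$, and \cite[Theorem 3.4(4)]{lev:ellcoh} for generation by classes $[X]_\MGL$ (restated as Proposition~\ref{prop:MGL-classes}). One minor slip: Lazard's theorem gives $x_i$ in degree $-i$, not $-2i$; the grading shift in the introduction's statement reflects the passage from the Lazard grading to the $\MGL^{2*,*}$ convention, not the internal grading of $\L$ itself.
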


Nevertheless, in $\A^1$-homotopy theory various "interesting" cohomology theories are not $\GL$-oriented, examples include Chow-Witt theory and Hermitian $K$-theory. This led to defining weaker notions of orientation (\cite{Panin-Walter:BO}, \cite{panwal:grass}), which consists of a theory of Thom classes for vector bundles with additional structures instead of all vector bundles.

Panin and Walter \cite[Section 6]{Panwal-cobordism} constructed the motivic spectrum $\MSp$ representing algebraic symplectic cobordism, a motivic analogue of quaternionic symplectic cobordism $\MSp^\top$ (\cite{Thomcompl},\cite[Chapter 1, Section 5]{C-F:Cobordism}), and proved that this is universal for having an orientation for symplectic vector bundles \cite[Theorem 4.5]{Panwal-cobordism}. The above mentioned Pontryagin-Thom construction can be adapted to construct a class $[X]_\MSp \in \MSp^{-\dim X}$ for each smooth proper $k$-variety $X$ with a stably symplectic structure on the tangent bundle. Knowing the complete structure of the coefficient ring $\MSp^{*,*}$ would give useful insights in understanding $\Sp$-orientations and formal ternary laws. It is then natural to ask whether $\MSp^*$ has a description similar to that of $\MGL^*$ given by Theorem \ref{introthm:MGL*}. It turns out that for $\MSp$ the picture is much less clear, and in fact the structure of the coefficient ring of $\MSp$ is mostly unknown. We can immediately make a couple of observations:

1) $(\MSp^\top)^*$ is not polynomial (see e.g. \cite{Ray:SymplBordism}). Roughly, this arises from the fact that the sum of a non-trivial symplectic bundle with itself can be a trivial symplectic bundle, and this gives indecomposable torsion elements of order 2. For example, Ray \cite{Ray:Tors} gave explicit computations of indecomposable torsion elements $\phi_i \in (\MSp^\top)^{-8i+3}$ of order $2$ for all $i\ge 1$. Based on the situation in classical homotopy theory, we expect that $\MSp^*$ is also not polynomial and contains infinitely many torsion elements.

2) If we want to compute generators of $\MSp^*$ through the Pontryagin-Thom construction, we realize that there are not many known examples of symplectic varieties in algebraic geometry. Standard examples like $K3$-surfaces and their Hilbert schemes of points are computationally difficult to handle, compared to projective spaces and Milnor hypersurfaces in the $\GL$-oriented case. 

A key fact that needs to be considered to simplify the problem, is that the category of motivic spectra $\SH(k)$, after inverting $2$, has a splitting
$$\SH(k)[1/2] \simeq \SH(k)^- \oplus \SH(k)^+$$
induced by the involution $\P^1 \wedge \P^1 \xrightarrow{1:1}\P^1 \wedge \P^1$,
and so does the coefficient ring of $\MSp$. A special role in this splitting is played by the motivic Hopf map $\eta:\G_m \to 1_k$, inasmuch the projection $\SH(k)[1/2]\to \SH(k)^-$ corresponds to localizing at $\eta$ and the projection $\SH(k)[1/2]\to \SH(k)^+$ corresponds to a certain $\eta$-completion $(-)_\eta^\wedge$. Bachmann and Hopkins studied the $\eta$-inverted symplectic bordism ring, establishing the relation $\MSp[1/\eta]^*\cong W(k)[x_1,x_2, \ldots]$, where $W(k)$ is the Witt ring of $k$, and $x_i$ is in degree $2i$ (\cite[Theorem 8.7]{Bachmann:eta-periodic}). This takes care of the minus-part of $\MSp[1/2]^*$.

In this paper, we investigate in details the plus-part of $\MSp[1/2]^*$. As in topology, the main tool for this purpose is the Adams spectral sequence. Nevertheless, the algebraic Adams spectral sequence differs significantly from its topological analogue, one of the main differences being the non-nilpotence of the motivic Hopf map $\eta$.

\addtocontents{toc}{\SkipTocEntry}
\subsection*{Strategy and final results}
The inclusion $\Sp_* \subset \GL_*$ induces a natural map $\Phi:\MSp \to \MGL$, which in turn induces the map $(\MSp^\wedge_\eta)^* \to (\MGL^\wedge_\eta)^*=\MGL^*$, $\MGL$ being $\eta$-completed. We want to prove that $\Phi_*$ is a split embedding. We face this problem by fixing an odd prime $\ell$ and comparing the mod-$\ell$ Adams spectral sequences associated to the two spectra. We adopt the strategy used in \cite{lev:ellcoh} to prove that the Adams spectral sequence for $\MSp$ has the form
$$E_2^{s,t,u}= \Ext_{A^{*,*}}^{s,(t-s,u)}(H^{*,*}(\MSp),H^{*,*}) \Rightarrow (\MSp_{H\Z /\ell}^\wedge)^{t,u},$$
and it degenerates at the second page (Propositions \ref{prop: a.s.s.MSp}-\ref{prop:a.s.s.MSp2}), analogously to that of $\MGL$. We figure a presentation of the $E_2$-page as a polynomial $Z/\ell$-algebra, and comparing this with the $E_2$-page of the spectral sequence for $\MGL$ given by \cite[Proposition 5.7]{lev:ellcoh}, we prove that $(\MSp_{\eta,\ell}^\wedge)^{2*}\cong(\MGL_\ell^\wedge)^{2*}$, while $(\MSp_{\eta,\ell}^\wedge)^{2*+1}=0$. In particular, $(\MSp_{\eta,\ell}^\wedge)^*$ is polynomial over $\Z/\ell$ and concentrated in non-positive even degrees.
Along the way, by using a known criterion to detect generators in $\MGL^*$, we explicitely compute elements $[Y_{2d}]_\MSp \in \MSp^{-2d}$, for $d\ge 1$, which generate $(\MSp_{\eta,\ell}^\wedge)^*$ as subalgebra of $(\MGL^\wedge_\ell)^*$ via $\Phi_*$.
Finally, by working on the $\ell$-torsion of $(\MSp_{\eta,\ell}^\wedge)^*$ one prime $\ell$ at a time, we globalize the previous results, and we conclude the following.

\begin{thm}
    Let $({\overline{\MSp}^\wedge_\eta})^*[1/2p]$ be the quotient of $({\MSp^\wedge_\eta})^*[1/2p]$ by its maximal subgroup that is $\ell$-divisible for all odd primes $\ell \neq p$. Then $({\overline{\MSp}^\wedge_\eta})^*[1/2p] \cong \Z[1/2p][y_1,y_2, \ldots]$, with $y_i$ in degree $-2i$. Moreover, $y' \in {\overline{\MSp}^\wedge_\eta})^{-2d}[1/2p]$ is a generator if and only if the following is satisfied:
    \begin{equation*}
        c_{(2d)}(\Phi_* y')=
        \begin{cases}
            \lambda \in \Z[1/2p]^\times & \text{for} \; \; 2d \neq \ell^r-1, \; \; \text{if all prime} \;  \ell \neq 2,p; \; \forall r \ge 1 \\
            \lambda' \cdot \ell, \; \lambda' \in \Z_\ell^\times & \text{if} \; \; 2d= \ell^r-1, \; \; \ell \; \text{prime}, \; \ell \neq 2,p; \; r\ge 1,
        \end{cases}
    \end{equation*}
    with $c_{(2d)}(y)$ a certain characteristic number associated to universal Newton classes in motivic cohomology.
\end{thm}
See Theorem \ref{thm:FinalResult}.

The strategy is completely parallel to that used by Levine, Yang and Zhao in \cite{lev:ellcoh} to compute the $\eta$-completed SL-cobordism ring $(\MSL^\wedge_{\eta})^*$, but there are critical differences in our situation to take into account. First, some cellularity properties for $\MSp$ and its motive $\MSp \wedge H\Z$ are required in order to prove the convergence of the Adams spectral sequence. For $\MGL$ and $\MSL$ this cellularity is induced by the Schubert filtration on ordinary Grassmannians, but this can not be extended to our case due to the different geometry of quaternionic Grassmannians. We then need to study the geometry of quaternionic Grassmannians, following \cite{panwal:grass}, and use a double-inductive argument to get the required properties. This procedure was also used in \cite[Proposition 3.1]{Rond:Cellularity}. Also, we already mentioned that the application of the Pontryagin-Thom construction to compute classes in the symplectic bordism ring is not very helpful. We then use an isomorphism of Thom spaces due to Ananyevskiy (\cite[Lemma 4.1]{Ana:Slor}), which is also a special instance of \cite[Proposition 2.2]{Ron:Theta}, to extend the symplectic Pontryagin-Thom construction to varieties that are not necessarily symplectic but carry a certain "symplectic twist", namely varieties $X$ for which the Thom space $\Th(T_X)$ of the tangent bundle is stably isomorphic to the Thom space of a virtual vector bundle admitting a virtual symplectic form. Lastly, in order to compute characteristic numbers of our new twisted classes, we need to modify the classical Voevodsky's construction of the degree map in motivic cohomology in order to take the symplectic twist into account.

\addtocontents{toc}{\SkipTocEntry}
\subsection*{Organization of the paper}
In \S \ref{section:preliminaries} we fix all the necessary background for our work. In particular, we start by recalling the basic notions from motivic homotopy theory and the formalism of six functors for the sake of the reader. We then review the theory of $\GL$-orientations and $\Sp$-orientations, including discussions on the universal motivic spectra $\MGL$ and $\MSp$ in order to fit all the needs for our future purposes. In \S \ref{section:TwistedDegree} we use Ananyveskiy's isomorphism $\Th(E\oplus L)\simeq \Th(E \oplus L^\vee)$ to define a twisted version of Voevodsky's degree map, which we use then to compute the degree of a certain kind of twisted cohomology classes valued in motivic cohomology. In \S \ref{section:Cellularity} we show that $\MSp$ is a cellular spectrum and we prove a similar statement for the $H\Z$-module $\MSp \wedge H\Z$. In \S \ref{section:A.s.s} we study the convergence of the mod-$\ell$ Adams spectral sequence for $\MSp$ by using the main results of \S \ref{section:Cellularity}, and give a presentation of the $E_2$-page as a trigraded $\Z/\ell$-algebra. In \S \ref{section:Generators} we use the results of \S \ref{section:A.s.s} to prove that the $\eta$-completed symplectic bordism ring is a polynomial subalgebra of the algebraic bordism ring, ad we use the computations of \S \ref{section:TwistedDegree} to prove a criterion to detect polynomial generators by looking at some characteristic numbers.

\addtocontents{toc}{\SkipTocEntry}
\subsection*{Notations and conventions}
All schemes in this paper are assumed to be quasi-compact quasi-separated. Starting from \S \ref{subsection:GL-orient.}, we work over a perfect field $k$ of exponential characteristic $p\neq 2$, and we write $\sH_\bullet(k) \coloneqq \sH_\bullet(\Spec k)$, $\SH(k) \coloneqq \SH(\Spec k)$ for the unstable and stable motivic homotopy category over $k$. For $\mathcal{C}$ Ab-enriched category, we write $[A,B]_\mathcal{C}$ for the abelian group $\Hom_{\mathcal{C}}(A,B)$, in particular we use this notation for cohomology groups. For $\sE^{*,*}(-)$ cohomology theory, we write $\sE^{*,*}\coloneqq \sE^{*,*}(\Spec k)=\oplus_{a,b \in \Z}\sE^{a,b}(\Spec k)$ for the (bigraded) coefficient ring, $\sE^*:=\sE^{2*,*}=\oplus_{a \in \Z}\sE^{2a,a}(\Spec k)$ for its (graded) diagonal, and $\sE^a:=\sE^{2a,a}(\Spec k).$ For $F\dashv G$ adjoint functors, we occasionally use the notation $\eta_{(F,G)}$ and $\epsilon_{(F,G)}$ for the counit and unit of the adjunction respectively. Starting from \S \ref{section:A.s.s}, $\ell$ will be a fixed odd prime different from $p$, $\nu_\ell:\Z \to \Z$ will denote the $\ell$-adic valuation, and $G^\wedge_\ell$ will denote the $\ell$-adic completion of a (possibly graded) abelian group $G$.
 
 \addtocontents{toc}{\SkipTocEntry}
 \subsection*{Acknowledgements}
 This paper is based on the author's Ph.D thesis \cite{gigli:thesis}. The author wholeheartedly thanks his supervisor M. Levine for his dedication and priceless mentoring work. The author also thanks A. Ananyevskiy for reviewing the thesis and for his many useful comments which improved this paper, as well as the whole ESAGA group in Essen for creating a lively and stimulating working environment, the DFG foundation, ERC and CNRS for financial support.
 
 \section{Preliminaries}
 \label{section:preliminaries}
 
 \subsection{Background on $\A^1$-homotopy theory}
 
 Let $S$ be a Noetherian scheme of finite Krull dimension. Let $\Sch/S$ be the category of quasi-projective $S$-schemes, and $\Sm/S$ the full subcategory of $\Sch/S$ of smooth quasi-projective $S$-schemes. 
 
 We follow the classical approach of Morel and Voevodsky. $\Spc(S)$ and $\Spc_{\bullet}(S)$ denote respectively the categories of Nisnevich sheaves of simplicial sets and Nisnevich sheaves of pointed simplicial sets on $\Sm/S$. We call objects in $\Spc(S)$ and $\Spc_{\bullet}(S)$ \emph{spaces} and \emph{pointed spaces} respectively. By identifying a scheme $U \in \Sm/S$ with the Nisnevich sheaf associated to the constant representable presheaf $\text{Hom}_{\Sm/S}(-,U)$ we get the embedding $\Sm/S \hookrightarrow \Spc(S)$, and similarly $(\Sm/S)_+\hookrightarrow \Spc_\bullet(S)$. We also have the \emph{smash product} $\wedge$ on $\Spc_\bullet(S)$.
 
 By \cite[Theorem 3.7]{voe:homotopy_theory}, $\Spc(S)$ has a closed model structure, with $\A^1$-weak equivalences as in \cite[Definition 3.4]{voe:homotopy_theory}. These weak equivalences are intended to be an algebraic analogue of homotopy equivalences of topological spaces. The resulting homotopy category is denoted by $\sH(S)$.
 
 The model structure on $\Spc(S)$ induces a model structure on $\Spc_\bullet(S)$, and the resulting homotopy category is denoted by $\sH_\bullet(S)$. We refer to objects in $\sH_\bullet(S)$ as \emph{motivic spaces}.
 
 \begin{rmk}
\label{rmk:colims}
    $\Spc(S)$ and $\Spc_{\bullet}(S)$ have all small limits and colimits. On the other hand, despite homotopy categories are not complete nor cocomplete in general, one can always take homotopy limits and homotopy colimits in $\Spc(S)$ and $\Spc_{\bullet}(S)$ as objects in $\sH(S)$ and $\sH_{\bullet}(S)$ respectively.
\end{rmk}

The smash product $\wedge$ in $\Spc_{\bullet}(S)$ induces a smash product $\wedge$ in $\sH_{\bullet}(S)$. With this product, $\sH_{\bullet}(S)$ acquires a symmetric monoidal structure, with unit being the \emph{$0$-sphere} $S^0\coloneqq S_+$. This follows by the properties of weak equivalences (see \cite[Section 3, pp.585-586]{voe:homotopy_theory}).

In $\sH_\bullet(S)$, there are two "circles": the \emph{simplicial circle} $S^1$ (see for instance \cite{voe:homotopy_theory} before lemma 3.8) and the \emph{punctured line} $\G_m \coloneqq (\A^1_S\setminus \{0\},1)$. One defines then the \emph{motivic spheres}
    $$S^{p,q} \coloneqq (S^1)^{\wedge (p-q)} \wedge \G_m^{\wedge q}.$$
For $p \ge q \ge 0$, we have the suspension endofunctor $\Sigma^{p,q}: \sH_{\bullet}(S) \to \sH_{\bullet}(S)$ defined by $\Sigma^{p,q}X \coloneqq S^{p,q} \wedge X$.

There is a canonical isomorphism $S^{2,1} \simeq \P^1 =: (\P^1_S,\infty)$ in $\sH_{\bullet}(S)$, given by the fact that both these spaces are homotopy pushouts of $\A^1 \hookleftarrow \G_m \hookrightarrow \A^1$ in $\Spc_{\bullet}(S)$, with $\A^1$ pointed at $1$.

    For $\pi: V\to S$ vector bundle, $\P(V)$ denotes the associated projective space $\operatorname{Proj}(\Sym^*\sV)\xrightarrow{q}S$, with $\sV$ sheaf of sections of $V^\vee$. The \emph{Thom space of $V$}, denoted by $\Th(V)$, is the quotient $\P(V \oplus \mathcal{O}_S)/\P(V)$ in $\Spc_{\bullet}(S)$, with the obvious distinguished point, which exists because of Remark \ref{rmk:colims}. Equivalently, $\Th(V)$ is the quotient $V/ V - s_0(S)$, with $s_0:S \to V$ the zero section.

\begin{rmk}
\label{rmk:Thomspaces}
    \begin{enumerate}
        \item From the properties of the smash product, it follows that, if $V_1,V_2 \to S$ are two vector bundles over $S$, one has $\Th(V_1 \oplus V_2) \simeq \Th(V_1) \wedge \Th(V_2)$.
\item Since $\P^1 \simeq \A^1/\G_m$, in $\sH_{\bullet}(S)$, we have in particular $\P^1 \simeq \Th(\mathcal{O}_S)$ in $\sH_{\bullet}(S)$. By the previous point, one also gets $\Th(\mathcal{O}_S^n)\simeq (\P^1)^{\wedge n} \simeq S^{2n,n}$. If $S \in \Sm/S'$, we can consider $\Th(\mathcal{O}_S^n)$ as a motivic space over $S'$. Technically, it corresponds to $S^{2n,n}\wedge S_+ \in \sH_{\bullet}(S')$.
\item Let $g:X \to Y$ be a map in $\Sm/S$, and $V\to X$, $W \to Y$ two vector bundles with a map $f:V \to W$ arising from a fiberwise injective map $f_X:V\to g^*W$. Then the Thom space construction gives naturally a map $\Th(g,f):\Th_X(V) \to \Th_Y(W)$ in $\sH_{\bullet}(S)$. If the map $f$ is obvious from the context, for instance if $V=g^*W$, we will just write $\Th(g)$. 
    \end{enumerate}
\end{rmk}

There exist various equivalent stabilization procedures which produce the stable homotopy category $\SH(S)$. One model is given by $\P^1$-spectra. In general, for $T \in \Spc_{\bullet}(S)$, a \emph{$T$-spectrum} $\E$ is a sequence of pointed spaces $(E_0,E_1, E_2, \ldots)$ with bonding maps $\epsilon_r:T \wedge E_r \to E_{r+1}$, and morphisms of $T$-spectra are componentwise morphisms of pointed spaces such that all the obvious squares with the bonding maps commute. There is also a notion of \emph{stable weak equivalences} of $T$-spectra \cite[before Definition 5.1]{voe:homotopy_theory}. The stable homotopy category $\SH(S)$ is the left Bousfield localization of $\P^1$-spectra with respect to these equivalences (see \cite[Definition 5.7]{voe:homotopy_theory}, \cite[Theorem 2.9]{Jardine:SymSpectra} or \cite[Definition 3.3]{Hovey:Spectra}). Objects in $\SH(S)$ are called \emph{motivic $\P^1$-spectra over $S$}.

The category $\SH(S)$ has again all small limits and colimits. There is an \emph{infinite $\P^1$-suspension functor} $\Sigma_{\P^1}^\infty:\sH_{\bullet}(S)\to \SH(S)$ defined by  
    $$\Sigma_{\P^1}^\infty(X) \coloneqq (X, \P^1 \wedge X, (\P^1)^{\wedge 2} \wedge X, \ldots, (\P^1)^{\wedge r}\wedge X, \ldots),$$
    with bonding maps being the identities, and suspension endofunctors $\Sigma_{\P^1}, \Sigma_{S^1}, \Sigma_{\G_m}: \SH(S) \to \SH(S)$. The main features of $\SH(S)$ are recalled in the following theorem.
    
    \begin{thm}[\cite{voe:homotopy_theory}, Proposition 5.4, Theorem 5.6]
    \label{thm:SHsymmonoidal}
    $\SH(S)$ is additive, with coproduct $\oplus$ induced by the wedge sum of pointed spaces, and has a triangulated structure with shift functor being $\Sigma_{S^1}$. Also, $\SH(S)$ is symmetric monoidal, with monoidal product $\wedge$ induced by the smash product of pointed spaces, and monoidal unit the sphere spectrum $1_S \coloneqq \Sigma_{\P^1}^\infty S_+$, and the following properties hold:
    \begin{enumerate}
        \item For $\E \in \SH(S)$ and $X \in \Spc_{\bullet}(S)$, the motivic spectrum $\E \wedge \Sigma_{\P^1}^\infty X$ is canonically isomorphic to the spectrum $(E_0 \wedge X, E_1 \wedge X, E_2 \wedge X, \ldots)$ with bonding maps $\epsilon_i \wedge \id_X$.
        \item For $\E \in \SH(S)$ and $\{\F_\alpha\}_\alpha$ a collection of motivic spectra in $\SH(S)$, we have a canonical isomorphism
        $$(\oplus_\alpha \F_\alpha) \wedge \E \xrightarrow{\sim} \oplus_\alpha(\F_\alpha \wedge \E).$$
    \end{enumerate}
    \end{thm}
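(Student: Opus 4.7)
The plan is to realize the symmetric monoidal structure not on the naive category $\SPP_S(\P^1)$, where a termwise smash product would fail to be associative and symmetric in any natural way, but on a Quillen-equivalent category of \emph{symmetric} $\P^1$-spectra, following Jardine \cite{Jardine:SymSpectra} and Hovey \cite{Hovey:Spectra}. First I would define a symmetric $\P^1$-spectrum as a sequence $(E_0,E_1,\ldots)$ in which each $E_n$ carries a pointed $\Sigma_n$-action, together with bonding maps $(\P^1)^{\wedge p}\wedge E_q \to E_{p+q}$ that are $\Sigma_p\times\Sigma_q$-equivariant, with $\Sigma_p$ acting on $(\P^1)^{\wedge p}$ by coordinate permutation. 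The smash product is then given by the standard coend formula
$$(\E\wedge\F)_n \;=\; \bigvee_{p+q=n}(\Sigma_n)_+ \wedge_{\Sigma_p\times\Sigma_q}(E_p\wedge F_q),$$
with unit $1_S=\Sigma^\infty_{\P^1}S_+$; once the symmetric group bookkeeping is in place, associativity, symmetry, and unit axioms are formal.

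Next I would put a level model structure on symmetric $\P^1$-spectra and perform a left Bousfield localization to invert stable weak equivalences, producing a symmetric monoidal model category. The \emph{main obstacle} I expect is Voevodsky's cyclic permutation lemma: that the cyclic permutation on $\P^1\wedge\P^1\wedge\P^1$ is $\A^1$-homotopic to the identity in $\sH_\bullet(S)$. This is what guarantees that the forgetful functor from symmetric to naive $\P^1$-spectra induces an equivalence on homotopy categories, so that the derived smash product descends to a symmetric monoidal structure on $\SH(S)$ as defined in the excerpt rather than on a strictly larger localization. Granting this lemma, cofibrant replacement produces a total left derived smash product, yielding the desired symmetric monoidal structure on $\SH(S)$.

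Finally, I would verify properties (1) and (2). For (1), the suspension spectrum $\Sigma^\infty_{\P^1}X$ admits an explicit symmetric model with trivial $\Sigma_n$-action on the $X$-factor and coordinate-permutation action on $(\P^1)^{\wedge n}$; when $X$ is cofibrant this model is already cofibrant, so no replacement is needed. Plugging it into the coend formula for $\E\wedge\Sigma^\infty_{\P^1}X$ kills every summand with $q>0$, leaving precisely the spectrum $(E_n\wedge X)$ with bonding maps $\epsilon_n\wedge\id_X$, as claimed. Property (2) is then a formal consequence: by construction the functor $-\wedge\E$ is a left adjoint at the model category level, hence preserves all small colimits, and since arbitrary wedge sums of cofibrant spectra are cofibrant this compatibility descends to $\SH(S)$.
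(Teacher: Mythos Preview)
The paper does not give its own proof of this theorem: it is stated as a citation to Voevodsky, and the paragraph immediately following it explicitly says that the termwise smash product is ``clearly not a good definition in the category of $\P^1$-spectra,'' that the result is ``extremely non trivial,'' and that ``one way to avoid this issue is using the theory of symmetric spectra,'' referring to Jardine. So there is no in-paper argument to compare against; your outline is precisely the approach the paper gestures at and defers to the literature for.

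Your sketch is essentially correct and standard. Two small points of precision. First, your justification of property~(1) via ``killing every summand with $q>0$'' is not quite the right mechanism: the summands with $q>0$ are not literally trivial in the coend, rather they are identified with the $q=0$ summand through the coequalizer relations coming from the bonding maps of $\Sigma^\infty_{\P^1}X$. A cleaner way to say it is that $\Sigma^\infty_{\P^1}X \cong \mathbb{S}_S \wedge X$ (levelwise smash with the constant space $X$), and since $\mathbb{S}_S$ is the unit, $\E \wedge \Sigma^\infty_{\P^1}X \cong \E \wedge X$ levelwise. Second, for property~(2) you should note that the derived smash product preserves homotopy colimits (not just strict colimits of cofibrant objects), which is what is actually needed in $\SH(S)$; this follows because $-\wedge\E$ is a left Quillen functor after cofibrant replacement of $\E$. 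Neither point is a gap, just a matter of stating the argument more carefully.
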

    
Basically by construction of $\SH(S)$ and Property (1) of Theorem \ref{thm:SHsymmonoidal}, $\P^1$ is invertible in $\SH(S)$, then $\Sigma_{\P^1},\Sigma_{S^1}, \Sigma_{\G_m}$ are invertible endofunctors. One can then define $\Sigma_{\P^1}^n$, $\Sigma_{S^1}^n$, $\Sigma_{\G_m}^n$ for any $n \in \Z$. By adopting a notation analogous to the unstable motivic spheres, one writes $\Sigma^{p,q}\coloneqq \Sigma_{S^1}^{p-q}\circ \Sigma_{\G_m}^q$ for all $p,q$ in $\Z$, and we have $\Sigma_{\P^1} \simeq \Sigma^{2,1}$.

Every motivic spectrum $\E \in \SH(S)$ defines bigraded cohomology functors $\E^{p,q}(-): \Spc(S)^{\text{op}} \to \text{Ab}$ and homology functors $\E_{p,q}(-): \Spc(S) \to \text{Ab}$ for any integer $p,q$, in the category of abelian groups, through:
$$\E^{p,q}(X) \coloneqq [\Sigma_{\P^1}^\infty X_+, \Sigma^{p,q}\E]_{\SH(S)}, \; \; \, \E_{p,q}(X) \coloneqq [1_S, \Sigma^{-p,-q}\E \wedge \Sigma_{\P^1}^\infty X_+]_{\SH(S)}.$$
For $f:X \to Y$ in $\Spc(S)$, composing with $\Sigma_{\P^1}^\infty f$ defines the ordinary cohomology pullback $\E^{p,q}(f)=f^*$ and the ordinary homology pushforward $\E_{p,q}(f)=f_*$. Let us note that $\E^{p,q}(S)=\E_{-p,-q}(S)$.

\begin{defn}
    A commutative monoid object $\sE$ in the symmetric monoidal category $\SH(S)$ is called a \emph{motivic commutative ring spectrum}.
\end{defn}
In particular, a motivic commutative ring spectrum $\sE$ is equipped with a multiplication map $\mu_{\sE}:\sE \wedge \sE \to \sE$ and a unit map $\epsilon_\sE:1_S \to \sE$.

For $\sE$ motivic commutative ring spectrum, the multiplication map $\mu_\sE$ induces a natural cup product $\cup:\sE^{p,q}(X) \times \sE^{p',q'}(X) \to \sE^{p+p',q+q'}(X)$, which makes $\sE^{*,*}(X)$ a graded ring, and a natural cap product $\cap:\sE^{p,q}(X) \times \sE_{p_1,q_1}(X) \to \sE_{p_1-p,q_1-q}(X)$. Also, for $\E \in \SH(S)$, we have a canonical pairing $\langle -,-\rangle:\sE^{p,q}(\E)\times \sE_{p_1, q_1}(\E)\to \sE_{p_1-p, q_1-q}(S)$ defined as
\begin{equation}\label{eqn:Pairing}
\langle \alpha,\gamma\rangle: 1_S\xrightarrow{\gamma}\Sigma^{-p_1, -q_1}\sE\wedge \E
\xrightarrow{\id_\sE\wedge\alpha}\Sigma^{p-p_1,q-q_1}\sE\wedge\sE
\xrightarrow{\mu_\sE} \Sigma^{p-p_1,q-q_1}\sE.
\end{equation}

\begin{rmk}\label{rmk:HurewiczMap} If $\sE\in \SH(S)$ is a commutative ring spectrum, then for any $\E \in \SH(S)$ we have the {\em motivic $\sE$-Hurewicz map} $h_\sE:\E_{a,b}(S)\to \sE_{a,b}(\E)$ defined by $h_\sE(f):=(\epsilon_\sE\wedge\id)\circ f:1_S\to \Sigma^{-a,-b}\sE\wedge \E.$ For $\alpha\in \sE^{p,q}(\E)$, $\gamma_0\in \E_{p_1,q_1}(S)$, we then have $\langle \alpha,h_\sE(\gamma_0)\rangle =\alpha\circ \gamma_0\in \sE_{p_1-p,q_1-q}(S)$.
\end{rmk}

Note that, if $X \in \Spc(S)$ has structure map $p$, the cohomology pullback $p^*$ makes the coefficient ring $\sE^{*,*}(X)$ a bigraded module over $\sE^{*,*}(S)$. The $\sE$-cohomology $\sE^{*,*}(-)$ can then be seen as a functor from $\Spc(S)^\op$ to bigraded $\sE^{*,*}(S)$-algebras.

There exist various standard models for $\SH(S)$ as homotopy category of some symmetric monoidal model category. One such model is given by symmetric spectra (see \cite{Jardine:SymSpectra} or \cite{Hovey:SymmSpectra}).
 
 \begin{defn}
\label{defn:highlyStructuredRings}
    A motivic commutative ring spectrum $\sE \in \SH(S)$ is said to be a \emph{highly structured motivic commmutative ring spectrum} if it admits a model $\sE^{\hs}$ as a commutative monoid object in a certain symmetric monoidal model category $\SPP^{\hs}_S$ such that $\text{Ho}(\SPP^{\hs}_S) \simeq \SH(S)$ as symmetric monoidal categories. For $\sE$ a highly structured motivic commutative ring spectrum with model $\sE^{\hs}\in \SPP^\hs_S$, the homotopy category $\text{Ho}(\Mod_{\sE^{\hs}})$ of $\sE^{\hs}$-module objects in $\SPP^{\hs}_S$ gives the \emph{category of $\sE$-modules} $\Mod_\sE$. Moreover, the usual free-forget adjunction on $\sE^{\hs}$-modules in $\SPP^\hs_S$ induces the adjunction
\[
\text{Free}_\sE:\SH(S)\xymatrix{\ar@<3pt>[r]&\ar@<3pt>[l]}\Mod_\sE:\text{Forget},
\]
where, for a motivic spectrum $\E \in \SH(S)$, $\text{Free}_\sE(\E)$ is the $\sE$-module $\E \wedge \sE$ with module map $\id_\E \wedge \mu_\sE: \E \wedge \sE \wedge \sE \to \E \wedge \sE$ (we use the right modules notation).
\end{defn}

\begin{exmp}
    Voevodsky's \emph{Eilenberg-Maclane spectrum} $H\Z$ is the motivic spectrum representing \emph{motivic cohomology}. Over a field of characteristic $0$, the construction is described in \cite[Section 6.1]{voe:homotopy_theory}, but the definitions has been extended over more general bases in many ways (see \cite{rond:modules}, \cite{Hoy:Steenrod}, \cite{Spitzweck:HZ}, \cite{deglise:mixmot}, \cite{Hoyois:localization}). $H\Z$ is a highly structured motivic ring spectrum. This is proved for instance in \cite[Example 3.4]{DunRon:Functors} through a different model for $H\Z$, which is nevertheless equivalent to Voevodsky's Eilenberg-Maclane spectrum by \cite[Lemma 4.6]{DunRon:Functors}. 
\end{exmp}

This allows one to define the category $\Mod_{H\Z}$ of $H\Z$-modules. For $S=\Spec k$, $k$ a field, we let $\DM(k) \coloneqq \Mod_{H\Z}$. Usually, $\DM(k)$ denotes Voevodsky's triangulated category of motives constructed in \cite{voev:MotivicHomology}, but the main result of \cite{rond:modules} shows that this is equivalent to $\Mod_{H\Z}$ in characteristic $0$, and in characteristic $p$ the same is true after inverting $p$, by \cite[Theorem 5.8]{Hoy:Steenrod}. So for our purposes in this work there should be no confusion.

We also recall a partial computation of motivic cohomology.

\begin{thm}[\cite{MazWei:lectures}, Theorem 19.1, Theorem 19.3, Corollary 4.2]
\label{thm:MazWeiHZ}
    For $X \in \Sm/k$, with $k$ a perfect field, $H\Z^{p,q}(X)=0$ if either $q<0$, $p > 2q$ or $p>q +\dim(X)$. Moreover, $H\Z^{0,0}(\Spec k)=\Z$, $H\Z^{p,0}(\Spec k)=0$ for $p \neq 0$, and $H\Z^{p,1}(\Spec k)=0$ for $p \neq 1$.
\end{thm}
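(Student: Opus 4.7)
The plan is to reduce everything to Bloch's higher Chow groups via Theorem \ref{thm:chow-motiviccohomology}, giving $H\Z^{p,q}(X) \simeq \CH^q(X, 2q-p)$ for all $X \in \Sm/k$ with $k$ perfect. Under this identification, the three vanishing statements become elementary observations about the simplicial abelian group $z^q(X,\bullet)$, while the claims over $\Spec k$ reduce to low-codimension explicit computations.

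First I would verify the three vanishing bounds in order. The group $\CH^q(X, n) = \pi_n(z^q(X,\bullet))$ is defined only for $q\ge 0$, which handles the case $q<0$. For $2q-p<0$, i.e.\ $p>2q$, vanishing is immediate since the homotopy groups of a simplicial abelian group are zero in negative degrees. The third bound rewrites as $q > \dim X + (2q-p)$, so we are looking at cycles on $X \times \Delta_k^{2q-p}$ of codimension strictly larger than the dimension $\dim X + (2q-p)$ of the ambient scheme; no such nonzero cycles exist, so $z^q(X, 2q-p)=0$.

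For the specific computations over $\Spec k$, I would handle the cases separately. For $(p,q)=(0,0)$ we get $\CH^0(\Spec k,0) = \Z$, the fundamental cycle. For $q=0$, $p\neq 0$: the case $p>0$ is covered by the second bound, while for $p<0$ the simplicial group $z^0(\Spec k,\bullet)$ is the constant group $\Z$ (each $\Delta_k^n$ is irreducible, so its only codimension-$0$ cycle is itself), hence has vanishing higher homotopy. For the line $q=1$, the second and third bounds already cover $p>2$ and $p=2$ (using $\dim\Spec k=0$), so the remaining case is $p\le 0$.

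The main obstacle is this last case, where one needs $\CH^1(\Spec k, n)=0$ for $n\ge 2$. This is not a pure dimension count; it is the nontrivial input. I would invoke the theorem of Nesterenko-Suslin and Totaro identifying $\CH^n(\Spec k, n)$ with the Milnor $K$-group $K_n^M(k)$, together with the vanishing $\CH^q(\Spec k, n)=0$ for $n>q$, which can be deduced from the structure of the Bloch complex of a point (cycles meeting all faces properly become extremely restricted once the codimension drops below the simplicial degree). Since the statement is explicitly attributed to \cite{MazWei:lectures}, I would invoke this computation rather than reproving it, and combine it with the geometric bounds above to complete the proof.
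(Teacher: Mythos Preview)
The paper does not prove this theorem at all: it is stated as a recall of external results, with the citation to \cite{MazWei:lectures} (Theorems 19.1, 19.3, Corollary 4.2) serving as the entire justification. So there is no proof in the paper to compare against; your proposal is an attempt to supply an argument where the paper simply cites.

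Your reduction to Bloch's higher Chow groups via Theorem~\ref{thm:chow-motiviccohomology} is correct, and the three vanishing bounds follow exactly as you say: nonnegativity of $q$, nonnegativity of the simplicial degree $2q-p$, and the dimension bound $q\le \dim X + (2q-p)$. The computations for $q=0$ are also fine.

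The only genuine imprecision is in the $q=1$, $p\le 0$ case. Your invocation of Nesterenko--Suslin--Totaro is not the right tool: that identifies $\CH^n(\Spec k,n)$ with $K_n^M(k)$, which says nothing about $\CH^1(\Spec k,n)$ for $n\ge 2$. More seriously, the general claim ``$\CH^q(\Spec k,n)=0$ for $n>q$'' that you state is equivalent to $H\Z^{p,q}(\Spec k)=0$ for $p<q$, which for general $q$ is not a theorem (it is closely related to Beilinson--Soul\'e vanishing). For $q=1$ specifically, the clean argument is the quasi-isomorphism $\Z(1)\simeq \mathcal{O}^*[-1]$ (this is essentially Corollary 4.2 in \cite{MazWei:lectures}), which gives $H\Z^{p,1}(\Spec k)\simeq H^{p-1}_{\Zar}(\Spec k,\mathcal{O}^*)$, and the latter is $k^\times$ for $p=1$ and $0$ otherwise. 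Replacing your last paragraph with this identification would make the argument complete and self-contained.
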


\subsection{The six functors}

As $S$ varies among quasi-compact quasi separated schemes over a fixed base scheme $B$ Noetherian of finite Krull dimension, the categories $\SH(S)$ acquire the formalism of Grothendieck's six operations, developed in \cite{ayoub:sixfunctors}. We now recall the main features.

\begin{defnprop}
\label{prop:fourfunctors}
    For every morphism $f:S \to T$ of schemes, we have two adjoint functors
    $$
    \begin{tikzcd}
    f^*: \SH(T) \arrow[r, shift left] & \SH(S): f_*, \arrow[l, shift left]
    \end{tikzcd}
    $$
    where $f^*$ is symmetric monoidal and compatible with the unstable functor $f^*$ through infinite $\P^1$-suspension functors. $f^*$ and $f_*$ are called, respectively, \emph{inverse image along $f$} and \emph{direct image along $f$}. Furthermore, if $f$ is separated of finite presentation, we have another pair of adjoint functors
    $$\begin{tikzcd}
    f_!: \SH(S) \arrow[r, shift left] & \SH(T): f^!, \arrow[l, shift left]
    \end{tikzcd}
    $$
    called, respectively, \emph{exceptional direct image along $f$} and \emph{exceptional inverse image along $f$}.
\end{defnprop}

In almost all cases in this paper, we will work on $\Sch/B$, and morphisms of quasi-projective schemes are automatically separated of finite presentation. Then we will usually not worry about the existence of the exceptional functors.

Sending $S \in \Sch/B$ to $\SH(S)$, and a map of schemes $f$ to the functor $f^*$, defines a pseudofunctor
$$\SH(-): \Sch/B^{\text{op}} \to \mathbf{Tr}^\otimes$$
from $\Sch/B$ to the category of symmetric monoidal triangulated categories. From now on, for the sake of the exposition, we will focus on the case $S \in \Sch/B$. Differently, the statements involving $f_!$ and $f^!$ only hold for $f$ separated of finite presentation.

The construction of the functors $f^*,f_*,f^!,f_!$ is discussed in \cite[Chapter 1]{ayoub:sixfunctors}, as are the properties listed in the following proposition.

\begin{prop}
\label{prop:sixfunctors}
    The four functors for $\SH(-)$ defined in Proposition \ref{prop:fourfunctors} satisfy the following properties:
    \begin{enumerate}
        \item For every $f$, there is a natural transformation $f_! \to f_*$ that is invertible if $f$ is proper.
        \item If $f$ is an open immersion, there is a natural isomorphism $f^* \xrightarrow{\sim} f^!$.
        \item If $f$ is smooth, $f^*$ has a left adjoint, denoted by $f_{\#}$.
        \item (Projection formula) The exceptional direct image satisfies projection formula against the inverse image. Namely, for $f:S \to T$, $\E \in \SH(S)$, $\F \in \SH(T)$, there is a canonical isomorphism in $\SH(T)$
        $$\F \wedge f_! \E \xrightarrow{\sim} f_!(f^* \F \wedge \E).$$
        \item (Base change) For a cartesian square
        $$\begin{tikzcd}
         S' \arrow[d, "g'"] \arrow[r, "f'"] & T' \arrow[d, "g"] \\
         S \arrow[r, "f"] & T, \\
        \end{tikzcd}$$
        there are canonical natural isomorphisms $g^* f_! \xrightarrow{\sim} f'_!g'^*$ and $g'_*f'^! \xrightarrow{\sim}f^!g_*.$
    \end{enumerate}
\end{prop}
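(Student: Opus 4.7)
The plan is to verify the five properties by unpacking the construction of the six-functors formalism in \cite{ayoub:sixfunctors}, where $\SH(-)$ is shown to satisfy the axioms of a motivic triangulated category. The existence of the pair $(f^*, f_*)$ is formal from the model structure on motivic spectra together with the inverse/direct image adjunction at the level of Nisnevich sheaves on $\Sm/-$ (Remark \ref{rmk:unstablefunctors}), and the existence of $(f_!, f^!)$ for an arbitrary morphism $f$ in $\Sch/B$ is obtained through a Nagata-type compactification $f = \bar f \circ j$ with $j$ an open immersion and $\bar f$ proper, by setting $f_! := \bar f_* \circ j_\#$ and checking independence of the chosen compactification.

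For property (3), when $f$ is smooth the unstable functor $f_\#$ of Remark \ref{rmk:unstablefunctors} is left adjoint to $f^*$; this extends to $\P^1$-spectra since $\P^1$-suspension commutes with $f_\#$. Properties (1) and (2) are then consequences of the Nagata construction: for a proper $f$ we may take $j = \id$, yielding $f_! = f_*$ directly, and for general $f$ the natural transformation $f_! \to f_*$ comes from the canonical map $j_\# \to j_*$ composed with the unit of $j_* \dashv j^*$; when $f$ is an open immersion we may take $\bar f = \id$, so $f_! = f_\#$ and the adjunction $(f_\#, f^*) = (f_!, f^!)$ forces $f^! \simeq f^*$.

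For the projection formula (4), one first establishes it for a smooth $f$ against $f^*$ using the closed symmetric monoidal structure on $\SH(-)$, and then transports it through the Nagata factorization, reducing the problem to a separate verification for the proper part $\bar f_*$, which in turn relies on compatibilities between the smash product and the direct image on the underlying model categories. Property (5) splits into smooth base change, which is straightforward at the level of $f_\#$ against an arbitrary pullback, and proper base change, which is one of the main theorems of \cite{ayoub:sixfunctors}: it is proved by a reduction via Chow's lemma to the projective case, where purity and deformation-to-the-normal-cone arguments available in $\A^1$-homotopy theory can be applied.

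The main obstacle is proper base change for $f_!$, which carries the genuine geometric input of the whole package: once it is granted, the remaining items follow by formal manipulations built on the Nagata construction and the smooth projection formula, and the compatibility with the symmetric monoidal structure on $\SH(-)$ asserted throughout.
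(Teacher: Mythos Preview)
The paper does not give a proof of this proposition at all: immediately before stating it, the author writes that the construction of the functors and the properties listed are discussed in \cite[Chapter 1]{ayoub:sixfunctors}, and the proposition is simply recorded without argument. So there is no proof in the paper against which to compare your proposal.

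Your sketch is a reasonable high-level outline of how the six-functors package is assembled in Ayoub's work (Nagata compactification to define $f_!$, smooth base change and proper base change as the two essential ingredients, the projection formula established first for smooth morphisms and then transported). A couple of minor points: your derivation of (2) is slightly off, since from $f_! = f_\#$ for an open immersion one gets $f^! \simeq f^*$ by comparing the \emph{right} adjoints of $f_! = f_\#$, not via ``the adjunction $(f_\#, f^*) = (f_!, f^!)$''; and your description of the natural transformation $f_! \to f_*$ in (1) is too vague to be correct as written (it should come from the map $j_\# \to j_*$ for the open immersion $j$ in the chosen compactification, followed by applying $\bar f_*$). But as the paper itself treats the result as a black-box citation, a detailed verification is not expected here.
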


In addition to this, the $\A^1$-invariance property of $\SH(S)$ can be stated in terms of the four functors as follows.

\begin{prop}[$\A^1$-invariance]
\label{prop:hom-invariance}
    If $\pi:E \to X$ is a vector bundle, $\pi^*$ is fully faithful. In particular, the maps $\epsilon_{(\pi^*,\pi_*)}:\id_{\SH(X)} \to \pi_*\pi^*$ and $\eta_{(\pi_\#,\pi^*)}:\pi_\# \pi^* \to \id_{\SH(X)}$ are invertible.
\end{prop}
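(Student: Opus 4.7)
The plan is to reduce the claim to the fundamental $\A^1$-invariance built into the construction of $\SH$. First, since $\pi$ is smooth, Proposition \ref{prop:sixfunctors}(3) furnishes a left adjoint $\pi_\#$ to $\pi^*$, so $\pi^*$ sits in a chain $\pi_\# \dashv \pi^* \dashv \pi_*$. The standard categorical criterion (a right adjoint is fully faithful iff the counit is invertible, and dually for left adjoints) shows that the three conditions in the statement are equivalent, so it suffices to prove that the counit $\pi_\#\pi^* \to \id$ is an isomorphism.

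Next, I would reduce to the case of a trivial bundle via Zariski descent. Choose a Zariski cover $\{j_\alpha: U_\alpha \hookrightarrow X\}$ over which $E$ trivializes, and let $\pi_\alpha: \A^n_{U_\alpha} \to U_\alpha$ be the restricted bundles, with $j'_\alpha$ the pullback of $j_\alpha$ along $\pi$. By the base-change isomorphism of Proposition \ref{prop:sixfunctors}(5) and its left-adjoint counterpart $j_\alpha^*\pi_\# \simeq (\pi_\alpha)_\# j'^*_\alpha$ (which holds for any cartesian square with $\pi$ smooth, by the universal property of $\pi_\#$), the functor $j_\alpha^*$ sends the counit $\pi_\#\pi^* \to \id$ to the counit $(\pi_\alpha)_\#\pi_\alpha^* \to \id$ on $\SH(U_\alpha)$. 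Since $\SH(-)$ is a Zariski (indeed Nisnevich) sheaf of categories on $\Sm/X$, the family $\{j_\alpha^*\}$ is jointly conservative, and it suffices to treat the trivial bundles $\pi_\alpha$.

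Finally, factoring $\A^n_Y \to Y$ as $n$ successive projections from trivial line bundles reduces to the case of a trivial line bundle $p: \A^1_Y \to Y$. Using the smooth projection formula $p_\#(p^*\mathcal{F} \wedge -) \simeq \mathcal{F} \wedge p_\#(-)$, one computes
\[
p_\#p^*\mathcal{F} \simeq \mathcal{F} \wedge p_\# 1_{\A^1_Y} \simeq \mathcal{F} \wedge \Sigma^\infty_{\P^1}(\A^1_Y)_+ \simeq \mathcal{F} \wedge 1_Y \simeq \mathcal{F},
\]
where the third isomorphism is the $\A^1$-contractibility of $\A^1_Y$ over $Y$, which holds by construction of $\sH_{\bullet}(Y)$ and hence of $\SH(Y)$. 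A verification that this chain of isomorphisms indeed realizes the counit completes the argument.

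The main obstacle---or rather, the bookkeeping burden---is the descent step: checking joint conservativity of $\{j_\alpha^*\}$ and the base-change compatibilities for both $\pi_*$ and $\pi_\#$, together with the smooth projection formula which is not spelled out in Proposition \ref{prop:sixfunctors}. These are standard consequences of the six-functor formalism developed in \cite{ayoub:sixfunctors, deglise:mixmot}, but they carry the real technical weight; once granted, the reduction to $\A^1$-invariance is essentially formal.
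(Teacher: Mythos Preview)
The paper does not actually provide a proof of this proposition: it is stated in the preliminaries as a reformulation of the $\A^1$-invariance built into the construction of $\SH(-)$, and is treated as a known foundational fact of the six-functor formalism (with implicit reference to \cite{ayoub:sixfunctors, deglise:mixmot}). So there is no ``paper's own proof'' to compare against.

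That said, your argument is the standard one and is correct. A few small remarks. First, the smooth projection formula you invoke in the final step is in fact recorded later in the paper as Remark~\ref{rmk:smoothsharp-properties}, so you can cite it rather than flag it as missing. Second, the joint conservativity of the family $\{j_\alpha^*\}$ for a Zariski cover follows from the localization sequence (Proposition~\ref{prop:localization}) by an induction on the number of opens, exactly as in the Mayer--Vietoris argument of Lemma~\ref{lemma:Mayer-Vietoris}; this is routine but worth pointing to explicitly. Third, the smooth base-change isomorphism $j_\alpha^*\pi_\# \simeq (\pi_\alpha)_\# j'^*_\alpha$ you use is a formal consequence of the $(\pi_\#,\pi^*)$ adjunction together with $j'^*_\alpha \pi^* \simeq \pi_\alpha^* j_\alpha^*$, so no extra input is needed there. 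With these bookkeeping points in place, your reduction to the $\A^1$-contractibility of $\A^1_Y$ is exactly the content the paper is alluding to.
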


\begin{defn}
    The family of functors $(f^*,f_*,f_!,f^!)$, together with the monoidal product bifunctor $\wedge$ and the internal-Hom bifunctor $\underline{\Hom}$, are called the \emph{six functors} for $\SH(-)$.
\end{defn}

\begin{rmk}
    Let $p:X \to S$ a smooth map. Working unstably, one easily sees that $p_\#((\id_X)_+)=(X\xrightarrow{p}S)_+=X_+$. Thus, in $\SH(S)$, $p_\# 1_X = p_\# (\Sigma^\infty_{\P^1}(\id_X)_+) \simeq \Sigma_{\P^1}^\infty X_+$. 
\end{rmk}

We also mention a well known reformulation of Morel-Voevodsky localization theorem in the language of six functors. For a proof see for instance \cite[Corollary 3.2.3]{ayoub:sixfunctors}.

\begin{prop}
\label{prop:localization}
    Let $i:Z \to S$ be a closed immersion, with complementary open immersion $j:U \to S$. Then we have the distinguished triangles
    $$i_!i^! \xrightarrow{\eta_{(i_!,i^!)}} \id_{\SH(S)} \xrightarrow{\epsilon_{(j^*,j_*)}} j_*j^* \to i_!i^![1] \; \; \; \text{and} \; \; \; j_!j^! \xrightarrow{\eta_{(j_!,j^!)}} \id_{\SH(S)} \xrightarrow{\epsilon_{(i^*,i_*)}} i_*i^* \to j_!j^![1]$$
    of endofunctors of $\SH(S)$, called the localization sequences.
\end{prop}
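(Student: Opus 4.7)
The plan is to reduce the statement to the Morel--Voevodsky localisation theorem, which yields the second distinguished triangle essentially directly, and then to bootstrap the first triangle by a recollement-type argument relying only on the adjunctions of Proposition \ref{prop:fourfunctors} and the base change of Proposition \ref{prop:sixfunctors}(5).

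First, I would invoke the Morel--Voevodsky localisation theorem of \cite{morvoe:homotopytheory} (recast in the six-functor formalism in \cite{ayoub:sixfunctors}): for every $\E \in \SH(S)$, the counit $j_\#j^*\E \to \E$ and the unit $\E \to i_*i^*\E$ fit into a distinguished triangle
$$j_\#j^*\E \to \E \to i_*i^*\E \to j_\#j^*\E[1].$$
Since $j$ is an open immersion, Proposition \ref{prop:sixfunctors}(2) gives $j^* \simeq j^!$, and passing to left adjoints identifies $j_\# \simeq j_!$; since $i$ is proper, Proposition \ref{prop:sixfunctors}(1) gives $i_* \simeq i_!$. Substituting yields exactly the second triangle of the statement.

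For the first triangle, I would complete the counit $\eta_{(i_!,i^!)} : i_!i^!\E \to \E$ to a distinguished triangle
$$i_!i^!\E \to \E \to C \to i_!i^!\E[1]$$
and identify $C$ canonically with $j_*j^*\E$. Applying $j^*$ and using base change along the cartesian square with corner $U \times_S Z = \emptyset$ to deduce $j^*i_! = 0$, the triangle collapses to an isomorphism $j^*\E \xrightarrow{\sim} j^*C$. Applying $i^!$ and using that $i_! = i_*$ is fully faithful (equivalently $i^!i_! \simeq \id$, which follows from the already-established second triangle applied to $i_*\F$ for $\F \in \SH(Z)$, combined with $j^*i_* = 0$ to kill the $j_!j^!$-term), one obtains $i^!C = 0$.

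To conclude $C \simeq j_*j^*\E$, I would argue by Yoneda. For every $\F \in \SH(S)$, the adjunction $i_! \dashv i^!$ together with $i_! = i_*$ gives $\Hom(i_*i^*\F, C) \simeq \Hom(i^*\F, i^!C) = 0$, and likewise for $\F[-1]$; hence the long exact sequence obtained by applying $\Hom_{\SH(S)}(-, C)$ to the second localisation triangle for $\F$ degenerates to an isomorphism
$$\Hom(\F, C) \xrightarrow{\sim} \Hom(j_!j^*\F, C) \simeq \Hom(\F, j_*j^*C) \simeq \Hom(\F, j_*j^*\E),$$
natural in $\F$, yielding $C \simeq j_*j^*\E$ as claimed. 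The main obstacle is purely organisational: checking that this identification sends the middle arrow of the triangle to the unit $\epsilon_{(j^*,j_*)}$, so that the triangle produced coincides with the one asserted. This reduces to naturality of the adjunctions and base change isomorphisms employed, with no further homotopical input required.
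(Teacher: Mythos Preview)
The paper does not supply its own argument here; immediately after the statement it writes ``For a proof see for instance \cite[Corollary 3.2.3]{ayoub:sixfunctors}.'' Your proposal is essentially the standard unpacking of that reference: obtain the second triangle from the Morel--Voevodsky localization cofibre sequence via the identifications $j_\#\simeq j_!$, $j^*\simeq j^!$, $i_*\simeq i_!$, and then deduce the first triangle by a recollement/Yoneda argument. The strategy is correct.

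There is one small gap. You claim that full faithfulness of $i_!=i_*$ (equivalently $i^!i_!\simeq\id$) ``follows from the already-established second triangle applied to $i_*\F$''. Applying that triangle and using $j^*i_*=0$ only shows that the map $i_*\F\to i_*i^*i_*\F$ is an isomorphism, i.e.\ that $i_*$ applied to the unit $\F\to i^*i_*\F$ is an isomorphism; to conclude that the unit itself is an isomorphism you would still need $i_*$ to be conservative, which is not available from the triangle alone. Full faithfulness of $i_*$ is in fact a separate ingredient of the Morel--Voevodsky localization theorem (and is taken as an axiom in Ayoub's formalism), so you should cite it directly as an additional input rather than attempt to derive it from the triangle. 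With that amendment the argument goes through.
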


Let now $\pi:V \to S$ be a vector bundle, with zero section $s_0:X \hookrightarrow V$. Then the adjunction
$$
\begin{tikzcd}
    \Sigma^V \coloneqq \pi_\# s_{0!}: \SH(S) \arrow[r, shift left] & \SH(S): s_0^! \pi^* =: \Sigma^{-V} \arrow[l, shift left]
\end{tikzcd}
$$
is a self-equivalence of $\SH(S)$.
\begin{defn}
    The endofuntors $\Sigma^V$ and $\Sigma^{-V}$ are called the \emph{$V$-suspension} and the \emph{$V$-desuspension} respectively. Often, one calls them generically the \emph{Thom equivalences}.
\end{defn}

All the results about Thom equivalences that we are going to state here are discussed in \cite[Section 1.5]{ayoub:sixfunctors} and recalled in \cite[Section 2.1]{deglise-jin-khan}.

Thom equivalences are compatible with the monoidal structure, in the sense that, for $V\to S$ vector bundle, and $\E,\F \in \SH(S)$, one has $\Sigma^V\E \wedge \F \simeq \Sigma^V(\E \wedge \F)$, and analogously for $\Sigma^{-V}$. Then, the functor $\Sigma^V(-)$ can be read as $\Sigma^V1_S \wedge (-)$, and the same for $\Sigma^{-V}(-)$. We also have canonical isomorphisms
\begin{equation}
\label{eq:thomcompatibilities}
    f^*\Sigma^V \simeq \Sigma^{f^*V}f^*, \; \; \Sigma^Vf_* \simeq f_*\Sigma^{f^*V}, \; \; \Sigma^Vf_! \simeq f_!\Sigma^{f^*V}, \; \; f^!\Sigma^V \simeq \Sigma^{f^*V}f^!.
\end{equation}

Let now $K(S)$ be the Thomason-Trobaugh $K$-theory space of perfect complexes on $S$ (\cite[Definition 3.1]{Thomason:K-theory}), and let $\sK(S)$ denote the fundamental groupoid of $K(S)$. Let $\sV$ be a locally free sheaf of finite rank over $S$, and let us consider the associated vector bundle $V =\V(\sV) \coloneqq \Spec_{\mathcal{O}_X}\Sym^*(\sV)$. Again, we have endofunctors of $\Sigma^\sV \coloneqq \Sigma^V$ and $\Sigma^{-\sV}\coloneqq \Sigma^{-V}$ of $\SH(S)$. The assignment $\sV \to \Sigma^\sV$ naturally extends to a functor of groupoids $\Sigma^{(-)}: \sK(S) \to \Aut(\SH(S))$, where $\Aut(\SH(S))$ is the groupoid of autoequivalences of $\SH(S)$ with their natural isomorphisms as morphisms.

\begin{rmk}
    Let us note that a distinguished triangle $\sV' \to \sV \to \sV''\to \sV'[1]$ of perfect complexes on $S$ induces canonically an identification $\sV \simeq \sV' + \sV''$ in $\sK(S)$, inducing the isomorphism $\Sigma^\sV \simeq \Sigma^{\sV' +\sV''} \simeq \Sigma^{\sV'}\circ \Sigma^{\sV''}.$
In particular, this gives the isomorphism
\begin{equation}
\label{eq:smash-stablethom}
    \Sigma^\sV 1_S \simeq \Sigma^{\sV'}1_S \wedge \Sigma^{\sV''}1_S.
\end{equation}
The canonical distinguished triangles $\sV\to \sV\oplus\sV'\to \sV'\to \sV[1]$ and $\sV'\to \sV\oplus\sV'\to \sV\to \sV'[1]$ give canonical isomorphisms $\Sigma^\sV\circ\Sigma^{\sV'}\simeq \Sigma^{\sV\oplus \sV'}\simeq \Sigma^{\sV'}\circ \Sigma^\sV$. Similarly, the distinguished triangle $\sV\to 0\to \sV[1]\to \sV[1]$ gives the canonical isomorphism $\Sigma^{\sV}\circ \Sigma^{\sV[1]}\simeq\Sigma^0=\id$, so, for $\sV$ a locally free sheaf, we have $\Sigma^{-\sV}=(\Sigma^{\sV})^{-1}=\Sigma^{\sV[1]}$. We can thus extend the notation $\Sigma^\sV$ to virtual perfect complexes by setting $\Sigma^{\sV-\sV'}:=\Sigma^{\sV}\circ\Sigma^{\sV'[1]}$.
\end{rmk}

The Morel-Voevodsky relative purity theorem \cite[\S 3 Theorem 2.23]{morvoe:homotopytheory} has a reformulation in the stable setting in terms of six functors:

\begin{thm}[\cite{deglise-jin-khan}, item 2.1.8]
\label{thm:MorVoePurity}
    Let $i:Z \hookrightarrow X$ be a closed immersion of smooth schemes over $S$ locally of finite type, with structure maps $p_Z$, $p_X$. Then there are natural isomorphisms
    $$p_{X\#}i_* \simeq p_{Z\#}\Sigma^{N_i} \; \; \; \text{and} \; \; \; \Sigma^{-N_i}p_Z^* \simeq i^!p_X^*,$$
    where $N_i$ is the normal bundle of $i$.
\end{thm}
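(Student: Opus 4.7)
The plan is to recognise the two displayed equivalences as adjoint to one another and then reduce everything to the stable form of Morel-Voevodsky relative purity, namely $i^!\simeq \Sigma^{-N_i}i^*$ for a closed immersion of smooth $S$-schemes.

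First I would check that the two isomorphisms are each other's adjoints. Since $i$ is a closed immersion, $i_*=i_!$ by Proposition~\ref{prop:sixfunctors}(1), whose right adjoint is $i^!$. Combining this with the adjunctions $p_{X\#}\dashv p_X^*$ and $p_{Z\#}\dashv p_Z^*$ and with the fact that $\Sigma^{-N_i}$ is both right adjoint and inverse to $\Sigma^{N_i}$, one reads off that $p_{X\#}i_*$ is left adjoint to $i^!p_X^*$, while $p_{Z\#}\Sigma^{N_i}$ is left adjoint to $\Sigma^{-N_i}p_Z^*$. Producing one natural isomorphism therefore produces the other by passage to right adjoints, so it is enough to construct the second.

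Assuming the stable purity isomorphism $i^!\simeq \Sigma^{-N_i}i^*$ of \cite[2.1.8]{deglise-jin-khan}, the second displayed isomorphism is a one-line computation: using $p_X\circ i=p_Z$ together with the pseudofunctoriality $i^*p_X^*\simeq(p_X\circ i)^*$ of the inverse image,
\[
i^!p_X^*\;\simeq\;\Sigma^{-N_i}i^*p_X^*\;=\;\Sigma^{-N_i}(p_X\circ i)^*\;=\;\Sigma^{-N_i}p_Z^*,
\]
and the first isomorphism follows by taking left adjoints.

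The real content therefore lies in the stable purity statement itself, and this is the step I expect to be the genuine obstacle. To derive it from scratch I would start from Morel-Voevodsky's unstable purity equivalence $X/(X\setminus Z)\simeq \Th(N_i)$, identify $p_{X\#}i_*i^*1_X$ as the cofibre of the localization triangle
\[
p_{X\#}j_\#j^*1_X\to p_{X\#}1_X\to p_{X\#}i_*i^*1_X
\]
supplied by Proposition~\ref{prop:localization} (using $j^*\simeq j^!$ and $j_\#\simeq j_!$ for the complementary open immersion $j$), and match the resulting cofibre with $p_{Z\#}\Sigma^{N_i}1_Z$. Upgrading the equivalence on the unit $1_Z$ to a natural isomorphism of functors on all of $\SH(Z)$ then requires the compatibility \eqref{eq:thomcompatibilities} of Thom equivalences with the six functors together with the projection formula of Proposition~\ref{prop:sixfunctors}(4); this bookkeeping is carried out in Ayoub and in Deglise-Jin-Khan, so I would appeal to their construction rather than redo it.
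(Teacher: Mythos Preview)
The paper does not give its own proof of this theorem: it is stated as a citation to \cite[item 2.1.8]{deglise-jin-khan}, introduced as the stable six-functor reformulation of Morel--Voevodsky's unstable relative purity theorem \cite[\S 3, Theorem 2.23]{morvoe:homotopytheory}. So there is nothing to compare against directly.

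Your sketch is a correct outline of how the result is obtained. The observation that the two displayed isomorphisms are mutual adjoints is right (using $i_*\simeq i_!$ from Proposition~\ref{prop:sixfunctors}(1)), and the derivation of the second from $i^!\simeq\Sigma^{-N_i}i^*$ via $i^*p_X^*\simeq p_Z^*$ is exactly the intended argument. One small point of attribution: what \cite[2.1.8]{deglise-jin-khan} records is precisely the pair of isomorphisms in the statement, not the isomorphism $i^!\simeq\Sigma^{-N_i}i^*$ on all of $\SH(X)$ as such; but these are essentially equivalent formulations, and your reduction is the standard one. Your final paragraph correctly identifies the substantive input as the passage from the unstable Morel--Voevodsky purity equivalence to the natural isomorphism of functors, and it is appropriate to defer that bookkeeping to Ayoub and D\'eglise--Jin--Khan, as the paper itself does.
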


On the other hand, Ayoub's purity theorem \cite[Section 1.6]{ayoub:sixfunctors} implies the following:

\begin{thm}[\cite{deglise-jin-khan}, item 2.1.7]
\label{thm:ayoubpurity}
    If $f$ is a smooth morphism locally of finite type, there is a canonical isomorphism
    \begin{equation}
    \label{eq:AyoubPurity}
        \Sigma^{T_f}f^* \xrightarrow{\sim} f^!,
    \end{equation}
    where $T_f$ is the relative tangent bundle of $f$.
\end{thm}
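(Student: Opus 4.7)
My plan is to construct a canonical natural transformation $\alpha_f: \Sigma^{T_f}f^* \to f^!$ for smooth morphisms $f: X \to Y$ locally of finite type, and then verify it is an equivalence by a Zariski-local reduction to two basic cases: \'etale morphisms and the projection from an affine space.

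For the construction, I would exploit the diagonal $\Delta_f: X \to X \times_Y X$. Since $f$ is smooth, $\Delta_f$ is a closed immersion between smooth $X$-schemes (regarded as $X$-schemes via either of the two projections $p_1, p_2$), and its normal bundle is canonically identified with $T_f$. Applying the Morel--Voevodsky stable purity of Theorem \ref{thm:MorVoePurity} to $\Delta_f$, combined with the base change isomorphisms (Proposition \ref{prop:sixfunctors}(5)) for the fiber square of $f$ with itself, and chasing through the units and counits of the adjunctions $(f^*, f_*)$ and $(f_!, f^!)$, one obtains the desired natural transformation $\alpha_f$.

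To show $\alpha_f$ is an isomorphism, I would use that both sides behave well under composition in $f$: the right side via $(gh)^! \simeq h^!g^!$, and the left side because of the short exact sequence of relative tangent bundles $0 \to T_h \to T_{gh} \to h^*T_g \to 0$, the additivity of Thom equivalences under short exact sequences (equation \eqref{eq:smash-stablethom}), and the compatibility \eqref{eq:thomcompatibilities}. Since every smooth morphism factors Zariski-locally as an \'etale map followed by a projection $\A^n_Y \to Y$, it suffices to verify $\alpha_f$ is invertible in these two cases. For an \'etale morphism $e$, the tangent bundle $T_e$ vanishes, so the claim reduces to $e^! \simeq e^*$; this follows because the diagonal of an \'etale map is an open-and-closed immersion, so combining Proposition \ref{prop:sixfunctors}(2) with base change gives the identification. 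For the projection $\pi: \A^n_Y \to Y$, the tangent bundle is trivial of rank $n$, so $\Sigma^{T_\pi}\pi^* \simeq \pi^*\Sigma^{2n,n}$. By $\A^1$-invariance (Proposition \ref{prop:hom-invariance}), $\pi^*$ is fully faithful, and applying the Morel--Voevodsky purity to the zero section $s_0: Y \to \A^n_Y$ (whose normal bundle is trivial of rank $n$) produces the inverse via the identity $\pi \circ s_0 = \id_Y$ and the unit/counit axioms of $(\pi_!, \pi^!)$.

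The main obstacle is the very first step: making $\alpha_f$ canonical, natural, and compatible with composition of smooth morphisms requires extensive bookkeeping with the six-functors coherence data, and ensuring that the transformation constructed via the diagonal agrees, up to canonical isomorphism, with the one obtained from any local factorization through \'etale maps and affine projections. This coherence check is essentially the technical content of \cite[Section 1.6]{ayoub:sixfunctors}, where deformation to the normal cone is used in a functorial way to pin down the higher coherence data.
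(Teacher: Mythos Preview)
The paper does not give a proof of this theorem at all: it is stated as a citation, attributed to \cite[item 2.1.7]{deglise-jin-khan} and ultimately to Ayoub's purity theorem \cite[Section 1.6]{ayoub:sixfunctors}. There is therefore no ``paper's own proof'' to compare against.

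That said, your outline is a faithful sketch of the standard argument as carried out in those references: construct the purity transformation via the diagonal and Morel--Voevodsky closed-immersion purity, then check invertibility Zariski-locally by factoring a smooth map as \'etale followed by a projection $\A^n_Y \to Y$. You have correctly identified the main difficulty as the coherence of $\alpha_f$ under composition and its independence of the chosen local factorization, which is exactly what Ayoub handles with deformation to the normal cone. One small point: for the \'etale case you might more directly invoke the localization sequence (Proposition~\ref{prop:localization}) together with the fact that the diagonal of an \'etale map is open, rather than only Proposition~\ref{prop:sixfunctors}(2); but your route also works. Overall the proposal is correct and matches the approach in the cited literature.
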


\begin{rmk}
\label{rmk:smoothsharp}
If $f$ is a smooth morphism, taking the left adjoints of both sides of the natural isomorphism \eqref{eq:AyoubPurity} gives the natural isomorphism
$f_\#\Sigma^{-T_f}\simeq f_!.$
\end{rmk}

\begin{rmk}
\label{rmk:smoothsharp-properties}
    Proposition \ref{prop:sixfunctors}(4) and Remark \ref{rmk:smoothsharp} imply that, for $f$ smooth, $f_\#$ satisfies a projection formula against $f^*$. Also, $f_{\#}$ verifies the same compatibility as $f_!$ in \eqref{eq:thomcompatibilities} with Thom equivalences, since Thom equivalences commute with each other.
\end{rmk}

\begin{rmk}
\label{rmk:etalemaps}
    If $f$ is étale, $T_f$ is the zero bundle $S \to S$, so $\Sigma^{T_f}=\id_{\SH(S)}$. Thus, Theorem \ref{thm:ayoubpurity} gives an isomorphism $f^* \xrightarrow{\sim} f^!$. If $f$ is an open immersion, this is the isomorphism of Proposition \ref{prop:sixfunctors} (2) . Moreover, in this case we also have $f_\# \simeq f_!$ by Remark \ref{rmk:smoothsharp}.
\end{rmk}

For $\pi:V \to S$ a vector bundle, let $j:V^0 \to V$ be the open complement of the zero section $s_0:S \to V$. By Proposition \ref{prop:localization}, we have a distinguished triangle 
\begin{equation}
\label{eq:thom-loc}
    j_!j^!1_V \to 1_V \to s_{0*}s_0^*1_V \to j_!j!1_V[1].
\end{equation}
Let $\pi^0:V^0 \to S$ be the restriction of $\pi$. By applying $\pi_\#$ to \eqref{eq:thom-loc}, and using $j_! \simeq j_\#$ from Remark \ref{rmk:etalemaps} and $j^! \simeq j^*$ from Proposition \ref{prop:sixfunctors}(2), we obtain the distinguished triangle
\begin{equation}
\label{eq:thom-loc2}
    \pi^0_\# 1_{V^0} \to \pi_\# 1_V \to \pi_\# s_{0*}1_S \simeq \Sigma^V 1_S \to \pi^0_\# 1_{V^0}[1],
\end{equation}
where the isomorphism follows because $s_{0*}\simeq s_{0!}$ by Proposition \ref{prop:sixfunctors}(1). This can be rewritten as
\begin{equation}
\label{eq:thom-loc3}
    \Sigma_{\P^1}^\infty V_+^0 \to \Sigma_{\P^1}^\infty V_+ \to \Sigma^V1_S \to \Sigma_{\P^1}^\infty V_+^0[1],
\end{equation}
from which we see that $\Sigma^V1_S \simeq \Sigma^\infty _{\P^1}\Th(V)$. For this reason, the motivic spectrum $\Sigma^V 1_S$ is also called the \emph{stable Thom space of $V$}.

In view of this discussion on stable Thom spaces, we can say that the relation \eqref{eq:smash-stablethom} is a generalization of Remark \ref{rmk:Thomspaces} (1).

\begin{rmk}
By recalling Remark \ref{rmk:Thomspaces} (2), we have $\Sigma^{\mathcal{O}_S} \simeq \P^1 \wedge (-) \simeq \Sigma^{2,1}$ as endofunctors of $\SH(S)$. By taking wedge powers of $\P^1$, one also gets $\Sigma^{\mathcal{O}_S^r}\simeq \Sigma^{2r,r}$.
\end{rmk}

\subsection{$\GL$-oriented theories}\label{subsection:GL-orient.} From now on, we focus on the case $S=\Spec k$, with $k$ perfect field of exponential characteristic $p\neq2$.

The definition of a $\GL$-orientation is due to Panin \cite[Definition 3.1]{Pan:oriented}. In what follows, we adopt the conventions used in \cite{Ana:Slor}.

\begin{defn}
\label{defn:thomclasstheory}
A \emph{$\GL$-orientation} for a commutative ring spectrum $\sE\in \SH(k)$ consists of the assignment of Thom classes $\th^\sE(V)\in\sE^{2r,r}(\Th(V))=[p_\#\Sigma^V1_X,\Sigma^{2r,r}\sE]_{\SH(k)}$ for each vector bundle $V\to X$, $X\in \Sm/k$, with $r=\rnk(V)$, satisfying the following axioms: 
\begin{equation}\label{enum:ThomClassAxioms}
\end{equation}
\begin{enumerate} 
\item Normalization: For $V=\V(\sO_X^r)$ over some smooth scheme $p:X \to \Spec k$, $\th^\sE(V)\in \sE^{2r, r}(\Th(V))$ is the image of the unit $1_{\sE^{0,0}(X)}$ under the suspension isomorphism $\sE^{0,0}(X) = \sE^{0,0}(p_\#1_X)  \xrightarrow{\sim} \sE^{2r, r}(p_\#\Sigma^{2r,r}1_X) = \sE^{2r, r}(\Th(V))$. 
\item Naturality: Given vector bundles $V\to X$, $W\to Y$, $X,Y\in \Sm/k$, a morphism $f:Y\to X$ and an isomorphism $\alpha:W\xrightarrow{\sim} f^*V$ of vector bundles on $Y$, let $\Th(\alpha,f): \Th(W)\to \Th(V)$ be the obvious induced morphism of motivic spaces. Then
$$\Th(\alpha,f)^*(\th^\sE(V))=\th^\sE(W).$$ 
\item Multiplicativity: Given an exact sequence of vector bundles over $X\in \Sm/k$
\[
0\to V'\to V\to V''\to0
\]
of ranks $r', r,r''$, respectively, one has the corresponding cup product map $\cup:\sE^{2r', r'}(\Th(V'))\otimes \sE^{2r'', r''}(\Th(V''))\to \sE^{2r, r}(\Th(V))$ thanks to the relation \eqref{eq:smash-stablethom}. Then
\[
\th^\sE(V)=\th^\sE(V')\cup\th^\sE(V'').
\]
\end{enumerate}
\end{defn}

Let $V\to X$ be a rank $r$ vector bundle on $X\in \Sm/k$, with structure morphism $p:X\to \Spec k$, and let $\sV$ be the sheaf of sections of $V^\vee$. Since $\Sigma_{\P^1}^\infty \Th(V) \simeq p_\#(\Sigma^\sV1_X)$, we may use the adjunction $p_\#\dashv p^*$ to view $\th^\sE(V)$ as an element $\th_\sE(\sV)\in \Map_{\SH(X)}(\Sigma^{\sV-\sO_X^r}1_X, p^*\sE)$.

\begin{lemma}
\label{lemma:ThomClassNaturalTransformation}
    The assignment $\Locfree(X) \ni \sV\mapsto \th_\sE(\sV) \in \Map_{\SH(X)}(\Sigma^{\sV-\sO_X^r}1_X, p^*\sE),$ with $r=\rnk \sV$, extends to a natural transformation of functors of groupoids from $\sK(X)$ to $\SH(X)_\simeq$
$$
[\th_\sE(-):(\Sigma^{(-)-\sO_X^{\rnk(-)}}1_X)\to c_{p^*\sE}]:\sK(X)\to \SH(X)_\simeq,
$$
where $\SH(X)_\simeq$ is the underlying groupoid of $\SH(X)$, and $c_{p^*\sE}$ is the constant functor with value $p^*\sE$.
\end{lemma}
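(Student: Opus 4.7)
The plan is to check that the prescribed assignment on locally free sheaves is compatible with the generating morphisms of the fundamental groupoid $\sK(X)$, and then invoke functoriality to extend it uniquely to a natural transformation on all of $\sK(X)$.

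First, the source functor $\sV \mapsto \Sigma^{\sV-\sO_X^{\rnk \sV}}1_X$ is itself a functor of groupoids $\sK(X) \to \SH(X)_\simeq$, obtained by combining the functor $\Sigma^{(-)}:\sK(X) \to \Aut(\SH(X))$ from \eqref{eq:SigmaFunctorOfGroupoids} with the shift by $-\sO_X^{\rnk(-)}$, which is itself functorial since $\rnk$ is additive on short exact sequences. The target $c_{p^*\sE}$ is constant, so specifying a natural transformation between these functors amounts to specifying its components on objects, subject to a naturality condition on the morphisms of $\sK(X)$.

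Morphisms in $\sK(X)$ are generated, via the underlying $K$-theoretic construction, by two kinds of data: (i) isomorphisms $\alpha:\sV \xrightarrow{\sim} \sV'$ of locally free sheaves, and (ii) identifications $\sV \simeq \sV' + \sV''$ arising from short exact sequences $0\to\sV'\to\sV\to\sV''\to 0$, which produce the decomposition $\Sigma^{\sV-\sO_X^r}1_X \simeq \Sigma^{\sV'-\sO_X^{r'}}1_X \wedge_X \Sigma^{\sV''-\sO_X^{r''}}1_X$ via \eqref{eq:smash-stablethom}. For case (i), naturality reduces to the identity $\th_\sE(\sV') \circ \Sigma^\alpha = \th_\sE(\sV)$, which after passing to $p_\#$ and using the $p_\# \dashv p^*$ adjunction is exactly axiom (2) of Definition \ref{defn:thomclasstheory} applied with $f=\id_X$. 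For case (ii), naturality states that $\th_\sE(\sV)$ coincides with the smash product $\th_\sE(\sV') \wedge \th_\sE(\sV'')$ under the given decomposition, which after passing to $p_\#$ and translating into the cup product is axiom (3). Compatibility with the normalizing shifts by $\sO_X^{\rnk}$ is ensured by axiom (1), which guarantees that the Thom class of a trivial bundle agrees with the unit under the suspension isomorphism.

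Since axioms (1)–(3) are precisely designed to capture the coherence required by these generating morphisms, the assignment $\sV \mapsto \th_\sE(\sV)$ extends uniquely to the desired natural transformation. The one nontrivial point to verify is that the generators above exhaust all relations in $\sK(X)$, but this is guaranteed by the standard description of the fundamental groupoid of $K(X)$ in terms of short exact sequences and isomorphisms, together with the observation that because the target is constant no higher coherence has to be tracked; thus the main body of the proof is a clean repackaging of the axioms in the language of $K$-theoretic naturality.
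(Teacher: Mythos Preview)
Your proposal identifies the right axioms but has a genuine gap in the extension step. You correctly verify naturality on $\Locfree(X)$ via axiom (2), but your handling of the passage to $\sK(X)$ is incomplete. The objects of $\sK(X)$ are not all in the image of $\Locfree(X)$: they include formal differences $[\sV_1]-[\sV_2]$, and you never say what $\th_\sE(v)$ is for such a $v$. Your case (ii) only checks compatibility between \emph{actual} locally free sheaves related by a short exact sequence, and your appeal to a ``standard description of the fundamental groupoid of $K(X)$ in terms of short exact sequences and isomorphisms'' does not supply a definition on virtual objects. Also, in case (ii) the map $\th_\sE(\sV')\wedge\th_\sE(\sV'')$ lands in $p^*\sE\wedge_X p^*\sE$, not $p^*\sE$, so what you have written is not literally the naturality square for the constant target $c_{p^*\sE}$; this is the multiplicativity axiom, which the paper records separately.

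The paper closes this gap by a concrete reduction. It first passes to an affine Jouanolou cover so that every object of $\sK(X)$ is isomorphic to one of the form $[\sV]-r[\sO_X]$, then uses axioms (1) and (3) to check that the triangle comparing $\th_\sE(\sV\oplus\sO_X^r)$ and $\th_\sE(\sV)$ commutes under the canonical identification $\Sigma^{\sV\oplus\sO_X^r-\sO_X^{\rnk(\sV\oplus\sO_X^r)}}1_X\simeq \Sigma^{\sV-\sO_X^{\rnk\sV}}1_X$. This lets one \emph{define} $\th_\sE(\sV-\sO_X^r):=\th_\sE(\sV)$ unambiguously. The extension to all of $\sK(X)$ then follows from Grayson's description of $\sK(X)$ (for $X$ affine) as the groupoid obtained from $\Locfree(X)$ by inverting the autoequivalence $\oplus\sO_X$. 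Your argument needs either this step or an equivalent replacement; short exact sequences alone do not invert anything.
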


\begin{proof}
    The functor $\Sigma^{(-)}:\sK(X)\to \Aut(\SH(X))$ gives rise to a functor of groupoids 
    \begin{equation}
    \label{eq:FunctorOfGroupoids}
        \Sigma^{(-)-\sO_X^{\rnk(-)}}1_X:\sK(X)\to \SH(X)_\simeq.
    \end{equation}
Composing \eqref{eq:FunctorOfGroupoids} with the canonical functor of groupoids $i:\Locfree(X) \to \sK(X)$, we get a functor of groupoids $\Locfree(X) \to \SH(X)_\simeq$. Let $\alpha:\sW\to \sV$ be an isomorphism in  $\Locfree(X)$, giving the induced isomorphism $\Th(\alpha):\Sigma^{\sW-\sO_X^{\rnk(\sW)}}1_X\to \Sigma^{\sV-\sO_X^{\rnk(\sV)}}1_X$. By \eqref{enum:ThomClassAxioms}(2), we see that $\th_\sE(\sV)\circ \th(\alpha)= \th_\sE(\sW)$, and we can then see the assignment $\Locfree(X) \ni \sV \mapsto \th_\sE(\sV)$ as a natural transformation of functors of groupoids 
\begin{equation}
    \label{eq:NaturalTransformation}
    [\th_\sE(-):(\Sigma^{(-)-\sO_X^{\rnk(-)}}1_X)\circ i\to c_{p^*\sE}]: \Locfree(X)\to \SH(X)_\simeq,
\end{equation}
from the functor $(\Sigma^{(-)-\sO_X^{\rnk(-)}}1_X)\circ i$ to the constant functor $c_{p^*\sE}$ with value $p^*\sE$.

If $q:\tilde{X}\to X$ is a morphism in $\Sm/k$, the two functors commute with $q^*$. Thus, taking $q$ as a Jouanolou cover of $X$, we can assume that $X$ is affine. 

If $\sV \in \Locfree(X)$, with $X$ affine, we have a canonical isomorphism $$\sV\oplus \sO_X^r-\sO_X^{\rnk(\sV\oplus \sO_X^r)} \xrightarrow{\sim} \sV-\sO_X^{\rnk(\sV)}$$
in $\sK(X)$, which gives the corresponding isomorphism in $\SH(X)$ after applying $\Sigma^{(-)}1_X$. We then have a diagram
   
$$
\begin{tikzcd}
        \Sigma^{\sV\oplus \sO_X^r-\sO_X^{\rnk(\sV\oplus \sO_X^r)}}1_X \arrow[rr, "\sim"] \arrow[dr, swap, "\th_\sE(\sV\oplus \sO_X^r)"] & & \Sigma^{\sV-\sO_X^{\rnk(\sV)}}1_X \arrow[dl, "\th_\sE(\sV)"] \\
        & p^*\sE. &
\end{tikzcd}
$$
It follows by \eqref{enum:ThomClassAxioms}(1,3) that this diagram commutes. Thus, we can extend $\th_\sE(-)$ to differences $\sV-\mathcal{O}_X^r$ by letting $\th_\sE(\sV-\mathcal{O}_X^r)\coloneqq \th_\sE(\sV)$.

We recall from \cite{Gra:Quillen} that the groupoid $\sK(X)$ is obtained from the groupoid $\Locfree(X)$ by inverting the autoequivalence $\oplus \mathcal{O}_X$. Therefore, we can extend the natural transformation \eqref{eq:NaturalTransformation} to a natural transformation
$$[\th_\sE(-):(\Sigma^{(-)-\sO_X^{\rnk(-)}}1_X)\to c_{p^*\sE}]:\sK(X)\to \SH(X)_\simeq.$$
\end{proof}

The natural transformation $\th_\sE(-)$ is multiplicative in the sense that, given $v,v'\in \sK(X)$, we have a canonical isomorphism
\[
\Sigma^{v+v'-\sO_X^{\rnk(v+v')}}1_X\simeq \Sigma^{v-\sO_X^{\text{rank}(v)}}1_X\wedge_X\Sigma^{v'-\sO_X^{\rnk(v')}}1_X.
\]

\begin{lemma}\label{lem:ThomMult}
For $v,v'\in \sK(X)$, $\th_\sE(v+v')$ is the composition
\begin{multline}\label{multline:ThomMult}
\Sigma^{v+v'-\sO_X^{\rnk(v+v')}}1_X\simeq \Sigma^{v-\sO_X^{\rnk(v)}}1_X\wedge_X\Sigma^{v'-\sO_X^{\rnk(v')}}
1_X\\\xrightarrow{\th_\sE(v)\wedge\th_\sE(v')}p^*\sE\wedge_Xp^*\sE\xrightarrow{\mu_{p^*\sE}} p^*\sE\notag
\end{multline}
where $\mu_{p^*\sE}$ is the multiplication on $p^*\sE$.
\end{lemma}
The proof is a direct consequence of the multiplicativity of Thom classes \eqref{enum:ThomClassAxioms}(3).

\begin{defn} For $v\in \sK(X)$, $X\in \Sm/k$, let $\th_{\sE}^f(v):\Sigma^{v-\sO_X^{\rnk(v)}}p^*\sE\to p^*\sE$ be the composition  
\[
\Sigma^{v-\sO_X^{\rnk(v)}}p^*\sE\simeq\Sigma^{v-\sO_X^{\rnk(v)}}1_X\wedge_Xp^*\sE\xrightarrow{\th_\sE(v)\wedge\id_{p^*\sE}}p^*\sE\wedge_X p^*\sE\xrightarrow{\mu_{p^*\sE}} p^*\sE.
\]
Also, for $\sE$ a highly structured motivic commutative ring spectrum, let $\th^f_{\sE\Mod}(v):\Sigma^{v-\sO_X^{\rnk(v)}}p^*\sE\to p^*\sE$ denote the map in $\Mod_{p^*\sE}$ arising from the morphism $\th_\sE(v)$ in $\SH(X)$ by the Free-Forgetful adjunction between $\SH(X)$ and $\Mod_{p^*\sE}$.
\end{defn}

Applying the forgetful functor to $\th^f_{\sE\Mod}(v)$ gives back the map $\th_{\sE}^f(v)$, and for arbitrary $\sE$, we recover $\th_\sE(v)$ from $\th_{\sE}^f(v)$ by composing with the unit $\Sigma^{v-\mathcal{O}_X^{\rnk(v)}} 1_{p^*\sE^{0,0}(X)}$.

\begin{lemma}\label{lem:ThomIso} For $X\in \Sm/k$, $v\in \sK(X)$, $\sE\in \SH(k)$ oriented, the map $\th_{\sE}^f(v):\Sigma^{v-\sO_X^{rnk(v)}}p^*\sE\to p^*\sE$ in $\SH(X)$ is an isomorphism in $\SH(k)$, and for $\sE$ highly structured, $\th^f_{\sE\Mod}(v)$ is an isomorphism in $\Mod_{p^*\sE}$.
\end{lemma}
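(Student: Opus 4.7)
My plan is the standard ``local triviality plus normalization'' argument for the Thom isomorphism, in three steps, followed by a short transfer to the module category.

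First, I would reduce to the case where $v$ is represented by an honest vector bundle. Replacing $X$ by a Jouanolou cover $q:\tilde X\to X$ and using that $q^*:\SH(X)\to \SH(\tilde X)$ is fully faithful (hence conservative) by $\A^1$-invariance (Proposition \ref{prop:hom-invariance}), together with the naturality of $\th_\sE(-)$ from Lemma \ref{lemma:ThomClassNaturalTransformation}, I may assume that $X$ is affine. Over an affine $X$ every class in $\sK(X)$ is isomorphic in the groupoid to one of the form $\sV-\sO_X^r$ for some vector bundle $\sV$ of rank $r$, and the canonical isomorphism $\Sigma^{v-\sO_X^{\rnk v}}1_X\simeq \Sigma^{\sV-\sO_X^r}1_X$ used in Lemma \ref{lemma:ThomClassNaturalTransformation} identifies $\th_\sE^f(v)$ with $\th_\sE^f(\sV)$. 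So it suffices to treat $v=\sV$ an actual vector bundle.

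Second, I would invoke Zariski descent in $\SH(X)$: for a Zariski cover $\{j_i:U_i\to X\}$, the restriction functors $j_i^*$ are jointly conservative, so being an isomorphism can be checked on the cover. Choosing a cover on which $\sV$ trivializes and using naturality $j_i^*\th_\sE^f(\sV)=\th_\sE^f(j_i^*\sV)$ (Lemma \ref{lemma:ThomClassNaturalTransformation}), the problem is reduced to the case of the trivial bundle $\sO_X^r$. For the trivial bundle, the normalization axiom \eqref{enum:ThomClassAxioms}(1), combined with Remark \ref{rmk:th(-)&th_E(-)}, identifies $\th_\sE(\sO_X^r):1_X\to p^*\sE$ with the unit map $\epsilon_{p^*\sE}$. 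Consequently $\th_\sE^f(\sO_X^r)$ is the composition
$$p^*\sE\simeq 1_X\wedge_X p^*\sE \xrightarrow{\epsilon_{p^*\sE}\wedge\id} p^*\sE\wedge_X p^*\sE \xrightarrow{\mu_{p^*\sE}} p^*\sE,$$
which is the identity by the unit axiom for the commutative ring spectrum $p^*\sE$.

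Third, for the module version, the forgetful functor $\Mod_{p^*\sE}\to \SH(X)$ is conservative and, by the Free--Forgetful adjunction, sends $\th_{\sE\Mod}^f(v)$ back to $\th_\sE^f(v)$ (cf.\ Remark \ref{rmk:th(-)&th_E(-)}); hence the isomorphism in $\SH(X)$ established above upgrades to an isomorphism in $\Mod_{p^*\sE}$. The only real subtlety I anticipate lies in the first step: one must check that the extension of $\th_\sE^f(-)$ from $\Locfree(X)$ to $\sK(X)$ constructed in Lemma \ref{lemma:ThomClassNaturalTransformation} is compatible with the Jouanolou reduction, and that inverting $\oplus\sO_X$ causes no harm---which is precisely guaranteed by the fact that $\th_\sE^f(\sO_X)$ is the identity, as verified in the trivial case above.
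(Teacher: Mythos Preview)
Your proof is correct and follows essentially the same approach as the paper: reduce to the trivial bundle via Zariski-local checking, verify the trivial case using the normalization axiom, and transfer to $\Mod_{p^*\sE}$ via conservativity of the forgetful functor. The only minor difference is that your first step (Jouanolou reduction to affine $X$) is unnecessary---the paper simply chooses a Zariski cover on which the virtual class $v$ itself restricts to $r_i[\sO_{U_i}]$, which is always possible without passing to an affine model.
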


\begin{proof} Since the forgetful functor $\Mod_{p^*\sE}\to\SH(X)$ reflects isomorphisms, it suffices to see that $\th_{\sE}^f(v)$ is an isomorphism in $\SH(X)$. It follows from the normalization axiom \eqref{enum:ThomClassAxioms}(1) that for $v=[\sO_X^r]$, $\th_\sE(v):1_X\to p^*\sE$ is the unit in $\sE^{0,0}(X)$, which easily implies the lemma for $v=[\sO_X^r]$. In general, we take a Zariski open cover $\sU=\{U_i\}_i$ for $X$ for which the restriction of $v$ to each $U_i$ is $r_i[\sO_{U_i}]$ for some integer $r_i$. The result then follows from the gluing axiom for the sheaf $\SH(-)$ on $X_\Zar$.
\end{proof}

Analogously to $\th_\sE(-)$, $\th_{\sE}^f(-)$ can be viewed as a natural isomorphism of functors of groupoids from $\sK(X)$ to $\SH(X)_\simeq.$

\begin{prop}
\label{prop:K_0ExtensionTh(-)}
Let $\sE\in \SH(k)$ be oriented and take $X\in \Sch/k$. \\[5pt]
1. The functor of groupoids $\Sigma^{(-)}p_X^*\sE:\sK(X)\to \SH(X)_\simeq$ given by $v\mapsto \Sigma^vp_X^*\sE$ descends to a map
\[
\Sigma^{(-)}p_X^*\sE:K_0(X)\to \Obj(\SH(X)),\ v\mapsto \Sigma^vp_X^*\sE.
\]
2. The natural isomorphism $[\th_{\sE}^f(-):(\Sigma^{(-)-\sO_X^{\rnk(-)}}p_X^*\sE)\to c_{p_X^*\sE}]:\sK(X)\to \SH(X)_\simeq$ descends to a natural isomorphism
\[
[\th_{\sE}^f(-):(\Sigma^{(-)-\sO_X^{\rnk(-)}}p_X^*\sE\to c_{p_X^*\sE}]:K_0(X)\to \SH(X)_\simeq. \]
\end{prop}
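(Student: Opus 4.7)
The plan is to reduce everything to a single \textbf{Key Lemma}: for every morphism $\alpha:v\to v'$ in $\sK(X)$, the induced isomorphism $\Sigma^\alpha\wedge\id_{p_X^*\sE}:\Sigma^vp_X^*\sE\to\Sigma^{v'}p_X^*\sE$ depends only on the source and target, not on the particular choice of $\alpha$ (and is, in particular, the identity when $v=v'$). To prove this, I would combine the naturality of $\th_\sE(-)$ from Lemma \ref{lemma:ThomClassNaturalTransformation} with the fact that $\th^f_\sE(v)$ is an isomorphism by Lemma \ref{lem:ThomIso}. From the definition $\th^f_\sE(v)=\mu_{p^*\sE}\circ(\th_\sE(v)\wedge\id_{p^*\sE})$ and the naturality identity $\th_\sE(v')\circ\Sigma^\alpha=\th_\sE(v)$, one gets
\[
\th^f_\sE(v')\circ\bigl(\Sigma^\alpha\wedge\id_{p_X^*\sE}\bigr)=\th^f_\sE(v),
\]
so $\Sigma^\alpha\wedge\id_{p_X^*\sE}=\th^f_\sE(v')^{-1}\circ\th^f_\sE(v)$, whose right-hand side is manifestly independent of $\alpha$.

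For \textbf{Part 1}, I would fix once and for all a set of representatives $\{v_\lambda\}_{\lambda\in K_0(X)}$ for the isomorphism classes of objects of $\sK(X)$ and define $\Sigma^{(-)}p_X^*\sE:K_0(X)\to\Obj(\SH(X))$ by $\lambda\mapsto \Sigma^{v_\lambda}p_X^*\sE$. For every $v\in\sK(X)$ of class $\lambda$, choose an isomorphism $\alpha_v:v\to v_\lambda$ with $\alpha_{v_\lambda}=\id$, and set $\eta_v:=\Sigma^{\alpha_v}\wedge\id_{p_X^*\sE}$. The Key Lemma makes $\eta_v$ independent of the choice of $\alpha_v$. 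Given a morphism $\beta:v\to v'$ of $\sK(X)$ (necessarily within a single class $\lambda$), the composite $\eta_{v'}\circ(\Sigma^\beta\wedge\id)$ equals $\Sigma^{\alpha_{v'}\circ\beta}\wedge\id$, which by the Key Lemma again equals $\Sigma^{\alpha_v}\wedge\id=\eta_v$. This is exactly the naturality of $\{\eta_v\}$, so $\{\eta_v\}$ supplies the required natural isomorphism between $\Sigma^{(-)}p_X^*\sE:\sK(X)\to\SH(X)_\simeq$ and the composite $\sK(X)\xrightarrow{q}K_0(X)\to\Obj(\SH(X))\hookrightarrow\SH(X)_\simeq$.

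For \textbf{Part 2}, with the representatives chosen as above, I would set $\th^f_\sE(\lambda):=\th^f_\sE(v_\lambda)$. For any $v$ of class $\lambda$, the naturality identity already used in the Key Lemma reads $\th^f_\sE(v_\lambda)\circ(\Sigma^{\alpha_v}\wedge\id)=\th^f_\sE(v)$, which says precisely $\th^f_\sE(\lambda)\circ\eta_v=\th^f_\sE(v)$. Thus the natural isomorphism $\th^f_\sE(-)$ of Lemma \ref{lem:ThomIso} is compatible with the descent constructed in Part 1 and induces the claimed natural isomorphism of functors on $K_0(X)$. The main (and essentially only) obstacle in this whole argument is the Key Lemma, whose proof is where the orientation on $\sE$ is genuinely used to trivialize the action of automorphisms in $\sK(X)$ on the smash product $\Sigma^{(-)}p_X^*\sE$; everything else is formal bookkeeping about groupoids, their sets of connected components, and the choice of representatives.
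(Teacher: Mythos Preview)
Your proposal is correct and follows essentially the same approach as the paper: the Key Lemma is precisely the paper's argument, which expresses $\Sigma^{\phi}(p_X^*\sE)$ as $\Sigma^{\sO_X^r}\bigl(\th_{\sE}^f(v')^{-1}\circ \th_{\sE}^f(v)\bigr)$ via the naturality of $\th_\sE^f(-)$ on $\sK(X)$, thereby showing the induced map is independent of $\phi$. The paper simply declares that this observation proves (1) and that (2) follows from (1), whereas you spell out the descent with explicit choices of representatives and natural isomorphisms; this extra bookkeeping is correct but not needed for the paper's level of detail.
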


\begin{proof} (2) follows directly from (1).

For (1), let $\phi:v\to v'$ be an isomorphism in $\sK(X)$,  let $r=\rnk(v)=\rnk(v')$ and let $\Sigma^\phi(p_X^*\sE):\Sigma^{v}p_X^*\sE\simeq \Sigma^{v'}p_X^*\sE$ be the isomorphism induced from $\Sigma^\phi:\Sigma^v \simeq \Sigma^{v'}$. This gives us the commutative diagram of isomorphisms in $\SH(X)$
\[
\xymatrixcolsep{40pt}
\xymatrix{
\Sigma^{v-\sO_X^r}p_X^*\sE\ar[d]_{\Sigma^{-\sO_X^r}\Sigma^\phi(p_X^*\sE)}\ar[r]^-{\th_{\sE}^f(v)}&p_X^*\sE\\
\Sigma^{v'-\sO_X^r}p_X^*\sE\ar[ru]_{\th_{\sE}^f(v')}
}
\]
In other words $\Sigma^{\phi}(p_X^*\sE)=\Sigma^{\sO_X^r}(\th_{\sE}^f(v)^{-1}\circ \th_{\sE}^f(v))$. This shows that $\Sigma^{\phi}(p_X^*\sE)$ is independent of the choice of $\phi:v\to v'$, proving (1). 
\end{proof}

For $V_1, V_2$ vector bundles on $X\in \Sm/k$, we have the Thom class $\th_{\sE}^f(V_1-V_2)$ defined as $\th_{\sE}^f([\sV_1]-[\sV_2])$, where $\sV_i$ is the sheaf of sections of $V_i^\vee$.
\begin{lemma}\label{lem:ThomClassVirtualBundles} Let $V_1\to X$, $V_2\to X$ vector bundles on $X\in \Sm/k$ of rank $r_1, r_2$, and let $\sV_1, \sV_2$ be the respective locally free sheaves of sections of $V_1^\vee, V_2^\vee$. Then $\th_{\sE}^f(V_1-V_2)\in [\Sigma^{[\sV_1]-[\sV_2]-(r_1-r_2)[\sO_X]}p^*\sE, p^*\sE]_{\SH(X)}$ is the composition
\begin{multline*}
    \Sigma^{[\sV_1]-[\sV_2]-(r_1-r_2)[\sO_X]}p^*\sE\xrightarrow{\Sigma^{r_2[\sO_X]-[\sV_2]}\th_{\sE}^f(V_1)}\Sigma^{-[\sV_2]+r_2[\sO_X]}p^*\sE \\ \xrightarrow{(\Sigma^{-[\sV_2]+r_2[\sO_X]}\th_{\sE}^f(V_2))^{-1}}p^*\sE
\end{multline*}
In particular, for $V\to X$ a vector bundle of rank $r$, we have
$$\th_{\sE}^f(-V)=(\Sigma^{-[\sV]+r[\sO_X]}\th_{\sE}^f(V))^{-1}.$$
\end{lemma}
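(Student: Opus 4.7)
The plan is to derive both statements from the multiplicativity of $\th_\sE^f$ (Lemma \ref{lem:ThomMult}), combined with the descent of $\th_\sE^f$ to $K_0(X)$ established in Proposition \ref{prop:K_0ExtensionTh(-)}. Concretely, the module-level reformulation of Lemma \ref{lem:ThomMult} reads: for $v,v'\in K_0(X)$ of ranks $r,r'$,
\[
\th_\sE^f(v+v')=\th_\sE^f(v)\circ\Sigma^{v-r[\sO_X]}\th_\sE^f(v'):\Sigma^{v+v'-(r+r')[\sO_X]}p^*\sE\to p^*\sE.
\]
Both assertions of the lemma will fall out of this one identity, together with the normalization $\th_\sE^f(0)=\id_{p^*\sE}$ (which is the Definition \ref{defn:thomclasstheory}(1) axiom in the trivial rank $0$ case).

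First I would handle the special case $\th_\sE^f(-V)$. Taking $v=V$ and $v'=-V$ in the above formula gives $v+v'=0$ in $K_0(X)$, whence
\[
\id_{p^*\sE}=\th_\sE^f(V)\circ\Sigma^{[\sV]-r[\sO_X]}\th_\sE^f(-V).
\]
Since $\th_\sE^f(V)$ is an isomorphism by Lemma \ref{lem:ThomIso}, this identifies $\Sigma^{[\sV]-r[\sO_X]}\th_\sE^f(-V)$ with $\th_\sE^f(V)^{-1}$. Applying the functor $\Sigma^{-[\sV]+r[\sO_X]}$ to both sides and using that it preserves inverses yields
\[
\th_\sE^f(-V)=\bigl(\Sigma^{-[\sV]+r[\sO_X]}\th_\sE^f(V)\bigr)^{-1},
\]
which is the second claim.

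For the general formula, I would decompose $V_1-V_2=(-V_2)+V_1$ in $K_0(X)$ and apply the multiplicativity identity with $v=-V_2$ (rank $-r_2$) and $v'=V_1$ (rank $r_1$). This gives
\[
\th_\sE^f(V_1-V_2)=\th_\sE^f(-V_2)\circ\Sigma^{-[\sV_2]+r_2[\sO_X]}\th_\sE^f(V_1),
\]
and substituting the special case just proved for $\th_\sE^f(-V_2)$ produces exactly the stated composition.

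The only real obstacle is keeping track of suspensions; namely, verifying that the canonical identifications $\Sigma^{v+v'-(r+r')[\sO_X]}p^*\sE\simeq\Sigma^{v-r[\sO_X]}\Sigma^{v'-r'[\sO_X]}p^*\sE$ used when applying the multiplicativity formula are compatible with the shifts appearing in the statement. This is taken care of by the fact that $\Sigma^{(-)}:\sK(X)\to\Aut(\SH(X))$ is a (symmetric) monoidal functor, so these identifications are canonical and the composition in the lemma is unambiguous.
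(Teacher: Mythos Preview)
Your argument is correct and is precisely the ``direct computation'' the paper has in mind: deriving the module-level multiplicativity $\th_\sE^f(v+v')=\th_\sE^f(v)\circ\Sigma^{v-r[\sO_X]}\th_\sE^f(v')$ from Lemma~\ref{lem:ThomMult}, using it first with $v'=-V$ to identify $\th_\sE^f(-V)$, and then with $v=-V_2$, $v'=V_1$ to obtain the stated composition. The invocation of Proposition~\ref{prop:K_0ExtensionTh(-)} to make sense of the formula at the level of $K_0(X)$ is exactly right, and the bookkeeping of suspensions is handled correctly.
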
 

The proof is a direct computation.

\begin{defn}
\label{defn:DualMap_Properpushforward}
\begin{enumerate}
    \item Let $f:X \to Y$ a proper map in $\Sm/k$, and $p_X, p_Y$ the structure maps of the schemes. Since $f_*\simeq f_!$ by Proposition \ref{prop:sixfunctors} (1), we have the natural transformation
  $$p_{Y!}\xrightarrow{p_{Y!}\epsilon_{(f^*,f_*)}}p_{Y!}f_*f* \simeq p_{Y!}f_!f^*=p_{X!}f^*.$$
    Applying this to $1_Y$ gives the \emph{dual map} of $f$ in $\SH(k)$:
    $$f^\vee: p_{Y!}1_Y \to p_{X!}1_X.$$
    \item In the same setting as (1), we suppose $X,Y$ have dimension $d_X,d_Y$ respectively, and $f$ has relative dimension $d$. Let $\sE \in \SH(k)$ $\GL$-oriented. The dual map $f^\vee$ gives rise to a pullback $(f^\vee)^*:\sE^{a,b}(p_{X!}1_X) \to \sE^{a,b}(p_{Y!}1_Y)$, and Remark \ref{rmk:smoothsharp} gives $p_{X!}\simeq p_{X\#}\Sigma^{-T_{p_X}}=p_{X\#}\Sigma^{-T_X}$. Then, for all $a,b$ in $\Z$, we have
    \begin{multline*}
        \sE^{a,b}(p_{X!}1_X) = [p_{X!}1_X,\Sigma^{a,b}\sE]_{\SH(k)} \cong [p_{X\#}\Sigma^{-T_X}1_X, \Sigma^{a,b}\sE]_{\SH(k)} \cong [1_X,\Sigma^{T_X}p_X^*\Sigma^{a,b}\sE]_{\SH(X)} \\ \cong [1_X,\Sigma^{a,b}\Sigma^{T_X}p_X^*\sE]_{\SH(X)} \xrightarrow[\sim]{\circ \Sigma^{a,b}\th_{\sE}^f(T_X)}[1_X,\Sigma^{a,b}\Sigma^{2d_X,d_X}p_X^*\sE]_{\SH(X)} \\ \cong [1_X,p_X^*\Sigma^{a+2d_X,b+d_X}\sE]_{\SH(X)} \cong [p_{X\#}1_X,\Sigma^{a+2d_X,b+d_X}\sE]_{\SH(k)}=\sE^{a+2d_X,b+d_X}(X).
    \end{multline*}
    In the same way, we have $\sE^{a,b}(p_{Y!}1_Y) \cong \sE^{a+2d_Y,b+d_Y}(Y).$ Thus, by reindexing, we have that $(f^\vee)^*$ induced the \emph{proper pushforward for $\sE$ along $f$}:
    $$f_*:\sE^{a,b}(X) \to \sE^{a-2d,b-d}(Y).$$
\end{enumerate}
\end{defn}

It is a standard computation to verify the following compatibilities between pullback and proper pushforward:

\begin{prop}\label{prop:PushPull} Let $\sE\in \SH(k)$ be an oriented commutative ring spectrum.\\[5pt]
1.(Push-pull formula) Let
\[
\xymatrix{
Y'\ar[r]^{g'}\ar[d]^{f'}&Y\ar[d]^f\\
X'\ar[r]^g&X
}
\]
be a transverse cartesian square, with $X,X', Y,Y'$ in $\Sm/k$ and with $f$ proper. Then $f'$ is also proper and $g^*f_*=f'_*g^{\prime *}$.\\[2pt]
2. (Projection formula) Let $f:Y\to X$ be a proper map in $\Sm/k$. Then for $x\in \sE^{*,*}(X)$, $y\in \sE^{*',*'}(Y)$, we have $f_*(f^*(x)\cdot y)=x\cdot f_*(y)$.
\end{prop}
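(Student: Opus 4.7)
The plan is to prove both statements by unwinding Construction \ref{constr:properpushforward}, in which $f_*$ is expressed as the composition of the Thom isomorphism at the source, pullback along the dual map $f^\vee$, and the inverse Thom isomorphism at the target. With this in mind, both parts reduce to compatibilities within the six-functor formalism.

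For part (1), I proceed in two steps. First, I would establish base-change compatibility for the dual maps: the six-functor base change $g^* f_! \simeq f'_! g^{\prime *}$ of Proposition \ref{prop:sixfunctors}(5), together with the naturality of the unit of the adjunction $f^* \dashv f_*$ (applied to $1_Y$) under pullback, yields a commutative diagram in $\SH(k)$ relating $f^\vee$, $(f')^\vee$, and the suspension spectra of $g$ and $g'$. Second, I would verify that the Thom isomorphisms are compatible with the pullbacks $g^*$ and $g^{\prime *}$. The transversality assumption produces a canonical isomorphism of virtual tangent classes $T_{Y'} - g^{\prime *} T_Y \simeq T_{X'} - g^* T_X$ in $K_0(Y')$, and by Proposition \ref{prop:K_0ExtensionTh(-)} together with Lemma \ref{lem:ThomMult} this descends to the required compatibility between the Thom isomorphisms on the two sides and the pullbacks. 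Pasting these two commuting squares then yields $g^* f_* = f'_* g^{\prime *}$.

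For part (2), the projection formula for $f_*$ is essentially a direct consequence of the projection formula for the six functors, $\F \wedge f_! \E \xrightarrow{\sim} f_!(f^* \F \wedge \E)$, from Proposition \ref{prop:sixfunctors}(4). I would express both $f_*(f^*(x) \cdot y)$ and $x \cdot f_*(y)$ as compositions involving $f^\vee$, the multiplication $\mu_\sE$, the Thom isomorphism $\th_\sE^f(T_Y)$, and the diagonal of $Y$ (through which the cup product is realized). The six-functor projection formula then allows one to ``slide'' the class $f^*(x)$ past $f_*$, while associativity of $\mu_\sE$ and the multiplicativity of Thom classes (Lemma \ref{lem:ThomMult}) ensure that the cup-product structures match up on both sides.

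The main technical obstacle lies in part (1): the bookkeeping of the Thom twists under transversality. Converting the $K_0(Y')$-level equality $T_{Y'} - g^{\prime *} T_Y \simeq T_{X'} - g^* T_X$ into an honest compatibility of Thom isomorphism \emph{maps} is where Proposition \ref{prop:K_0ExtensionTh(-)} plays an essential role, since only the extension of $\th_\sE^f(-)$ as a natural isomorphism on $K_0$ makes the two compositions literally equal rather than merely isomorphic. Beyond this point, both proofs reduce to formal diagram chases in the six-functor formalism, though care must be taken to correctly track bigrading shifts and Thom twists introduced at each stage of the construction of $f_*$.
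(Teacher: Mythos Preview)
Your proposal is correct and, in fact, considerably more detailed than what the paper provides. The paper's own ``proof'' consists of two sentences: it asserts that the push-pull formula follows from the analogous formula for coherent sheaves (referencing \cite[Section 9]{EGA1}) and that the projection formula is a direct computation. No details are given.

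Your approach via the six-functor formalism is the natural one given how $f_*$ is defined in Construction~\ref{constr:properpushforward}, and your identification of the key technical point in part (1)---namely that transversality yields $T_{Y'} - g^{\prime *}T_Y \simeq f^{\prime *}(T_{X'} - g^*T_X)$ in $K_0(Y')$, and that Proposition~\ref{prop:K_0ExtensionTh(-)} is what lets this $K_0$-identity become an equality of Thom isomorphisms---is exactly right. The paper's reference to the coherent-sheaf push-pull formula is really just a pointer to where transversality enters (it guarantees the compatibility of normal/tangent bundles), not a proof in the present framework; your argument actually carries this out in $\SH(k)$. For part (2), your reduction to Proposition~\ref{prop:sixfunctors}(4) and Lemma~\ref{lem:ThomMult} is likewise the correct unwinding of ``direct computation''.
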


\begin{defn}\label{defn:stwist} For $Y\in \Sm/k$ of dimension $d$ over $k$, a {\em stable  twist of $-T_Y$} is a pair  $(v, \vartheta)$, where $v$ is a virtual vector bundle of virtual rank $-d$, and $\vartheta$ is an isomorphism $\Sigma^{-T_Y}1_Y\xrightarrow{\sim} \Sigma^v1_Y$ in $\SH(Y)$.  
\end{defn}

\begin{defn}\label{defn:twistclass} Let $\sE\in \SH(k)$ be an oriented commutative ring spectrum. For $p_Y:Y\to \Spec k\in \Sm/k$ proper of dimension $d$ over $k$, let $(v, \vartheta)$ be a stable twist of $-T_Y$. Let $\th_\sE(v)':\Sigma^{2d,d}p_{Y\#}\Sigma^{v}1_Y\to
\sE$ be the map corresponding to $\th_\sE(v):\Sigma^{2d,d}\Sigma^v1_Y\to p_Y^*\sE$ by adjunction. We define the class $[Y, v,\vartheta]_\sE\in \sE^{-2d,-d}(\Spec k)$ as the composition
\[
1_k\xrightarrow{p_Y^\vee}p_{Y\#}\Sigma^{-T_Y}1_Y\xrightarrow{p_{Y\#}(\vartheta)}
p_{Y\#}\Sigma^{v}1_Y\xrightarrow{\Sigma^{-2d,-d}\th_\sE(v)'}\Sigma^{-2d,-d}\sE,
\]
where $p_Y^\vee$ is the dual map of $p_Y$ of Definition \ref{defn:DualMap_Properpushforward} (1) composed with the purity isomorphism $p_{Y!}1_Y\simeq p_{Y\#}\Sigma^{-T_Y}1_Y$ (Remark \ref{rmk:smoothsharp}). We use the shorthand $[Y]_\sE :=[Y,-T_Y, \id]_\sE$.
\end{defn}

\begin{defn}\label{defn:Degree}
For $X \in \Sm/k$ proper of dimension $d_X$ and structure map $p_X$, we have the \emph{degree map} for motivic cohomology $\deg_k:H\Z^{2d_X,d_X}(X)\to \Z$, defined by the proper pushforward $p_{X*}$, since $H\Z^{0,0}(\Spec k)=\Z$ by Theorem \ref{thm:MazWeiHZ}.
\end{defn}

\subsubsection*{Chern classes}

For $\sE$ $\GL$-oriented, $V \to X$ rank $r$ vector bundle, X $\in \Sm/k$, one has Chern classes $c_i(V) \in \sE^{2i,i}(X)$, $i=1,\ldots, r$ satisfying the standard axioms, see \cite[Definition 3.26]{Pan:oriented}. For $\sV$ the sheaf of sections of a vector bundle $V$, we define $c_i(\sV)$ to be $c_i(V)$.

For $i>0$, the Chern classes $c_i(V)$ are nilpotent elements in the graded ring $\oplus_n\sE^{2n,n}(X)$ (see \cite[Theorem 3.27]{Pan:oriented}). 

Panin has shown (see\cite[Lemma 3.33, Theorems 3.5,  3.27,  3.36]{Pan:oriented}) that, for a commutative ring spectrum $\sE$, giving a theory of Chern classes is equivalent to giving a theory of Thom classes. Under this correspondence, given a vector bundle $V\to X$, the top Chern class $c_{\rnk(V)}(V)\in\sE^{2{\rnk(V)}, {\rnk(V)}}(X)$ is the pullback $z_0^*\th^\sE(V)$ of $\th^\sE(V)$ along the zero section $z_0:X_+ \to \Th(V)$ in $\sH_\bullet(k)$. This yields in particular an expression for the unique non-trivial Chern class $c_1(L)\in \sE^{2, 1}(X)$ for a line bundle $L\to X$. Also, $c_{\rnk(V)}(V)$ can also be described in the following terms (Panin showed this for line bundles but the argument extends to finite rank).

\begin{lemma}\label{lem:FirstChernClassFacts} Let $V\to X$ be a rank $r$ vector bundle on $X\in \Sm/k$, and let $\sE\in \SH(k)$ be an oriented ring spectrum. If $s_0:X\to V$ is the zero section and $s:X\to V$ an arbitrary section, then $c_r(V)=s^*(s_{0*}(1^\sE_X))$.
\end{lemma}

The following is \cite[Theorem 3.9]{Pan:oriented}.

\begin{thm}[Projective Bundle Formula]\label{thm:PBF} Let $\sE\in \SH(k)$ be an oriented commutative ring spectrum. Let $X$ be in $\Sm/k$, and let $V\to X$ be a rank $n+1$ vector bundle on $X$. Let $\P(V)$ denote the projective vector bundle $\operatorname{Proj}(\Sym^*\sV)\xrightarrow{q}X$, with tautological quotient invertible sheaf $q^*\sV\twoheadrightarrow\sO_V(1)$. Let us write $\xi:=c_1(\sO_V(1))\in \sE^{2,1}(X)$. Then $\sE^{*,*}(\P(V))$ is a free $\sE^{*,*}(X)$-module, via the pullback $q^*$, with basis $1, \xi,\ldots, \xi^n$.
\end{thm}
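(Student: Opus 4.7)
The plan is to proceed by a two-step induction: first reduce to the case of a trivial bundle by Zariski descent, then compute $\sE^{*,*}(\P^n\times X)$ by induction on $n$ via the Morel--Voevodsky cofiber sequence coming from the stratification $\P^n=\A^n\sqcup\P^{n-1}$. The essential ingredients are the normalization axiom for the orientation, the identification $\P^n/\P^{n-1}\simeq S^{2n,n}$ in $\sH_\bullet(k)$, and the Thom class formalism of Lemmas \ref{lem:ThomMult}--\ref{lem:ThomIso}.

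For $V=\sO_X^{n+1}$, $\P(V)=\P^n\times X$ and $\xi$ is pulled back from $c_1(\sO_{\P^n}(1))$ via projection. Induct on $n$: the base $n=0$ is immediate. For the step, the closed immersion $\P^{n-1}\hookrightarrow\P^n$ with open complement $\A^n$, smashed with $X_+$, gives the cofiber sequence
\[
(\P^{n-1}\times X)_+\to (\P^n\times X)_+\to (\P^n/\P^{n-1})\wedge X_+\simeq S^{2n,n}\wedge X_+
\]
in $\sH_\bullet(k)$, and hence the long exact sequence
\[
\cdots\to \sE^{p-2n,q-n}(X)\xrightarrow{\alpha} \sE^{p,q}(\P^n\times X)\to \sE^{p,q}(\P^{n-1}\times X)\xrightarrow{\delta}\sE^{p+1-2n,q-n}(X)\to\cdots.
\]
The decisive point is that $\alpha(1)=\xi^n$ up to a unit in $\sE^{2n,n}(\P^n\times X)$. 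Granting this, the restriction $\sE^{*,*}(\P^n\times X)\to\sE^{*,*}(\P^{n-1}\times X)$ sends $\xi^i\mapsto\xi^i$ and is surjective by the inductive hypothesis, forcing $\delta=0$. The sequence splits into short exact ones, and a basis $\{1,\xi,\ldots,\xi^{n-1}\}$ of the right-hand side lifts to a basis $\{1,\xi,\ldots,\xi^n\}$ of the middle term.

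For a general $V\to X$ I apply Mayer--Vietoris. Cover $X$ by finitely many Zariski opens $U_i$ trivializing $V$ and induct on the size of the cover. For $X=U\cup U'$, write $\P(V)_U:=\P(V|_U)$ etc. The globally defined class $\xi=c_1(\sO_V(1))$ restricts to the corresponding Chern classes on each open (naturality \eqref{enum:ChernClassAxioms}), so the natural map from $\bigoplus_{i=0}^n\sE^{*-2i,*-i}(X)\cdot\xi^i$ into $\sE^{*,*}(\P(V))$ fits into a morphism of Mayer--Vietoris long exact sequences whose restrictions to $U$, $U'$, $U\cap U'$ are isomorphisms by the trivial-bundle case. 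The five lemma then concludes.

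The main obstacle is the Chern--Thom identification $\alpha(1)=\xi^n$ (up to unit), which is the geometric heart of the formula. This must be reduced to the axioms as follows. By relative purity, $\P^n/\P^{n-1}\simeq\Th(\sO_{\P^{n-1}}(1))$, and the map $\alpha$ is, up to suspension isomorphisms, the cohomology operation induced by this quotient map smashed with $X_+$. Lemma \ref{lem:FirstChernClassFacts}(1) expresses the top Chern class $c_n$ of a rank-$n$ bundle as pulling back the Thom class along a zero section; combined with the multiplicativity of the Thom classes (Lemma \ref{lem:ThomMult}) applied to iterated self-intersections $\P^{n-1}\supset\P^{n-2}\supset\cdots$ of projective subspaces with normal bundle $\sO(1)$, one obtains that the generator of $\sE^{0,0}$ is transported by $\alpha$ to the $n$-fold product $c_1(\sO(1))^n=\xi^n$, up to the unit prescribed by the rank-one normalization \eqref{enum:ThomClassAxioms}.
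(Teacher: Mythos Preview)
The paper does not prove this theorem at all: it simply cites \cite[Theorem 3.9]{Pan:oriented}. So there is no proof in the paper to compare against; you have supplied an argument where the paper gives none. Your overall strategy --- induction on $n$ for the trivial bundle via the cofiber sequence $\P^{n-1}_+\to\P^n_+\to S^{2n,n}$, followed by Mayer--Vietoris to globalize --- is the standard one and is essentially how Panin's proof proceeds as well.

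That said, there is a genuine slip in your final paragraph. You write ``by relative purity, $\P^n/\P^{n-1}\simeq\Th(\sO_{\P^{n-1}}(1))$'', but this is false: relative purity gives $\P^n/\A^n\simeq\Th_{\P^{n-1}}(\sO(1))$, since $\P^{n-1}$ is the \emph{closed} stratum with open complement $\A^n$. The correct identification, which you used correctly earlier, is $\P^n/\P^{n-1}\simeq S^{2n,n}$, obtained by collapsing the complement of a point $p\in\A^n\subset\P^n$ (using that $\P^n\setminus\{p\}$ $\A^1$-retracts onto $\P^{n-1}$). With this in hand, the map $\alpha$ is the Gysin map for the inclusion of the point $p$, so $\alpha(1)$ is the fundamental class $[p]$. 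Your ``iterated self-intersection'' idea then works cleanly: $p$ is a transverse intersection of $n$ hyperplanes, each of class $\xi=c_1(\sO(1))$ by Lemma~\ref{lem:FirstChernClassFacts}(2), so repeated use of the push-pull formula (Proposition~\ref{prop:PushPull}) gives $[p]=\xi^n$. Once you correct the purity statement and phrase the argument this way, the proof is complete.
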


The lower Chern classes $c_i(V)\in \sE^{2i,i}(X)$, $0<i<{\rnk(V)}$, for a vector bundle $V\to X$ are determined using Grothendieck's formula
\begin{equation}\label{eqn:GrothFormula}
\xi^{\rnk(V)}+\sum_{i=1}^{\rnk(V)}(-1)^ic_i(V)\cdot \xi^{\rnk(V)-i}=0;\quad \xi:=c_1(\sO_V(1))\in \sE^{2,1}(\P(V)).
\end{equation}
Combining Theorem \ref{thm:PBF} with the Grothendieck formula \eqref{eqn:GrothFormula} gives a description of $\sE^{*,*}(\P(V))$ as a $\sE^{*,*}(X)$-algebra.

For any $\GL$-oriented theory $\sE$ we have an isomorphism of $\Z$-graded rings $\sE^{2*,*}(k)[[x_1,x_2]]\cong \sE^{2*,*}(\P^\infty\times\P^\infty)$, with $x_i$ mapping to $p_i^*(c_1(\sO(1)))$, and via this isomorphism a uniquely defined $F=F(x_1,x_2)\in \sE^{2*,*}(k)[[x_1,x_2]]$ such that $F(x_1,x_2)=c_1(x_1 \otimes x_2)$. Panin showed \cite[Proposition 3.37]{Pan:oriented} that given two line bundles $L_1, L_2$ on $X\in \Sm/k$, one has $c_1(L_1\otimes L_2)=F(c_1(L_1), c_1(L_2))$, where the right-hand side makes sense because $c_1(L_i)$ are nilpotent, and by \cite[Proposition 3.38]{Pan:oriented} $F(x, y)$ is a rank one formal group law over $\sE^{2*,*}(k)$.

\begin{exmp}
\label{exmp:orientedHZ}
    Motivic cohomology $H\Z$ has a canonical orientation with additive formal group law $F(x,y)=x + y$. See for example \cite[Section 11.3]{deglise:mixmot}. 
\end{exmp}

\subsubsection*{Newton classes}

For $X\in \Sm/k$ and $V\to X$ a rank $r$ vector bundle, with associated flag bundle $\pi:\text{Fl}(V)\to X$, the pullback $\pi^*V$ admits a canonical filtration
\[
0=F^{r+1}V\subset F^rV\subset\ldots\subset F^1V\subset F^0V=\pi^*V
\]
with subquotients $L_i:=F^iV/F^{i+1}V$ all line bundles. For $\sE\in \SH(k)$ an oriented commutative ring spectrum, the {\em $\sE$-Chern roots} of $V$ are the element $\xi_i^\sE(V):=c_1^\sE(L_i)\in \sE^{2,1}(\text{Fl}(V))$. If $\sE=H\Z$, we write $\xi_i$ for $\xi_i^{H\Z}$, and we call the classes $\xi_i(V)$ simply the \emph{Chern roots of $V$}. The splitting principle tells us that $\pi^*:\sE^{2*,*}(X)\to \sE^{2*,*}(\text{Fl}(V))$ is injective, with image the elements of the form $P(\xi^\sE_1,\ldots, \xi^\sE_r)$ for $P\in \sE^{2*,*}[x_1,\ldots, x_r]^{\Sigma_r}$ a symmetric polynomial.

\begin{defn}
\label{defn:NewtonClassesAndStuff}
1. Let $m(x,t)$ be the formal product $m(x,t):=\prod_{i,j=1}^\infty 1+x_i^jt_j$, and $I:=(i_1\ge i_2\ge\ldots \ge i_r\ge 0)$ a partition. We let $|I|:=\sum_j i_j$, and $t_I:=t_{i_1}\cdots t_{i_r}$. Writing $m(x,t)$ as $\sum_I m_I(x)t_I$ gives the {\em monomial symmetric function} $m_I(x_1, x_2,\ldots)$, which is a homogeneous polynomial in $x_1, x_2,\ldots$ of degree $|I|$. \\[2pt]
2. Let $\sE\in \SH(k)$ be an oriented commutative ring spectrum and $V\to X$ a rank $r$ vector bundle on $X\in \Sm/k$. For a partition $I$, the {\em $I$-th Conner-Floyd Chern class} $c_I^\sE(V)\in \sE^{2|I|,|I|}(X)$ is the element corresponding to $m_I(\xi_1^\sE(V),\ldots, \xi_r^\sE(V), 0,0,\ldots)$ via the splitting principle. For $\sE=H\Z$, we write simply  $c_I(V)$. \\[2pt]
3. We call the \emph{$n$-th Newton class of $V$} the Conner-Floyd Chern class $c_{I}(V)$ relative to the partition $I=(n)$, namely:
$$c_{(n)}(V) \coloneqq \sum_{i=1}^{r}\xi_i(V)^n \in H\mathbb{Z}^{2n,n}(X).$$
4. For a variety $X$, smooth and proper over $k$ of dimension $n$, $s_n(X)$ denotes the number
    $$s_n(X) \coloneqq \text{deg}_k(c_{(n)}(T_X)) \in \mathbb{Z}.$$
\end{defn}

\begin{lemma}
\label{lemma:newtonclasses}
    \begin{enumerate}
        \item[(1)] If $V\to X$ is a vector bundle on $X\in \Sm/k$, and $n>\dim_kX$, then 
        $c_{(n)}(V)=0$.
        \item[(2)] Newton classes are additive: if $V_1, V_2 \to X$ are two vector bundles, we have
        $$c_{(n)}(V_1 \oplus V_2) = c_{(n)}(V_1)+c_{(n)}(V_2).$$
        \item[(3)] Newton classes are natural: given $V\to X$ a vector bundle and $f:Y\to X$ a morphism in $\Sm/k$, we have $c_{(n)}(f^*(V))=f^*(c_{(n)}(V))$ in $H\Z^{2n,n}(Y)$.
        \item[(4)] If $r \ge 2$ and $X = \prod_{i=1}^rX_i$ is the product of $r$ smooth proper varieties, with $1 \le \dim_k X_i <n$ for all $i$ and $n$ a positive integer, then $c_{(n)}(T_X)=0$. In particular, the characteristic numbers $s_n$ vanish on decomposable varieties.
    \end{enumerate}
\end{lemma}

\begin{proof}
    (1) follows from the fact that for $X\in \Sm/k$,  $H\mathbb{Z}^{2n,n}(X)=\CH^n(X)=0$ for $n>\dim_kX$. (2) follows from the fact that if $V_1$ has Chern roots $\xi_1, \ldots, \xi_r$ and $V_2$ has Chern roots $\xi_{r+1}, \ldots, \xi_{r+s}$, then $V_1\oplus V_2$ has Chern roots $\xi_1, \ldots, \xi_{r+s}$. (3) follows from the naturality of the Chern roots with respect to pullback. 
    (4) follows from the other three, since $T_X\simeq \bigoplus_{i=1}^r   p_i^*T_{X_i}$, with $p_i:X \to X_i$ the projections. 
\end{proof}

\begin{rmk}
\label{rmk:NewtonClasses} For a partition $I=i_1\ge i_2\ge \ldots\ge i_s\ge 0$, let $I'=(i'_1,i'_2, \ldots, i'_{i_1}, 0,0,\ldots)\in \N^\infty$ be the tuple defined by $i'_m=\#\{j\mid i_j=m\}$. This gives $t_I=t^{I'}=\prod_{j=1}^{i_1}t_j^{i'_j}$. The generating function $m(x,t)$ can be rewritten as $m(x,t)=\sum_Im_I(x)t^{I'}$. If we define $\langle I'\rangle:= \sum_ij\cdot i'_j$, we have  $|I|=\langle I'\rangle$. Given a vector bundle $V\to X$ on $X\in \Sm/k$, we have the \emph{Conner-Floyd Chern polynomial} 
\[
c^\CF(V)=\sum_Ic_I(V)t_I=\sum_Ic_I(V)t^{I'}.
\]
For $0\to V'\to V\to V''\to 0$ an exact sequence of vector bundles on $X$, the Chern roots of $V$ are those of $V'$ together with those of $V''$, which easily implies $c^\CF(V)= c^\CF(V')\cdot c^\CF(V'')$. This shows that the assignment $V\mapsto c^\CF(V)$ extends to a group homomorphism
\[
c^\CF(-):K_0(X)\to (1+\oplus_JH\Z^{2\langle J\rangle, \langle J\rangle}(X)t^J)^\times.
\]
In particular, for each partition $I$ (for instance, $I=(n)$), we have a well-defined map (of sets)
\[
c_I:K_0(X)\to H\Z^{2|I|,|I|}(X).
\]
 \end{rmk}

\subsection{Algebraic cobordism}

The universal oriented cohomology theory is Voevodsky's algebraic cobordism $\MGL$. Briefly, we can consider the spaces $\BGL_r:=\colim_n\Gr(r,n) \in \Spc(k)$ with respect to the natural inclusions $\Gr(r,n)\hookrightarrow\Gr(r,n+1)$ and $\MGL_r=\Th(E_r) \in \Spc_\bullet(k)$ with $E_r\to \BGL_r$ tautological bundle. There are natural inclusions $i_r:\BGL_r\hookrightarrow\BGL_{r+1}$, and isomorphisms $i_r^*E_{r=1}\simeq \mathcal{O}\oplus E_r$, which induce the bonding maps $\epsilon_r:\Th(\mathcal{O}\oplus E_r)\simeq T \wedge \MGL_r \to \MGL_{r+1}$, with $T=\A^1/(\A^1\setminus\{0\})$. $\MGL$ is then the $T$-spectrum $(\MGL_0,\MGL_1,\ldots)$, ad since $T$ is weakly equivalent to $\P^1$, the homotopy category of $T$-spectra is equivalent to $\SH(k)$.

It is easy to see that this is equivalent to define $\MGL \in \SH(k)$ as the colimit $\colim_r\Sigma^{-2r,-r}\Sigma_{\P^1}^\infty\MGL_r$, with respect to the maps $\Sigma^{-2r-2,-r-1}\Sigma^\infty_{\P^1}\epsilon_r$. In particular, we have natural maps $\alpha_r:\Sigma_{\P^1}^\infty\MGL_r \to \Sigma^{2r,2}\MGL$.
\begin{rmk}
    It is shown in \cite[Section 2.1]{Panin:algcob} that $\MGL$ has the structure of a highly structured motivic commutative ring spectrum.
\end{rmk}

\begin{rmk}
\label{rmk:oriented_MGL}
$\MGL$ has a natural $\GL$-orientation where, for $X\in \Sm/k$ affine, $V\to X$ a rank $r$ vector bundle, and $\eta_V:X\to \BGL_r$ a classifying map of $V$, which is defined up to $\A^1$-homotopy, the $\MGL$-Thom class of $V$ is given by
$$\th^\MGL(V):\Sigma_{\P^1}^\infty \Th(V) \xrightarrow{\Th(\eta_V)}\Sigma_{\P^1}^\infty \MGL_r \xrightarrow{\alpha_r}\Sigma^{2r,r}\MGL.$$
A standard Jouanolou argument allows us to extend this construction to the case where $X$ is not affine. All the axioms for a theory of Thom classes are straightforward to check.
\end{rmk}

The $\GL$-orientation on $\MGL$ of Remark \ref{rmk:oriented_MGL} is universal in the following sense:

\begin{thm}[\cite{Panin:algcob}, Theorem 2.7]
    For any commutative ring spectrum $\sE$, the map of sets
    \begin{multline*}
        \{\text{morphisms of commutative ring spectra} \;  \MGL \to \sE \; \text{in} \; \; \SH(k)\} \\ \to \{\text{orientations on the ring spectrum} \; \sE\}
    \end{multline*}
    that takes a map $\varphi: \MGL \to \sE$ to the $\GL$-orientation of $\sE$ given by $\th^\sE(V):= \Sigma^{2r,r}\varphi \circ \th^\MGL(V)$ is a bijection. 
\end{thm}

The $\GL$-orientation on $\MGL$ induces a formal group law on $\MGL^*$. This in turn gives a map $\Laz \to \MGL$ from the Lazard ring, which is the ring of coefficient of the universal formal group law $F^{\text{univ}} \in \Laz[[x,y]]$. Spitzweck (\cite{Spitzweck:AlgCob}) and Hoyois (\cite{Hoyois:AlgCob}) proved that, after inverting the exponential characteristic of $k$, this map is an isomorphism of graded ring, and that $\MGL^{2n+m,n}(\Spec k)[1/p]=0$ for all $m>0$. Also, it is well-known since Quillen that $\Laz$ is isomorphic to the polynomial ring $\Z[x_1,x_2,\ldots]$ with $x_i$ in degree $-i$ for all $i>0$ (see e.g. \cite[Theorem 4.4.9]{koch:bordism} for a proof). In particular, we have the following.
\begin{thm}
\label{thm:comparisonLazard}
After inverting the exponential characteristic of $k$, $\MGL^*$ is isomorphic to the polynomial ring $\Z[x_1,x_2, \ldots]$ with $\mid x_i \mid =-i$.
\end{thm}

Through a well-established algebraic version of the Pontryagin-Thom construction (see \cite[Section 3]{lev:ellcoh}), we can associate to any smooth proper variety over $k$ a class in the algebraic cobordism ring $\MGL^*$, as follows.

\begin{defn}
    \label{defn:MGLclasses}
If $X$ is a smooth proper variety $X \in \Sm/k$ of dimension $d$, we call the \emph{$\MGL$-class of $X$} the class $[X]_\MGL=[X,-T_X,\id]_\MGL \in \MGL_{2d,d}(\Spec k)=\MGL^{-2d,-d}(\Spec k)$ constructed as in Definition \ref{defn:twistclass}. Explicitly, if $p:X \to \Spec k$ is the structure map, $[X]_\MGL$ is given by the composition
\begin{multline*}
    1_k \xrightarrow{p^\vee} p_\# \Sigma^{-T_X}1_X \xrightarrow{p_\#\Sigma^{-T_X}\epsilon_{p^*\MGL}} p_\#\Sigma^{-T_X}p^*\MGL \xrightarrow{p_\#\Sigma^{-2d,-d}\th_{\MGL}^f(-T_X)} \\ p_\#\Sigma^{-2d,-d} p^*\MGL \simeq p_\#p^*\Sigma^{-2d,-d}\MGL \xrightarrow{\eta_{(p_\#,p^*)}(\Sigma^{-2d,-d}\MGL)}\Sigma^{-2d,-d}\MGL.
 \end{multline*}
\end{defn}

\begin{rmk}
    Taking $p_*:\MGL^{-2d,-d}(p_{X\#}\Sigma^{-T_X}1_X) \cong \MGL^{0,0}(X) \to \MGL^{-2d,-d}(\Spec k)$ the proper pushforward, $[X]_\MGL$ can be seen as the class $p_*\th^{\MGL}(-T_X)$.
\end{rmk}

\begin{prop}[\cite{lev:ellcoh}, Theorem 3.4 (4)]
\label{prop:MGL-classes}
    After inverting  the exponential characteristic of $k$, $\MGL_{2d,d}(\Spec k)$ is generated by the classes $[X]_\MGL$, for $X$ smooth proper $k$-scheme with $\dim_k(X)=d$.
\end{prop}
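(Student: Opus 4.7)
The plan is to use Theorem \ref{thm:comparisonLazard}, which identifies $\MGL^{2*,*}(\Spec k)[1/p]$ with the Lazard ring $\Laz[1/p]$, and to exhibit an explicit family $\{Y_n\}_{n \ge 1}$ of smooth projective $k$-varieties with $\dim_k Y_n = n$ whose $\MGL$-classes form a polynomial generating set of $\Laz[1/p]$. Once such a family is produced, the multiplicativity of the Pontryagin--Thom construction $[X \times_k Y]_\MGL = [X]_\MGL \cdot [Y]_\MGL$ implies that for any partition $d = d_1 + \cdots + d_r$, the product $[Y_{d_1}]_\MGL \cdots [Y_{d_r}]_\MGL$ coincides with $[Y_{d_1} \times_k \cdots \times_k Y_{d_r}]_\MGL$, the class of a smooth projective $k$-variety of dimension $d$. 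Since these monomials additively span $\Laz_{-d}[1/p] \simeq \MGL_{2d,d}(\Spec k)[1/p]$, this yields the conclusion.

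The construction of the $Y_n$ rests on the classical Milnor criterion for polynomial generation of $\Laz$: a family $\{\xi_n\}_{n \ge 1}$ with $\xi_n \in \Laz_{-n}[1/p]$ generates $\Laz[1/p]$ as a polynomial $\Z[1/p]$-algebra if and only if for every $n$ the characteristic number $s_n(\xi_n)$, associated to the power sum $\sum_i t_i^n$ in the formal Chern roots, is a unit in $\Z[1/p]$ whenever $n+1$ is not a power of a prime different from $p$, and has $\ell$-adic valuation exactly $1$ whenever $n+1 = \ell^r$ for some prime $\ell \neq p$. For the first class of indices take $Y_n = \P^n_k$: a direct computation using the projective bundle formula (Theorem \ref{thm:PBF}) together with the Euler sequence $T_{\P^n} \oplus \sO \simeq \sO(1)^{\oplus (n+1)}$ gives $s_n([\P^n]_\MGL) = n+1$. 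For indices $n$ with $n+1 = \ell^r$ take $Y_n$ to be a smooth Milnor hypersurface $H_{a,b} \subset \P^a_k \times_k \P^b_k$ of bidegree $(1,1)$ with $a + b = n + 1$; expressing $[H_{a,b}]_\MGL = i_*(1)$ via Lemma \ref{lem:FirstChernClassFacts} and applying the projection formula (Proposition \ref{prop:PushPull}) together with the adjunction sequence yields $s_n([H_{a,b}]_\MGL) = -\binom{n+1}{a}$, and an appropriate choice of $a$ and $b$ places the $\ell$-adic valuation of this integer at exactly $1$.

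The main technical obstacle is the Chern number computation for the Milnor hypersurface: one must identify $s_n$ evaluated on $H_{a,b}$ with a symmetric function in the two hyperplane classes pulled back from $\P^a_k \times_k \P^b_k$, and extract the stated binomial via a direct manipulation of Newton's identities applied to the normal bundle sequence. A secondary point to address is that $s_n$ is \emph{a priori} a characteristic number computed in $\MGL$-cohomology; to feed it into Milnor's criterion one transports it through the isomorphism of Theorem \ref{thm:comparisonLazard}, under which it becomes the standard Chern number on the Lazard ring, so no information is lost. Granted all this, the polynomial generators immediately yield additive generation of $\MGL_{2d,d}(\Spec k)[1/p]$ in each fixed degree $d$ by smooth projective varieties of dimension $d$, as required.
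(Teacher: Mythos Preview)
Your overall strategy is sound and is essentially the argument behind the cited result: identify $\MGL^{2*,*}[1/p]$ with $\Laz[1/p]$ via Theorem~\ref{thm:comparisonLazard}, produce smooth projective varieties realizing polynomial generators via the Milnor $s_n$-criterion, and use multiplicativity of the Pontryagin--Thom construction to pass to all degrees. However, the proposal has a genuine gap in the choice of varieties.

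You claim that a \emph{single} $Y_n$ can be found whose class is a polynomial generator of $\Laz[1/p]$ in degree $-n$, but your candidates do not satisfy the criterion. If $n+1$ is not a prime power (say $n+1=6$ with $p\neq 2,3$), then $s_n(\P^n)=n+1$ is not a unit in $\Z[1/p]$, so $[\P^n]_\MGL$ is \emph{not} a generator. Likewise, for $n+1=\ell^r$ with $\ell\neq p$, a single Milnor hypersurface $H_{a,b}$ typically has $s_n=-\binom{\ell^r}{a}$ with extraneous prime factors other than $\ell$ and $p$ (e.g.\ $\binom{9}{3}=84=2^2\cdot 3\cdot 7$), so it is not of the form $\ell\cdot(\text{unit in }\Z[1/p])$ either.

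The fix is that you do not need a single generator per degree: you only need the $[X]_\MGL$ to \emph{additively} span. It suffices that, for each $n$, the integers $s_n(\P^n)=n+1$ and $s_n(H_{a,b})=-\binom{n+1}{a}$ for $0<a<n+1$ have greatest common divisor equal to $m_n$, where $m_n=1$ if $n+1$ is not a prime power and $m_n=\ell$ if $n+1=\ell^r$; this is a classical arithmetic fact (Kummer). Hence some $\Z$-linear combination of these classes maps to a polynomial generator of $\Laz_{-n}[1/p]$, and then multiplicativity and the decomposable structure give additive generation of each $\MGL_{2d,d}[1/p]$ by classes of smooth projective varieties of dimension $d$. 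You should also justify the multiplicativity $[X\times_k Y]_\MGL=[X]_\MGL\cdot[Y]_\MGL$, which is not stated in the paper; it follows from the compatibility of the dual map with products and the multiplicativity of Thom classes.
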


It is well known (see for example \cite[Theorem 2.2]{Panin:algcob}) that $H\Z^{*,*}(\BGL_r)=H\Z^{*,*}(k)[c_1,\ldots, c_r],$ with $c_r$ in bidegree $(2r,r)$ being the usual $r$-th Chern class $c_r(E_m)$ for $m\ge r$, defined by the Grothendieck formula \eqref{eqn:GrothFormula}. Moreover, the cohomology pullback $i_r^*:H\Z^{a,b}(\BGL_{r+1})\to H\Z^{a,b}(\BGL_r)$ is surjective, thus, the inverse system $\{H\Z^{a,b}(\BGL_r), i_r^*\}_r$ satisfies the Mittag-Leffler condition, and its $\lim^1$-term vanishes. Using this and the Thom isomorphism $H\Z^{a+2r, b+r}(\MGL_r)\cong H\Z^{a,b}(\BGL_r)$ given by the $\GL$-orientation of $H\Z$, it is easy to prove the following.

\begin{prop}
\label{prop:MotCohOfMGL}
    We have $H\Z^{*,*}(\MGL) \cong H\Z^{*,*}(k)[c_1,c_2, \ldots],$
    with $c_r$ in bidegree $(2r,r)$. The same holds if we replace $H\Z$ with $H\Z/\ell$ for any prime $\ell$.
\end{prop}

For each $r\ge 0$, we have the flag bundle $\Fl_r :=\Fl(E_r)\xrightarrow{\pi_r} \BGL_r$ associated to the tautological bundle $E_r \to \BGL_r$. By construction of the flag bundles, the inclusions $i_r:\BGL_r \hookrightarrow \BGL_{r+1}$ induce maps $\Fl(i_r):\Fl_r \to \Fl_{r+1}$, and we have
$$\Fl(i_r)^*(\pi_{r+1}^*E_{r+1})\simeq \pi_r^*(i_r^*E_{r+1})\simeq \pi_r^*(E_r \oplus \mathcal{O}_{\BGL_r})\simeq L_1^{(r)} \oplus \ldots \oplus L_r^{(r)} \oplus \mathcal{O}_{\Fl_r}.$$
Since $\pi_{r+1}^*E_{r+1}\simeq L_1^{(r+1)}\oplus \ldots \oplus L_{r+1}^{(r+1)}$, we can suppose $\Fl(i_r)^*(L_j^{(r+1)}) = L_j^{(r)}$ for all $j<r$, and then $\Fl(i_r)^*\xi_j(E_{r+1})=\xi_j(E_r)$ for all $j<r$. We define the bundle $\Fl \to \BGL$ through $\Fl:=\colim_r(\Fl_0 \xrightarrow{\Fl(i_0)} \Fl_1 \xrightarrow{\Fl(i_1)}\ldots)$, and the class $\xi_j(E_r)$, for some $r>>0$, gives then a class $\xi_j \in \Fl$. The maps $\pi_r:\Fl_r \to \BGL_r$ induce a map $\Fl\to \BGL$ by passing to the colimit, and $H\Z^{*,*}(\Fl)\cong \lim_rH\Z^{*,*}(\Fl_r)$, thus, the map $\pi^*:H\Z^{2*,*}(\BGL)\to H\Z^{2*,*}(\Fl)$ satisfies the splitting principle.

\begin{defn}
\label{defn:univNewtonClasses}
    We call the \emph{$n$-th universal Newton class} the class $c_{(n)} \in H\Z^{2n,n}(\MGL)\cong H\Z^{2n,n}(\BGL)$ corresponding to the symmetric function $\sum_i \xi_i^n \in H\Z^{2n,n}(\Fl)$ as in Definition \ref{defn:NewtonClassesAndStuff}(4).
\end{defn}

Let $h:= h_{H\Z}:\MGL_{*,*}(\Spec k) \to H\Z_{*,*}(\MGL)$ be the motivic Hurewicz map as defined in Remark \ref{rmk:HurewiczMap}. For any cohomological class $\alpha \in H\mathbb{Z}^{a,b}(\MGL)$ and any homological class $\gamma \in H\mathbb{Z}_{c,d}(\MGL)$, we have the pairing $\langle \alpha,\gamma \rangle \in H\mathbb{Z}^{a-c,b-d}(\Spec k)$, as in \eqref{eqn:Pairing} with $\sE=H\Z$.

\begin{defn}
    For $x \in \MGL^{-n}=\MGL_{2n,n}(\Spec k)$, and $c_{(n)} \in H\Z^{2n,n}(\MGL)$ the $n$-th universal Newton class (Definition \ref{defn:univNewtonClasses}), we denote by $c_{(n)}(x)$ the integer
$$c_{(n)}(x) := \langle c_{(n)},h(x) \rangle \in H\Z^{0,0}(\Spec k)=\Z.$$
\end{defn}

If $\ell$ is an odd prime different from $\chr k$, \cite{lev:ellcoh} gives the following generating criterion for the $\ell$-adic completion $(\MGL^*)^\wedge_\ell$.

\begin{prop}[\cite{lev:ellcoh}, from proof of Theorem B, Section 6.6]
\label{prop:CriterionMGL}
    A family of elements $\{x_d'\}_{d \ge 1}$, with $x_d' \in (\MGL^{-d})^\wedge_\ell$ forms a family of polynomial generators of $(\MGL^*)^\wedge_\ell$ if and only if
    \begin{equation*}
        \nu_\ell(c_{(d)}(x_d'))=
        \begin{cases}
            1 \; \; \; \; \text{for} \; \; d= \ell^r-1, \; r\ge 1 \\
            0 \; \; \; \text{for} \; \; d \neq \ell^r-1 \; \forall r \ge 1.
        \end{cases}
    \end{equation*}
\end{prop}

\subsection{$\Sp$-oriented theories}
We recall that a rank $2r$ \emph{symplectic vector bundle} over $X \in \Sm/k$ is a pair $(V,\omega)$, where $V \to X$ is a rank $2r$ vector bundle, and $\omega: V \otimes V \to \mathcal{O}_X$ is a symplectic form on it, namely a non-degenerate antisymmetric bilinear form.

\begin{exmp}
\label{exmp:sympl_bundles}
\begin{enumerate}
    \item The trivial rank $2r$ vector bundle $\mathcal{O}_X^{2r}$ can be equipped with the standard rank $2r$ symplectic form $\phi_{2r}$, represented by the standard $2r \times 2r$ symplectic matrix
$$\phi_{2r} =
\begin{pmatrix}
     & & &  &  & 1\\
     & &  &  &\ldots & \\
     & & & 1& &\\
     & & -1 &  & &  \\
     & \ldots &  & & & \\
     -1 & & & & &
\end{pmatrix}.$$
\item For $V\to X$ any rank $r$ vector bundle, the rank $2r$ vector bundle $V \oplus V^\vee \to X$ can always be equipped with the rank $2r$ standard symplectic form $\phi_{2r}$. Concretely, this is the form
    $$\phi_{2r}((v,v^*),(w,w^*)) \coloneqq <w^*,v> -<v^*,w>,$$
    where $<-,->:V^\vee \times V \to \mathcal{O}_X$ is the canonical pairing.
\end{enumerate}
\end{exmp}

The axioms defining a $\GL$-orientation as in Definition \ref{defn:thomclasstheory} can be used to define weaker kinds of orientation, namely a theory of Thom classes for vector bundles carrying additional structures, as elaborated by Panin and Walter in \cite{Panin-Walter:BO} and \cite{panwal:grass}. We are interested in the following.

\begin{defn}
    An $\Sp$-orientation for a commutative ring spectrum $\sE \in \SH(k)$ consists in a theory of Thom classes $\th_{\Sp}^\sE(V,\omega) \in \sE^{4r,2r}(\Th(V))$ for each rank $2r$ symplectc vector bundle $(V,\omega)\to X$, $X \in \Sm/k$, satisfying the axioms of Definition \ref{defn:thomclasstheory}, after replacing the trivial vector bundle $\mathcal{O}^r_X$ with the trivial symplectic bundle $(\mathcal{O}_X^{2r},\phi_{2r})$, morphisms and exact sequences of vector bundles with morphisms and exact sequences of symplectic vector bundles.
\end{defn}

It is immediate to note that any $\GL$-orientation induces an $\Sp$-orientation.

We now want to get a theory of symplectic Thom isomorphisms as in the case of $\GL$-oriented theories. We first need to define symplectic analogs of the fundamental groupoid $\sK(X)$ and the Grothendieck group $K_0(X)$.

\begin{defn} Let us fix a scheme $X\in \Sm/k$.\\[5pt]
1. Let $\sS{p}(X)$ denote the groupoid whose objects are the symplectic vector bundles $(V,\omega)$ on $X$, and whose morphisms are isomorphisms of vector bundles compatible with the given symplectic forms.\\[2pt]
2. Let  $\sS{p}(X)_{st}$ be the category where objects are pairs of symplectic vector bundles on $X$ $((V,\omega), (V', \omega'))$, and where a morphism $((V_1,\omega_1), (V_1', \omega_1'))\to
((V_2,\omega_2), (V_2', \omega_2'))$ is a triple $((V,\omega), f,f')$ with $(V,\omega)$ a
symplectic vector bundle on $X$, and
$$f:(V_1\oplus V,\omega_1\oplus \omega)\xrightarrow{\sim} (V_2,\omega_2), \; \; f':(V'_1\oplus V,\omega'_1\oplus \omega)\xrightarrow{\sim} (V'_2,\omega'_2)$$ are isomorphisms of symplectic vector bundles. If we have two morphisms
$$((V_1,\omega_1),(V_1',\omega_1'))\xrightarrow{((V_f,\omega_f),f,f')} ((V_2,\omega_2),(V_2',\omega_2')) \xrightarrow{((V_g,\omega_g),g,g')} ((V_3,\omega_3),(V_3',\omega_3')),$$
their composition is the triple $((V_f \oplus V_g,\omega_f \oplus \omega_g),g \circ (f\oplus \id_{V_g}), g' \circ (f' \oplus \id_{V_g}))$. \\ [2pt]
3. Let $\sK^\Sp(X)$ be the localization of $\sS{p}(X)_{st}$ with respect to the morphisms 
$$((V'',\omega''), \id_{V\oplus V''},\id_{V'\oplus V''}):((V,\omega), (V', \omega'))\to ((V\oplus V'',\omega\oplus \omega''), (V'\oplus V'', \omega'\oplus\omega'')).$$
4. We have the commutative monoid $(\sS{p}(X)/\text{iso},\oplus)$ of isomorphism classes in $\sS{p}(X)$, with operation induced by the direct sum of symplectic  bundles. Let $K_0^\Sp(X)$ denote its group completion.
\end{defn}

\begin{rmk} 1. $\sK^\Sp(X)$ is a symmetric monoidal groupoid: the inverse of $((V,\omega), f,f'):((V_1,\omega_1), (V_1', \omega_1'))\to
((V_2,\omega_2), (V_2', \omega_2'))$ is $((V,\omega), \id_{V_1 \oplus V},\id_{V_1'\oplus V})^{-1}\circ (0, f^{-1}, f^{\prime-1})$, and the symmetric monoidal product is induced by direct sum. \\[2pt]
2. Sending $((V_1,\omega_1), (V_1', \omega_1'))$ to the formal difference of classes $[(V_1,\omega_1)]- [(V_1', \omega_1')]\in K_0^\Sp(X)$ descends to an isomorphism of abelian groups $ \sK^\Sp(X)/\text{iso}\cong  K_0^\Sp(X)$. \\[2pt]
3. Sending $((V_1,\omega_1), (V_1', \omega_1'))$ to  the complex $V_1\oplus V_1'[1]\in \sK(X)$ extends to a functor of symmetric monoidal groupoids $\sK^\Sp(X)\to \sK(X)$, which induces the corresponding map of abelian groups $K_0^\Sp(X)\to K_0(X)$ by passing to isomorphism classes. In particular, sending $((V_1,\omega_1), (V_1', \omega_1'))$ to $\Sigma^{V_1-V_1'}1_X\in \SH(X)$ extends to a functor of groupoids $\Sigma^{(-)}:\sK^\Sp(X)\to \SH(X)_{\text{iso}}$.
\end{rmk}

\begin{defn}
Let $\sE\in \SH(k)$ be an $\Sp$-oriented ring spectrum. Given a rank $2r$ symplectic bundle, $(V,\omega)$ on $X\in \Sm/k$,  the Thom class $\th^\sE_\Sp(V,\omega)$ induces a morphism
\[
\th^{\Sp}_\sE(V,\omega):\Sigma^{V-\sO_X^{2r}}1_X\to p_X^*\sE
\]
in $\SH(X)$, where $p_X:X\to \Spec k$ is the structure map, which in turn induces the morphism
\[
\th^{\Sp,f}_{\sE}(V,\omega):\Sigma^{V-\sO_X^{2r}}p_X^*\sE\to p_X^*\sE
\]
in $\SH(X)$. If $\sE$ is highly structured, $\th^{\Sp}_\sE(V,\omega)$ also induces the map 
\[
\th^{\Sp, f}_{\sE\Mod}(V,\omega):\Sigma^{V-\sO_X^{2r}}p_X^*\sE\to p_X^*\sE 
\]
in $\Mod_{p^*\sE}$, by the free-forget adjunction.
\end{defn}

\begin{lemma}
\label{lemma:beforethom}
    The map $\th^{\Sp,f}_{\sE}(V,\omega)$ is an isomorphism in $\SH(X)$, and if $\sE$ is highly structured, the map $\th^{\Sp, f}_{\sE\Mod}(V,\omega)$ is an isomorphism in $\Mod_{p^*\sE}$.
\end{lemma}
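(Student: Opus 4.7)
The proof should closely parallel that of Lemma \ref{lem:ThomIso}, replacing the role of $\GL$-trivialization with a Zariski-local trivialization in the symplectic category. Since the forgetful functor $\Mod_{p_X^*\sE}\to \SH(X)$ reflects isomorphisms, the second assertion is reduced to the first, so I will focus on showing that $\th^{\Sp,f}_{\sE}(V,\omega)$ is an isomorphism in $\SH(X)$.

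First I would treat the trivial case. For the standard symplectic bundle $(\sO_X^{2r},\phi_{2r})$, the normalization axiom (the symplectic analogue of \eqref{enum:ThomClassAxioms}(1)) identifies $\th^{\Sp}_\sE(\sO_X^{2r},\phi_{2r})$ with the image of the unit $u_X\in\sE^{0,0}(X)$ under the canonical suspension isomorphism. Unwinding the construction of $\th^{\Sp,f}_\sE$ by smashing with $p_X^*\sE$ and composing with $\mu_{p_X^*\sE}$, this shows $\th^{\Sp,f}_\sE(\sO_X^{2r},\phi_{2r})$ is, up to the canonical suspension identification $\Sigma^{\sO_X^{2r}-\sO_X^{2r}}\simeq \id$, simply the multiplication $\mu_{p_X^*\sE}\circ(\epsilon\wedge\id)$, which is the identity on $p_X^*\sE$. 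Hence it is an isomorphism.

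For the general case I would use Zariski descent. Let $\{j_i:U_i\hookrightarrow X\}_i$ be a Zariski open cover such that for each $i$, there exists an isomorphism of symplectic vector bundles $\alpha_i:(V|_{U_i},\omega|_{U_i})\xrightarrow{\sim}(\sO_{U_i}^{2r},\phi_{2r})$. Such a cover exists because the group scheme $\Sp_{2r}$ is special in the sense of Serre (so every $\Sp_{2r}$-torsor is Zariski-locally trivial); equivalently, over a local ring in which $2$ is invertible one may always construct a symplectic basis, and spreading out yields a Zariski neighborhood over which $\omega$ is standard. By the naturality axiom for symplectic Thom classes, we have $j_i^*\th^{\Sp}_\sE(V,\omega)=\Th(\alpha_i)^*\th^{\Sp}_\sE(\sO_{U_i}^{2r},\phi_{2r})$, so the restriction $j_i^*\th^{\Sp,f}_\sE(V,\omega)$ is, up to a canonical isomorphism induced by $\alpha_i$, the map $\th^{\Sp,f}_\sE(\sO_{U_i}^{2r},\phi_{2r})$, which is an isomorphism by the previous paragraph.

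Finally, since $\SH(-)$ satisfies Zariski descent (it is a sheaf of triangulated categories on the small Zariski site of $X$), a morphism in $\SH(X)$ is an isomorphism if and only if its restriction along each $j_i^*$ is an isomorphism. Applying this to the Zariski cover above yields that $\th^{\Sp,f}_\sE(V,\omega)$ is an isomorphism in $\SH(X)$, proving the first assertion; the module version in $\Mod_{p_X^*\sE}$ then follows immediately since it maps to $\th^{\Sp,f}_\sE(V,\omega)$ under the isomorphism-reflecting forgetful functor. The only genuine subtlety is establishing Zariski-local triviality of symplectic bundles, which reduces to the standard fact about $\Sp_{2r}$-torsors in characteristic different from $2$; the rest of the argument is a formal transcription of Lemma \ref{lem:ThomIso} to the symplectic setting.
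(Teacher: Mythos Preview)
Your proposal is correct and follows essentially the same strategy as the paper: reduce to $\SH(X)$ via the forgetful functor, verify the trivial case using the normalization axiom, then use Zariski-local triviality of symplectic bundles together with Zariski gluing for $\SH(-)$. The only cosmetic differences are that the paper spells out the local triviality via an explicit symplectic Gram--Schmidt argument (rather than invoking that $\Sp_{2r}$ is special) and carries out the gluing step by an explicit Mayer--Vietoris induction on a finite cover (Lemma~\ref{lemma:Mayer-Vietoris}) rather than appealing to descent in one line.
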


\begin{proof}
    The statement for $\th^{\Sp, f}_{\sE\Mod}(V,\omega)$ follows immediately from that for $\th^{\Sp,f}_{\sE}(V,\omega)$, by applying the forgetful functor. We prove the statement for $\th^{\Sp,f}_{\sE}(V,\omega)$.

    Let us suppose that $(V,\omega)$ is a trivial symplectic vector bundle, which means $V=\mathcal{O}_X^{2r}$, $\omega=\phi_{2r}$. Then we have $\Sigma^V=\Sigma^{4r,2r}$, and it is easy to see that $\th_{\sE}^\Sp(V,\omega)$ can be rewritten as
    $$\th^{\Sp,f}_{\sE}(V,\omega)=\Sigma^{4r,2r}\mu_{p_X^*\sE} \circ \Sigma^{4r,2r}(\epsilon_{p_X^*\sE} \wedge 1) = \Sigma^{4r,2r}\id_{p_X^*\sE},$$
    and hence is an isomorphism.
    
Let then $(V,\omega)$ be a general rank $2r$ symplectic vector bundle over $X$. If $x\in X$ is a point, through a skew-symmetric version of the Gram-Schmidt process, we can find (not-uniquely) a Zariski open neighbourhood $U_x$ of $x$ and a decomposition 
$V \mid_{U_x}= \langle v_1, \ldots, v_{2r} \rangle$
such that $\omega\mid_{U_x}$ is represented by the standard symplectic matrix $\phi_{2r}$ with respect to the generating sections $v_1, \ldots v_{2r}$. We have then a Zariski open cover $\mathcal{U}=\{U_{\alpha}\}_{\alpha}$ trivializing $(V,\omega)$ as symplectic vector bundle. By quasi-compactness we can suppose $\mathcal{U}=\{U_{\alpha_1},\ldots,U_{\alpha_n}\}$ finite.

To simplify the notation, we write $\Psi$ for $\th^{\Sp,f}_{\sE}(V,\omega)$. Let $X_i=\cup_{j=1}^iU_{\alpha_i}$ with open immersion $j_i:X_i\to X$. We show by induction on $i$ that $j_i^*\Psi:j_i^*\Sigma^Vp_X^*\sE\to j_i^*\Sigma^{4r,2r}p_X^*\sE$ is an isomorphism, the case $i=1$ following by our construction of $U_{\alpha_1}$.

Let us take $i>1$ and write $X_i=X_{i-1}\cup U_{\alpha_i}$.  Let $j:X_{i-1}\cap U_{\alpha_i}\hookrightarrow X_i$, $j_1:X_{i-1}\hookrightarrow X_i$ and $j_2:U_{\alpha_i}\hookrightarrow X_i$ be the evident open immersions. Lemma \ref{lemma:Mayer-Vietoris} below shows that there exists a Mayer-Vietoris  distinguished triangle of endofunctors of $\SH(X_i)$:
\begin{equation}
    \label{eq:Mayer_Vietoris}
    j_\#j^* \to j_{1\#}j_1^* \oplus j_{2\#}j_2^* \to \id_{\SH(X_i)}\to  j_\#j^*[1].
\end{equation}
Applying \eqref{eq:Mayer_Vietoris} to the map of spectra $j_i^*\Psi$ gives the commutative diagram in $\SH(X_i)$
\[
\xymatrix{
j_\#j^*j_i^*\Sigma^Vp_X^*\sE \ar[r]\ar[d]^{j_\#j^*j_i^*\Psi}& \hbox{$\begin{matrix}j_{1\#}j_{i-1}^*\Psi\\\oplus\\ j_{2\#}j_2^*j_i^*\Psi\end{matrix}$}\ar[r]\ar[d]^{j_{1\#} j_{i-1}^*\Psi\oplus j_{2\#}j_2^*j_i^*\Psi} &j_i^* \Sigma^Vp_X^*\sE \ar[r]\ar[d]^{j_i^*\Psi} & j_\#j^*j_i^*\Sigma^Vp_X^*\sE[1] \ar[d]^{j_\#j^*j_i^*\Psi[1]}\\
j_\#j^*j_i^*\Sigma^{(4r,2r)}p_X^*\sE \ar[r]&\hbox{$\begin{matrix}j_{1\#}j_{i-1}^*\Sigma^{(4r,2r)}p_X^*\sE\\\oplus \\j_{2\#}j_2^*j_i^*\Sigma^{(4r,2r)}p_X^*\sE\end{matrix}$} \ar[r] & j_i^*\Sigma^{(4r,2r)}p_X^*\sE \ar[r] & j_\#j^*j_i^*\Sigma^{(4r,2r)}p_X^*\sE[1],
}
\]
with rows distinguished triangles. Using the induction hypothesis, we see that the vertical maps in the first, second, and fourth columns are isomorphisms, hence
$j_i^*\Psi$ is an isomorphism and the induction goes through.
\end{proof}

\begin{lemma}
\label{lemma:Mayer-Vietoris}
    Let $X=U_1 \cup U_2$ be the union of two Zariski open subsets $U_1, U_2$, and let $j_1:U_1 \to X$, $j_2: U_2 \to X$ and $j:U_1 \cap U_2 \to X$ the respective open inclusions. Then we have a Mayer-Vietoris distinguished triangle
    $$j_\#j^* \to j_{1\#}j_1^* \oplus j_{2\#}j_2^* \to \id_{\SH(X)} \to j_\#j^* \to j_{1\#}j_1^*[1]$$
    of endofunctors of $\SH(X)$.
\end{lemma}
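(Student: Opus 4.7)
The plan is to exhibit the Mayer-Vietoris triangle as the total fiber/cofiber sequence of a bicartesian square of endofunctors of $\SH(X)$, with the bicartesian property established via the localization triangles of Proposition~\ref{prop:localization} and the smooth base change of Proposition~\ref{prop:sixfunctors}(5).

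Set $k_1 \colon U_1 \cap U_2 \to U_1$ and $k_2 \colon U_1 \cap U_2 \to U_2$ so that $j = j_1 \circ k_1 = j_2 \circ k_2$, and let $i_2 \colon Z_2 := X \setminus U_2 \hookrightarrow X$. Since $U_1 \cup U_2 = X$, we have $Z_2 \subset U_1$, with inclusion $\ell \colon Z_2 \to U_1$ satisfying $j_1 \circ \ell = i_2$. The counits produce natural transformations $g_1 \colon j_{1\#}j_1^* \to \id$, $g_2 \colon j_{2\#}j_2^* \to \id$, and, using $j_\#j^* = j_{1\#}(k_{1\#}k_1^*)j_1^* = j_{2\#}(k_{2\#}k_2^*)j_2^*$, the maps $f_1 \colon j_\#j^* \to j_{1\#}j_1^*$ and $f_2 \colon j_\#j^* \to j_{2\#}j_2^*$ induced by the counits of $k_{1\#} \dashv k_1^*$ and $k_{2\#} \dashv k_2^*$. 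These assemble into a commutative square with outer arrows $g_i \circ f_i$ all equal to the canonical map $j_\#j^* \to \id$, and the candidate Mayer-Vietoris triangle is
$$j_\#j^* \xrightarrow{(f_1, -f_2)} j_{1\#}j_1^* \oplus j_{2\#}j_2^* \xrightarrow{(g_1, g_2)} \id \to j_\#j^*[1].$$

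The key identification comes from applying the localization triangle for $Z_2 \subset X$ to $j_{1\#}j_1^*$:
$$j_{2\#}j_2^*\,j_{1\#}j_1^* \to j_{1\#}j_1^* \to i_{2*}i_2^*\,j_{1\#}j_1^* \to [1].$$
Smooth base change along the Cartesian square with vertices $U_1 \cap U_2, U_1, U_2, X$ gives $j_2^*j_{1\#} \simeq k_{2\#}k_1^*$, whence $j_{2\#}j_2^*\,j_{1\#}j_1^* \simeq j_\#j^*$ (with the resulting map to $j_{1\#}j_1^*$ being $f_1$). Base change along the Cartesian square with vertices $Z_2, Z_2, U_1, X$ gives $i_2^*j_{1\#} \simeq \ell^*$, whence $i_{2*}i_2^*\,j_{1\#}j_1^* \simeq i_{2*}i_2^*$. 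The localization triangle for $Z_2 \subset X$ applied to $\id$ reads $j_{2\#}j_2^* \xrightarrow{g_2} \id \to i_{2*}i_2^* \to [1]$, and comparing with the first triangle shows that the square with rows $f_1$ and $g_2$ and columns $f_2$, $g_1$ has isomorphic cofibers both identified canonically with $i_{2*}i_2^*$. This forces the square to be bicartesian, and its associated long cofiber sequence is exactly the desired distinguished triangle.

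The main point to check carefully is that the base change isomorphism $j_2^*j_{1\#} \simeq k_{2\#}k_1^*$ intertwines the counit-induced maps so that the identification $j_{2\#}j_2^*\,j_{1\#}j_1^* \simeq j_\#j^*$ converts the unit of $(i_{2!}, i_2^!)$ applied to $j_{1\#}j_1^*$ into the map $f_1$, and that the cofiber identification matches the cofiber map of $g_2$. This is the standard compatibility between smooth base change and the localization sequences in the six-functor formalism; it follows from unwinding definitions, but is where one must be most careful.
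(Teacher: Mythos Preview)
Your proof is correct and follows essentially the same strategy as the paper: both arguments reduce the Mayer--Vietoris triangle to two localization sequences with a common cofiber identified via base change for open immersions. The paper's version applies the localization sequence for $U_1\cap U_2\hookrightarrow U_2$, pushes it forward along $j_2$, observes that the resulting cofiber $j_{2!}i_{2*}i_2^*j_2^*$ agrees with $i_{1*}i_1^*$ (where $i_1:X\setminus U_1\hookrightarrow X$), and then invokes the octahedral axiom together with the split triangle $j_{1\#}j_1^*\to j_{1\#}j_1^*\oplus j_{2\#}j_2^*\to j_{2\#}j_2^*$; you instead apply the localization sequence for $X\setminus U_2\hookrightarrow X$ to both $j_{1\#}j_1^*$ and $\id$, use smooth base change to rewrite the first, and phrase the conclusion as a bicartesian square. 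Up to swapping the roles of $U_1$ and $U_2$ and the octahedral/bicartesian packaging, the two arguments are the same; your version has the minor advantage of making explicit (in your final paragraph) the compatibility that the paper absorbs into the phrase ``we have the conditions for applying the octahedral axiom''.
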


\begin{proof}
    Let us consider the diagram
$$
\begin{tikzcd}
    U_1 \cap U_2 \arrow[r, "t_2"] \arrow[d, swap, "t_1"] \arrow[dr, "j"] & U_2\arrow[d, "j_2"] \\
    U_1 \arrow[r, "j_1"] & U_1 \cup U_2 =X.
\end{tikzcd}
$$
Let $i_1: X-U_1 \to X$ be the closed complement of $j_1$, and smilarly, let $i_2$ be the closed complement of $t_2$. From Proposition \ref{prop:localization}, the localization sequence for the pair $(j_1,i_1)$ is the distinguished triangle 
\begin{equation}
\label{eq:MV1}
    j_{1!}j_1^*\to \id_{\SH(X)} \to i_{1*}i_1^* \to j_{1!}j_1^*[1]
\end{equation}
of endofunctors of $\SH(X)$. Analogously, the localization sequence for the pair $(t_2,i_2)$ is the distinguished triangle
$$t_{2!}t_2^* \to \id_{\SH(X)} \to i_{2*}i_2^* \to t_{2!}t_2^*[1].$$
By applying $j_{2!}(-)j_2^*$ to the last triangle, and using $j_2\circ t_2=j$, we get the distinguished triangle
\begin{equation}
\label{eq:MV2}
     j_!j^* \to j_{2!}j_2^* \to j_{2!}i_{2*}i_2^*j_2^* \to j_!j^*[1].
\end{equation}
Moreover, we have a canonical distinguished triangle
\begin{equation}
\label{eq:MV3}
    j_{1!}j_1^* \to j_{1!}j_1^* \oplus j_{2!}j_2^* \to j_{2!}j_2^* \to j_{1!}j_1^*[1]. 
\end{equation}
Since $i_1 \simeq j_2 \circ i_2$ on $U_2-(U_1\cap U_2)$, we have that $j_{2!}i_{2*}i_2^*j_2^* \simeq i_{1*}i_1^*$. Thus, we can apply the octahedral axiom to the triangles \eqref{eq:MV1}, \eqref{eq:MV2} and \eqref{eq:MV3}, and we get the distinguished triangle
$$j_!j^* \to j_{1!}j_1^* \oplus j_{2!}j_2^* \to \id_{\SH(X)} \to j_!j^* \to j_{1!}j_1^*[1].$$
Finally, for any $f$ open immersion, we have $f_!\simeq f_\#$ by Remark \ref{rmk:etalemaps}. This concludes the proof.
\end{proof}

We have proved that $\th^{\Sp,f}_{\sE}(V,\omega)$ is an isomorphism. We may then define the map of spectra
$$\th^{\Sp,f}_{\sE}(-(V,\omega)):\Sigma^{\sO_X^{2r}-V}p_X^*\sE\to p_X^*\sE$$
by $\th^{\Sp,f}_{\sE}(-(V,\omega)):=\Sigma^{\sO_X^{2r}-V}(\th^{\Sp,f}_{\sE}(V,\omega)^{-1})$, and then define 
$$\th^{\Sp}_{\sE}(-(V,\omega)):\Sigma^{\sO_X^{2r}-V}1_X\to p_X^*\sE$$
by composing $\th^{\Sp,f}_{\sE}(-(V,\omega))$ with the map $\Sigma^{\sO_X^{2r}-V}1_X\to
\Sigma^{\sO_X^{2r}-V}p_X^*\sE$ induced by the unit $\epsilon_\sE$ of $\sE$. Finally, if $(V',\omega')$ is another symplectic vector bundle of rank $2r'$, we define
\[
\th^{\Sp}_{\sE}((V,\omega)-(V',\omega')):\Sigma^{V-V'- (\sO_X^{2r}-\sO_X^{2r'})}1_X\to p_X^*\sE
\]
as the composition
\begin{multline*}
\Sigma^{V-V'- (\sO_X^{2r}-\sO_X^{2r'})}1_X\simeq
\Sigma^{V- \sO_X^{2r}}(\Sigma^{-V'+ \sO_X^{2r'}}1_X)\\\xrightarrow{\Sigma^{V- \sO_X^{2r}}(\th^{\Sp}_{\sE}(-(V',\omega'))}
\Sigma^{V- \sO_X^{2r}}p_X^*\sE\xrightarrow{\th^{\Sp,f}_{\sE}(V,\omega)}
p_X^*\sE.
\end{multline*}
Again, we also have the $p_X^*\sE$-extension
\[
\th^{\Sp,f}_{\sE}((V,\omega)-(V',\omega')):\Sigma^{V-V'- (\sO_X^{2r}-\sO_X^{2r'})}p^*_X\sE\to p_X^*\sE,
\]
given by 
$\th^{\Sp,f}_{\sE}(V,\omega)\circ \Sigma^{V-V'-\sO_X^{2r}-\sO_X^{2r'}}(\th^{\Sp,f}_{\sE}(V',\omega')^{-1})$, and similarly, if $\sE$ is highly structured, the map $\th^{\Sp, f}_{\sE\Mod}((V,\omega)-(V',\omega'))$ of $p_X^*\sE$-modules.

The multiplicativity of the symplectic Thom isomorphisms, as stated in Lemma \ref{lem:SympThomMult} below, follows as for the $\GL$-oriented case from the multiplicativity of the symplectic Thom classes, following the symplectic version of  \eqref{enum:ThomClassAxioms}(3).

\begin{lemma}\label{lem:SympThomMult} Let $\sE\in \SH(k)$ be symplectically oriented, and take $X\in \Sm/k$. 
For $v,v'\in \sK^\Sp(X)$, $\th_\sE^\Sp(v+v')$ is the composition
\begin{multline}\label{multline:SympThomMult}
\Sigma^{v+v'-\sO_X^{\rnk(v+v')}}1_X\simeq \Sigma^{v-\sO_X^{\rnk(v)}}1_X\wedge_X\Sigma^{v'-\sO_X^{\rnk(v')}}
1_X\\\xrightarrow{\th_\sE^\Sp(v)\wedge\th^\Sp_\sE(v')}p^*\sE\wedge_Xp^*\sE\xrightarrow{\mu_{p^*\sE}} p^*\sE.\notag
\end{multline}
\end{lemma}

\begin{prop}\label{prop:K_0SpExtensionTh(-)} Let $\sE\in\SH(k)$ be a symplectically oriented ring spectrum and let $p_X:X\to \Spec k$ be in $\Sm/k$.\\[5pt]
1. Let $c_{p_X^*\sE}:\sK^\Sp(X)\to \SH(X)_{\id}$ be the constant functor with value $ p_X^*\sE$, with $\SH(X)_{\id}$ considered as a category with only identity morphisms. Then sending $(V,\omega)-(V',\omega')$ to $\th^{\Sp}_{\sE}((V,\omega)-(V',\omega'))$ extends to a natural transformation of functors
\[
(\th^{\Sp}_{\sE}(-):\Sigma^{(-)-\sO_X^{\rnk(-)}}1_X\to c_{p_X^*\sE}):\sK^\Sp(X)\to \SH(X). 
\]
2. The functor of groupoids $\Sigma^{(-)-\sO_X^{\rnk(-)}}p^*_X\sE:\sK^\Sp(X)\to (\SH(X))_\simeq$ descends (up to equivalence) to a functor $\Sigma^{(-)-\sO_X^{\rnk(-)}}p^*_X\sE:K_0^\Sp(X)\to \SH(X)$, where we consider 
$K_0^\Sp(X)$ as a category with only identity morphisms.\\[2pt]
3. For each $(v,\omega)\in K^\Sp_0(X)$, let $\th^{\Sp,f}_{\sE}(v, \omega):
\Sigma^{v-\sO_X^{\rnk(v)}}p^*_X\sE\to p_X^*\sE$ be the $\sE$-linear extension of  
$\th^{\Sp,f}_{\sE}(v, \omega)$. Then the maps $\th^{\Sp,f}_{\sE}(v, \omega)$ define a natural isomorphism of functors of groupoids
\[
\th^{\Sp,f}_{\sE}(-):\Sigma^{(-)-\sO_X^{\rnk(-)}}p^*_X\sE\to c_{p_X^*\sE}:K_0^\Sp(X)\to (\SH(X))_\simeq. 
\]
\end{prop}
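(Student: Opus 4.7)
The strategy mirrors the proof of Proposition \ref{prop:K_0ExtensionTh(-)} in the $\GL$-oriented setting. The two main inputs are the symplectic version of the axioms \eqref{enum:ThomClassAxioms} (in particular naturality and multiplicativity under direct sums of symplectic bundles) and the multiplicativity statement Lemma \ref{lem:SympThomMult}, together with the basic fact that $\th^{\Sp,f}_\sE$ is an isomorphism, which is Lemma \ref{lemma:beforethom}.

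For (1), I would first observe that the symplectic naturality axiom gives directly that the assignment $(V,\omega)\mapsto \th^\Sp_\sE(V,\omega)$ extends to a natural transformation of functors of groupoids from $\sS{p}(X)$ to $\SH(X)_\simeq$, from $(V,\omega)\mapsto \Sigma^{V-\sO_X^{\rnk(V)}}1_X$ to the constant functor with value $p_X^*\sE$. Next, using the construction of $\th^{\Sp}_\sE((V,\omega)-(V',\omega'))$ recalled just before the statement, we extend this to formal differences, and thus to a natural transformation on $\sS{p}(X)_{st}$. To descend to the localization $\sK^\Sp(X)$, the crucial point is to verify that the stabilization morphism $((V,\omega), \id_{V_1\oplus V}, \id_{V_1'\oplus V})$ induces the identity on Thom classes. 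Concretely, one checks that the composition
\[
\Sigma^{V_1+V-V_1'-V-\sO_X^{\rnk(V_1)-\rnk(V_1')}}1_X \xrightarrow{\sim} \Sigma^{V_1-V_1'-\sO_X^{\rnk(V_1)-\rnk(V_1')}}1_X \xrightarrow{\th^\Sp_\sE} p_X^*\sE
\]
equals $\th^\Sp_\sE((V_1\oplus V,\omega_1\oplus\omega)-(V_1'\oplus V,\omega_1'\oplus\omega))$; this is an immediate diagrammatic consequence of Lemma \ref{lem:SympThomMult}, since the $\th^\Sp_\sE(V,\omega)$ factors appearing on source and target cancel after multiplication.

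For (2), following the argument in the proof of Proposition \ref{prop:K_0ExtensionTh(-)}, I would reduce via a Jouanolou cover (which is compatible with $p_X^*$) to the case where $X$ is affine, so that every element of $\sK^\Sp(X)$ is represented by an honest formal difference $(V,\omega)-(V',\omega')$. Given two such differences with the same class in $K_0^\Sp(X)$ and an isomorphism $\phi$ between them in $\sK^\Sp(X)$, the induced map $\Sigma^\phi(p_X^*\sE)$ fits into a diagram with $\th^{\Sp,f}_\sE$ at each end, forcing the identification $\Sigma^\phi(p_X^*\sE)=\th^{\Sp,f}_\sE(\text{target})^{-1}\circ \th^{\Sp,f}_\sE(\text{source})$, which is manifestly independent of $\phi$. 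Hence the functor $\Sigma^{(-)-\sO_X^{\rnk(-)}}p_X^*\sE$ descends to $K_0^\Sp(X)$.

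Finally, (3) is immediate: Lemma \ref{lemma:beforethom} gives that each $\th^{\Sp,f}_\sE(v,\omega)$ is an isomorphism, so combining the naturality of $\th^{\Sp,f}_\sE$ established in (1) with the descent to $K_0^\Sp(X)$ from (2) yields the asserted natural isomorphism. The main obstacle I expect is the bookkeeping in (1): morphisms in $\sS{p}(X)_{st}$ involve the triple datum $((V,\omega),f,f')$, and the relevant virtual suspensions before and after stabilization live in slightly different shapes. Once Lemma \ref{lem:SympThomMult} is applied, however, the cancellation of the auxiliary $\th^\Sp_\sE(V,\omega)$ factors is formal, and the rest is a straightforward diagram chase.
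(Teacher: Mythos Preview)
Your proposal is correct and follows essentially the same approach as the paper's proof. Two minor remarks: First, the paper organizes the argument by proving an auxiliary statement (3${}'$) first --- naturality of $\th^{\Sp,f}_\sE$ on $\sK^\Sp(X)$ --- and then derives (1), (2), (3) from it; you instead prove (1) first and implicitly deduce naturality of the $\sE$-linear extension $\th^{\Sp,f}_\sE$ from it for use in (2). Both orderings work. Second, the Jouanolou reduction in your (2) is unnecessary: by definition, every object of $\sK^\Sp(X)$ is already a pair $((V,\omega),(V',\omega'))$ of honest symplectic bundles, so no affineness assumption is needed to represent classes by formal differences. Also note a small organizational slip: the compatibility with stabilization morphisms is what is needed to obtain naturality on $\sS p(X)_{st}$ itself (these are morphisms there), and descent to the localization $\sK^\Sp(X)$ is then automatic; you have this slightly reversed, but the content is the same.
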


The proof is similar to that of Proposition~\ref{prop:K_0ExtensionTh(-)} for the $\GL$-oriented case, but using Lemma~\ref{lem:SympThomMult} to show that the maps $\th^{\Sp}_{\sE}(-)$ are natural in $\sK^\Sp(X)$. We omit the details.
 
 \begin{rmk} For $\sE$ highly structured, we may refine Proposition~\ref{prop:K_0SpExtensionTh(-)} by replacing $\th^{\Sp,f}_{\sE}$ with $\th^{\Sp, f}_{\sE\Mod}$ and the target category $\SH(X)$ with $\Mod_\sE$.
\end{rmk}

\subsubsection*{Borel classes} The symplectic analogue of a theory of Chern classes is a theory of Borel classes. For $\sE$ $\Sp$-oriented, $(V,\omega) \to X$ rank $2r$ symplectic vector bundle, one has Borel classes $b_i(V,\omega) \in \sE^{4i,2i}(X)$, $i=1, \ldots, r$ satisfying the axioms of \cite[Definition 14.1]{panwal:grass}. 

Alternatively, one can define the first Borel class of a rank $2$ symplectic vector bundle $(V,\omega) \to X$ as $-z_0^*\th_\Sp^\sE(V,\omega)$, with $z_0:X_+\to \Th(V)$ the zero section. Next, to any symplectic vector bundle $(V,\omega) \to X$, one has the associated quaternionic projective bundle $\HP(V,\omega) \to X$ defined by $\HP(V,\omega):=\Gr(2,V)/\Gr\Sp(2,V,\omega)$, where $\Gr\Sp(2,V,\omega)$ is the closed subscheme of $\Gr(2,V)$ parametrizing rank $2$ vector subbundles of $V$ over which the restriction of $\omega$ is degenerate. The following is \cite[Theorem 8.2]{panwal:grass}.
\begin{thm}[Quaternionic projective bundle formula]
\label{thm:QuatProjBundle}
    Let $\sE \in \SH(k)$ be an $\Sp$-oriented commutative ring spectrum. Let $(V,\omega) \to X$ be a rank $2r$ symplectic vector bundle over $X \in \Sm/k$, and $\HP(V,\omega) \xrightarrow{q}X$ the associated quaternionic projective bundle. Let $(\mathcal{U}, \omega \mid_{\mathcal{U}})\to \HP(V,\omega)$ be the tautological rank $2$ symplectic vector bundle over $\HP(V,\omega)$, and $\xi \coloneqq b_1(\mathcal{U}, \omega \mid_{\mathcal{U}}) \in \sE^{4,2}(\HP(V,\omega))$ its first Borel class. Then $\sE^{*,*}(\HP(V,\omega))$ is a free $\sE^{*,*}(X)$-module via the pullback $q^*$, with basis $1, \xi, \ldots, \xi^r$. Also, for $1\le i \le r$, there exist unique elements $b_i(V,\omega) \in \sE^{4i,2i}(X)$ 
    such that
    $$\xi^r + \sum_{i=1}^r(-1)^ib_i(V,\omega) \cup \xi^{r-i}=0,$$
    and if $(V,\omega)$ is the trivial, then $b_i(V,\omega)=0$ for all $1\le i \le r$. 
\end{thm}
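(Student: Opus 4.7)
The plan is to prove the quaternionic projective bundle formula in three steps, following Panin--Walter \cite{panwal:grass} and paralleling both the $\GL$-oriented Theorem~\ref{thm:PBF} and the Zariski-local-to-global pattern used in the proof of Lemma~\ref{lemma:beforethom}: (a) establish the formula over $\Spec k$ for the standard trivial symplectic bundle by exploiting a cellular decomposition of the quaternionic projective space; (b) extend to trivial symplectic bundles over an arbitrary $X\in \Sm/k$; and (c) bootstrap to an arbitrary $(V,\omega)$ by a Mayer--Vietoris induction over a trivializing Zariski open cover of $X$.

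For (a), the Schubert-cell analysis of \cite{panwal:grass} provides a filtration $\emptyset = Z_{-1}\subset Z_0\subset\cdots\subset Z_N = \HP(\sO_k^{2r},\psi_{2r})$ by closed subschemes with strata $Z_i\setminus Z_{i-1}\cong \A^{4i}_k$. Iterating the localization triangle for each pair $(Z_i,Z_{i-1})$, and using the $\A^1$-invariance of $\sE^{*,*}$ together with the symplectic Thom isomorphism for the (naturally symplectic) normal bundles of the closed strata, one produces inductively a free $\sE^{*,*}(k)$-module structure on $\sE^{*,*}(\HP(\sO_k^{2r},\psi_{2r}))$ with one generator per stratum. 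By the naturality of the first Borel class and the symplectic analogue of \eqref{eqn:ChernEuler2}, the generator attached to $Z_i$ is exactly $\xi^i$: under the closed embedding $Z_i\hookrightarrow \HP(\sO_k^{2r},\psi_{2r})$, $\xi^i$ restricts to the top Borel class of a naturally symplectic normal bundle, matching the class produced by the inductive step.

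For (b), a trivial symplectic bundle $(\sO_X^{2r},\psi_{2r})$ yields $\HP(\sO_X^{2r},\psi_{2r})\cong X\times_k\HP(\sO_k^{2r},\psi_{2r})$, and running the same localization/$\A^1$-invariance argument of (a) relatively over $X$ using the product cellular structure delivers the free $\sE^{*,*}(X)$-module description with the basis pulled back from (a). For (c), a general $(V,\omega)$ is Zariski-locally trivial, as shown in the proof of Lemma~\ref{lemma:beforethom}, so one chooses a finite Zariski open cover $\{U_1,\ldots,U_n\}$ of $X$ trivializing $(V,\omega)$ and inducts on $n$. Pulling the Mayer--Vietoris triangle of Lemma~\ref{lemma:Mayer-Vietoris} back along $q\colon \HP(V,\omega)\to X$ -- which is legitimate since both the formation of $\HP$ and the Borel class $\xi$ commute with pullback -- and applying the five lemma to the induced long exact sequences in $\sE^{*,*}$ propagates freeness from $U_1\cup\cdots\cup U_{n-1}$, $U_n$, and their intersection to the whole union.

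Once freeness is granted, the Borel classes $b_i(V,\omega)\in\sE^{4i,2i}(X)$ are defined by the unique expansion in the given basis producing the linear relation stated in the theorem; tracking bidegrees forces each $b_i$ into bidegree $(4i,2i)$. Their vanishing on a trivial bundle follows immediately from (a)--(b). The principal obstacle is step (a): matching the generators produced by the abstract cellular argument with the explicit powers of $\xi$ is the real crux of the proof, and relies essentially on Panin--Walter's concrete Schubert-type description of the cells together with the normalization and naturality of the symplectic Thom class.
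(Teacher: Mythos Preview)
The paper does not give its own proof of this theorem; immediately after the statement it simply records ``This is \cite[Theorem 8.2]{panwal:grass}.'' Your proposal is a reasonable sketch of Panin--Walter's argument, and the three-step strategy (trivial bundle over a point via a cell decomposition, then trivial bundle over $X$, then Mayer--Vietoris) is indeed how the proof goes.

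Two small imprecisions are worth flagging. First, the open strata of $\HP^n$ are only $\A^1$-contractible, not literally isomorphic to affine spaces; cf.\ Proposition~\ref{prop:contractibleopencellofHPn}, which produces an $\A^1$-bundle $\A^{4n+1-2i}\to X_{2i}$ rather than an identification $X_{2i}\cong\A^{4n-2i}$. This does not affect the argument, since $\A^1$-contractibility is all that is needed for the localization/Thom-isomorphism step. Second, and more substantively, you assert that the normal bundles of the closed strata are ``naturally symplectic'' without justification; in Panin--Walter this is established by an explicit geometric identification (the normal bundle is the tautological rank~$2$ symplectic bundle on the smaller quaternionic Grassmannian, cf.\ the discussion preceding Proposition~\ref{prop:summary}), and it is essential: without a symplectic structure on the normal bundle there is no symplectic Thom isomorphism to invoke in an $\Sp$-oriented theory. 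You correctly flag the identification of the cell generators with the powers $\xi^i$ as the crux, and that is where the real work in \cite{panwal:grass} lies.
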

If $r=1$, one deduces from the discussion on symplectic Thom isomorphisms that the element $b_1(V,\omega)$ given by Theorem \ref{thm:QuatProjBundle} coincides with the previous description of the first Borel class of a rank $2$ symplectic bundle, and one can define the Borel classes as the elements $b_i(V,\omega)$ given by Theorem \ref{thm:QuatProjBundle}. This approach is equivalent to the axiomatic one by \cite[Theorem 14.4]{panwal:grass}.

\begin{rmk}
    The sign in the definition of the first Borel class is purely conventional, and it is chosen in order to satisfy the traditional relation $b_i(V)=(-1)^ic_{2i}$ for cohomology theories that are both $\Sp$-oriented and $\GL$-oriented.
\end{rmk}

\subsection{Symplectic cobordism}
A symplectic analogue of the algebraic cobordism spectrum $\MGL$ is the symplectic cobordism spectrum $\MSp$ defined in \cite[Section 6]{Panwal-cobordism}. The construction is similar to that of $\MGL$. If we equip $\A_k^{2n}$ with the standard symplectic form $\phi_{2n}$, we can define the \emph{quaternionic Grassmannian} $\HGr(r,n)$ as the open subscheme $j:\HGr(r,n) \xhookrightarrow{j} \Gr(2r,2n)$ parametrizing the $2r$-dimensional subspaces of $\A_k^{2n}$ on which the restriction of $\phi_{2n}$ is non-degenerate. We denote by $E_{2r,2n}^\Sp$ the restriction $j^*E_{2r,2n}$. In particular, $E_{2r,2n}^\Sp$ is a subbundle of the trivial bundle $\mathcal{O}_{\HGr(r,n)}^{2n}$. $\phi_{2n}$ defines a symplectic form on $\mathcal{O}_{\HGr(r,n)}^{2n}$, and its restriction to $E_{2r,2n}^\Sp$ is non-degenerate. We call $(E_{2r,2n}^\Sp,\phi_{2n} \mid_{E_{2r,2n}^\Sp})$ the \emph{tautological rank $2r$ symplectic subbundle}. We will occasionally omit the symplectic form in the notation. One defines $\BSp_{2r}\in \sH(k)$ as the colimit $\colim_N \HGr(r,rN)$ with respect to the natural inclusions, the vector bundle $E_{2r}^\Sp \to \BSp_{2r}$ as $E_{2r}^\Sp \coloneqq \colim_N E_{2r,2rN}^\Sp$, and finally $\MSp_{2r} \coloneqq \Th(E_{2r}^\Sp) \in \sH_{\bullet}(k).$ The canonical inclusion $i_r:\BSp_{2r} \to \BSp_{2r+2}$ gives a canonical isomorphism $i^*E_{2r+2}^\Sp \simeq \mathcal{O}_{\BSp_{2r}} \oplus E_{2r}^\Sp \oplus \mathcal{O}_{\BSp_{2r}}$. Analogously to the case of $\MGL$ we get a $T^{\wedge 2}$-spectrum $\MGL=(\MGL_0,\MGL_2,\ldots, \MGL_{2r},\ldots)$, and this defines an object in $\SH(k)$ because the homotopy category of $T^{\wedge 2}$-spectra is equivalent to $\SH(k)$ (\cite[Theorem 3.2]{Panwal-cobordism}). Of course, this is equivalent to defining $\MSp$ as $\colim_r \Sigma^{-4r,-2r}\MSp_{2r} \in \SH(k)$.

\begin{rmk}
It is shown in \cite[Section 6]{Panwal-cobordism} that $\MSp$ lifts to a commutative monoid in the category of symmetric $\P^2$-spectra. Thus, $\MSp$ is a highly structured commutative ring spectrum in $\SH(k)$
\end{rmk}

$\MSp$ comes with a theory of symplectic Thom classes, whose construction is completely analogous to that of $\MGL$, using $\BSp_{2r}$ as classifying space for rank $2r$ symplectic vector bundles, and we have the following universality theorem.

\begin{thm}[\cite{PanWal:MSpKtheory}, Theorem 4.5]
\label{thm:universalitySp}
       For any commutative ring spectrum $\sE$, the map of sets
    \begin{multline*}
        \{\text{morphisms of commutative ring spectra} \;  \MSp \to \sE \; \text{in} \; \; \SH(k)\} \\ \xrightarrow{\sim} \{\text{$\Sp$-orientations on the ring spectrum} \; \sE\}.
    \end{multline*}
    that takes a map $\varphi:\sE \to \MSp$ to the $\Sp$-orientation of $\sE$ given by $\th_\Sp^\sE(V,\omega)\coloneqq \Sigma^{4r,2r}\varphi \circ \th_\Sp^\MSp(V,\omega)$ is a bijection.
\end{thm}

\begin{rmk}
\label{rmk:MSp-MGLThomClasses}
    The inclusions $\HGr(r,n) \hookrightarrow \Gr(2r,2n)$ induce maps $\BSp_{2r} \to \BGL_{2r}$, and, since $E_{2r}^\Sp$ is the pullback of $E_{2r}$, we also get maps of Thom spaces $\MGL_{2r} \to \MSp_{2r}$ in $\sH_{\bullet}(k)$. These maps in turn define a natural map $\Phi:\MSp \to \MGL$ in $\SH(k)$, and one notes that the $\Sp$-orientation on $\MGL$ given by $\Phi$ is the $\Sp$-orientation induced by the universal $\GL$-orientation on $\MGL$.
\end{rmk}

\begin{prop}[\cite{Panwal-cobordism}, Theorems 9.1, 9.3]
\label{prop:CohBSpMSp} Let $E^\Sp_{2r}\to \BSp_{2r}$ be the tautological rank $2r$ symplectic bundle, with Thom space $\MSp_{2r}$, and let  $\sE$ be a $\Sp$-oriented commutative ring spectrum in $\SH(k)$.\\[5pt]
1. $\sE^{*,*}(\BSp_{2r})$ is the ring of homogeneous elements in the power series ring $\sE^{*,*}(\Spec k)[[b_1,\ldots, b_r]]_h$, where $b_i:=b_i(E^\Sp_{2r})\in \sE^{4i,2i}(\BSp_{2r})$ is the $i$-th Borel class of $E^\Sp_{2r}$.\\[2pt]

2. Let $z_{2r}:\BSp_{2r}\to \MSp_{2r}$ be the map induced by the zero-section of $E^\Sp_{2r}$. Then $z_{2r}^*:\sE^{*,*}(\MSp_{2r})\to \sE^{*,*}(\BSp_{2r})$ is injective, with image the ideal generated by $b_r$.
\end{prop}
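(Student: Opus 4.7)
The plan for part (1) is to reduce to finite-dimensional quaternionic Grassmannians $\HGr(r,rN)$ and then pass to the colimit. I would first compute $\sE^{*,*}(\HGr(r,rN))$ via a symplectic splitting principle: over a suitable iterated quaternionic flag bundle $\pi:F_N\to \HGr(r,rN)$, the pullback of the tautological symplectic bundle $(E^\Sp_{2r,2rN},\phi)$ splits as an orthogonal sum of rank two symplectic subbundles $L_1,\ldots,L_r$. Iterated application of the quaternionic projective bundle formula (Theorem~\ref{thm:QuatProjBundle}) to the tower producing $F_N$ shows that $\pi^*$ is injective and that $\sE^{*,*}(F_N)$ is free over $\pi^*\sE^{*,*}(\HGr(r,rN))$ on monomials in $\xi_i:=b_1(L_i,\phi|_{L_i})$. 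The Whitney-sum formula for Borel classes, which follows directly from comparing Grothendieck-type relations in $\HP$-bundles, identifies $b_i(E^\Sp_{2r,2rN})$ with the $i$-th elementary symmetric polynomial in the $\xi_j$'s, so a standard symmetric-function argument presents $\sE^{*,*}(\HGr(r,rN))$ as a free $\sE^{*,*}$-module on monomials in $b_1,\ldots,b_r$ of bounded weighted total degree.

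Next, I would pass to $\BSp_{2r}=\colim_N \HGr(r,rN)$. The restriction maps in the inverse system $\{\sE^{a,b}(\HGr(r,rN))\}_N$ are compatible with the above presentations and are surjective (they send $b_i$ to $b_i$ and merely cut off certain top-degree monomials), so the Mittag-Leffler condition holds and the associated $\lim^1$-term vanishes. One then obtains $\sE^{a,b}(\BSp_{2r})\simeq\lim_N\sE^{a,b}(\HGr(r,rN))$, and assembling over all $(a,b)$ produces the ring of homogeneous elements in $\sE^{*,*}(\Spec k)[[b_1,\ldots,b_r]]_h$.

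For part (2), the symplectic Thom isomorphism associated with $\th^\sE_\Sp(E^\Sp_{2r})$ identifies $\sE^{*,*}(\MSp_{2r})$ with $\sE^{*-4r,*-2r}(\BSp_{2r})$ as $\sE^{*,*}(\BSp_{2r})$-modules. Composing this isomorphism with $z_{2r}^*$ amounts to cup product with the symplectic Euler class $z_{2r}^*(\th^\sE_\Sp(E^\Sp_{2r}))\in\sE^{4r,2r}(\BSp_{2r})$. In analogy with the $\GL$-oriented case treated in Lemma~\ref{lem:FirstChernClassFacts}, and consistently with the sign convention $b_1(V,\omega)=-z_0^*\th^\sE_\Sp(V,\omega)$ of Definition~\ref{defn:FirstBorelClass}, this Euler class equals, up to a sign, the top Borel class $b_r(E^\Sp_{2r})$; this can be verified on the universal example by a further splitting principle argument, where each rank two factor contributes a single Borel class. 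Since part (1) presents $\sE^{*,*}(\BSp_{2r})$ as a homogeneous power-series ring on $b_1,\ldots,b_r$, multiplication by $b_r$ is injective with image the principal ideal $(b_r)$, yielding the claim.

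The main obstacle is the identification of the top Borel class with the zero-section pullback of the Thom class (tracking signs carefully) together with the Mittag-Leffler verification in the colimit step; the splitting-principle computation itself is otherwise routine given Theorem~\ref{thm:QuatProjBundle}, and indeed the whole argument closely follows the $\GL$-oriented template for $\BGL_r$ and $\MGL_r$ used in the proof of Proposition~\ref{prop:MotCohOfMGL}.
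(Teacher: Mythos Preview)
The paper does not give its own proof of this proposition; it simply records that part 1 is \cite[Theorem 9.1]{Panwal-cobordism} and part 2 is \cite[Theorem 9.3]{Panwal-cobordism}. Your sketch is in the spirit of Panin--Walter's arguments and is essentially correct.

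One small imprecision worth flagging: $\sE^{*,*}(\HGr(r,rN))$ is not literally a polynomial ring on $b_1,\ldots,b_r$ truncated only by a degree bound. The actual relations come from the vanishing of the higher Borel classes of the complementary tautological symplectic bundle (this is the content of \cite[Theorem 11.4]{panwal:grass}), and the resulting presentation is more intricate than ``monomials of bounded weighted degree.'' However, this does not affect your limit argument: all you need is that $\sE^{*,*}(\HGr(r,rN))$ is generated as an $\sE^{*,*}$-algebra by $b_1,\ldots,b_r$ and that the transition maps send $b_i$ to $b_i$. Since the relations in $\HGr(r,rN)$ are a subset of those in $\HGr(r,r(N+1))$, the restriction maps are surjective, Mittag--Leffler holds, and the inverse limit is the homogeneous power-series ring as claimed. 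Your treatment of part 2 via the Thom isomorphism and the identification of the symplectic Euler class with $\pm b_r$ is likewise the standard route.
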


From this, similarly to how we have shown in Proposition \ref{prop:MotCohOfMGL} for the motivic cohomology of $\MGL$, they obtain a computation of the $\sE^{*,*}$-cohomology of $\MSp$, as follows. 

\begin{thm}[\cite{Panwal-cobordism}, Theorem 13.1]
\label{thm:cohomologyofmsp}
    Let $\sE \in \SH(k)$ any motivic commutative ring spectrum with an $\Sp$-orientation. Then we have an isomorphism of bigraded rings:
    $$\sE^{*,*}(\MSp)\cong \sE^{*,*}(\Spec k)[[b_1,b_2,\ldots]]_h$$
    with the variables $b_i$ in bidegree $(4i,2i)$.
\end{thm}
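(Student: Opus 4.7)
The plan is to mimic exactly the strategy used for $\MGL$ in Proposition \ref{prop:MotCohOfMGL}, replacing $\BGL_r$, $\MGL_r$, and Chern classes by $\BSp_{2r}$, $\MSp_{2r}$, and Borel classes. The two key inputs, namely Proposition \ref{prop:CohBSpMSp} (computing $\sE^{*,*}(\BSp_{2r})$ and identifying $\sE^{*,*}(\MSp_{2r})$ via the zero section) and the fact that $\MSp = \colim_r \Sigma^{-4r,-2r}\MSp_{2r}$ (Construction \ref{constr:MSp}), are already in place.

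First, I would use the colimit presentation $\MSp = \colim_r \Sigma^{-4r,-2r}\MSp_{2r}$ together with the Milnor exact sequence to write
\[
\sE^{a,b}(\MSp)=[\colim_r\Sigma^{-4r,-2r}\MSp_{2r},\Sigma^{a,b}\sE]_{\SH(k)}\simeq \lim_r \sE^{a+4r,b+2r}(\MSp_{2r}),
\]
provided the corresponding $\lim^1$ term vanishes, which will be established by a Mittag-Leffler argument below. Next, since $\sE$ is $\Sp$-oriented, the Thom class $\th^\sE_\Sp(E^\Sp_{2r})$ provides an isomorphism
\[
\sE^{a+4r,b+2r}(\MSp_{2r})=\sE^{a+4r,b+2r}(\Th(E^\Sp_{2r}))\xrightarrow{\sim}\sE^{a,b}(\BSp_{2r}).
\]
Naturality of Thom classes with respect to $\tilde i_r:i_r^*E^\Sp_{2r+2}\to E^\Sp_{2r+2}$ and the canonical symplectic splitting $i_r^*E^\Sp_{2r+2}\simeq E^\Sp_{2r}\oplus (\sO^{\oplus 2}_{\BSp_{2r}},\phi_2)$ shows that, under these Thom isomorphisms, the transition map in the inverse system $\{\sE^{a+4r,b+2r}(\MSp_{2r})\}_r$ corresponds to the pullback $i_r^*:\sE^{a,b}(\BSp_{2r+2})\to \sE^{a,b}(\BSp_{2r})$.

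Then I would invoke Proposition \ref{prop:CohBSpMSp}(1) to identify $\sE^{*,*}(\BSp_{2r})$ with the homogeneous elements of $\sE^{*,*}(\Spec k)[[b_1,\ldots,b_r]]$, where $b_i = b_i(E^\Sp_{2r})$. Naturality of Borel classes together with the splitting $i_r^*E^\Sp_{2r+2}\simeq E^\Sp_{2r}\oplus(\sO^{\oplus 2}_{\BSp_{2r}},\phi_2)$ and the vanishing of the Borel classes of a trivial symplectic bundle (Theorem \ref{thm:QuatProjBundle}) give $i_r^*(b_i) = b_i$ for $i\le r$ and $i_r^*(b_{r+1})=0$. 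Hence each transition map in the inverse system is surjective (on each bidegree the variable $b_{r+1}$ is simply dropped), which establishes the Mittag-Leffler condition and kills the $\lim^1$ term. Finally, the inverse limit of the truncated homogeneous polynomial rings is, by definition, the ring of homogeneous elements in the formal power series ring $\sE^{*,*}(\Spec k)[[b_1,b_2,\ldots]]_h$ with $b_i$ in bidegree $(4i,2i)$, yielding the claimed isomorphism
\[
\sE^{*,*}(\MSp)\simeq \sE^{*,*}(\Spec k)[[b_1,b_2,\ldots]]_h.
\]

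The only non-routine point, and the only one I expect to require care, is the verification that the Thom-isomorphism identifications in the two inverse systems intertwine the bonding maps $\Sigma_{\P^1}^\infty\epsilon_r$ with the pullbacks $i_r^*$; this is a diagram chase reducing to the multiplicativity/naturality axioms of symplectic Thom classes as established in Lemma \ref{lem:SympThomMult} and Proposition \ref{prop:K_0SpExtensionTh(-)}, combined with the fact that the Thom class of a trivial rank $2$ symplectic bundle is (up to suspension) the unit. Everything else is a direct transcription of the argument for $\MGL$ into the symplectic setting.
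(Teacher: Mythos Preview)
Your proposal is correct and follows precisely the approach the paper indicates: the paper does not prove this theorem itself but cites it from Panin--Walter, noting explicitly that the argument runs ``similarly to how we have shown in Proposition~\ref{prop:MotCohOfMGL} for the motivic cohomology of $\MGL$.'' Your proof is exactly this transcription---colimit presentation, Thom isomorphisms, Proposition~\ref{prop:CohBSpMSp} for $\sE^{*,*}(\BSp_{2r})$, and the Mittag-Leffler argument via the surjectivity of $i_r^*$---so there is nothing to add.
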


\begin{rmk}
\label{rmk:homogeneous}
If there is a $k_0$ such that $\sE^{a,b}(\Spec k)=0$ for $a<-4k_0$, $b<-2k_0$, then $\sE^{*,*}(\Spec k)[[b_1,\ldots, b_r]]_h$ is just the polynomial ring $\sE^{*,*}(\Spec k)[b_1,\ldots, b_r]$. For instance, this is the case for $\sE=H\Z$ or $\sE=H\Z/\ell$.
\end{rmk}

We do not have as nice description of the coefficient ring of $\MSp$ as we have for $\MGL$ through Theorem \ref{thm:comparisonLazard} and Propositions \ref{prop:MGL-classes}, \ref{prop:CriterionMGL}. The main goal of this paper is a partial computation of the graded ring $\MSp^*$.

\section{A twisted degree map}
\label{section:TwistedDegree}

For any $E$ vector bundle on $X \in \Sm/S$ and any $L$ line bundle on $X$, Ananyevskiy (\cite[Lemma 4.1]{Ana:Slor}) gives an isomorphism in $\sH_{\bullet}(S)$.
$$\rho: \Th(E\oplus L)\xrightarrow{\sim} \Th(E \oplus L^{\vee})$$
Through $\Sigma^{\infty}_{\P^1}$, the isomorphism passes to the stable setting. Then, for line bundles $L_1, \ldots, L_n$ over $X$, we denote by $\Anan_{L_1, \ldots, L_n}$ the isomorphism
\begin{equation}
    \label{eqn:AnanIso}
    \Anan_{L_1,\ldots, L_n}: \Sigma^{\oplus_{i=1}^n L_i}1_X \xrightarrow{\sim} \Sigma^{\oplus_{i=1}^n L_i^\vee}1_X
\end{equation}
in $\SH(X)$ obtained through Ananyevskiy's construction, with $S=X$. In the same way, for all $e \in K_0(X)$ and line bundles $L_i$ as before, we obtain an isomorphism
\begin{multline}
\label{eq:Anaconstruction}
    \Anan_{e;-L_1, \ldots, -L_n} \coloneqq \Sigma^{e-\oplus_{i=1}^nL_i^\vee-
    \oplus_{i=1}^nL_i} \Anan_{L_1^\vee,\ldots, L_n^\vee}: \Sigma^{e-\sum_{i=1}^n[L_i]}1_X\xrightarrow{\sim} \\ \Sigma^{e-\sum_{i=1}^n[L_i^\vee]}1_X.
\end{multline}

\begin{lemma}\label{lem:Anan} Let us suppose $X \in \Sm/k$ irreducible, and use the shorthand $\Anan \coloneqq \Anan_{e;-L_1,\ldots,-L_n}$ for the isomorphism \eqref{eq:Anaconstruction}. Let also $d\coloneqq \rnk(e)-n$. For $p:X\to \Spec k$ in $\Sm/k$, the diagram in $\Mod_{p^*H\Z}$
$$
\begin{tikzcd}
\Sigma^{e- \sum_{i=1}^n[L_i] - d[\mathcal{O}_X]}1_X\wedge p^*H\Z \arrow[rr, "\Sigma^{-2d,-d} \Anan\wedge\id_{p^*H\Z}", "\sim"'] \arrow[dr, swap, "\th_{H\Z\Mod}^f (e- \sum_{i=1}^nL_i)", "\sim"'] && \Sigma^{e-\sum_{i=1}^n[L_i^\vee]-d[\mathcal{O}_X]}1_X\wedge p^*H\Z \arrow[dl, "(-1)^n\th^f_{H\Z\Mod} (e-\sum_{i=1}^nL_i^\vee)", "\sim"'] \\
& p^*H\Z &
\end{tikzcd}
$$
commutes.
\end{lemma}

\begin{proof}
    By using a Jouanolou cover of $X$, we may assume that $X$ is affine. In that case, there exists a vector bundle $E$ over $X$ such that $-e=[E]-s[\sO_X]$, and let $r \coloneqq s-d$. In this way, we are reduced to prove the commutativity of the diagram
    \begin{equation}
    \label{eq:anatriangle}
         \begin{tikzcd}
        \Sigma^{2r,r}\Sigma^{-[E +\oplus_i^nL_i]}1_X \wedge p^*H\Z \arrow[rrrr, "\Sigma^{2r,r}\Anan_{E;L_1,\ldots,L_n} \wedge \id_{p^*H\Z}", "\sim"'] \arrow[drr, swap, "\th^f_{H\Z \Mod} (-E-\oplus_i^nL_i)", "\sim"'] &&&&  \Sigma^{2r,r}\Sigma^{-[E +\oplus_i^nL_i^{\vee}]}1_X \wedge p^*H\Z \arrow[dll, "(-1)^n\th^f_{H\Z \Mod} (-E-\oplus_i^nL_i^\vee)", "\sim"'] \\
        &&p^*H\Z&&
    \end{tikzcd}
    \end{equation}
    in $\Mod_{p^*H\Z}$. Let $\rho:\Th(E +\oplus_{i=1}^nL_i)\xrightarrow{\sim}\Th(E + \oplus_{i=1}^nL_i^\vee)$ the isomorphism in $\sH_{\bullet}(k)$ defined by Ananyevskiy.
    
    The commutativity of \eqref{eq:anatriangle} is equivalent to
    $$(\Sigma^{2r,r}\Anan_{E;L_1,\ldots,L_n} \wedge \id_{p^*H\Z})^*\th_{H\Z}^f (E\oplus_i^nL_i^\vee) =(-1)^n \th_{H\Z}^f (E+\oplus_i^nL_i),$$
    and this is in turn equivalent to
    $$\rho^*\th^{H\Z} (E+\oplus_i^nL_i^\vee) = (-1)^n \th^{H\Z} (E+\oplus_i^nL_i).$$
    Lemma \ref{lemma:anatwists} below shows in detail the computation for $n=1$. The result immediately follows by induction on $n$.
\end{proof}

\begin{lemma}
\label{lemma:anatwists}
    Let $X \in \Sm/k$ irreducible. Let $E \to X$ a vector bundle of finite rank over $X$ and $L \to X$ a line bundle over $X$, with dual bundle $L^{\vee}\to X$. Let $\rho: \Th(E\oplus L) \xrightarrow{\sim} \Th(E \oplus L^\vee)$ the isomorphism in $\SH(k)$ defined as in \cite[Lemma 4.1]{Ana:Slor}. Then $\rho^*\th^{H\mathbb{Z}}(E + L^{\vee})=-\th^{H\mathbb{Z}}(E+L).$
\end{lemma}

\begin{proof}
    Since we are working in the stable setting, we simplify the notation by writing $\rho$ for $\Sigma^{\infty}_{\P^1}\rho:p_\#\Sigma^{E +L}1_X \xrightarrow{\sim} p_\#\Sigma^{E+L^\vee}1_X$, with $p\coloneqq p_X:X \to \Spec k$ the structure map. Let us briefly review the construction of $\rho$ from \cite{Ana:Slor}.

    Let $\pi_L:L\to X$ and $\pi_{L^\vee}:L^\vee \to X$ the structure morphisms of the two vector bundles. Let us denote by $L^0$ and $(L^\vee)^0$ the complements of the zero sections, and by $\pi_{L^0}$, $\pi_{(L^\vee)^0}$ the restrictions of the relative morphisms. Let us note that $\A^1$-invariance (Proposition \ref{prop:hom-invariance}) implies $\pi_{L\#}1_L \simeq 1_X \simeq \pi_{L^\vee \#}1_{L^\vee}$. The localization sequence associated to the inclusion $L^0 \hookrightarrow L$ as in \eqref{eq:thom-loc2} gives a distinguished triangle 
    \begin{equation}
    \label{eq:locseq1}
        \pi_{L^0\#}1_{L^0} \to 1_X \to \Sigma^L1_X \to \pi_{L^0\#}1_{L^0}[1], 
    \end{equation}
    and analogously, we have the distinguished triangle
    \begin{equation}
    \label{eq:locseq2}
        \pi_{(L^\vee)^0\#}1_{(L^\vee)^0} \to 1_X \to \Sigma^{L^\vee}1_X \to \pi_{(L^\vee)^0\#}1_{(L^\vee)^0}[1].
    \end{equation}
    There is a unique isomorphism of $X$-schemes $f:L^0\xrightarrow{\sim}(L^{\vee})^0$ given by taking, fiberwise, the vector $v$ to the functional $f_v$ such that $f_v(v)=1$. By taking $\Sigma_{\P^1}^\infty f$ in $\SH(X)$, this gives an isomorphism between the left hand terms of \eqref{eq:locseq1} and \eqref{eq:locseq2}, that, together with $\id_X$, induces the isomorphism $\Anan_L:\Sigma^L1_X \xrightarrow{\sim} \Sigma^{L^\vee}1_X$ on the cofibers. Applying $p_\# \circ \Sigma^E$ to $\Anan_L$ gives the isomorphism $\rho: p_\#\Sigma^{E+L}1_X \xrightarrow{\sim} p_\#\Sigma^{E +L^\vee}1_X$.

    From this construction, we see that $\rho=\id_{p_\# \Sigma^E1_X}\wedge \rho_L$, with $\rho_L \coloneqq p_\# \Anan_L$, and since Thom classes are additive, we have $\rho^*\th^{H\mathbb{Z}}(E + L^{\vee})=\th^{H\Z}(E) \cup \rho_L^*\th^{H\Z}(L^\vee)$. This reduces to the case $E=0$ and $\rho=\rho_L$.

    Next, we reduce to the case $X=\Spec k(X)$. The Thom isomorphism in $H\mathbb{Z}$-cohomology makes $H\mathbb{Z}^{**}(p_{\#}\Sigma^L1_X)$ and $H\mathbb{Z}^{**}(p_{\#}\Sigma^{L^{\vee}}1_X)$ two free $H\mathbb{Z}^{**}(X)$-modules of rank $1$, generated respectively by $\th^{H\mathbb{Z}}(L)$ and $\th^{H\mathbb{Z}}(L^{\vee})$, both in bidegree $(2,1)$. $\rho_L^*$, as an isomorphism of $H\mathbb{Z}^{**}(X)$-modules, sends generators to generators, then $\rho_L^*\th^{H\mathbb{Z}}(L^{\vee})=\alpha \cdot \th^{H\mathbb{Z}}(L)$, with $\alpha \in H\mathbb{Z}^{0,0}(X)=\mathbb{Z}.$ Since $X$ is irreducible, there is a unique generic point $i:\Spec k(X) \to X$. Let then
    $$\rho_{i^*L}: p_{k(X)\#}\Sigma^{i^*L}1_{k(X)} \xrightarrow{\sim} p_{k(X)\#}\Sigma^{i^*L^{\vee}}1_{k(X)}$$
    be the isomorphism obtained in the same way as $\rho$, with $p_{k(X)}\coloneqq p \circ i =\Spec(k \hookrightarrow k(X))$. We have the commutative diagram
\[
\xymatrixcolsep{70pt}
 \xymatrix{
 H\Z^{0,0}(X)\ar@{=}[d]\ar[r]^{i^*}_\sim&H\Z^{0,0}(k(X))\ar@{=}[d]\\
  [1_X, p^*H\Z]_{\SH(X)}\ar[r]^{i^*}\ar[d]_\cong^{\cup \; \th^{H\mathbb{Z}}(L)}&[1_{k(X)}, p_{k(X)}^*H\Z]_{\SH(k(X))}\ar[d]_\cong^{\cup \; \th^{p_{k(X)}^*H\mathbb{Z}}(i^*L)}\\
 [\Sigma^L1_X, \Sigma^{2,1}p^*H\Z]_{\SH(X)}\ar@{=}[d]\ar[r]^{\Th(i)^*}&[\Sigma^{i^*L}1_{k(X)}, \Sigma^{2,1}p_{k(X)}^*H\Z]_{\SH(k(X))}\ar@{=}[d]\\
 H\Z^{2,1}(\Th(L))\ar[r]^{\Th(i)^*}&H\Z^{2,1}(\Th(i^*L)) 
 }
 \]
 and a similar diagram for $L^\vee$. Let $\alpha_{k(X)}\in \Z=H\Z^{0,0}(k(X))$ be the element such that $\rho^*_{i^*L}(\th^{p_{k(X)}^*H\mathbb{Z}}(i^*L^{\vee}))=\alpha_{k(X)} \cdot \th^{p_{k(X)}^*H\mathbb{Z}}(i^*L)$. The maps $\rho^*_L$ and $\rho^*_{i^*L}$ give a map to the above diagram from the corresponding one for $L^\vee$, so we have $i^*\alpha=\alpha_{k(X)}$. But since $i^*:\Z=H\Z^{0,0}(X)\to H\Z^{0,0}(k(X))=\Z$ is the identity map on $\Z$, we have $\alpha_{k(X)}=\alpha$. This reduces to the case $X =\Spec k(X)$. 
 
 Changing notation, we may simply assume that $X=\Spec k$. $L$ is then a one dimensional $k$-vector space with zero section being the origin. The total space of $L$ is (non canonically) isomorphic to $\mathbb{A}^1$. Let us choose any isomorphism $L\xrightarrow{\sim} \mathbb{A}^1$. This also induces an isomorphism $L^{\vee}\xrightarrow{\sim}\mathbb{A}^1$ through $f\mapsto f(1)$. Through these isomorphisms, we identify $L^0$ and $(L^{\vee})^0$ with $\mathbb{G}_m$. The isomorphism of schemes $L^0 \xrightarrow{\sim}(L^{\vee})^0$ given by $v \mapsto f_v$, induces the automorphism $t \mapsto t^{-1}$ of $\mathbb{G}_m$. The isomorphisms $L\xrightarrow{\sim}\mathbb{A}^1\xleftarrow{\sim}L^{\vee}$ induce
    $$\Sigma^L1_X \xrightarrow{\sim}S^1\wedge \mathbb{G}_m \xleftarrow{\sim} \Sigma^{L^{\vee}}1_X,$$
    and through these, the isomorphism $\rho_L$ can be read as the automorphism $\id_{S^1}\wedge(t\mapsto t^{-1})$ of $S^1\wedge\G_m$, where $t$ is the canonical coordinate on $\G_m$. We have
    $$H\Z^{2,1}(\P^1) \cong H\mathbb{Z}^{2,1}(S^1\wedge\G_m)\cong  H\mathbb{Z}^{1,1} (\G_m) \xleftarrow{\sim} \mathbb{Z},$$
    with the last isomorphism sending $n\in\Z$ to $t^n\in H\mathbb{Z}^{1,1}(\G_m))$. Thus, $\rho_L$ induces the isomorphism $n\mapsto -n$ on $\Z\cong H\mathbb{Z}^{2,1}(\P^1)$.

    $\th^{H\mathbb{Z}(L)}$ and $\th^{H\mathbb{Z}}(L^{\vee})$ belong to $H\mathbb{Z}^{2,1}(\mathbb{P}^1)\cong \mathbb{Z}$, and they are generators. Since $L\simeq L^\vee\simeq \A^1_F$ as line bundles over $\Spec F$, we have $\th^{H\mathbb{Z}}(L)=\th^{H\mathbb{Z}}(L^{\vee})$, and since $\rho_L^*(n)=-n$, we conclude $\rho_L^*(\th^{H\mathbb{Z}}(L^{\vee}))= -\th^{H\mathbb{Z}}(L)$.
\end{proof}

Let us now consider $X \in \Sm/k$ and $e=\sum_{i=1}^n[L_i]-[T_X]+d_X[\sO_X] \in K_0(X)$, with $d_X \coloneqq \dim_k(X)$. Let us suppose $v \in K_0(X)$ being an element satisfying the identity
\begin{equation}
\label{eq:comparisontangentbundle}
    v=e - \sum_{i=1}^n[L_i^\vee] - d_X[\sO_X].
\end{equation}
In particular, $\rnk(v)=-\rnk(T_X)=-d_X$. In this case, \eqref{eq:Anaconstruction} gives the isomorphism
\begin{equation}
\label{eq:anatwist}
    \Anan_{-L_1, \ldots, -L_n}(v): \Sigma^{-T_X+\rnk(T_X)[\sO_X]}1_X \xrightarrow{\sim} \Sigma^{v-\rnk(v)[\sO_X]}1_X,
\end{equation}
where $\Anan_{-L_1, \ldots, -L_n}(v):=\Anan_{e;-L_1, \ldots, -L_n}$.

\begin{defn}
\label{defn:AnaDegree}
With $p:X\to \Spec k$ smooth and proper of dimension $d_X$, and $v \in K_0(X)$ as in \eqref{eq:comparisontangentbundle}, we have the \emph{twisted degree}
\[
\deg^\Anan_k:[1_X, \Sigma^{2d_X, d_X}p^*H\Z]_{\SH(X)}\to 
\Z
\]
defined as the composition \eqref{eq:anandeg} below, where we write $\Anan$ for the isomorphism \eqref{eq:anatwist}:
\begin{multline}
\label{eq:anandeg}
[1_X, \Sigma^{2d_X, d_X}p^*H\Z]_{\SH(X)} \xrightarrow{\sim}
[p^*H\Z, \Sigma^{2d_X, d_X}p^*H\Z]_{\Mod_{p^*H\Z}} \\
\xrightarrow[\sim]{\th_{H\Z\Mod}^f(v)^*} [\Sigma^{v-\rnk(v)[\sO_X]}1_X\wedge p^*H\Z, \Sigma^{2d_X, d_X}p^*H\Z]_{\Mod_{p^*H\Z}} \\
\xrightarrow[\sim]{(\Anan \wedge\id)^*}[\Sigma^{-T_X+\rnk(T_X)[\sO_X]}1_X\wedge  p^*H\Z, \Sigma^{2d_X, d_X}p^*H\Z]_{\Mod_{p^*H\Z}}\\
=[\Sigma^{-T_X}1_X\wedge  p^*H\Z, p^*H\Z]_{\Mod_{p^*H\Z}}
=[\Sigma^{-T_X}1_X, p^*H\Z]_{\SH(X)}
=[p_\#(\Sigma^{-T_X}1_X), H\Z]_{\SH(k)}\\
\xrightarrow{(p^\vee)^*}[1_k,H\Z]_{\SH(k)}
=H\Z^{0,0}(\Spec k)=\Z.
\end{multline}
\end{defn}

\begin{prop}
\label{prop:comparisondegrees}
Suppose we have $X\in \Sm/k$ irreducible, $v\in K_0(X)$ and line bundles $L_1,\ldots, L_r$ such that Ananyevskiy's isomorphism $\Anan_{L_1,\ldots, L_r}$ is defined. Suppose that $X$ is smooth and proper of dimension $d_X$ over $k$. Then, for $\alpha\in H\Z^{2d_X, d_X}(X)$ we have $\deg_k(\alpha)=(-1)^r\deg^\Anan_k(\alpha).$
\end{prop}

\begin{proof}
By rewriting the construction of the proper pushforward, we can see $\deg_k:H\Z^{2d_X, d_X}(X)\to \Z$ as the composition
\begin{multline}
\label{eq:rewritinf_deg}
H\Z^{2d_X, d_X}(X) =[1_X, \Sigma^{2d_X, d_X}p^*H\Z]_{\SH(X)} \xrightarrow{\sim}
[p^*H\Z, \Sigma^{2d_X, d_X}p^*H\Z]_{\Mod_{p^*H\Z}}\\
\xrightarrow{\th_{H\Z\Mod}^f(-T_X)^*}[\Sigma^{-T_X+d_X[\sO_X]}1_X\wedge p^*H\Z, \Sigma^{2d_X, d_X}p^*H\Z]_{\Mod_{p^*H\Z}}\\
=[\Sigma^{-T_X}1_X\wedge  p^*H\Z, p^*H\Z]_{\Mod_{p^*H\Z}}
=[\Sigma^{-T_X}1_X, p^*H\Z]_{\SH(X)}\\
=[p_\#(\Sigma^{-T_X}1_X), H\Z]_{\SH(k)}
\xrightarrow{(p^\vee)^*}[1_k,H\Z]_{\SH(k)}=H\Z^{0,0}(\Spec k)=\Z.
\end{multline}
Comparing \eqref{eq:rewritinf_deg} with \eqref{eq:anandeg} and applying Lemma~\ref{lem:Anan} concludes the proof.
\end{proof}

We want to apply this comparison to a certain kind of motivic cohomology classes. We need a preliminary definition.

\begin{defn}
\label{defn:HZclasses}
Let $\sE\in \SH(k)$ be an oriented ring spectrum, $c\in H\Z^{a,b}(\sE)=[\sE, \Sigma^{a,b}H\Z]_{\SH(k)}$, and $v\in K_0(X)$ for $X\in \Sm/k$. Let $p:X \to \Spec k$ be the structure map. Applying $p^*$ to $c$ gives the map $p^*(c):p^*\sE\to \Sigma^{a,b}p^*H\Z$. The Thom classes for $\sE$ give us the map $\th_\sE(v):\Sigma^{v-\rnk(v)[\sO_X]}1_X\to p^*\sE$.  
We have the map $\tilde{c}^\sE(v)'$ defined as the composition
\[
\Sigma^{v-\rnk(v)[\sO_X]}1_X\xrightarrow{\th_\sE(v)} p^*\sE\xrightarrow{p^*(c)}\Sigma^{a,b}p^*H\Z
\]
in $\SH(X)$, and by the adjunction $p_\# \dashv p^*$, the map
\[
\tilde{c}^\sE(v):p_\#\Sigma^{v-\rnk(v)[\sO_X]}1_X\to p_{\#}p^*\Sigma^{a,b}H\Z \to \Sigma^{a,b}H\Z,
\]
which can be seen as an element of $H\Z^{a,b}(p_\#\Sigma^{v-\rnk(v)[\sO_X]}1_X)$. Via the isomorphism $\Sigma^{a,b}\th_{H\Z}^f(-v)$, $\tilde{c}^\sE(v)$ gives an element $c^\sE(v)\in H\Z^{a,b}(X).$
\end{defn}

\begin{corollary}
\label{cor:TwistClassComp}
Let $Y\in \Sm/k$ be proper of dimension $d$ over $k$ and irreducible, and let $(v,\vartheta)$ be a stable twist of $-T_Y$. Let us suppose that, for a certain $e \in K_0(Y)$ and $L_1, \ldots, L_n$ line bundles on $Y$, we have $v=e-\sum_{i=1}^s[L_i^\vee]$, $-[T_Y]=e-\sum_{i=1}^n[L_i]$ in $K_0(Y)$, and $\vartheta$ is the isomorphism
\[
\Sigma^{-2d,-d}\Anan_{e, -L_1,\ldots, -L_n}(v):\Sigma^{-T_Y}1_Y\xrightarrow{\sim} \Sigma^v1_Y.
\]
If $\sE \in \SH(k)$ is an oriented commutative ring spectrum, and $c$ a class in $H\Z^{2d,d}(\sE)$,then 
\[
\deg_k c^\sE(v)=(-1)^n c\circ [Y,v,\vartheta]_\sE\in H\Z^{0,0}(\Spec k)=\Z.
\]
\end{corollary}

\begin{proof}
    Let $p:=p_Y:Y \to \Spec k$ be the structure map. We write $\Anan$ for $\Anan_{e, -L_1,\ldots, -L_n}(v)$.
    
    By looking at the definition of $[Y,v,\vartheta]_\sE$ (Definition \ref{defn:twistclass}), we see that $c\circ [Y,v,\vartheta]$ is the composition
    $$1_k\xrightarrow{p^\vee}p_{\#}\Sigma^{-T_Y}1_Y\xrightarrow[\sim]{p_\#\Sigma^{-2d,-d}\Anan} p_{\#}\Sigma^{v}1_Y\xrightarrow{\Sigma^{-2d,-d}\th_\sE(v)'} \Sigma^{-2d,-d}\sE\xrightarrow{\Sigma^{-2d-d}c}H\Z,$$
where $\th_\sE(v)'$ is the map corresponding to $\th_\sE(v)$ by adjunction $p_\# \dashv p^*$.

On the other hand, the composition $p_{\#}\Sigma^{v}1_Y\xrightarrow{\Sigma^{-2d,-d}\th_\sE(v)'} \Sigma^{-2d,-d}\sE\xrightarrow{\Sigma^{-2d-d}c}H\Z$ is the class $\Sigma^{-2d,-d}\tilde{c}^\sE(v)$ (see Definition \ref{defn:HZclasses}), then
$$c\circ [Y,v,\vartheta]=(p^\vee)^*(\Sigma^{-2d,-d}(p_\#\Anan)^*(\Sigma^{-2d,-d}\tilde{c}^\sE(v))).$$

We also see from Definition \ref{defn:HZclasses} that the isomorphism $H\Z^{2d,2}(p_\#\Sigma^{v+d[\sO_Y]}1_Y)\simeq H\Z^{2d,d}(Y)$ giving $c^\sE(v)$ is induced by the isomorphism $\th_{H\Z}^f(v)$, or equivalently, by $\th_{H\Z}^f(v)^*$.

Lastly, the isomorphism $p_\#\Anan$ in $\SH(k)$ corresponds to the isomorphism 
$$\Anan \wedge \id_{p^*H\Z}:\Sigma^{-T_Y+d[\sO_Y]}1_Y \wedge p_*H\Z \xrightarrow{\sim} \Sigma^{v +d[\sO_Y]}1_Y \wedge p^*H\Z$$
in $\Mod_{p^*H\Z}$ through the free-forgetful adjunction. 

Thus, by looking at the composition \eqref{eq:anandeg}, we see that $c\circ [Y,v,\vartheta]$ is the class $\deg_k^\Anan(c^\sE(v))$ in $H\Z^{0,0}(\Spec k)$. Applying Proposition \ref{prop:comparisondegrees} concludes the proof. 
\end{proof}

\section{Cellular structure of $\MSp$}
\label{section:Cellularity}

\begin{defn}
\label{defn:motivicCellular}
    We denote by $\langle 1_k \rangle$ the full localizing subcategory of $\SH(k)$ generated by the motivic spheres, or in other words, the smallest full subcategory containing $1_k$ and and closed with respect to coproducts $\oplus$, homotopy colimits, and suspensions $\Sigma^{a,b}$ for $a,b \in \Z$. We will say that a motivic spectrum $\E \in \SH(k)$ is \emph{cellular} if it belongs to the subcategory $\langle 1_k \rangle$.
\end{defn}

\begin{prop}
\label{prop:s.e.s.-cellularity}
    Let $\E' \to \E \to \E''$ be a cofiber sequence in $\SH(k)$ such that two of $\E'$, $\E$ and $\E''$ are cellular. Then so is the third.
\end{prop}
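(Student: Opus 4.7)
The plan is to argue directly from the definition of $\langle E\rangle$ as a localizing subcategory. Recall that a cofiber sequence $\E'\to \E\to \E''$ in $\SH(k)$ extends canonically to a distinguished triangle
\[
\E'\to \E\to \E''\to \E'[1]
\]
in the triangulated category $\SH(k)$. So the statement is really the standard ``two out of three'' property for triangulated subcategories, applied to $\langle E\rangle$.

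First I would observe that $\langle E\rangle$ is by construction closed under the shift $[1]=\Sigma^{1,0}$ (since it is closed under all bigraded suspensions $\Sigma^{a,b}$), under arbitrary coproducts, and under the formation of homotopy colimits, in particular under homotopy pushouts. Given a distinguished triangle as above with, say, $\E'$ and $\E$ in $\langle E\rangle$, the object $\E''$ is (up to weak equivalence) the cofiber of $\E'\to \E$, i.e. the homotopy pushout of $\E\leftarrow \E'\to 0$; since $0\in\langle E\rangle$ and $\langle E\rangle$ is closed under homotopy pushouts, $\E''\in \langle E\rangle$. The cases where $\E,\E''$ are in $\langle E\rangle$ or $\E',\E''$ are in $\langle E\rangle$ follow by rotating the triangle: the rotated triangle $\E''[-1]\to \E'\to \E\to \E''$ is again distinguished, and $\E''[-1]=\Sigma^{-1,0}\E''$ lies in $\langle E\rangle$ by closure under $\Sigma^{a,b}$, reducing us to the previous case.

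There is no real obstacle; the only thing to verify is that one may legitimately identify cofiber sequences with distinguished triangles in $\SH(k)$ (which is immediate from the construction of the triangulated structure recalled earlier in the paper) and that ``localizing subcategory'' in Definition \ref{defn:motivicCellular} is strong enough to guarantee closure under both shifts and homotopy pushouts — both of which are explicitly built into the definition. Thus the proposition follows in a few lines.
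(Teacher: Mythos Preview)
Your proof is correct and follows essentially the same approach as the paper: both arguments identify $\E''$ as a homotopy cofiber (hence in $\langle E\rangle$ by closure under homotopy colimits) and then handle the remaining two cases by rotating the triangle and using closure under $\Sigma^{-1,0}$. The paper phrases the rotation slightly differently, writing $\E'$ as the cofiber of $\Sigma^{-1,0}\E\to\Sigma^{-1,0}\E''$ and $\E$ as the cofiber of $\Sigma^{-1,0}\E''\to\E'$, but the content is identical.
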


\begin{proof}
If $\E'$ and $\E$ are cellular, so is $\E''$, since $\langle 1_k \rangle$ is closed with respect to homotopy colimits, and in particular homotopy cofibers. For the other two cases, we note that $\E'$ is the homotopy cofiber of $\Sigma^{-1,0}\E \to \Sigma^{-1,0}\E''$, and $\E$ is the homotopy cofiber of $\Sigma^{-1,0}\E'' \to \E'$. In both cases, the result follows because $\langle 1_k \rangle$ is closed under $\Sigma^{-1,0}$-suspensions and homotopy cofibers.
\end{proof}

\begin{exmp}
\label{exmp:MGL_cellular}
The algebraic cobordism spectrum $\MGL$ is cellular (see \cite[Theorem 6.4]{dugisa:cellstructures} or the proof of \cite[Theorem 6.11]{lev:ellcoh}). In this case, the cellularity is induced by the Schubert cell decomposition of Grassmannians. 
\end{exmp}

To prove the cellularity of $\MSp$, we need to go through the geometry of quaternionic Grassmannians and their vector bundles. We noted that almost the same procedure was used in \cite[Proposition 3.1]{Rond:Cellularity}.

\subsection{Vector bundles over quaternionic Grassmannians}

 Let $V$ be a $(2n)$-dimensional vector space over $k$, and $\phi$ the symplectic form on $V$ represented by the standard symplectic matrix $=\phi_{2n}$ (see example \ref{exmp:sympl_bundles} (1)) with respect to a fixed basis $\langle x_1, \ldots, x_n,y_n, \ldots, y_1 \rangle$.

Over $\Spec k$, $\HGr(r,n)$ can be seen as the classifying space of the $(2r)$-dimensional symplectic vector subspaces of the symplectic vector space $(V,\phi)$. 

We denote by $E_i$ the $i$-dimensional subspace $\langle x_1, \ldots , x_i \rangle$ of $V$. Following \cite[Theorem 4.1]{panwal:grass}, we define $N^+\coloneqq \HGr(r,n)\cap \Gr(2r,E_1^{\perp})$. $N^+$ is a closed subscheme of $\HGr(r,n)$, and it is a rank $2r$ subbundle of the normal bundle $N_i$ of the obvious inclusion $i:\HGr(r,n-1) \to \HGr(r,n)$. More specifically, \cite[Theorem 4.1 (b)]{panwal:grass} gives $N_i=N^+\oplus N^-$, where $N^-=\HGr(r,n)\cap \Gr(2r,y_1^{\perp})$. Moreover, by \cite[Theorem 4.1 (c)]{panwal:grass}, both $N^+$ and $N^-$ are isomorphic to the tautological rank $2r$ symplectic bundle over $\HGr(r,n-1)$. In particular
$\dim(N^+)=\dim(\HGr(r,n-1))+2r=4nr-2r-4r^2.$ Finally, we denote by $Y$ the open complement of the closed subscheme $N^+$ in $\HGr(r,n)$. Before proving our main statement on the Thom spaces of vector bundles over $\HGr(r,n)$, we need to study the geometry of the open stratum $Y$, following the ideas of \cite{panwal:grass}.

$Y$ can be seen as the classifying space of $2r$-dimensional subspaces of $V$, symplectic with respect to the form $\phi$, that are not subspaces of $\langle x_1, \ldots, x_n,y_n, \ldots y_2 \rangle$. Let us consider the restriction of the tautological rank $2r$ bundle $\mathcal{U} \to Y$, with $\mathcal{U}\coloneqq E^\Sp_{2r,2n} \mid_Y$. Note that a point in $\mathcal{U}$ is a pair $(y,v)$ where $y$ is a point of $Y$, hence representing a vector subspace $[y]$, and $v$ a vector in $[y]$. Let $\lambda:\mathcal{U} \to \mathcal{O}_S$ be the map that takes $(y,(v_1,\ldots, v_{2n}))\in \mathcal{U}$ to $v_{2n}$, and let $Y_1 \coloneqq \lambda^{-1}(1)$. Let $g_1:Y_1 \to Y$ be the induced map.

Let now
$$Y_2 \coloneqq \{(y,e,f) \mid y \in Y, \;e \in [y],\; f \in [y], \; e_{2n}=0, \; f_{2n}=1, \; \phi(e,f)=1 \},$$
and let $g_2:Y_2 \to Y_1$ be the map defined by $(y,e,f) \mapsto (y,f)$.

\begin{lemma}
    The maps $g_1$ and $g_2$ are Zariski-locally trivial affine space bundles.
\end{lemma}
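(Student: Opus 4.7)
The plan is to realize each $g_i$ as a torsor under an explicit vector subbundle, which automatically makes it Zariski-locally trivial as an affine space bundle.

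For $g_1$, I would first check that $\lambda:\mathcal{U}\to \mathcal{O}_Y$ is a surjective morphism of vector bundles. Given $y\in Y$, the subspace $[y]\subset V$ is, by the very definition of $Y$, not contained in $\langle x_1,\ldots,x_n,y_n,\ldots,y_2\rangle = \{v:v_{2n}=0\}$, so there exists $v\in[y]$ with $v_{2n}\neq0$; hence $\lambda$ is pointwise nonzero, hence surjective as a morphism of vector bundles. Let $K:=\ker(\lambda)$, a vector subbundle of $\mathcal{U}$ of rank $2r-1$. Then $Y_1=\lambda^{-1}(1)$ is a torsor under $K$ via the restriction of the vector bundle structure of $\mathcal{U}\to Y$, and therefore $g_1:Y_1\to Y$ is a Zariski-locally trivial $\mathbb{A}^{2r-1}$-bundle.

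For $g_2$, I would work over $Y_1$, where the data of $f$ with $f_{2n}=1$ is tautologically encoded as a section $\tilde f$ of $g_1^*\mathcal{U}$. Consider the morphism of vector bundles on $Y_1$
\[
\Phi:g_1^*K \longrightarrow \mathcal{O}_{Y_1}, \qquad e\longmapsto \phi(e,\tilde f).
\]
To show $\Phi$ is surjective, fix $(y,f)\in Y_1$. The fiber of $g_1^*K$ over $(y,f)$ is the $(2r-1)$-dimensional subspace $[y]\cap\{v_{2n}=0\}$ of $[y]$. Since $\phi$ is non-degenerate on the symplectic subspace $[y]$, the kernel of $\phi(-,f):[y]\to k$ is $(2r-1)$-dimensional. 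These two $(2r-1)$-dimensional subspaces of the $2r$-dimensional space $[y]$ coincide if and only if $f\in\{v_{2n}=0\}$, which fails as $f_{2n}=1$. Hence $\Phi$ is pointwise surjective, so surjective as a morphism of vector bundles.

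It follows that $K':=\ker(\Phi)$ is a vector subbundle of $g_1^*K$ of rank $2r-2$, and $Y_2=\Phi^{-1}(1)$ is a torsor under $K'$ via the vector bundle structure on $g_1^*K\to Y_1$. Therefore $g_2:Y_2\to Y_1$ is a Zariski-locally trivial $\mathbb{A}^{2r-2}$-bundle. I do not expect any serious obstacle: the only substantive point is the two pointwise-surjectivity verifications, both of which are immediate once the right vector bundle map is written down.
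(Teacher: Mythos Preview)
Your proposal is correct and follows essentially the same approach as the paper: realize each $g_i$ as the fiber $\lambda_i^{-1}(1)$ of a surjective bundle map $\lambda_i$ to $\mathcal{O}$, hence as a torsor under the kernel. Your pointwise surjectivity argument for $\Phi$ (comparing the two hyperplanes of $[y]$ and observing that $f$ lies in $\ker\phi(-,f)$ but not in $\{v_{2n}=0\}$) is in fact more explicit than the paper's, which simply asserts the corresponding map $\lambda'$ is surjective.
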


\begin{proof}
Since $Y_1=\lambda^{-1}(1)$, the map $g_1$ is the restriction of the projection $\mathcal{U}\to Y$ to a Zariski closed subscheme. $\Ker\lambda$ is a subbundle of rank $2r-1$ of $\mathcal{U}$, and fiberwise, it acts freely on $\mathcal{U}$ by translations. Since this action preserves the image of $\lambda$, it induces an action on $Y_1$. Since the action is fiberwise it preserves $Y$, and $Y$ is obtained as the quotient of $Y_1$ by this action. $\lambda$ is surjective by construction, thus, $Y_1 \to Y$ is a torsor for the vector subbundle $\Ker\lambda$, and then $g_1$  is a Zariski-locally trivial affine space bundle with fibre $\mathbb{A}^{2r-1}$.

In order to obtain the statement for $g_2$, let us consider the rank $2r-1$ vector bundle $g_1^*\Ker\lambda \to Y_1$. A point in this is given by a triple $(y,e,f)$ with $y$ in $Y$, and $e,f$ two vectors in $[y]$, with $\lambda(f)=1$ and $\lambda(e)=0$. We then get a surjective map of vector bundles 
\begin{align*}
    \lambda':g_1^*\Ker\lambda & \to \mathcal{O}_S\\
    (y,e,f) &\mapsto \psi(e,f),
\end{align*}
and $Y_2=\lambda'^{-1}(1)$. $Y_2$ is then a Zariski closed subscheme of the vector bundle $g_1^{-1}\Ker\lambda$ over $Y_1$, and $\Ker\lambda'$ is a subbundle of rank $2r-2$ that acts fiberwise on $g_1^{-1}\Ker\lambda$ by translations. The induced map $Y_2 \to Y_1$ is the map $g_2$. By applying the previous argument to this situation, we obtain that $g_2$ is a Zariski locally trivial affine space bundle with fiber $\mathbb{A}^{2r-2}$. 
\end{proof}

\begin{lemma}
\label{lemma:smallerGrassmannian}
    Let $\Tilde{Y}$ be the space
    $$\Tilde{Y} \coloneqq\{(e,f)\in V \times V \mid e_{2n}=0, \; f_{2n}=1, \; \phi (e,f)=1\}.$$
    Then, there is an isomorphism of $k$-schemes $Y_2 \simeq \Tilde{Y}\times \HGr(r-1,n-1)$.
\end{lemma}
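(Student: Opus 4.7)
The plan is to recognize $Y_2$ as the total space of an $\HGr(r-1,n-1)$-bundle over $\tilde Y$ and then trivialize that bundle. The projection $p:Y_2\to \tilde Y$, $(y,e,f)\mapsto (e,f)$, has fiber over $(e,f)$ consisting of those $2r$-dimensional symplectic subspaces $y\subset V$ containing $\langle e,f\rangle$; since $\phi(e,f)=1$ forces $\langle e,f\rangle$ to be a symplectic plane, every such $y$ decomposes orthogonally as $y=\langle e,f\rangle\oplus (y\cap \langle e,f\rangle^\perp)$. Thus the fiber of $p$ is canonically the quaternionic Grassmannian of $(2r-2)$-dimensional symplectic subspaces of the $(2n-2)$-dimensional symplectic space $\langle e,f\rangle^\perp$, and globally $Y_2$ is the $\HGr(r-1,n-1)$-bundle associated to the rank $2(n-1)$ symplectic subbundle $\sE\subset V\otimes_k\sO_{\tilde Y}$ with fiber $\sE_{(e,f)}=\langle e,f\rangle^\perp$.

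First I would check that $\tilde Y$ is itself an affine space of dimension $4n-3$: the two linear conditions $e_{2n}=0$ and $f_{2n}=1$ cut out an affine subspace $\A^{4n-2}\subset V\times V$, and on this subspace the constraint $\phi(e,f)=1$ is linear in $e_1$ with unit coefficient, so it can be solved for $e_1$ as a polynomial in the remaining $4n-3$ free coordinates, exhibiting $\tilde Y\simeq \A^{4n-3}$. Next, the orthogonal decomposition $V\otimes_k\sO_{\tilde Y}=\langle e,f\rangle\oplus \sE$ of symplectic bundles, in which the ambient bundle is the trivial rank $2n$ symplectic bundle and $\langle e,f\rangle$ is trivial of rank $2$ (with the global symplectic frame $(e,f)$ since $\phi(e,f)=1$), exhibits $\sE$ as a stably trivial symplectic bundle on $\tilde Y$. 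A symplectic trivialization $\tau:\sE\xrightarrow{\sim} V_{n-1}\otimes_k\sO_{\tilde Y}$, where $V_{n-1}$ denotes the standard $(2n-2)$-dimensional symplectic space, would then yield the desired isomorphism via $(y,e,f)\mapsto \bigl((e,f),\tau(y\cap \langle e,f\rangle^\perp)\bigr)$.

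The main obstacle is producing such a global symplectic trivialization $\tau$, which amounts to showing that every symplectic vector bundle over the affine space $\A^{4n-3}_k$ is trivial. This is the symplectic analog of the Bass--Quillen/Lindel theorem, which can be established by iterated symplectic Gram--Schmidt over the polynomial ring $k[\tilde Y]$: Quillen--Suslin provides a unimodular global section $v_1$ of $\sE$, and since $\sE$ is symplectic the $\sO_{\tilde Y}$-linear functional $\phi(v_1,\cdot)$ is unimodular, so a partner section $v_2$ with $\phi(v_1,v_2)=1$ exists; the resulting trivial rank $2$ symplectic subbundle $\langle v_1,v_2\rangle$ then splits off orthogonally, its complement is again a free module by Quillen--Suslin, and induction on the rank produces a global symplectic basis of $\sE$. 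Once this step is in place, the asserted isomorphism of $k$-schemes follows immediately.
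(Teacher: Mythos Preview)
Your argument is correct and follows the same conceptual outline as the paper: both recognize that $Y_2$ fibers over $\tilde Y$ with fiber $\HGr(r-1,n-1)$ via the orthogonal complement $\langle e,f\rangle^\perp$, and both trivialize the resulting symplectic bundle $\sE$ on $\tilde Y\simeq\A^{4n-3}$.

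The difference lies in how the trivialization is produced. You invoke Quillen--Suslin to conclude $\sE$ is free and then run an abstract symplectic Gram--Schmidt. The paper instead writes down, for each $(e,f)\in\tilde Y$, an explicit symplectic matrix $\rho_{e,f}=\rho_f\rho_e$ (a product of two symplectic transvections with entries polynomial in the coordinates of $e$ and $f$) satisfying $\rho_{e,f}(x_1)=e$, $\rho_{e,f}(y_1)=f$. This $\rho_{e,f}$ carries the fixed subspace $V'=\langle x_2,\ldots,x_n,y_n,\ldots,y_2\rangle$ isometrically onto $\langle e,f\rangle^\perp$, giving a global symplectic trivialization of $\sE$ by hand. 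The isomorphism is then the concrete map $((e,f),x)\mapsto(\rho_{e,f}([x])\oplus\langle e,f\rangle,e,f)$. The paper's route is more elementary---it avoids the appeal to Quillen--Suslin entirely---and yields an explicit formula, while yours is cleaner to state but imports a nontrivial theorem.
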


\begin{proof}
    We can write $V$ as $\langle x_1 \rangle \oplus V' \oplus \langle y_1 \rangle$, with $V' = \langle x_2, \ldots, x_n, y_n, \ldots, y_2 \rangle$. $V'$ equipped with the symplectic structure induced by $\phi$ is a symplectic vector subspace of $V$, and we can identify $\HGr(r-1,n-1)$ with $\HGr(r-1,V')$. A point of $\Tilde{Y}\times \HGr(r-1,n-1)$ is then a triple $(e,f,x)$ with $e,f$ vectors of $V$ satisfying the three defining conditions of $\Tilde{Y}$, and $x$ representing a symplectic subspace $[x]$ of $V'$ of rank $2r-2$. One can associate to the pair of vectors $(e,f)$ two invertible matrices
$$\rho_e=
\begin{pmatrix}
    1 & 0 & 0 & & \ldots  &  & & & 0 \\
    e_2 & 1 & 0 & & & & & & 0 \\
    e_3 & 0 & 1 & 0 & & 0 & & & \ldots \\
    \ldots  & & & 1 & & & & & \ldots \\
    \ldots & & & & 1 & & & & \ldots \\
    \ldots & & 0 & & & 1 & & & \ldots \\
    e_{2n-2} & & & & & & 1 & & 0 \\
    e_{2n-1} & & & & & \ldots & 0 & 1 & 0 \\
    0 & -e_{2n-1} & -e_{2n-2} & \ldots & -e_{n+1} & e_{n} & \ldots & e_{2} & 1
\end{pmatrix},$$

$$\rho_f=
\begin{pmatrix}
    1 & -f_{2n-1} & -f_{2n-2} & \ldots & -f_{n+1} & f_n & \ldots & f_2 & f_1 \\
    0 & 1 & 0 & & & & & & f_2 \\
    0 & 0 & 1 & 0 & & & 0 & & f_3 \\
    \ldots  & & & 1 & & & & & \\
    \ldots & & & & 1 & & & & \ldots \\
    \ldots & & 0 & & & 1 & & & \ldots \\
    \ldots & & & & & & 1 & & \\
    0 & & & & & & 0 & 1 & f_{2n-1} \\
    0 & & & \ldots & & & & 0 & 1
\end{pmatrix}.
$$
Let now $\rho_{e,f}\coloneqq \rho_f \cdot \rho_e$. We get $\rho_{e,f}(x_1)=\rho_f(1, e_2, \ldots, e_{2n-1},0)=e$, and $\rho_{e,f}(y_1)=\rho_f(y_1)=f$. It is immediate to see that $\rho_{e,f}$ preserves the form $\phi$. Thus, for each symplectic subspace $W$ of $V$, $\rho_{e,f}$ maps $W^{\perp}$ isometrically to $(\rho_{e,f}(W))^{\perp}$. In particular, $\rho_{e,f}(V')=\rho_{e,f}(\langle x_1,y_1\rangle^{\perp}) \simeq \langle e,f \rangle^{\perp}$. We can then define a map:
\begin{align*}
    \gamma:\Tilde{Y}\times \HGr(r-1,n-1) & \to Y_2 \\
    ((e,f),x) & \mapsto (\rho_{e,f}([x])\oplus \langle e \rangle \oplus \langle f \rangle, e, f),
\end{align*}
and note that this is an isomorphism with inverse
\begin{align*}
    \gamma^{-1}: \; \;  Y_2 & \to \Tilde{Y}\times \HGr(r-1,n-1)\\
    (y,e,f) & \mapsto ((e,f), \rho_{e,f}^{-1}([y])\cap V').
\end{align*}
\end{proof}

We get a map $q:Y_2 \to \HGr(r-1,n-1)$ by identifying $Y_2$ with $\Tilde{Y}\times \HGr(r-1,n-1)$ thanks to Lemma \ref{lemma:smallerGrassmannian}, and applying the canonical projection. Moreover, $\Tilde{Y}$ is isomorphic to $\A^{4n-3}$, with inverse given by:
\begin{align*}
    \mathbb{A}^{4n-3} & \to \Tilde{Y}\\
    (x_1,\ldots,x_{4n-3}) &\mapsto (e_1,x_1,\ldots,x_{2n-2},0,x_{2n-1}, x_{2n}, \ldots, x_{4n-3},1),
\end{align*}
where $e_1=1-\phi_{2n-2}((x_1,\ldots,x_{2n-2}),(x_{2n},\ldots,x_{4n-3}))$. The map $q$ is then a trivial $\mathbb{A}^{4n-3}$-bundle, and as such, it is an $\mathbb{A}^1$-equivalence.

We summarize our discussion about the open stratum $Y$ in the following proposition.

\begin{prop} \label{prop:summary}
Let $Y=\HGr(r,n)\setminus N^+$. Then  we have a zig-zag diagram
$$
\begin{tikzcd}
Y & Y_1 \arrow[l,swap,"g_1"] & Y_2 \arrow[l,swap,"g_2"] \arrow[r,"q"] & \HGr(r-1,n-1)   
\end{tikzcd}
$$
where the maps $g_1,g_2$ and $q$ are Zariski locally trivial affine space bundles, hence $\mathbb{A}^1$-weak equivalences. Moreover, $q$ is a trivial affine space bundle with fiber $\mathbb{A}^{4n-3}$.
\end{prop}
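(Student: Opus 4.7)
The proposition is essentially a bookkeeping statement assembling the three preceding constructions, so the proof plan is to verify each assertion by citing the relevant piece that was just built, and then to invoke $\A^1$-invariance at the end to pass from affine bundles to weak equivalences.

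First, I would record that the existence and defining properties of $g_1: Y_1 \to Y$ and $g_2:Y_2 \to Y_1$ as Zariski locally trivial affine space bundles (with fibers $\A^{2r-1}$ and $\A^{2r-2}$ respectively) is precisely the content of the unnamed lemma that was just proved: in each case one exhibits the source as the preimage of $1$ under a surjective map from a vector bundle, making it a torsor for the kernel, which is a vector subbundle of the expected rank. So for the claims about $g_1, g_2$, the proof just cites that lemma.

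Next, for the map $q:Y_2 \to \HGr(r-1,n-1)$, I would first note that by Lemma \ref{lemma:smallerGrassmannian}, the explicit map $\gamma$ constructed via the matrices $\rho_{e,f}$ yields an isomorphism $Y_2 \simeq \Tilde{Y} \times \HGr(r-1,n-1)$, under which $q$ becomes the second projection. Then the explicit map $\varphi:\Tilde{Y} \xrightarrow{\sim} \A^{4n-3}$ produced immediately after the lemma identifies $\Tilde{Y}$ with affine space of the stated dimension. Composing these two identifications exhibits $q$ as a trivial bundle with fiber $\A^{4n-3}$, which is in particular a Zariski locally trivial affine space bundle.

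Finally, to conclude the $\A^1$-equivalence part of the statement, I would invoke the $\A^1$-invariance of motivic spaces (Proposition \ref{prop:hom-invariance} in stable form, or more basically the fact that a Zariski-locally trivial affine space bundle is an $\A^1$-weak equivalence in $\sH(S)$, which follows from homotopy invariance and the gluing axiom for the Nisnevich-local model structure). Applying this to $g_1$, $g_2$, and $q$ gives the claimed weak equivalences.

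I do not expect any genuine obstacle here, since every ingredient has been spelled out in the two preceding lemmas and the paragraph about $\varphi$; the only thing to be slightly careful with is that $\A^1$-weak equivalence of a Zariski locally trivial affine space bundle requires the base to be smooth (which holds since $Y$ and $Y_1$ are open subschemes of the smooth scheme $\HGr(r,n)$, and $\HGr(r-1,n-1)$ is smooth), so I would record that verification in one sentence before closing the proof.
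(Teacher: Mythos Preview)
Your proposal is correct and matches the paper's approach exactly: the paper presents this proposition explicitly as a summary of the preceding subsection, with no separate proof, since all three maps $g_1$, $g_2$, $q$ and their properties were constructed in the two lemmas and the paragraph on $\varphi$ immediately before. Your only addition is the remark about smoothness of the bases, which is harmless but not strictly needed (Zariski locally trivial affine space bundles are $\A^1$-weak equivalences without any smoothness hypothesis on the base, by homotopy invariance and Nisnevich gluing).
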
  

We can now prove the main fact of this subsection.

\begin{prop}
\label{prop:bundlesonHGr}
    For $v\in K_0(\HGr(r,n))$, $p_{\HGr(r,n)\#}\Sigma^v1_{\HGr(r,n)}\in \SH(k)$ is cellular.
\end{prop}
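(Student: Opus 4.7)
The plan is a double induction on the pair $(r,n)$, exploiting the closed/open decomposition $N^+ \hookrightarrow \HGr(r,n) \hookleftarrow Y$ established above. The base cases I would take are $r=1$ for all $n$, which follows from Proposition~\ref{prop:vectorbundlesonHPn} together with the identification $\HGr(1,n)=\HP^{n-1}$, and $n=r$, where $\HGr(r,r)=\Spec k$ and the statement is trivial. For the inductive step with $r\geq 2$ and $n>r$, I assume the result for both $\HGr(r-1,n-1)$ (outer induction on $r$) and $\HGr(r,n-1)$ (inner induction on $n$ for fixed $r$). Writing $i'\colon N^+\hookrightarrow X:=\HGr(r,n)$ for the closed immersion and $j\colon Y\hookrightarrow X$ for its open complement, applying $p_{X\#}$ to the localization sequence of Proposition~\ref{prop:localization} produces a distinguished triangle
\[
p_{X\#}j_!j^!\Sigma^v 1_X\to p_{X\#}\Sigma^v 1_X\to p_{X\#}i'_*i'^*\Sigma^v 1_X\to p_{X\#}j_!j^!\Sigma^v 1_X[1],
\]
so by Proposition~\ref{prop:s.e.s.-cellularity} it suffices to show that the two outer terms are cellular.

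For the right-hand term, $N^+$ is smooth over $k$ since it is the total space of a vector bundle over $\HGr(r,n-1)$, so Theorem~\ref{thm:MorVoePurity} gives
\[
p_{X\#}i'_*i'^*\Sigma^v 1_X\simeq p_{N^+\#}\Sigma^{N_{i'}+i'^*v} 1_{N^+},
\]
where $N_{i'}$ denotes the normal bundle of $i'$. Writing $\pi\colon N^+\to \HGr(r,n-1)$ for the bundle projection, $\A^1$-invariance (Proposition~\ref{prop:hom-invariance}) yields $\pi_\#\pi^*\simeq \id$, and the $\A^1$-invariance of $K$-theory gives an isomorphism $\pi^*\colon K_0(\HGr(r,n-1))\xrightarrow{\sim} K_0(N^+)$. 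Writing $[N_{i'}+i'^*v]=\pi^*w'$, I obtain
\[
p_{N^+\#}\Sigma^{\pi^*w'} 1_{N^+}\simeq p_{\HGr(r,n-1)\#}\Sigma^{w'} 1_{\HGr(r,n-1)},
\]
which is cellular by the inner inductive hypothesis.

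For the left-hand term, the identifications $j_!\simeq j_\#$ and $j^!\simeq j^*$ from Remark~\ref{rmk:etalemaps} give $p_{X\#}j_!j^!\Sigma^v 1_X\simeq p_{Y\#}\Sigma^{j^*v} 1_Y$. Invoking Proposition~\ref{prop:summary}, the zig-zag $Y\xleftarrow{g_1}Y_1\xleftarrow{g_2}Y_2\xrightarrow{q}\HGr(r-1,n-1)$ consists of Zariski-locally trivial affine space bundles, each inducing an $\A^1$-weak equivalence and hence an isomorphism on $K_0$. Setting $g=g_1\circ g_2$, there is a unique $w\in K_0(\HGr(r-1,n-1))$ with $g^*j^*v=q^*w$, and repeated application of the unit/counit isomorphisms from $\A^1$-invariance transports the suspension spectrum along the chain, yielding
\[
p_{Y\#}\Sigma^{j^*v} 1_Y\simeq p_{Y_2\#}\Sigma^{q^*w} 1_{Y_2}\simeq p_{\HGr(r-1,n-1)\#}\Sigma^w 1_{\HGr(r-1,n-1)},
\]
cellular by the outer inductive hypothesis. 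The main geometric obstacle, namely the identification of $Y$ up to $\A^1$-equivalence with an affine space bundle over the smaller Grassmannian $\HGr(r-1,n-1)$, was already overcome in Proposition~\ref{prop:summary}; given that, the remaining work is essentially careful bookkeeping with $\A^1$-invariance of both $\SH(-)$ and $K_0$, and with the Morel–Voevodsky purity isomorphism.
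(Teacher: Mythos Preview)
Your proof is correct and follows essentially the same strategy as the paper's: a double induction on $(r,n)$ using the localization triangle for the decomposition $N^+\hookrightarrow \HGr(r,n)\hookleftarrow Y$, Morel--Voevodsky purity for the closed piece (reducing to $\HGr(r,n-1)$), and the affine-bundle zigzag of Proposition~\ref{prop:summary} together with $\A^1$-invariance of $K_0$ for the open piece (reducing to $\HGr(r-1,n-1)$). The paper additionally singles out $n=r+1$ (where $\HGr(r,r+1)\simeq\HP^r$) as a base case, but your induction handles it through the general step, so this is only a cosmetic difference.
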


\begin{proof}
 We prove the statement for all $n \ge r \ge 0$ by induction on $r$.

For $r=0$, $\HGr(r,n)$ is isomorphic to $\Spec k$, and the statement is immediate. We can then assume $r > 0$ and proceed by induction on $n \ge r$.

Again, the case $n=r$ is trivial, so we assume $n>r+1$.

We rename the closed subscheme $N^+$ of $\HGr(r,n)$ as $Z$, and again $Y$ denotes its open complement. Since $Z \to \HGr(r,n-1)$ is a rank $2r$ vector bundle, $Z$ is smooth over $k$ of codimension $2r$ in $\HGr(r,n)$. Let $p$, $p_Z$, $p_Y$ denote the structure maps of $\HGr(r,n)$, $Z$ and $Y$.

From the diagram
\begin{equation}
\label{diag:loc.sequence.HGr}
\begin{tikzcd}
Z \arrow[r, hook, "i"] & \HGr(r,n) & Y \arrow[l, swap, hook, "j"]  
\end{tikzcd}
\end{equation}
we get the localization distinguished triangle
\begin{equation}
\label{eq:localization_HGr}
    j_!j^! \to \id_{\SH(\HGr(r,n))} \to i_*i^* \to j_!j^![1]
\end{equation}
of endofunctors of $\SH(\HGr(r,n))$, by Proposition \ref{prop:localization}. By composing this sequence with $p_{\#}$ and applying it to $\Sigma^v1_{\HGr(r,n)}$, we get the cofiber sequence in $\SH(k)$
\begin{equation}
    \label{eq:s.e.s.ofThomSpaces}
    p_{\#}j_!j^!\Sigma^v1_{\HGr(r,n)} \to p_{\#}\Sigma^v1_{\HGr(r,n)} \to p_{\#}i_*i^*\Sigma^v1_{\HGr(r,n)}.
\end{equation}
 
Thanks to Proposition \ref{prop:s.e.s.-cellularity}, it is enough show that the left-hand term and the right-hand term of \eqref{eq:s.e.s.ofThomSpaces} belong to $\langle 1_k \rangle$.

Let us see the left-hand term first. Because $j$ is an open immersion, we have $j_!\simeq j_\#$ and $j^!\simeq j^*$, then $$p_{\#}j_!j^!\Sigma^v1_{\HGr(r,n)}\simeq p_{Y\#}j^*\Sigma^v1_{\HGr(r,n)} \simeq p_{Y\#}\Sigma^{j^*v}1_Y.$$
Proposition~\ref{prop:summary} gives the diagram
$$
\begin{tikzcd}
Y & Y_1 \arrow[l,swap,"g_1"] & Y_2 \arrow[l,swap,"g_2"] \arrow[r,"q"] & \HGr(r-1,n-1)   
\end{tikzcd}
$$
where the maps $g_1,g_2$ and $q$ are $\mathbb{A}^1$-weak equivalences, in fact, affine space bundles. Moreover, $q$ is a trivial $\mathbb{A}^{4n-3}$-bundle, hence, it has a zero section $s_0:\HGr(r-1,n-1) \to Y_2$, which is an inverse of $q$ in $\sH(k)$. The composition $h \coloneqq g_1 \circ g_2 \circ s_0: \HGr(r-1,n-1) \to Y$ is thus an $\mathbb{A}^1$-weak equivalence. Now, $g_1,g_2$ and $q$ are affine space bundles, and $Y_1$, $Y_2$ and 
$\HGr(r-1,n-1)$ are smooth, hence, by $\A^1$-homotopy invariance of $K$-theory on regular schemes \cite[\S7 paragraph 1, and Proposition 4.1]{Quillen:K}, the maps $q^*:K_0(\HGr(r-1,n-1))\to K_0(Y_2)$ and $(g_1g_2)^*:K_0(Y)\to K_0(Y_2)$ are isomorphisms. Thus, there is an element $v'\in K_0(\HGr(r-1,n-1))$ such that $q^*v'=(g_1g_2)^*j^*v$ in $K_0(Y_2)$, and, since $g_1,g_2$ and $q$ are $\A^1$ weak equivalences, we have
$$
p_{Y\#}\Sigma^{j^*v}1_Y \simeq p_{(\HGr(r-1,n-1)\#}\Sigma^{v'}1_{\HGr(r-1,n-1)}
$$
in $\SH(k)$. 
By the induction hypothesis on $r$, we conclude that $p_{Y\#} \Sigma^{j^*v}1_Y$ belongs to $\langle 1_k \rangle$.

Let us now prove the claim for the right-hand term of \eqref{eq:s.e.s.ofThomSpaces}. Let us recall that $Z$ is isomorphic to the tautological rank $2r$ symplectic bundle over $\HGr(r,n-1)$, and as such, it is a smooth $k$-scheme, with a projection $q':Z\to \HGr(r,n-1)$ realizing $Z$ as a vector bundle over $\HGr(r,n-1)$. For simplicity, let us call $X \coloneqq \HGr(r,n)$. By Theorem \ref{thm:MorVoePurity}, we have
$$p_{\#}i_*i^*\Sigma^v1_X \simeq p_{Z\#}\Sigma^{N_i}i^*\Sigma^v1_X \simeq p_{Z\#}\Sigma^{N_i+i^*v}1_Z.$$
Since $q'$ is a vector bundle, there is an element $v''\in K_0(\HGr(r,n-1))$ such that $q'^*v''=i^*v+N_i$ in $K_0(Z)$. Since $q'$ is an $\A^1$-weak equivalence,
$$p_{Z\#}\Sigma^{N_i + i^*v}1_Z\simeq p_{\HGr(r,n-1)\#}\Sigma^{v''}1_{\HGr(r,n-1)}.$$
By our induction hypothesis, we have that $p_{Z\#}\Sigma^{N_i + i^*v}1_Z$ belongs to $\langle 1_k \rangle$.

Therefore, we conclude that $p_{\#}\Sigma^V1_{\HGr(r,n)}$ belongs to $\langle 1_k \rangle $.
\end{proof}

\begin{corollary}
\label{cor:cellularityofMSp}
    The symplectic cobordism spectrum $\MSp$ is cellular.
\end{corollary}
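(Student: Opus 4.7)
The plan is to unwind the colimit definition of $\MSp$ and reduce the statement to Proposition~\ref{prop:bundlesonHGr}, which has already done the geometric work. Recall from Construction~\ref{constr:MSp} that
\[
\MSp \simeq \colim_r \Sigma^{-4r,-2r}\Sigma_{\P^1}^\infty \MSp_{2r},
\]
and that $\MSp_{2r} = \Th(E_{2r}^\Sp)$, where $E_{2r}^\Sp = \colim_N E_{2r,2rN}^\Sp$ is the tautological symplectic bundle on $\BSp_{2r} = \colim_N \HGr(r,rN)$. Since $\Sigma_{\P^1}^\infty$ commutes with the filtered colimits defining $\BSp_{2r}$ and $E_{2r}^\Sp$, I would first identify
\[
\Sigma_{\P^1}^\infty \MSp_{2r} \simeq \colim_N \Sigma_{\P^1}^\infty \Th(E_{2r,2rN}^\Sp) \simeq \colim_N p_{\HGr(r,rN)\#}\Sigma^{E_{2r,2rN}^\Sp}1_{\HGr(r,rN)},
\]
using the standard identification $\Sigma_{\P^1}^\infty \Th(V) \simeq p_\#\Sigma^V 1_X$ for a vector bundle $V \to X$ with structure morphism $p$, discussed at the end of \S\ref{subsection:thomequivalences}.

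Next, I would apply Proposition~\ref{prop:bundlesonHGr} with $v = [E_{2r,2rN}^\Sp] \in K_0(\HGr(r,rN))$ to conclude that each term $p_{\HGr(r,rN)\#}\Sigma^{E_{2r,2rN}^\Sp}1_{\HGr(r,rN)}$ is a cellular spectrum. A filtered colimit is a homotopy colimit, and Definition~\ref{defn:motivicCellular} demands that $\langle \mathbb{S}_k\rangle$ be closed under homotopy colimits, so $\Sigma_{\P^1}^\infty \MSp_{2r}$ is cellular for every $r$. Suspending by $\Sigma^{-4r,-2r}$ preserves cellularity, again by definition, and a final application of closure under filtered homotopy colimits to the system
\[
\Sigma_{\P^1}^\infty \MSp_0 \to \Sigma^{-4,-2}\Sigma_{\P^1}^\infty \MSp_2 \to \Sigma^{-8,-4}\Sigma_{\P^1}^\infty \MSp_4 \to \cdots
\]
yields that $\MSp \in \langle \mathbb{S}_k\rangle$.

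Essentially no obstacle remains at this stage: all the serious geometric content (the stratification of $\HGr(r,n)$ by $N^+$ and its $\mathbb{A}^1$-contractible open complement, the induction on $r$ and $n$, and the careful $K_0$-manipulations) is already encapsulated in Proposition~\ref{prop:bundlesonHGr}. The only mild point to be careful about is that the colimits appearing in the construction of $\MSp_{2r}$ and $\MSp$ are filtered, hence coincide with the corresponding homotopy colimits in $\SH(k)$; this is standard. For the same reason, the argument works verbatim with $\MSp_{2r}$ in place of $\MGL_r$, paralleling the proof of Theorem~\ref{thm:mglcellular}, with Proposition~\ref{prop:bundlesonHGr} playing the role that the Schubert-cell cellularity of ordinary Grassmannians played there via Lemma~\ref{lemma:cellularSchemes2}.
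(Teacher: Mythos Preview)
Your proposal is correct and follows essentially the same approach as the paper: express $\MSp$ and each $\MSp_{2r}$ as filtered (hence homotopy) colimits, identify the terms as $p_{\HGr(r,n)\#}\Sigma^{E^\Sp_{2r,2n}}1_{\HGr(r,n)}$, and invoke Proposition~\ref{prop:bundlesonHGr}. The paper's proof is slightly terser but the logical structure is identical.
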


\begin{proof}
    By definition, $\MSp$ is isomorphic to the filtered colimit of the system
    $$\Sigma^\infty_{\P^1}\MSp_0 \to \Sigma^{-4,-2}\Sigma^\infty_{\P^1}\MSp_2 \to \Sigma^{-8,-4} \Sigma_{\P^1}^\infty \MSp_4 \to \ldots,$$
    which is in particular a homotopy colimit. Thus, it is enough to show that $\Sigma^\infty_{\P^1}\MSp_{2r}$ is cellular for all $r$. $\MSp_{2r}$ is in turn the homotopy colimit $\colim_n\Th(E_{r,n}^{\Sp})$, and $\Sigma^\infty_{\P^1}\Th(E_{r,n}^\Sp)$ is cellular for all $r,n$ by Proposition \ref{prop:bundlesonHGr}. This concludes the proof.
\end{proof}

\subsection{The motive of $\MSp$}

By adapting the methods of the previous subsection, one can express the cellularity of $\MSp$ in terms of its motive $\MSp \wedge H\Z$. We introduce some notation: for a free graded $\Z$-module $\Z \cdot x$, with $x$ in degree $n$, we write $\Z \cdot x \otimes \sE$ for the motivic spectrum $\Sigma^{-2n,-n}\sE \in \SH(k)$. This naturally extends to define $M_* \otimes \sE$ for $M_*=\oplus_{n \in \Z}M_n$ a direct sum of free graded $\Z$-modules.

\begin{prop}
\label{prop:MotiveOfMSp}
    The motive of $\MSp$ in $\DM(k)$ has the form
    $$\MSp \wedge H\Z \simeq \Z[b'_1,b_2',\ldots] \otimes H\Z,$$
    with $b_i'$ a polynomial generator of degree $-2i$.
\end{prop}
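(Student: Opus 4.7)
The plan is to construct an explicit comparison map in $\DM(k)$ from $\MSp \wedge H\Z$ to a direct sum of shifted Eilenberg--MacLane spectra indexed by monomials in the Borel classes, and to show it is an equivalence by combining cellularity with the cohomology computation of Theorem~\ref{thm:cohomologyofmsp}. First, I would apply Theorem~\ref{thm:cohomologyofmsp} with $\sE = H\Z$; by Remark~\ref{rmk:homogeneous}, the homogeneous completion reduces to an honest polynomial ring, so $H\Z^{*,*}(\MSp) \simeq H\Z^{*,*}(\Spec k)[b_1,b_2,\ldots]$, with $b_i$ the universal Borel class in bidegree $(4i,2i)$. This is a free $H\Z^{*,*}(\Spec k)$-module on the monomials $b^\alpha := b_1^{i_1} b_2^{i_2}\cdots$, each in bidegree $(4n_\alpha, 2n_\alpha)$ with $n_\alpha := i_1 + 2i_2 + \cdots$; moreover for each fixed $n$ there are only finitely many $\alpha$ with $n_\alpha = n$.

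Each $b^\alpha \in H\Z^{4n_\alpha, 2n_\alpha}(\MSp)$ corresponds, by the Free--Forgetful adjunction between $\SH(k)$ and $\DM(k)$, to a map $\tilde b^\alpha : \MSp \wedge H\Z \to \Sigma^{4n_\alpha, 2n_\alpha}H\Z$ in $\DM(k)$. Using the finite-at-each-level filtration by weighted degree, I assemble these into a single morphism
\[
\phi : \MSp \wedge H\Z \longrightarrow T := \bigoplus_\alpha \Sigma^{4n_\alpha, 2n_\alpha}H\Z
\]
in $\DM(k)$, realized as the colimit of its restrictions to the finite sub-sums $\bigoplus_{n_\alpha \leq N}\Sigma^{4n_\alpha,2n_\alpha}H\Z$. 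The key verification is that $\phi$ induces an isomorphism on $H\Z$-cohomology: by Theorem~\ref{thm:MazWeiHZ}, $H\Z^{p,q}(\Spec k) = 0$ whenever $q < 0$ or $p > 2q$, so for each fixed $(a,b)$ only finitely many $\alpha$ contribute to $[T, \Sigma^{a,b}H\Z]_{\DM(k)} \simeq \bigoplus_\alpha H\Z^{a-4n_\alpha, b-2n_\alpha}(\Spec k)$; the induced map $\phi^*$ sends the generator of the $\alpha$-th summand to $b^\alpha$ (up to the corresponding $H\Z^{*,*}(\Spec k)$-coefficient), which is a basis of $H\Z^{a,b}(\MSp) = [\MSp \wedge H\Z, \Sigma^{a,b}H\Z]_{\DM(k)}$ by the first step.

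To upgrade the cohomology isomorphism to an equivalence, I appeal to cellularity. By Corollary~\ref{cor:cellularityofMSp}, $\MSp \in \langle \mathbb{S}_k\rangle \subset \SH(k)$, and since $-\wedge H\Z$ sends $\Sigma^{a,b}\mathbb{S}_k$ to $\Sigma^{a,b}H\Z$, both $\MSp \wedge H\Z$ and $T$ lie in the localizing subcategory of $\DM(k)$ generated by shifts of $H\Z$; so does the cofiber $C := \mathrm{cofib}(\phi)$, which moreover satisfies $[C, \Sigma^{a,b}H\Z]_{\DM(k)} = 0$ for every $(a,b)$. A Whitehead-type argument then yields $C \simeq 0$, and $\phi$ is an equivalence. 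The identification of $T$ with $\Z[b'_1,b'_2,\ldots]\otimes H\Z$ via the convention prescribed before the proposition — setting $b'_i$ in degree $-2i$, so that the monomial of total degree $-2n_\alpha$ contributes the summand $\Sigma^{4n_\alpha, 2n_\alpha}H\Z$ — completes the proof.

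The main obstacle is the final Whitehead step: in a compactly generated triangulated category one normally deduces $C \simeq 0$ from the vanishing of maps \emph{out of} the generators, whereas I only control vanishing of maps \emph{into} the generators $\Sigma^{a,b}H\Z$. The fix uses the explicit cellular structure from Proposition~\ref{prop:bundlesonHGr}, which exhibits $\MSp$ as a filtered colimit of compact cellular subspectra whose $H\Z$-cohomology is finitely generated and bounded in each bidegree. On such compact dualizable pieces, vanishing of cohomology and of homology are equivalent, and the Whitehead argument reduces to the standard one piece by piece before being reassembled in the colimit.
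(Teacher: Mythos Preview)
Your approach differs substantially from the paper's, and as you partly anticipate, the Whitehead step is where it breaks down. There are in fact two gaps. First, the cohomology classes $\tilde b^\alpha$ assemble into a map to the \emph{product} $\prod_\alpha \Sigma^{4n_\alpha,2n_\alpha}H\Z$, not to the coproduct; your phrase ``realized as the colimit of its restrictions to the finite sub-sums'' does not produce such a factorization, since one cannot build a map \emph{into} a colimit from maps into its terms. Second, even granting $\phi$, triviality of an object in the localizing subcategory of $\DM(k)$ generated by the $\Sigma^{a,b}H\Z$ is detected by $[\Sigma^{a,b}H\Z, -]$ (these being compact generators there), not by $[-, \Sigma^{a,b}H\Z]$. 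Your proposed fix via duality on compact pieces is on the right track but is not a proof as written: to run it you must show that $\phi$ restricts compatibly to equivalences on a cofinal system of compact stages, which requires controlling the cohomology of the finite-level Thom spectra and the restriction maps stage by stage (via Proposition~\ref{prop:CohBSpMSp}), not merely the limit computation of Theorem~\ref{thm:cohomologyofmsp}.

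The paper avoids both issues by never constructing a global comparison map. It proves inductively, using the localization triangle for $N^+ \hookrightarrow \HGr(r,n) \hookleftarrow Y$ together with the identifications of Proposition~\ref{prop:summary} and the Thom isomorphism for $H\Z$, that each $\Sigma^\infty_{\P^1}\HGr(r,n)_+ \wedge H\Z$ already splits as a finite sum $\bigoplus_s \Sigma^{4m_s,2m_s}H\Z$. The crucial point is that the boundary map of this triangle lies in a group of the form $\bigoplus H\Z^{4m+1,2m}(\Spec k)$, which vanishes by Theorem~\ref{thm:MazWeiHZ}, so the triangle splits. One then passes to the colimit over $n$ and $r$ to obtain $\MSp \wedge H\Z \simeq \bigoplus_\alpha \Sigma^{4n_\alpha,2n_\alpha}H\Z$; Theorem~\ref{thm:cohomologyofmsp} enters only at the very end, to identify the index set with monomials in the Borel classes.
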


\begin{proof}
    First, we show that for all $n \ge r \ge 0$, the $H\Z$-module $\Sigma^\infty_{\P^1} \HGr(r,n)_+ \wedge H\Z$ is of the form $\oplus_s \Sigma^{4m_s,2m_s}H\Z$, for a finite number of non-negative integers $m_s$. 

    For $r=0$ or $r=n$, the statement trivially holds. We can then suppose $n>r>0$ and the claim true for all quaternionic Grassmannians $\HGr(r',n')$ with $r'\le r$ and $n'<n$, or $r'<r$ and $n'\le n$. 
    
    We take $Z \coloneqq N^+$, $X\coloneqq \HGr(r,n)$ with structure map $p$, and $i,j$ the respective inclusion as in \eqref{diag:loc.sequence.HGr}, and we consider again the localization distinguished triangle \eqref{eq:localization_HGr}. Applying $p_\#(-)(1_X)\wedge H\Z$ to \eqref{eq:localization_HGr} gives the distinguished triangle
    \begin{equation}
        \label{eq:loc.seq.HGr}
        p_\#j_!j^!1_X \wedge H\Z \to \Sigma^\infty_{\P^1}X_+ \wedge H\Z \to p_\# i_*i^*1_X \wedge H\Z \xrightarrow{\alpha} j_!j^!1_X[1] \wedge H\Z
    \end{equation}
    in $\DM(k)$. As in the proof of Proposition \ref{prop:bundlesonHGr}, we have $p_\#j_!j^!1_X \simeq p_{Y\#}1_Y$ and $p_\#i_*i^*1_X \simeq p_{Z\#}\Sigma^{N_i}1_Z$, with $N_i$ the normal bundle of $i$.
    
    For the remainder of the proof, except where indicated to the contrary, we will work in $\DM(k)$.

  Let us look at the right-hand term $p_{Z\#}\Sigma^{N_i}1_Z \wedge H\Z$ of \eqref{eq:loc.seq.HGr}. Since $p_{Z\#}$ satisfies projection formula against $p_Z^*$ by Remark \ref{rmk:smoothsharp-properties}, we have $p_{Z\#}\Sigma^{N_i}1_Z \wedge H\Z \simeq p_{Z\#}\Sigma^{N_i}p_Z^*H\Z,$
    and through the isomorphism $\th^f_{H\Z \Mod}(N_i)$, we get $p_{Z\#}\Sigma^{N_i}p_Z^*H\Z \xrightarrow{\sim} \Sigma^{4c,2c}p_{Z\#}p_Z^*H\Z$, with $2c$ the codimension of $Z$ in $X$. By the projection formula again, we have $p_{Z\#}p_Z^*H\Z \simeq \Sigma_{\P^1}^\infty Z_+\wedge H\Z$, but $Z$ is a vector bundle over $\HGr(r,n-1)$, so $\Sigma_{\P^1}^\infty Z_+ \simeq \Sigma_{\P^1}^\infty \HGr(r,n-1)_+$. We can then apply the induction hypothesis to get
    $$p_{Z\#}\Sigma^{N_i}1_Z \wedge H\Z\simeq \Sigma^{4c,2c}\oplus_i \Sigma^{4m_i^Z,2m_i^Z}H\Z$$
    for a finite number of non-negative integers $m_i^Z$.

    Let us look at the left-hand term $p_{Y\#}1_Y \wedge H\Z$ of \eqref{eq:loc.seq.HGr}. As in the proof of Proposition \ref{prop:bundlesonHGr}, there is an $\A^1$-weak equivalence $h:\HGr(r-1,n-1) \to Y$, then $p_{Y\#}1_Y \wedge H\Z \simeq \Sigma_{\P^1}^\infty \HGr(r-1,n-1)_+ \wedge H\Z$. We can then use again our induction hypothesis to get
    $$p_{Y\#}1_Y \wedge H\Z\simeq \oplus_{i'} \Sigma^{4m_{i'}^Y,2m_{i'}^Y}H\Z$$
    for a finite number of non-negative integers $m_{i'}^Y$.

    Now, the boundary map $\alpha$ in \eqref{eq:loc.seq.HGr} is an element in
    \begin{multline*}
         [p_\# i_*i^*1_X \wedge H\Z,j_!j^!1_X[1] \wedge H\Z]_{\DM(k)}\cong
         [\Sigma^{4c,2c}\oplus_i \Sigma^{4m_i^Z,2m_i^Z}H\Z,\Sigma^{1,0}\oplus_{i'} \Sigma^{4m_{i'}^Y,2m_{i'}^Y}H\Z]_{\DM(k)} \cong \\
         \oplus_{i,i'}[\Sigma^{4(c+m_i^Z),2(c+m_i^Z)}H\Z, \Sigma^{4m_{i'}^Y+1,2m_{i'}^Y}H\Z]_{\DM(k)} \cong \\
         \oplus_{i,i'}[H\Z, \Sigma^{4(m_{i'}^Y-m_i^Z-c)+1,2(m_{i'}^Y-m_i^Z-c)}H\Z]_{\DM(k)}.
    \end{multline*}
    By the free-forgetful adjunction, this corresponds to an element in
    $$\oplus_{i,i'}[1_k, \Sigma^{4(m_{i'}^Y-m_i^Z-c)+1,2(m_{i'}^Y-m_i^Z-c)}H\Z]_{\SH(k)} = \oplus_{i,i'}H\Z^{4(m_{i'}^Y-m_i^Z-c)+1,2(m_{i'}^Y-m_i^Z-c)}(\Spec k),$$
    but each abelian group $H\Z^{4(m_{i'}^Y-m_i^Z-c)+1,2(m_{i'}^Y-m_i^Z-c)}(\Spec k)$ is trivial by Theorem \ref{thm:MazWeiHZ}. We conclude that $\alpha=0$, and the sequence \eqref{eq:loc.seq.HGr} splits. Hence 
    $$\Sigma^\infty_{\P^1}\HGr(r,n)_+ \wedge H\Z \simeq \oplus_s\Sigma^{4m_s,2m_s}H\Z$$
    for a finite number of non-negative integers $m_s$, proving the claim.

    Now, let us recall that $\BSp_{2r}=\colim_N \HGr(r,rN)$. Since the $\text{free}_{H\Z}$ functor $(-)\wedge H\Z$ is left adjoint, it commutes with small colimits, hence 
    \begin{equation}
    \label{eq:MotiveOfBsp}
        \Sigma_{\P^1}^\infty \BSp_{2r \, +}\wedge H\Z \simeq \colim_N(\Sigma_{\P^1}^\infty \HGr(r,rN)_+\wedge H\Z).
    \end{equation}
    
    Now we claim that, once we fix $r$, for each non-negative integer $m$ the number of summands in the right-hand side for which $m_s=m$ is eventually constant for $n>>0$. We can assume that this is the case for $\Sigma^\infty_{\P^1} \HGr(r-1,n)_+ \wedge H\Z$ by induction. Noting that $Z$ has codimension $c=2r$, independent of $n$, the above argument for the identity $\Sigma^\infty_{\P^1} \HGr(r,n)_+ \wedge H\Z \simeq \oplus_s \Sigma^{4m_s,2m_s}H\Z$ shows that the pairs $(4m,2m)$ that occur for $\HGr(r,n+1)$ are the pairs that occur for $\HGr(r-1,n)$ together with pairs of the form $(4(r+m'), 2(r+m'))$ where $(4m',2m')$ is a pair occurring for $\HGr(r,n)$. This proves the claim.
    
    We can then rewrite \eqref{eq:MotiveOfBsp} as
    \begin{equation}
    \label{eq:MotiveBsp2}
        \Sigma_{\P^1}^\infty \BSp_{2r \, +}\wedge H\Z \simeq \oplus_s \Sigma^{4m_s,2m_s}H\Z,
    \end{equation}
    where now the sum will be infinite, but for all $m$ there is only a finite number of summands for which $m_s=m$.

    As before, by using projection formula of $p_{\BSp_{2r}\#}$ against $p_{\BSp_{2r}}^*$ and the isomorphism $\th^f_{H\Z \Mod}(E_{2r}^\Sp):\Sigma^{E_{2r}^\Sp}p_{\BSp_{2r}}^*H\Z \xrightarrow{\sim}\Sigma^{4r,2r}p_{\BSp_{2r}}^*H\Z$, we get
    $$p_{\BSp_{2r}\#}\Sigma^{E_{2r}^\Sp}1_{\BSp_{2r}}\wedge H\Z \simeq \Sigma^{4r,2r}p_{\BSp_{2r}\#}1_{\BSp_{2r}}\wedge H\Z.$$
    Thus, since $\Sigma_{\P^1}^\infty \MSp_{2r}\simeq p_{\BSp_{2r}\#}\Sigma^{E_{2r}^\Sp}1_{\BSp_{2r}}$, we can write
    $$\Sigma_{\P^1}^\infty \MSp_{2r} \wedge H\Z \simeq \Sigma^{4r,2r}\oplus_s \Sigma^{4m_s,2m_s}H\Z,$$
    with $m_s$ as in \eqref{eq:MotiveBsp2}. Since $\MSp =\colim_r(\Sigma^{-4r,-2r}\Sigma^\infty_{\P^1}\MSp_{2r})$, we can use again that $(-)\wedge H\Z$ commutes with small colimits and that, for each integer $m$, the number of summands in the right hand side of \eqref{eq:MotiveBsp2} such that $m_s=m$ becomes constant for $r>>0$, to get
    \begin{equation}\label{eqn:MSpDecom}
  \MSp \wedge H\Z \simeq \oplus_\alpha \Sigma^{4n_\alpha,2n_\alpha}H\Z,
  \end{equation}
    where, for all integers n, there are only finitely many indices $\alpha$ with $n_\alpha=n$, and each $n_\alpha$ is non-negative.

    We have then the following chain of isomorphisms:
  \begin{multline*}H\Z^{*,*}(\MSp)= [\MSp, \Sigma^{*,*}H\Z]_{\SH(k)}
        \cong [\MSp \wedge H\Z, \Sigma^{*,*}H\Z]_{\DM(k)}\\
        \overset{\eqref{eqn:MSpDecom}}{\cong} [\oplus_\alpha \Sigma^{4n_\alpha,2n_\alpha}H\Z,\Sigma^{*,*}H\Z]_{\DM(k)}
        \cong \prod_\alpha[H\Z,\Sigma^{*-4n_\alpha,*-2n_\alpha}H\Z]_{\DM(k)}\\
        \cong \prod_\alpha[1_k,\Sigma^{*-4n_\alpha,*-2n_\alpha}H\Z]_{\SH(k)}
       =\oplus_\alpha H\Z^{*,*}(\Spec k)\cdot b_\alpha,
   \end{multline*}
with $b_\alpha$ in bidegree $(4n_\alpha,2n_\alpha)$. But from Theorem \ref{thm:cohomologyofmsp}, with Remark \ref{rmk:homogeneous}, we know that $H\Z^{*,*}(\MSp) \simeq H\Z^{*,*}(\Spec k)[b_1,b_2,\ldots]$,
    with $b_i$ in bidegree $(4i,2i)$, in other words, $H\Z^{*,*}(\MSp)$ is a free bigraded $H\Z^{*,*}$-module with basis the monomials in $b_1, b_2,\ldots$. Thus, the bidegrees $(4n_\alpha,2n_\alpha)$ that occur in our decomposition \eqref{eqn:MSpDecom} are exactly the bidegrees of the monomials in $b_1, b_2,\ldots$.
    
    From this we see that 
    $$\MSp \wedge H\Z \simeq \Z[b_1',b_2',\ldots] \otimes H\Z,$$
with $b_i'$ a polynomial generator of degree $-2i$.
\end{proof}

\begin{corollary}
\label{cor:product_map}
    For any integer $m \ge 1$, the canonical map
    $$H\Z^{*,*}(\MSp)^{\otimes_{H\Z^{*,*}(\Spec k)}m} \to H\Z^{*,*}(\MSp^{\wedge m})$$
    is an isomorphism. The same holds if we replace $H\Z$ with $H\Z/\ell$ for any prime $\ell$.
\end{corollary}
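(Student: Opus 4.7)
The plan is to reduce everything to Proposition~\ref{prop:MotiveOfMSp} and the known description of $H\Z^{*,*}(\MSp)$ from Theorem~\ref{thm:cohomologyofmsp}, by passing from $\SH(k)$ to $\DM(k)$ via the free-forgetful adjunction. First, I would rewrite the right-hand side using the adjunction:
\[
H\Z^{*,*}(\MSp^{\wedge m}) = [\MSp^{\wedge m}, \Sigma^{*,*}H\Z]_{\SH(k)} \simeq [\MSp^{\wedge m}\wedge H\Z, \Sigma^{*,*}H\Z]_{\DM(k)}.
\]
The key identity $H\Z \wedge_{H\Z} X \simeq X$ for any $H\Z$-module $X$ then yields an isomorphism of $H\Z$-modules
\[
\MSp^{\wedge m}\wedge H\Z \;\simeq\; (\MSp\wedge H\Z)^{\wedge_{H\Z}^m}.
\]

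Next, I would apply Proposition~\ref{prop:MotiveOfMSp}, which gives a decomposition $\MSp\wedge H\Z \simeq \oplus_\alpha \Sigma^{4n_\alpha,2n_\alpha}H\Z$ in $\DM(k)$, with all $n_\alpha\ge 0$ and only finitely many $n_\alpha$ equal to a given integer $n$. Smashing $m$ copies over $H\Z$ gives
\[
(\MSp\wedge H\Z)^{\wedge_{H\Z}^m} \;\simeq\; \bigoplus_{(\alpha_1,\dots,\alpha_m)} \Sigma^{4N,2N}H\Z, \qquad N := n_{\alpha_1}+\cdots+n_{\alpha_m},
\]
with the analogous bidegree-wise finiteness. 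Applying $[-,\Sigma^{a,b}H\Z]_{\DM(k)}$ turns the direct sum into a product, but using the vanishing $H\Z^{p,q}(\Spec k)=0$ for $q<0$ from Theorem~\ref{thm:MazWeiHZ}, in each fixed bidegree $(a,b)$ only finitely many factors are nonzero, so the product reduces to a direct sum, yielding
\[
H\Z^{a,b}(\MSp^{\wedge m}) \;\simeq\; \bigoplus_{(\alpha_1,\dots,\alpha_m)} H\Z^{a-4N,\,b-2N}(\Spec k).
\]

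On the other side, the same decomposition gives $H\Z^{*,*}(\MSp) \simeq \oplus_\alpha H\Z^{*,*}(\Spec k)\cdot b_\alpha$ (which recovers the polynomial ring description of Theorem~\ref{thm:cohomologyofmsp} via Remark~\ref{rmk:homogeneous}), where $b_\alpha$ has bidegree $(4n_\alpha,2n_\alpha)$. Taking the $m$-fold tensor product over $H\Z^{*,*}(\Spec k)$ gives
\[
H\Z^{*,*}(\MSp)^{\otimes_{H\Z^{*,*}(\Spec k)} m} \;\simeq\; \bigoplus_{(\alpha_1,\dots,\alpha_m)} H\Z^{*,*}(\Spec k)\cdot(b_{\alpha_1}\otimes\cdots\otimes b_{\alpha_m}),
\]
and in each bidegree $(a,b)$ this coincides with the expression above, so both sides are abstractly isomorphic as bigraded abelian groups.

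To conclude, I would verify that the canonical external product map realizes this identification. Each $b_\alpha:\MSp\to \Sigma^{4n_\alpha,2n_\alpha}H\Z$ corresponds under the $\SH(k)$-$\DM(k)$ adjunction to a morphism $\MSp\wedge H\Z \to \Sigma^{4n_\alpha,2n_\alpha}H\Z$ of $H\Z$-modules, namely the projection onto the $\alpha$-th summand. Smashing these morphisms over $H\Z$ and postcomposing with the multiplication $H\Z^{\wedge_{H\Z}^m}\simeq H\Z$ yields precisely the projection $(\MSp\wedge H\Z)^{\wedge_{H\Z}^m}\to \Sigma^{4N,2N}H\Z$ onto the summand indexed by $(\alpha_1,\dots,\alpha_m)$, which under the adjunction is the class $b_{\alpha_1}\otimes\cdots\otimes b_{\alpha_m}\in H\Z^{4N,2N}(\MSp^{\wedge m})$. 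Thus the canonical map matches the bidegree-wise isomorphism, and is itself an isomorphism. The main obstacle in this argument is largely bookkeeping: keeping track of the adjunction and of the identification of the external product in $\SH(k)$ with smash product over $H\Z$ of module maps in $\DM(k)$; the substantive input is entirely Proposition~\ref{prop:MotiveOfMSp}.
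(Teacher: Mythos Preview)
Your proposal is correct and follows essentially the same route as the paper: both pass to $\DM(k)$ via the free-forgetful adjunction, use Proposition~\ref{prop:MotiveOfMSp} to decompose $(\MSp\wedge H\Z)^{\wedge_{H\Z}^m}$ as a direct sum of shifted copies of $H\Z$, and invoke the vanishing of $H\Z^{p,q}(\Spec k)$ for $q<0$ to collapse the resulting product to a direct sum. Your final paragraph checking that the canonical external product map realizes the abstract identification is slightly more explicit than the paper's treatment, but the substance is identical.
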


\begin{proof}
    In the proof of Proposition \ref{prop:MotiveOfMSp} we have seen that we can write the motive of $\MSp$ as
    \begin{equation}
    \label{eq:MotiveMSp}
        \MSp \wedge H\Z \simeq \oplus_\alpha \Sigma^{4n_\alpha,2n_\alpha}H\Z,
    \end{equation}
    for some non-negative integer $n_\alpha$, and for each non-negative integers $n$, there are only finitely many indices $\alpha$ such that $n_\alpha=n$.

    By induction on $m$, we will also have 
    \begin{equation}
    \label{eq:MotiveMSp^n}
       (\MSp^{\wedge m}) \wedge H\Z \simeq (\MSp \wedge H\Z)^{\wedge_{H\Z} m} \simeq \oplus_\beta \Sigma^{4m_\beta,2m_\beta}H\Z, 
    \end{equation}
    with analogous conditions on the non-negative integers $m_\beta$. Thus, computing the motivic cohomology of $\MSp^{\wedge m}$ we have
    \begin{multline}
    \label{eq:chain1}
        H\Z^{p,q}(\MSp^{\wedge m})=[\MSp^{\wedge m},\Sigma^{p,q}H\Z]_{\SH(K)} \simeq [\MSp^{\wedge m} \wedge H\Z, \Sigma^{p,q}H\Z]_{\DM(k)} \\
        \simeq [\oplus_\beta \Sigma^{4m_\beta,2m_\beta}H\Z,\Sigma^{p,q}H\Z]_{\DM(k)}\simeq [1_k,\Sigma^{p-4m_\beta,q-2m_\beta}H\Z]_{\SH(k)} \\ \simeq \prod_\beta H\Z^{p-4m_\beta,q-2m_\beta}(\Spec k).
    \end{multline}
    Since $H\Z^{a,b}(\Spec k)$ vanishes for $b<0$ by Theorem \ref{thm:MazWeiHZ}, we also have 
    \begin{equation}
    \label{eq:chain2}
        \prod_\beta H\Z^{p-4m_\beta,q-2m_\beta}(\Spec k) \simeq \oplus_\beta H\Z^{p-4m_\beta,q-2m_\beta}(\Spec k).
    \end{equation}

    The expressions \eqref{eq:MotiveMSp} and \eqref{eq:MotiveMSp^n} give the isomorphism of $H\Z$-modules
    $$(\oplus_\alpha \Sigma^{4n_\alpha,2n_\alpha}H\Z)^{\wedge_{H\Z} m} \simeq \oplus_\beta \Sigma^{4m_\beta,2m_\beta} H\Z.$$
    With the product in motivic cohomology, this induces the isomorphism
    $$(\oplus_\alpha H\Z^{*-4n_\alpha,*-2n_\alpha}(\Spec k))^{\otimes_{H\Z^{*,*}(\Spec k)} m} \xrightarrow{\sim} \oplus_\beta H\Z^{*-4m_\beta,*-2m_\beta}(\Spec k),$$
    which can be rewritten, through \eqref{eq:chain2} and \eqref{eq:chain1}, as
    $$H\Z^{*,*}(\MSp)^{\otimes_{H\Z^{*,*}(\Spec k)} m} \xrightarrow{\sim} H\Z^{*,*}(\MSp^{\wedge m}).$$
\end{proof}

\begin{rmk}
\label{rmk:HZ/ell}
    Since Theorem \ref{thm:MazWeiHZ} remains true after replacing $H\Z$ with $H\Z/\ell$ for any prime $\ell$, the proofs of Proposition \ref{prop:MotiveOfMSp} and Corollary \ref{cor:product_map} also apply after replacing $H\Z$ with $H\Z/\ell.$
\end{rmk}

\section{The Adams spectral sequence for $\MSp$}
\label{section:A.s.s}

From now on, we fix an odd prime $\ell$ different from $\chr k$, and we use the shorthand $H^{*,*}(-)\coloneqq H\Z/\ell^{*,*}(-)$ for the mod-$\ell$ motivic cohomology.

We want to compute the mod-$\ell$ Adams spectral sequence for $\MSp$ following the same lines as \cite[Chapters 4-5-6]{lev:ellcoh}. We first need to briefly recall the construction of the mod-$\ell$ motivic Steenrod algebra, introduced in \cite{Voev-power}.

\subsection{The motivic Steenrod algebra}

Let us recall that a bistable cohomology operation of bidegree $(i,j)$ on a motivic spectrum $\E \in \SH(k)$ is a family $\{\phi_{p,q}\}_{p,q \in \Z}$ of natural transformations of functors in $\phi_{p,q}:\E^{p,q}(-) \to \E^{p+i,q+j}(-)$ that commute with arbitrary suspensions. The set of all bistable cohomology operations on $\E$ has a natural ring structure induced by composition, which makes it a bigraded $\Z/\ell$-algebra. By \cite[Theorem 1.1]{Hoy:Steenrod}, the algebra of bistable cohomology operations on $H\Z/\ell$ is naturally isomorphic to the $\Z/\ell$-algebra of shifted endomorphisms of $H\Z/\ell$, $\text{End}_{\SH(k)}^{*,*}(H\Z/\ell):=[H\Z/\ell,\Sigma^{*,*}H\Z/\ell]_{\SH(k)}$, where $\Phi:H\Z/\ell\to \Sigma^{i,j}H\Z/\ell$ defines the cohomology operation $\phi:=\{\phi_{p,q}:=\Sigma^{p,q}\Phi_*\}_{p,q\in\Z}$.

The short exact sequence
$$0 \to \Z/\ell \xrightarrow{\cdot \ell} \Z /\ell^2 \to \Z/\ell \to 0$$
induces the distinguished triangle
\begin{equation}
    \label{eq:Bockstein}
    H\Z/\ell \to H\Z/\ell^2\to  H\Z/\ell \xrightarrow{\beta_{0,0}} \Sigma^{1,0}H\Z/\ell
\end{equation}
in $\SH(k)$. The resulting map $\beta_{0,0}:H\Z/\ell\to \Sigma^{1,0}H\Z/\ell$ in $\SH(k)$ is the \emph{Bockstein homomorphism}. This defines a bistable cohomology operation $\beta=\{\beta_{p,q}:=\Sigma^{p,q}\beta_{0,0}\}_{p,q}$ of bidegree $(1,0)$ on $H\Z/\ell$, called the \emph{Bockstein operator}, which satisfies the following properties:
\begin{enumerate}
    \item $\beta \circ \beta = 0,$
    \item $\beta (xy)=\beta(x)\cup y + (-1)^p x \cup \beta(y)$, for $x \in (H\Z/\ell)^{p,*}(-)$ 
\end{enumerate}
(see \cite[Section 8]{Voev-power}).

In \cite[Section 9]{Voev-power}, for $i \ge 0$, Voevodsky defines reduced power operations $P^i$, with $P^0=1$, which we can consider as bistable cohomology operations of bidegree $(2i(\ell-1),i(\ell-1))$ on $H\Z/\ell$. 

\begin{defn}
    The \emph{motivic mod-$\ell$ Steenrod algebra} $A^{*,*}=A^{*,*}(k,\Z/\ell)$ is the subalgebra of the algebra of bistable cohomology operations on the mod $\ell$ motivic cohomology $H\Z/\ell$ over $k$ generated by the Bockstein operator $\beta$, the reduced power operations $\{P^i\}_{i \ge0}$ and operations $x \mapsto a \cdot x$ for $a \in (H\Z/\ell)^{*,*}(\Spec k)$.
\end{defn}

In fact, $A^{*,*}$ is isomorphic to the entire algebra $\text{End}_{\SH(k)}^{*,*}(H\Z/\ell)$ of bistable cohomology operations on $H\Z/\ell$ (see \cite[Therem 1.1 (1)]{Hoy:Steenrod}). In characteristic $0$, this was proved by Voevodsky (\cite[Theorem 3.49]{Voe:EM}).

The bigraded algebra $A^{*,*}$ is a left $H^{*,*}$-module via $(a\cdot\phi)(x):=a\cdot(\phi(x))$ for $a\in H^{*,*}$, $\phi\in A^{*,*}$ and $x\in H\Z^{*,*}(\E)$.

$A^{*,*}$ is equipped with a map $\Delta: A^{*,*} \to A^{*,*} \otimes_{H^{*,*}} A^{*,*}$ defined by
$\Delta (\gamma)= \sum_i \alpha_i \otimes \beta_i$, where the bistable operations $\alpha_i$, $\beta_i$ are determined by the relation
$$\gamma(x \cup y)= \sum_i(-1)^{ab_i} \alpha_i(x) \cup \beta_i(y),$$
for $x\in H^{a,a'}(\sE)$ and $\beta_i$ of bidegree $(b_i,b'_i)$. Voevodsky also defines an $H^{*,*}$-submodule $(A^{*,*} \otimes_{H^{*,*}} A^{*,*})_r$ for which the formula $(x\otimes y)\cdot (x'\otimes y')=xx'\otimes yy'$ gives a ring structure, and which contains the image of $\Delta$, and calls a \emph{coproduct}, which an abuse of notation, the co-associative map

\begin{equation}\label{eqn:ACoproduct}
\Psi:A^{*,*}\to (A^{*,*} \otimes_{H^{*,*}} A^{*,*})_r, 
\end{equation}
(see \cite[Lemmas 11.6-11.9]{Voev-power} for the details).

In \cite[Section 13]{Voev-power}, Voevodsky defines the \emph{Milnor basis} for $A^{*,*}$, given by cohomology operations $\rho(E,R)$, for $E=(\epsilon_0, \epsilon_1, \epsilon_2, \ldots)$ a sequence of zeros and ones that are almost all zeros, and $R=(r_1, r_2, r_3, \ldots)$ a sequence of non-negative integers that are almost all zero. Let us write $Q(E) \coloneqq \rho(E, \underaccent{\bar}{0})$ and $\mathcal{P}^R \coloneqq \rho(\underaccent{\bar}{0},R)$. For the sequence $e_i=(0,0, \ldots, 0,1,0, \ldots)$ having $1$ only in the $i$-th place, he defines $Q_i\coloneqq Q(e_i)$. In this basis, we have $P^i = \mathcal{P}^{i\cdot e_1}$ and $Q_0 = \beta$.

The motivic Steenrod algebra $A^{*,*}$ is the algebraic analogue of the classical Steenrod algebra $A^{\text{top}} \coloneqq A^{\top}_{\ell}$ described in \cite{mil:Steenrod}. Analogously tp $A^{*,*}$, $A^{\text{top}}$ acts on the singular cohomology $H^*(-,\Z/\ell)$, has a Bockstein operator $\beta_{\top}$ (see \cite[Chapter IV]{steenrod:cohomology}), power operations $P^i_{\top}$, and Milnor basis $\rho^{\top}(E,R)$. We let $Q^{\top}(E) \coloneqq \rho^{\top}(E, \underaccent{\bar}{0})$, $\mathcal{P}_{\top}^R \coloneqq \rho^{\top}(\underaccent{\bar}{0},R)$ and $Q_i^{\top} \coloneqq Q^{\top}(e_r)$. We have $P^i_{\top}=\mathcal{P}^{i \cdot e_1}_{\top}$, $P_{\top}^0=1$, and $Q_0^{\top}=\beta_{\top}$.

We now recall a few properties of the power operations.

\begin{lemma}
\label{lemma:someproperties}
\begin{enumerate}
    \item[(1)] $P^0= \id$, and, for any $i<0$, $P^i=0$.
    \item[(2)] For $x \in H^{2n,n}(\Spec k)$, $P^n(x)=x^\ell$.
    \item[(3)] For $x \in H^{p,q}(\Spec k)$, $n > p-q$ and $n\ge q$, we have $P^n(x)=0$.
    \item[(4)] $Q_i$ has bidegree $(2 \ell^i -1, \ell^i-1)$, and for each $k \ge 1$, $P^{k \cdot e_i}$ has bidegree $(2k(\ell^i-1),k(\ell^i-1))$.
\end{enumerate}
\end{lemma}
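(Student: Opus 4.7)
All four items will be deduced from the standard properties of Voevodsky's motivic reduced power operations established in \cite[Sections 9, 13]{Voev-power}.

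For item (1), the identity $P^0 = \id$ is built into Voevodsky's defining axioms, and the convention $P^i = 0$ for $i < 0$ follows because the operations are only defined for non-negative $i$. For item (4), the bidegrees of the Milnor basis elements $Q_i = Q(e_i)$ and $\mathcal{P}^{k \cdot e_i}$ are read directly off of Voevodsky's constructions in \cite[Section 13]{Voev-power}; the bidegree of a general $\mathcal{P}^{k \cdot e_i}$ then follows from the composition formula. For item (2), I would appeal to the motivic top power axiom, one of the basic defining properties of the reduced power operations: $P^n$ restricted to classes in bidegree $(2n, n)$ acts as the $\ell$-th power map, the motivic analogue of the classical identity $P^n(x) = x^p$ for a class of topological degree $2n$.

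For item (3), the plan is to invoke Voevodsky's instability axioms for $P^n$. The hypotheses $n > p - q$ and $n \geq q$ are tailored so that, for $x \in H^{p,q}(\Spec k)$, we fall into exactly one of two cases. If $n > q$ strictly, the weight-instability axiom directly gives $P^n(x) = 0$. If instead $n = q$, then $n > p - q$ rewrites as $p < 2n$, so $x$ lies in $H^{p,n}(\Spec k)$ with $p < 2n$; by item (2), $P^n$ acts as the $\ell$-th power on weight-$n$ classes and is nonzero only at the top bidegree $(2n, n)$, so the corresponding instability again yields $P^n(x) = 0$.

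The main obstacle is item (3): bidegree considerations via Theorem \ref{thm:MazWeiHZ} alone are not sufficient, since the target group $H^{p + 2n(\ell-1), q + n(\ell-1)}(\Spec k)$ can be nonzero when $p$ is negative or $q$ is large. The argument genuinely requires Voevodsky's precise instability statements, and the only real care needed is to match the form of those axioms in \cite{Voev-power} to the dichotomy $n > q$ versus $n = q$ that the hypotheses impose; once that alignment is made explicit, the four items reduce to routine bookkeeping.
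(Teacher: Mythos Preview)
Your proposal is correct and matches the paper's approach: both simply cite the relevant results from \cite{Voev-power} (Theorems 9.4--9.5 for (1), Lemma 9.8 for (2), Lemma 9.9 for (3), and Section 13 for (4)). One minor remark: your case split for item (3) into a separate ``weight-instability'' ($n>q$) and a top-degree case ($n=q$) is unnecessary and not quite how Voevodsky phrases it---his Lemma 9.9 gives the vanishing directly under the combined hypotheses $n>p-q$ and $n\ge q$, so you can just cite it outright as the paper does.
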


\begin{proof}
    $(1)$ is \cite[Theorems 9.4-9.5]{Voev-power}. $(2)$ is \cite[Lemma 9.8]{Voev-power}. $(3)$ is \cite[9.9]{Voev-power}. The formulas for the bidegrees of $Q_i$ and $P^{k \cdot e_i}$ come directly from their definitions in \cite[Section 13]{Voev-power}.
\end{proof}

Finally, let us define $B \subset A^{*,*}$ as the $\Z/\ell$-subalgebra generated by $\{Q_i\}_{i\ge 0}$, and $M_B \coloneqq A^{*,*}/A^{*,*}(Q_0,Q_1, \ldots)$ as the quotient of $A^{*,*}$ by its left ideal generated by the special elements $Q_i$. Analogously, we have $B^{\top} \subset A^{\top}$ and $M_B^{\top} \coloneqq A^{\top}/A^{\top}(Q_0^{\top}Q_1^{\top}, \ldots)$. 

\begin{rmk}\label{rmk:MBDescent} Taken $\E \in \SH(k)$  and  $u\in H\Z^{2n,n}(\E)$, the left $A^{*,*}$-module structure on $H\Z^{*,*}(\E)$ gives the bigraded map  $r_u:A^{*, *}\cdot u\to H\Z^{*,*}(\E)$ of left $A^{*,*}$-modules, defined by $r_u(\alpha\cdot u):= \alpha(u)$. If $H\Z^{2*+1,*}(\E)=0$, then $r_u$ descends to a map of $A^{*,*}$-modules $r_u:M_B\cdot u\to H\Z^{*,*}(\E)$. Indeed, since $Q_i$ has bidegree $(2\ell^i+1, \ell^i)$, $Q_i(u)$ lands in $H\Z^{2(\ell^i+n)+1, \ell^i+n}(\E)=0$ for each $i$. An analogous statement holds for $M_B^{\top}$ in the topological setting.
\end{rmk}

\subsection{The $\text{Ext}_{A^{**}}$-algebra of $H^{*,*}(\MSp)$}

Let us recall that, for two bigraded $A^{*,*}$-modules $N^{**}, M^{**}$, we have the trigraded abelian group  $\Ext_{A^{**}}(N^{**}, M^{**})$ defined by
$$\Ext_{A^{**}}^{a,(b,c)}(N^{**}, M^{**}):=\Hom_{D_{\text{bgr}}(A^{**})}^{b,c}(N^{**}, M^{**}[a]),$$
where $D_{\text{bgr}}(A^{**})$ denotes the derived category of bigraded $A^{**}$-modules, and $\Hom_{D_{\text{bgr}}(A^{**})}^{b,c}(-,-)$ is the $(b,c)$ bigraded component of $\Hom_{D_{\text{bgr}}(A^{**})}(-,-)$. This gives a natural tri-grading on the sum $\Ext_{A^{**}}^{*,(*,*)}(N^{**}, M^{**}) = \oplus_{a,b,c \in \Z}\Ext_{A^{**}}^{a,(b,c)}(N^{**}, M^{**})$ (we will omit the tri-grading from the notation). In particular, if we have $\E,\E' \in \SH(k)$ and two maps $H^{*,*}(\E) \to H^{*,*}[a]$ and $H^{*,*}(\E') \to H^{*,*}[b]$ in $D_{\text{bgr}}(A^{*,*})$, we get a map
$$H^{*,*}(\E) \otimes_{H^{*,*}}H^{*,*}(\E')\to H^{*,*} \otimes_{H^{*,*}}H^{*,*}[a][b]=H^{*,*}[a+b]$$
in the derived category of $(A^{*,*}\otimes_{H^{*,}}A^{*,*})_r$-modules, and through the map $\Psi$ introduced in \eqref{eqn:ACoproduct}, we can see this map in the derived category $D_{\text{bgr}}(A^{*,*})$.

Now, the description of the cohomology of $\MSp$ given in Theorem \ref{thm:cohomologyofmsp} shows that $H^{*,*}(\MSp)$ is a free $H^{*,*}$-module, in particular flat, and by Corollary \ref{cor:product_map} the canonical map
$$H^{*,*}(\MSp) \otimes_{H^{*,*}}H^{*,*}(\MSp) \to H^{*,*}(\MSp \wedge \MSp)$$
is an isomorphism. Therefore we have a product
\begin{equation}
    \label{eq:tensor_product_ext}
    \Ext_{A^{*,*}}(H^{*,*}(\MSp),H^{*,*})\otimes_{H^{*,*}}\Ext_{A^{*,*}}(H^{*,*}(\MSp),H^{*,*}) \to \Ext_{A^{*,*}}(H^{*,*}(\MSp \wedge \MSp), H^{*,*}).
\end{equation}
Finally, the pullback $\mu^*_\MSp$ of the multiplication map gives the map
$$(\mu_{\MSp}^*)^*:\Ext_{A^{*,*}}(H^{*,*}(\MSp \wedge \MSp),H^{*,*}) \to \Ext_{A^{*,*}}(H^{*,*}(\MSp),H^{*,*}),$$
and composing $(\mu_{\MSp}^*)^*$ with \eqref{eq:tensor_product_ext} gives the product
\begin{equation}
    \label{coproduct_ext_msp}
    \Ext_{A^{*,*}}(H^{*,*}(\MSp),H^{*,*})\otimes_{H^{*,*}}\Ext_{A^{*,*}}(H^{*,*}(\MSp),H^{*,*}) \to \Ext_{A^{*,*}}(H^{*,*}(\MSp), H^{*,*}).
\end{equation}
The co-associativity of the coproduct $\Psi$ makes \eqref{eq:tensor_product_ext} associative, thus \eqref{coproduct_ext_msp} is associative. We have then a natural structure of trigraded associative $\Z/\ell$-algebra on $\Ext_{A^{*,*}}(H^{*,*}(\MSp),H^{*,*})$. Following the strategy used in \cite{lev:ellcoh} for the case of MGL, we want to find a presentation for this trigraded $\Z/\ell$-algebra.

\begin{defn}
    A \emph{partition} $\omega$ of an integer $n\ge0$ is a sequence of integers $(\omega_1,\ldots, \omega_r)$ with $\omega_1\ge \omega_2\ge\ldots\ge \omega_r > 0$ such that $|\omega|:=\sum_i\omega_i=n$. We say that a partition $\omega$ is \textit{even} if all the terms $\omega_i$ are even.
\end{defn}

For a positive integer $k$, let us extend the natural bigrading on $H^{**}$ to a bigrading on the polynomial algebra $H^{*,*}[t_1,\ldots t_k]$ by giving $t_i$ bidegree $(2,1)$. As before, we have the polynomial ring $H^{*,*}[b_1, \ldots, b_k]$ with $b_i$ of bidegre $(4i,2i)$, and we get a map from $H^{*,*}[b_1, \ldots, b_k]$ to the subalgebra of symmetric polynomials in $t_1^2,\ldots, t_k^2$ by taking $b_s$ to the elementary symmetric polynomial
$$f_s \coloneqq \sum_{i_1,\ldots, i_s}t_{i_1}^2 \cdot \cdot \cdot t_{i_s}^2$$ 
for all $s \leq k$. Also, to each even partition $\omega=(2q_1,\ldots, 2q_s)$ of an even integer $2q$, we can associate the symmetric polynomial
$$f_{\omega} \coloneqq \sum_{i_1,\ldots, i_s}t_{i_1}^{2q_1} \cdot \cdot \cdot t_{i_s}^{2q_s}.$$
Since the $H^{*,*}$-algebra of symmetric polynomials in $t_1^2,\ldots, t_k^2$ is the polynomial algebra over $H^{*,*}$ in the  elementary symmetric polynomials in the same variables, we have a uniquely defined element $P_\omega\in H^{*,*}[b_1, \ldots, b_k]$ corresponding to $f_\omega$ under the association $b_s \mapsto f_s$. For each even partition $\omega$, let $u_{\omega}$ denote the element in $H^{*,*}(\MSp)$ corresponding to $P_{\omega}$, using Theorem~\ref{thm:cohomologyofmsp} to identify $H^{*,*}(\MSp)$ with $H^{*,*}[b_1, b_2,\ldots]$. In $H^{*,*}(\MSp)$, $u_{\omega}$ has bidegree $(2 |\omega|, |\omega|)$.

Since $H^{a,b}=0$ if $a>b$ or if $b<0$ by Theorem \ref{thm:MazWeiHZ}, we have $H^{2*+1, *}=0$. As $b_i$ has bidegree $(4i, 2i)$, it follows that $H^{2*+1,*}(\MSp)=(H^{*,*}[b_1, b_2,\ldots])^{2*+1,*}=0$. Thus, using Remark~\ref{rmk:MBDescent}, for each even partition $\omega$ we can define a map of left $A^{**}$-modules
\begin{align*}
    \Phi_{\omega}:M_Bu_{\omega} & \to H^{*,*}(\MSp)\\
    \alpha \cdot u_{\omega} & \mapsto \alpha(u_{\omega}).
\end{align*}

\begin{defn}
\label{defn:l-adicpartition}
    We say that a partition $\omega=(\omega_1, \ldots, \omega_s)$ is \textit{$\ell$-adic} if at least one of its terms $\omega_r$ is of the form $\omega_r=\ell^m-1$, for some $m\ge1$. 
\end{defn}

\begin{lemma}
\label{lemma:decompmsp}
    Let $P$ be the set of all even partitions $\omega$ that are not $\ell$-adic. Then the map
    $$\Phi \coloneqq \bigoplus_{\omega \in P} \Phi_{\omega}:\bigoplus_{\omega \in P}M_Bu_{\omega}\to H^{*,*}(\MSp)$$
    is an isomorphism of left $A^{**}$-modules.
\end{lemma}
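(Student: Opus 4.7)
The plan is to lift Novikov's classical topological decomposition $H^*(\MSp^\top; \Z/\ell) \simeq \bigoplus_{\omega \in P} M_B^\top \cdot u_\omega^\top$ from \cite{Thomcompl} to the motivic setting, paralleling the $\MGL$ argument in \cite[Section 5]{lev:ellcoh}, with the key bookkeeping change that the polynomial generators $b_i$ now sit in bidegree $(4i, 2i)$ rather than $(2i, i)$.

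First, I verify that each $\Phi_\omega$ is well-defined as a map out of $M_B \cdot u_\omega$. By Remark \ref{rmk:MBDescent}, it suffices to show $H^{2*+1, *}(\MSp) = 0$. This is immediate from Theorem \ref{thm:cohomologyofmsp}: $H^{*,*}(\MSp) \simeq H^{*,*}[b_1, b_2, \ldots]$ is concentrated in even first degree because each $b_i$ has bidegree $(4i, 2i)$ and $H^{2*+1, *}(\Spec k) = 0$ by Theorem \ref{thm:MazWeiHZ}. Hence every $Q_i$ annihilates $u_\omega$, and $\Phi_\omega$ descends to $M_B \cdot u_\omega$.

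Second, I establish surjectivity of $\Phi$. The strategy is to exhibit every monomial $b^\mu = b_1^{\mu_1} b_2^{\mu_2} \cdots \in H^{*,*}(\MSp)$ as a $\Z/\ell$-linear combination of expressions $\mathcal{P}^R(u_\omega)$ with $\mathcal{P}^R$ a Milnor basis element and $\omega \in P$. The action $\mathcal{P}^R(u_\omega)$ is computed using Voevodsky's Cartan formula \eqref{eqn:ACoproduct} together with the formulas for $P^i$ on first Chern classes (Lemma \ref{lemma:someproperties}), after expressing $u_\omega$ via its definition as the symmetric polynomial $f_\omega$ in the ``virtual roots'' $t_i$ of the universal symplectic bundle. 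Tracking bidegrees, the resulting combinatorics on partitions mirrors exactly Novikov's topological calculation for $\MSp^\top$, so surjectivity in the motivic setting is reduced to the already-established topological surjectivity.

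Third, I establish injectivity via a bigraded Poincaré series count. By the motivic Milnor splitting \cite[Section 13]{Voev-power}, $A^{*,*}$ is free as a right module over the exterior subalgebra $B$ generated by $\{Q_i\}$, with basis the $\mathcal{P}^R$. Consequently $M_B$ is a free left $H^{*,*}$-module on the classes of $\mathcal{P}^R$, and the bidegrees of these classes are the motivic lifts of the topological bidegrees of Novikov's basis (with the weight and topological degree indexed by the Milnor formulas in Lemma \ref{lemma:someproperties}(4)). Summing over $\omega \in P$, with $u_\omega$ placed in bidegree $(2|\omega|, |\omega|)$, the bigraded dimension of $\bigoplus_{\omega \in P} M_B \cdot u_\omega$ matches, weight by weight, that of $H^{*,*}[b_1, b_2, \ldots]$; this identity is exactly Novikov's generating-function identity, refined to keep track of motivic weights. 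Combined with surjectivity, this forces $\Phi$ to be an isomorphism.

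The main obstacle will be the surjectivity step: one must verify that the motivic computation of $\mathcal{P}^R(u_\omega)$ reproduces, in each bidegree, the topological formula, and in particular that no new phenomena arise from the $H^{*,*}$-linear terms in the motivic Cartan formula. Careful tracking of the weight grading shows these terms are consistent with the topological picture, since $u_\omega$ lies in motivic weight $|\omega|$ and $\mathcal{P}^R$ has weight equal to half its topological degree; this alignment is what makes the non-$\ell$-adic even partitions $P$ the correct indexing set, just as in the topological case.
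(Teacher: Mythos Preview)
Your outline is correct in spirit and would ultimately work, but it is organized less efficiently than the paper's proof, and the crucial computational step is only gestured at rather than carried out.

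The paper does not separate the argument into surjectivity plus a Poincar\'e-series injectivity count. Instead it makes a single structural reduction. Since $\ell$ is odd, one has $A^{*,*}\simeq H^{*,*}\otimes_{\Z/\ell}A^{*,*}_0$ as left $H^{*,*}$-modules, where $A^{*,*}_0\subset A^{*,*}$ is the $\Z/\ell$-subalgebra generated by $\beta$ and the $P^i$, and the canonical ring map $\theta:A^{\top}\to A^{*,*}_0$ is an isomorphism (this is \cite[Lemma 4.1]{lev:ellcoh}). Likewise $H^{*,*}(\MSp)\simeq H^{*,*}\otimes_{\Z/\ell}H_0^{*,*}(\MSp)$ with $H_0^{*,*}(\MSp)=\Z/\ell[b_1,b_2,\ldots]$, and $\Phi=\id_{H^{*,*}}\otimes\Phi^0$ for a map $\Phi^0:\bigoplus_{\omega\in P}M_{B0}u_\omega\to H_0^{*,*}(\MSp)$ of $A^{*,*}_0$-modules. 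It then suffices to show $\Phi^0$ is an isomorphism. The paper does this by exhibiting an explicit ring isomorphism $\rho:H^*(\MSp^{\top})\to H_0^{*,*}(\MSp)$, $b_n^{\top}\mapsto b_n$, and proving that $\rho$ intertwines the $A^{\top}$-action with the $A^{*,*}_0$-action. Once this is done, the commutative square with $\Phi^{\top}$ and $\theta'$ forces $\Phi^0$ to be an isomorphism directly --- no separate Poincar\'e-series bookkeeping is needed.

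The step you flag as ``the main obstacle'' is exactly where the content lies, and your proposed resolution (``careful tracking of the weight grading'') does not by itself do the job. What is actually required is a computation: one must show $\rho(P^i_{\top}(b_n^{\top}))=P^i(b_n)$. The paper accomplishes this via the symplectic splitting principle \cite[Theorem 10.1]{panwal:grass}, which reduces $P^i(b_j(E^\Sp_{2r,2n})\cdot b_r(E^\Sp_{2r,2n}))$ to evaluating $P^s(\xi_m)$ on the Borel roots $\xi_m\in H^{4,2}(\mathrm{HFlag}(1^r;n))$. Since $\xi_m$ has bidegree $(4,2)$, Lemma~\ref{lemma:someproperties} forces $P^0(\xi_m)=\xi_m$, $P^1(\xi_m)=0$ (for degree reasons in the presentation of $H^{*,*}(\mathrm{HFlag})$), $P^2(\xi_m)=\xi_m^\ell$, and $P^s(\xi_m)=0$ for $s\ge3$; these are identical to the topological values on $\xi_m^{\top}$. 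Your weight-tracking observation is what guarantees the computation stays inside $H_0^{*,*}(\MSp)$, but it does not replace the splitting-principle calculation itself.
\newline

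In short: your plan works, but the paper's route is shorter --- it trades your two-step surjectivity/injectivity argument for a single identification with the topological case, and it makes explicit the Borel-root computation that your proposal leaves as a promissory note.
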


Before proving Lemma \ref{lemma:decompmsp}, we need to review the analogous statement for $\MSp^{\top}$, studied in \cite[Chapter 1]{Thomcompl}. 

Let $H^*(-)$ denote the mod $\ell$ singular cohomology $H^*(-,\mathbb{Z}/\ell)$. We take $\Sp_{2r}^\top:=\Sp_{2r}(\C)$, with the classical topology. This gives the classifying space $\BSp_{2r}^\top$, as well as the tautological rank $2r$ symplectic bundle $E^{\Sp,\top}_{2r}\to \BSp_{2r}^\top$ and the Thom space $\MSp_{2r}^\top \coloneqq \Th(E^{\Sp,\top}_{2r})$. $\MSp^{\top}$ is the colimit of the shifted suspension spectra $\Sigma^{-4r}_{S^1}\Sigma^\infty_{S_1}\MSp_{2r}^\top$, and the cohomology $H^*(\MSp^\top)$ is the limit of the cohomologies $H^{*+4r}(\MSp_{2r}^\top)$, for each fixed degree $*$, which is eventually constant. The cohomology algebra $H^{*}(\BSp_{2r}^\top)$, described for instance in \cite{BorelSerre:Steenrod}, is the polynomial algebra over $\Z/\ell$ with generators the mod $\ell$ quaternionic Borel classes $b_j^\top(E^{\Sp,\top}_{2r})\in H^{4j}(\BSp_{2r}^\top)$, for $j=1,\ldots, r.$ Novikov \cite[Lemma 3]{Thomcompl} showed that the pullback by the zero section $z_{2r}^{\top*}:H^{*+4r}(\MSp_{2r}^\top)\to
 H^{*+4r}(\BSp_{2r}^\top)$ is injective, with image the ideal generated by the top Borel class $b_r^\top$. Writing $\BSp^\top$ as $\colim_r\BSp_{2r}^\top$ gives the isomorphism $H^*(\BSp^\top)\cong \lim_rH^*(\BSp_{2r}^\top)$, and similarly $H^*(\MSp^\top)\cong \lim_rH^{*+4r}(\MSp_{2r}^\top)$. Together with the Thom isomorphisms $H^*(\BSp_{2r}^\top)\cong H^{*+4r}(\MSp_{2r}^\top)$, this  gives the commutative diagram of isomorphisms
 $$
 \begin{tikzcd}
     H^*(\BSp^\top) \arrow[d, "\wr"] \arrow[r, dashed, "\sim"] & H^*(\MSp^\top) \arrow[d, "\wr"] \\
     \lim_rH^*(\BSp_{2r}^\top) \arrow[r, "\sim"] & \lim_r H^{*+4r}(\MSp_{2r}^\top)
 \end{tikzcd}
 $$
 Under the isomorphism $H^*(\BSp^\top)\xrightarrow{\sim}H^*(\MSp^\top)$, we can write $H^*(\MSp^\top) \cong \Z/\ell[b_1^\top, b_2^\top,\ldots].$
 
 The action on $H^*(\MSp^\top)$ is induced by the action on $H^{*+4r}(\MSp_{2r}^\top)$ for $r\gg0$, thus by the action on $H^{*+4r}(\BSp_{2r}^\top)$ via $z_{2r}^{\top*}$. Let us note that this does not identify with the action of $A^\top$ on $H^*(\BSp^\top)$ via the above isomorphism $H^*(\MSp^\top)\cong H^*(\BSp^\top)$. We can approximate $H^*(\MSp^\top)$ as $A^\top$-module by $H^{*+4r}(\HGr(r,n)^\top)$, with $\HGr(r,n)^\top \coloneqq \HGr(r,n)(\C)$ with the classical topology, and similarly, we can approximate $H^*(\MSp^\top)$ as $A^\top$-module by $H^{*+4r}(\HGr(r,n)^\top)$. Explicitly, $b_j^\top \in H^{4j}(\BSp^\top_{2r})$ corresponds to the elementary symmetric polynomial $\sum_{i_1,\ldots, i_j}t_{i_i}^2 \cdot \ldots \cdot t_{i_j}^2$ in $t_1^2,\ldots, t_r^2$, and to each even partition $\omega=(2q_1,\ldots,2q_s)$, we can assign the element
    $$v^{(r)}_{\omega}\coloneqq \sum_{i_1,\ldots, i_s}t_{i_1}^{2q_1}\cdot \ldots \cdot t_{i_s}^{2q_s}\in
 H^{2|\omega|}(\BSp_{2r}^\top).$$
 The $v^{(r)}_{\omega}$ are compatible in $r$, giving the elements $v_\omega\in H^{2|\omega|}(\BSp^\top)$, and we let $u_\omega\in H^{2|\omega|}(\MSp^\top)$ be the element corresponding to $v_\omega$ via the isomorphism $H^*(\MSp^\top)\cong H^*(\BSp^\top)$. Via the Thom isomorphism, $v^{(r)}_\omega$ maps to an element $v_\omega^{(r)'}\in H^{*+4r}(\MSp_{2r}^\top)$, and then $z_{2r}^*v_\omega^{(r)'}=v^{(r)}_{\omega}\cdot b_r^\top \in
 H^{2|\omega|+4r}(\BSp_{2r}^\top)$. Dropping the ${}^{(r)}$ from the notation, we see that the action of the Steenrod algebra on  $u_\omega\in H^{2|\omega|}(\MSp^\top)$ is given by the action on $v_{\omega}\cdot b_r^\top\in
 H^{2|\omega|+4r}(\BSp_{2r}^\top)$ for $r\gg0$.
    
 The natural product $\mu_{\MSp^\top}: \MSp^\top\wedge \MSp^\top\to \MSp^\top$ on $\MSp^\top$ is induced by the embeddings $\Sp(m)\times \Sp(n) \subset \Sp(m+n)$. Together with the K\"unneth formula, $\mu_{\MSp^\top}$ gives rise to the ``diagonal map''
 \[
 \delta^\top:=\mu_{\MSp^\top}^*:H^*(\MSp^\top)\to H^*(\MSp^\top)\otimes_{\Z/\ell}H^*(\MSp^\top).
 \] 
 
   By \cite[Lemma 7]{Thomcompl} the diagonal morphism on the generators $u_{\omega}^{\top}$ has the following form:
    \begin{equation}
    \label{eq:comultiplication}
        \delta^\top(u_{\omega}^{\top})= \sum_{{(\omega_1,\omega_2)=\omega} \atop{\omega_1 \neq \omega_2}}(u_{\omega_1}^{\top} \otimes u_{\omega_2}^{\top}+u_{\omega_2}^{\top}\otimes u_{\omega_1}^{\top}) + \sum_{(\omega_1,\omega_1)=\omega}(u_{\omega_1}^{\top}\otimes u_{\omega_1}^{\top}).
    \end{equation}
    
    As a module over $A^{\top}$, $H^*(\MSp^{\top})$ is the direct sum of modules $M_B^{\top}u_{\omega}^{\top}$ over even non-$\ell$-adic partitions $\omega$ (this follows by \cite[Lemma 4]{Thomcompl}). In other words, the map
    $$\Phi^{\top} \coloneqq \bigoplus_{\omega \in P} \Phi_{\omega}^{\top}:\bigoplus_{\omega \in P}M_B^{\top}u_{\omega}^{\top}\to H^*(\MSp^{\top})$$
    where $\Phi_{\omega}^{\top}$ is defined in the same way as we defined $\Phi_{\omega}$, is an isomorphism of graded left $A^{\top}$-modules.
    
    \begin{proof}[Proof of Lemma \ref{lemma:decompmsp}] By \cite[Chapter 6, Section 1]{steenrod:cohomology}, the $P_{\top}^i$ and $\beta_{\top}$ satisfy the Adem relations and $\beta_{\top}^2=0$, and these relations determine $A^{\top}$ as $\mathbb{Z}/\ell$-algebra. The same is true for the $P^i$ and $\beta$ by \cite[Theorem 5.1]{Hoy:Steenrod}. Hence, there is a unique ring homomorphism $\theta:A^{\top}\to A^{**}$ defined by sending $P_{\top}^i$ to $P^i$ and $\beta_{\top}$ to $\beta$. In particular, by \cite[Lemma 4.1]{lev:ellcoh}, if we let $A^{*,*}_0$ be the $\mathbb{Z}/\ell$-subalgebra of $A^{*,*}$ generated by $\beta$ and the $P^i$, $\theta$ induces an isomorphism of rings $A^{\top} \to A^{*,*}_0$, and the $H^{*,*}$-linear extension of $\theta$
    $$\id_{H^{*,*}}\otimes_{\mathbb{Z}/\ell}\theta: H^{*,*}\otimes_{\mathbb{Z}/\ell}A^{\top}\to A^{*,*}$$
    is an isomorphism of left $H^{*,*}$-modules. We similarly define $M_{B0}$ as the $\mathbb{Z}/\ell$-subalgebra of $M_B$ generated by the $P^i$.
    
    The action of $A^{*,*}$ on $H^{*,*}(\MSp)$ restricts to an action of $A^{*,*}_0$ on $H^{0,0}[b_1,b_2,\ldots]=\mathbb{Z}/\ell[b_1,b_2,\ldots]$ where $\beta$ acts trivially. We denote by $H_0^{*,*}(\MSp)$ the bigraded ring $\mathbb{Z}/\ell[b_1,b_2,\ldots]$. The map $\Phi_{\omega}$ of $A^{*,*}$-modules induces then a map $\Phi_{\omega}^0$ of $A_0^{*,*}$-modules, and we finally get a map
    $$\Phi^0 \coloneqq \bigoplus_{\omega \in P} \Phi_{\omega}^0:\bigoplus_{\omega \in P}M_{B0}u_{\omega}\to H^{*,*}_0(\MSp).$$
    
    Let us now note that, as left $H^{*,*}$-modules, $M_B$ is isomorphic to $H^{*,*}\otimes_{\mathbb{Z}/\ell} M_{B0}$, and $H^{*,*}(\MSp)$ is isomorphic to $H^{*,*}\otimes_{\mathbb{Z}/\ell}H^{*,*}_0(\MSp)$. In particular, we have $\Phi_{\omega}=\id_{H^{*,*}}\otimes \Phi_{\omega}^0$ and $\Phi=\id_{H^{*,*}}\otimes \Phi^0$ as homomorphisms of $\mathbb{Z}/\ell$-vector spaces. Thus, if $\Phi^0$ is an isomorphism of $A^{*,*}_0$-modules, $\Phi$ will be an isomorphism of $\mathbb{Z}/\ell$-vector spaces, and then also an isomorphism of left $A^{*,*}$-modules. It is then enough to prove that $\Phi^0$ is an isomorphism of $A^{*,*}_0$-modules.
    
    Let $\rho:H^*(\MSp^{\top})\to H^{*,*}_0(\MSp)$ be the map defined by taking $b_n^{\top}$ to $b_n$. It is immediate that $\rho$ is an isomorphism of graded rings. We have the commutative diagram 
    $$
    \begin{tikzcd}
        \bigoplus_{\omega \in P}M_B^{\top}u_{\omega}^{\top} \arrow[r, "\Phi^{\top}"] \arrow[d, swap, "\theta'"] & H^*(\MSp^{\top}) \arrow[d, "\rho"]\\
        \bigoplus_{\omega \in P}M_{B0}u_{\omega} \arrow[r, "\Phi^0"] & H^{**}_0(\MSp),
    \end{tikzcd}
    $$
    where $\theta'$ is the map induced by $\theta$. If $\rho$ is also an isomorphism of left $A^{*,*}_0$-modules (using $A^{*,*}_0\cong A^{\top}$ to define the module structure on $H^*(\MSp^{\top})$), all maps in the diagram except $\Phi^0$ are isomorphisms of left $A^{*,*}_0$-modules, then, so is $\Phi^0$. We now prove that that $\rho$ is an isomorphism of left $A^{*,*}_0$-modules.
    
    Since the action of $\beta$ is trivial, it is enough to show that $\rho$ preserves the action of the power operations, that is, $\rho(P^i_{\top}(b_n^{\top}))=P^i(b_n)$ for all $i$.
    
    For each positive integer $r$, the action of $P^i$ on $\mathbb{Z}/\ell[b_1,\ldots,b_r]\subset H^{*,*}_0(\MSp)$ is induced by the action on $H^{*,*}(\HGr(r,n))$ as follows. As in the topological case, the canonical map $\Sigma_{\P^1}^{-2r}\Sigma^\infty_{\P^1}\MSp_{2r}\to \MSp$ induces an isomorphism $H^{a,b}(\MSp)\cong H^{a+4r, b+2r}(\MSp_r)$ for $r\gg0$, namely for $r$ bigger than a certain positive integer $r_0$ depending on $(a,b)$. By Proposition~\ref{prop:CohBSpMSp}(2), the restriction by the zero section $z_{2r}^*:H^{*+4r, *+2r}
    (\MSp_{2r})\to H^{*+4r, *+2r}
    (\BSp_r)$ is injective, with image isomorphic to the ideal generated by the mod-$\ell$ Borel class $b_r(E^\Sp_{2r})$. Similarly, we have an isomorphism $\BSp_{2r}\simeq \colim_n\HGr(r, n)$, in $\sH(k)$, giving the approximation of $H^{*+4r, *+2r}
    (\BSp_r)$ by $H^{*+4r, *+2r}(\HGr(r, n))$ for $n\gg0$, and we can identify $H^{a,b}(\MSp)$ with the bidegree $(a+4r, a+2r)$ component of the ideal in $H^{*, *}(\HGr(r, n))$ generated by the mod $\ell$ Borel class $b_r(E^\Sp_{r,n})$, for $r,n\gg0$. As the motivic Steenrod operations are $\Sigma^{a,b}$-stable, it follows that this identification of $H^{a,b}(\MSp)$ with $H^{a+4r, b+2r}(\HGr(r, n))$ for $r,n\gg0$ is compatible with the respective $A^{*,*}$-module structures. 

    Thus, $P^i(b_j)\in H^{4(i+j), 2(i+j)}(\MSp)$ corresponds to $$P^i(b_j(E^\Sp_{r,n})\cdot b_r(E^\Sp_{r,n}))\in H^{4(i+j+r), 2(i+j+r)}(\HGr(r, n)), \; \; r,n\gg0.$$ 
    
    Let $\text{HFlag}(1^r;n)$ be the complete quaternionic flag variety associated to $\HGr(r,n)$ as in \cite[\S 3, p. 147]{panwal:grass}. By the quaternionic splitting principle \cite[Theorem 10.1]{panwal:grass}, the pullback of $E^\Sp_{2r,2n}$ by the map $\text{HFlag}(1^r;n)\to \HGr(r,n)$ splits as the orthogonal direct sum of the $r$ universal rank $2$ symplectic subbundles $(\mathcal{U}_2^{(1)},\phi_2^{(1)}) \perp \ldots \perp (\mathcal{U}_2^{(r)},\phi_2^{(r)})$, and the cohomology pullback maps injectively the Borel classes $b_j(E^\Sp_{2r,2n})$ to the elementary symmetric polynomials $e_j(\xi^\Sp_1,\ldots,\xi^\Sp_r)$ in the Borel roots $\xi^\Sp_m\coloneqq b_1(\mathcal{U}_2^{(m)},\phi_2^{(m)}) \in H^{4,2}(\text{HFlag}(1^r;n))$. Then $P^i(b_j(E^\Sp_{r,n})\cdot b_r(E^\Sp_{r,n}))$ is determined by $P^i((e_j \cdot e_r)(\xi^\Sp_1,\ldots,\xi^\Sp_r))$. Moreover, by the Cartan formula \cite[Proposition 9.7]{Voev-power}, $P^i((e_j \cdot e_r)(\xi^\Sp_1,\ldots,\xi^\Sp_r))$ is completely determined by the values $P^s(\xi^\Sp_m)$ for $s=0,1,\ldots,i$, but those values are in turn completely determined by the properties of reduced power operations, as follows.
    \begin{itemize}
    \item Since $P^0=\id$ by Lemma \ref{lemma:someproperties} (1), $P^0(\xi^\Sp_m)=\xi^\Sp_m$. 
    \item $P^1(\xi^\Sp_m)=0$ by looking at the degree and at the presentation of $H^{**}(\text{HFlag}(1^r;n))$ given by \cite[Theorem 11.1]{panwal:grass}.
    \item Since $\xi^\Sp_m$ has bidegree $(2\cdot 2,2)$, $P^2(\xi^\Sp_m)=(\xi^\Sp_m)^{\ell}$ by Lemma \ref{lemma:someproperties} (2). 
    \item $P^s(\xi^\Sp_m)=0$ for all $s \ge 3$ by Lemma \ref{lemma:someproperties} (3).
    \end{itemize}

    The analogous result works for the topological case. Namely, the same argument shows that $P^i_{\top}(b_j^{\top})$ is determined by the values $P^s_{\top}(\xi_m^{\top})$ for $s=0,1,\ldots,i$, with $\xi_m^{\top}$ the topological Borel roots of the complex symplectic flag manifold. For those values, the same expressions as before are valid, by the axioms of the power operations $P^i_{\top}$ (see for instance \cite[Chapter VI]{steenrod:cohomology}). 

    Thus, if we express $P^i_{\top}(b_n^{\top})$ as a polynomial $p$ in the variables $e^\top_j=e_j(\xi_1^{\top}, \ldots, \xi_r^{\top})$, the same polynomial $p$ in the variables $e_j=e_j(\xi^\Sp_1,\ldots, \xi^\Sp_r)$ will give an expression for $P^i(b_n)$, and thus:
    $$\rho(P^i_{\top}(b_n^{\top}))=\rho(p(b_1^{\top},b_2^{\top},\ldots))=p(b_1,b_2,\ldots)=P^i(b_n).$$
    \end{proof}
    
    Because of the decomposition of $H^{*,*}(\MSp)$ given by Lemma \ref{lemma:decompmsp}, the algebra $\Ext_{A^{*,*}}(H^{*,*}(\MSp),H^{*,*})$ is generated by the algebra $\Ext_{A^{*,*}}(M_B,H^{*,*})$ and the dual elements of the cohomology classes $u_{\omega}$. The algebra $\Ext_{A^{*,*}}(M_B,H^{*,*})$ is completely studied in \cite[Section 5.1]{lev:ellcoh}. In particular, we have the following computation.
  
  \begin{lemma}\label{lem:ExtMB}
 1. \cite[Lemma 5.1 and Lemma 5.4]{lev:ellcoh} There is an isomorphism of trigraded $\mathbb{Z}/\ell$-algebras
    $$\Ext_{A^{*,*}}(M_B,H^{*,*}) \simeq \Ext_B(\mathbb{Z}/\ell,H^{*,*}).$$
 2. \cite[Lemma 5.2]{lev:ellcoh} We have  
    \begin{itemize}
        \item $\underline{t>2u}$: $\Ext_B^{s,(t-s,u)}(\mathbb{Z}/\ell,H^{*,*})=0$.
        \item $\underline{t=2u}$: $\Ext_B^{s,(2u-s,u)}(\mathbb{Z}/\ell,H^{*,*})$ is a polynomial $\mathbb{Z}/\ell$-algebra in generators $\{h_r'\}_{r\ge0}$, with $\deg(h_r')=(1,(1-2\ell^r,1-\ell^r))$. 
        \item $\underline{t<2u}$: $\Ext_B^{0,(2u-1,u)}(\mathbb{Z}/\ell,H^{*,*})$ is $H^{1,1}$ if $u=1$, and $0$ otherwise, and the product map
        $$H^{1,1} \otimes \bigoplus_{s,u}\Ext_B^{s,(2u-s,u)}(\mathbb{Z}/\ell,H^{*,*}) \to \Ext_B^{s,(2u-s+1,u+1)}(\mathbb{Z}/\ell,H^{*,*})$$
        is surjective.
    \end{itemize}
    \end{lemma}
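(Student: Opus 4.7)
The proof splits into two halves matching the two parts of the lemma, and the overall strategy mirrors the arguments of \cite[Lemmas 5.1, 5.2, 5.4]{lev:ellcoh}.

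For Part 1, my plan is to establish the change of rings isomorphism. First, I invoke the Milnor basis for the motivic Steenrod algebra (\cite[Section 13]{Voev-power}, extended to positive characteristic in \cite{Hoy:Steenrod}) to write $A^{*,*}$ as a free left $H^{*,*}$-module on the generators $\rho(E,R) = Q(E)\cdot\mathcal{P}^R$. A Milnor--Moore style argument, suitably adapted to the motivic setting, then shows that $A^{*,*}$ is free as a right $B$-module. This identifies $M_B = A^{*,*}/A^{*,*}(Q_0,Q_1,\ldots)$ with the base change $A^{*,*}\otimes_B\mathbb{Z}/\ell$ as left $A^{*,*}$-modules; consequently, tensoring a $B$-projective resolution of $\mathbb{Z}/\ell$ with $A^{*,*}$ over $B$ produces an $A^{*,*}$-projective resolution of $M_B$, and the standard change-of-rings adjunction yields
\[
\Ext_{A^{*,*}}(M_B, H^{*,*}) \simeq \Ext_B(\mathbb{Z}/\ell, H^{*,*}).
\]

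For Part 2, I would compute $\Ext_B(\mathbb{Z}/\ell, H^{*,*})$ by exploiting the exterior structure of $B$. The anticommutation relations $Q_iQ_j + Q_jQ_i = 0$ (including $Q_i^2 = 0$) from Milnor's multiplication formula present $B$ as the exterior $\mathbb{Z}/\ell$-algebra $\Lambda_{\mathbb{Z}/\ell}(Q_0, Q_1, \ldots)$. The Koszul resolution of $\mathbb{Z}/\ell$ over $B$ then identifies $\Ext_B(\mathbb{Z}/\ell, H^{*,*})$ with the cohomology of the complex $(H^{*,*}\otimes_{\mathbb{Z}/\ell}\mathbb{Z}/\ell[h_0', h_1', \ldots], d)$, where $h_r'$ is dual to $Q_r$ in tridegree $(1, (1-2\ell^r, 1-\ell^r))$ and the differential $d(a\otimes M) = \sum_r Q_r(a)\otimes h_r'M$ records the $B$-action on $H^{*,*}$.

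The three bulleted claims then follow from a careful bidegree analysis, the key input being that for a monomial $a\otimes(h_{r_1}'\cdots h_{r_s}')$ with $a\in H^{p,q}$, the tridegree $(s,(t-s,u))$ satisfies $t - 2u = p - 2q$, and this invariant is shifted by exactly $+1$ under $d$. Combined with the motivic vanishing $H^{p,q}(\Spec k) = 0$ for $p > q$ from Theorem \ref{thm:MazWeiHZ}, this gives $p - 2q \le -q \le 0$ throughout the complex, hence $t\le 2u$, yielding the $t > 2u$ vanishing. For $t = 2u$, the constraint $p = 2q$ combined with $p\le q$ forces $p = q = 0$, so only $H^{0,0} = \mathbb{Z}/\ell$ contributes; target-vanishing ($H^{2\ell^r-1,\ell^r-1}(\Spec k) = 0$) then implies $d = 0$ on this subcomplex, giving the polynomial algebra $\mathbb{Z}/\ell[h_r']$. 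For the $s=0$, $t = 2u-1$ case, the constraint $p - 2q = -1$ with $p\le q$ forces $(p,q) = (1,1)$ and $u = 1$, and a similar target-vanishing check shows $Q_r$ annihilates $H^{1,1}$ for all $r$.

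The main obstacle will be the final surjectivity claim. One must prove that every class in $\Ext^{s,(2u-s+1,u+1)}$ is represented by an $H^{1,1}$-multiple of a polynomial in the $h_r'$. This requires not merely describing the cycle space in this tridegree, but also controlling the coboundaries that flow in from the $t - 2u = -2$ stratum -- coming from $H^{0,1}$ and $H^{2,2}$ together with monomials in $h_r'$ -- so that the $H^{1,1}$-generated description is preserved modulo exact terms. I would address this by computing $d$ explicitly on those low-weight strata and invoking the Bloch--Kato/Beilinson--Lichtenbaum description of $H^{p,q}(\Spec k)$ to constrain the $Q_r$-action, following the argument in \cite[Lemma 5.2]{lev:ellcoh}.
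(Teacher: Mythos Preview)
The paper does not give its own proof of this lemma; it is stated with citations to \cite[Lemmas 5.1, 5.2, 5.4]{lev:ellcoh} and used as a black box in the subsequent discussion. Your proposal correctly sketches the arguments from that reference: the change-of-rings isomorphism via freeness of $A^{*,*}$ as a right $B$-module (using the Milnor basis $\rho(E,R)=Q(E)\mathcal{P}^R$), and the Koszul computation of $\Ext_B(\mathbb{Z}/\ell,H^{*,*})$ over the exterior algebra $B=\Lambda_{\mathbb{Z}/\ell}(Q_0,Q_1,\ldots)$, with the tridegree bookkeeping driven by the vanishing $H^{p,q}(\Spec k)=0$ for $p>q$ from Theorem~\ref{thm:MazWeiHZ}. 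Your identification of the surjectivity in the $t<2u$ case as the most delicate point is accurate; this is where \cite{lev:ellcoh} invokes the structure of $H^{*,*}(\Spec k)$ in low weight to control the differential on the Koszul complex.
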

    
    Now, let us define, for all even non-$\ell$-adic partitions $\omega$, elements $z_{(\omega)} \in \Ext_{A^{*,*}}^{0,(-2|\omega|,-|\omega|)}$ characterized by the relations $(z_{\omega},u_{\omega'})= \delta_{\omega,\omega'}$, where the pairing $(z_{\omega},u_{\omega'})$ is $z_{\omega}(u_{\omega'})\in H^{*,*}$. Let us note that the diagonal map $\delta^\top$ described above can be read in $\bigoplus_{\omega \in P}M_B^{\top}u_{\omega}^{\top}$ through $\Phi^{\top}$, and it induces, via $\theta'$ and $\rho$, the diagonal map $\delta$ of $\bigoplus_{\omega \in P}M_Bu_{\omega}$. The expression \eqref{eq:comultiplication} for $\delta^{\top}(u_{\omega}^{\top})$ yields the analogous expression for $\delta(u_{\omega})$ through $\rho$. Then, via the multiplication map on $ \Ext_{A^{*,*}}(H^{*,*}(\MSp),H^{*,*})$ induced by $\rho$, we see that the pairing $(z_{\omega_1}\cdot z_{\omega_2},u_{\omega})$ can be computed by taking $(z_{\omega_1}\otimes z_{\omega_2},\delta(u_{\omega}))$ and turning $\otimes$ into products in $H^{*,*}$.
    
    Now we write $(\omega_1,\omega_2)$ for the concatenation: namely $((\omega_1,\ldots, \omega_r),(\omega'_1,\ldots, \omega'_s))$ is the sequence $(\omega_1,\ldots, \omega_r,\omega'_1,\ldots, \omega'_s)$, reordered to be non-increasing. It is then easy to see that $(z_{\omega_1}\otimes z_{\omega_2},\delta(u_{\omega}))$ is $1$ if $\omega=(\omega_1,\omega_2)$, and $0$ otherwise, which gives $z_{\omega_1}\cdot z_{\omega_2}=z_{(\omega_1,\omega_2)}$.
    
    Lemma~\ref{lemma:decompmsp} and Lemma~\ref{lem:ExtMB}  implies that the $H^{*,*}$-subalgebra
 $$\oplus_{k\ge0} \Ext_{A^{*,*}}^{0,(-4k,-2k)}(H^{*,*}(\MSp),H^{*,*})$$
 of $ \Ext_{A^{*,*}}^{*,(*,*)}(H^{*,*}(\MSp),H^{*,*})$ is the free $H^{*,*}$-module on the elements $z_\omega$, $\omega\in P$, the only indecomposable elements are the elements $z_{(2k)}\in \Ext_{A^{*,*}}^{0,(-4k,-2k)}(H^{*,*}(\MSp),H^{*,*})$ corresponding to the trivial partitions $\omega=(2k)$, with $2k$ not of the form $\ell^i-1$, and the subalgebra generated by the 
 $z_{(2k)}$ is the polynomial algebra over $H^{*,*}$ on the $z_{(2k)}$.
 
 Taking $k=0$, the element $z_{(0)}\in \Ext_{A^{*,*}}^{0,(0,0)}(H^{*,*}(\MSp),H^{*,*})$ dual to $u_{(0)}$ is the identity in the $\Ext$-algebra,  and the summand $\bigoplus_{s,u}\Ext_{A^{*,*}}^{s,(2u-s,s)}(M_Bu_{(0)},H^{*,*})$
of $\oplus_{s,u}\Ext_{A^{*,*}}^{s,(2u-s,s)}(H^{*,*}(\MSp),H^{*,*})$ is the polynomial algebra on generators $\{h_r'\}_{r\ge0}$ corresponding to the polynomial generators $\{h_r'\}_{r\ge0}$ described in Lemma~\ref{lem:ExtMB}. Moreover, we have $h_{r_1}'z_{\omega_1}\cdot h_{r_2}'z_{\omega_2}=h_{r_1}'h_{r_2}'z_{(\omega_1,\omega_2)}$, and for each partition $\omega\in P$, the assignment $x \mapsto x\cdot z_\omega$ gives an isomorphism
$$\bigoplus_{s,u}\Ext_{A^{*,*}}^{s,(2u-s,s)}(M_Bu_{(0)},H^{*,*}) \xrightarrow{\sim}\bigoplus_{s,u}\Ext_{A^{*,*}}^{s,(2u-s,s)}(M_Bu_{\omega},H^{*,*}).$$

Putting all the information from Lemma~\ref{lemma:decompmsp} and Lemma~\ref{lem:ExtMB} together with the product structure on $\{z_\omega\}$ described above, we get the following.

    \begin{prop}
    \label{prop:pres.ExtAlgebra}
        The $\mathbb{Z}/\ell$-algebra $\Ext_{A^{*,*}}(H^{*,*}(\MSp),H^{*,*})$ has the following presentation:
        \begin{itemize}
            \item $\Ext_{A^{*,*}}^{s,(t-s,u)}(H^{*,*}(\MSp),H^{*,*})=0$ if $t>2u$.
            \item $\Ext_{A^{*,*}}^{s,(2u-s,u)}(H^{*,*}(\MSp),H^{*,*})$ is polynomial in the generators:
            \begin{itemize}
                \item $1 \in \Ext_{A^{*,*}}^{0,(0,0)}(H^{*,*}(\MSp),H^{*,*})$;
                \item $z_{(2k)}\in \Ext_{A^{*,*}}^{0,(-4k,-2k)}(H^{*,*}(\MSp),H^{*,*})$, for $k\ge 1$, and $2k\neq \ell^i-1 \; \forall \; i \ge 0 $;
                \item $h_r' \in \Ext_{A^{*,*}}^{1,(1-2 \ell^r,1-\ell^r)}(H^{*,*}(\MSp),H^{*,*})$, for $r \ge 0$.
            \end{itemize}
            \item $\Ext_{A^{*,*}}^{0,(2u-1,u)}(H^{*,*}(\MSp),H^{**})$ is $H^{1,1}$ if $u=1$, and $0$ otherwise, and the product map
        \begin{multline*}
            H^{1,1} \otimes_{\Z/\ell} \bigoplus_{s,u}\Ext_{A^{*,*}}^{s,(2u-s,u)}(H^{*,*}(\MSp),H^{*,*}) = \\ \Ext_{A^{*,*}}^{0,(1,1)}(H^{*,*}(\MSp),H^{*,*}) \otimes_{\Z/\ell} \bigoplus_{s,u}\Ext_{A^{*,*}}^{s,(2u-s,u)}(H^{*,*}(\MSp),H^{*,*}) \to \\ \bigoplus_{s,u} \Ext_{A^{*,*}}^{s,(2u-s+1,u+1)}(H^{*,*}(\MSp),H^{*,*})
        \end{multline*}
        is surjective.
        \end{itemize}
    \end{prop}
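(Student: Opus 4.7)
The strategy is to combine the $A^{*,*}$-module decomposition of $H^{*,*}(\MSp)$ from Lemma~\ref{lemma:decompmsp} with the computation of $\Ext_{A^{*,*}}(M_B, H^{*,*})$ from Lemma~\ref{lem:ExtMB}, and then to track the multiplicative structure through the dual basis $\{z_\omega\}$ built using the diagonal formula \eqref{eq:diagonalexpansion}.

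First, I would apply Lemma~\ref{lemma:decompmsp} to write
\[
\Ext_{A^{*,*}}(H^{*,*}(\MSp), H^{*,*}) \simeq \prod_{\omega \in P} \Ext_{A^{*,*}}(M_B \cdot u_\omega, H^{*,*})
\]
and observe that a shift by the bidegree of $u_\omega$ identifies each factor with $\Ext_{A^{*,*}}(M_B, H^{*,*})$. Invoking Lemma~\ref{lem:ExtMB}(1) then identifies each factor, as a trigraded $H^{*,*}$-module, with $\Ext_B(\Z/\ell, H^{*,*})$ shifted by $(0,(-2|\omega|,-|\omega|))$. Combining with Lemma~\ref{lem:ExtMB}(2) applied to each summand, the vanishing for $t > 2u$ follows immediately (since each summand vanishes there by the shifted version of Lemma~\ref{lem:ExtMB}(2)), as does the statement describing $\Ext^{0,(2u-1,u)}$ and the surjectivity of the product by $H^{1,1}$.

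The multiplicative part is the main obstacle, and this is where the discussion preceding the statement does the essential work: the isomorphism of Lemma~\ref{lemma:decompmsp} gives, via the multiplication on $\MSp$, a comultiplication $\delta$ on $\oplus_\omega M_B\cdot u_\omega$, and by pulling back the topological diagonal formula of Novikov \cite[Lemma~7]{Thomcompl} through the isomorphism $\rho$ constructed in the proof of Lemma~\ref{lemma:decompmsp}, we obtain the explicit formula \eqref{eq:diagonalexpansion}. From \eqref{eq:diagonalexpansion} and the characterization $(z_\omega, u_{\omega'}) = \delta_{\omega,\omega'}$, a direct computation gives $z_{\omega_1} \cdot z_{\omega_2} = z_{(\omega_1,\omega_2)}$, and in particular the $H^{*,*}$-subalgebra $\oplus_k \Ext^{0,(-4k,-2k)}$ is the polynomial algebra over $H^{*,*}$ on the indecomposable classes $z_{(2k)}$ corresponding to the one-part partitions $(2k)$ with $2k$ not of the form $\ell^i - 1$ (by the definition of $P$ in Lemma~\ref{lemma:decompmsp}).

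Finally, for the part of the algebra with $t = 2u$, I would use that the identification $\Ext_{A^{*,*}}(M_B \cdot u_\omega, H^{*,*}) \simeq \Ext_B(\Z/\ell, H^{*,*})$ sends $h_r' \cdot z_\omega$ to the generator $h_r'$ of Lemma~\ref{lem:ExtMB}(2) in the $\omega$-summand, combined with the identity $h_{r_1}' z_{\omega_1} \cdot h_{r_2}' z_{\omega_2} = h_{r_1}' h_{r_2}' z_{(\omega_1,\omega_2)}$ (which follows from the $z_\omega$ product law plus the fact that within the $\omega = (0)$ summand the product is already the polynomial product from Lemma~\ref{lem:ExtMB}(2)). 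This shows that the full $\oplus_{s,u} \Ext^{s,(2u-s,u)}$-subalgebra is the polynomial ring over $\Z/\ell$ on the $z_{(2k)}$ (for $k \ge 1$, $2k \ne \ell^i - 1$) together with the $h_r'$ for $r \ge 0$, which is exactly what was claimed.
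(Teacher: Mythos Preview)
Your proposal is correct and follows essentially the same approach as the paper: the paper's argument is precisely the discussion immediately preceding the statement, which combines the decomposition of Lemma~\ref{lemma:decompmsp}, the computation of $\Ext_{A^{*,*}}(M_B,H^{*,*})$ from Lemma~\ref{lem:ExtMB}, and the product law $z_{\omega_1}\cdot z_{\omega_2}=z_{(\omega_1,\omega_2)}$ deduced from the diagonal formula \eqref{eq:diagonalexpansion}, exactly as you outline. The only cosmetic difference is that you write the decomposition of the $\Ext$ as a product over $\omega\in P$ rather than a direct sum; this is harmless since the bigrading on the $u_\omega$ guarantees that only finitely many factors contribute in each fixed tridegree.
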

    
    \begin{rmk} Comparing the descriptions of $H^{*,*}(\MSp)$ and $\Ext_{A^{*,*}}(H^{*,*}(\MSp),H^{*,*})$ given by Lemma \ref{lemma:decompmsp} and Proposition \ref{prop:pres.ExtAlgebra} with their analogues for $\MGL$ given by \cite[Lemma 5.9 and Proposition 5.7]{lev:ellcoh}, we see that $H^{*,*}(\MSp)$ is a summand of $H^{*,*}(\MGL)$ as $A^{*,*}$-modules, $\Ext_{A^{*,*}}(H^{*,*}(\MSp),H^{*,*})$ is the corresponding $H^{*,*}$-summand of  $\Ext_{A^{*,*}}(H^{*,*}(\MGL),H^{*,*})$,  and $\Ext_{A^{*,*}}(H^{*,*}(\MGL),H^{*,*})$ is the polynomial algebra over $\Ext_{A^{*,*}}(H^{*,*}(\MSp),H^{*,*})$ on generators $z_{(k)}\in \Ext_{A^{*,*}}^{0,(-2k,-k)}(H^{*,*}(\MGL),H^{*,*})$ for $k$ odd. All these comparison maps are induced by the map $\Phi: \MSp\to \MGL$ of Remark \ref{rmk:MSp-MGLThomClasses}. 
   \end{rmk}
   
   \subsection{The Adams spectral sequence}
   
   We denote by $\overline{H\Z/\ell}$ the homotopy cofiber of the unit map $\epsilon_{H\Z/\ell}:1_k \to H\Z/\ell$.  For $\E \in \SH(k)$, letting $\E_s\coloneqq \overline{H\Z/\ell}^{\wedge s} \wedge \E$ and $W_s(\E) \coloneqq H\Z/\ell \wedge \E_s$ gives the standard \emph{Adams tower} of $\E$ $\{\E_s,W_s(\E)\}_{s \ge 0}$ (see for example \cite[Section 6.1]{lev:ellcoh}).
   
   \begin{defn}
    \begin{enumerate}
        \item The \emph{$H\Z/\ell$-nilpotent completion} of $\E$, denoted by $\E^\wedge_{H\Z/\ell}$, is the homotopy limit of the Adams tower $\{\E_s,W_s(\E)\}_{s \ge 0}$.
        \item The (Mod-$\ell$) \emph{Adams spectral sequence for $\E$} is the spectral sequence associated to the Adams tower of $\E$. Explicitly, it is given by
    $$E_1^{s,t,u} \coloneqq W_s(\E)^{t,u} \Rightarrow (\E_{H\Z/\ell}^\wedge)^{t,u},$$
    with filtration degree $s$, cohomological bidegree $(t,u)$, and differentials $d_r^{s,t,u}:E_r^{s,t,u} \to E_r^{s+r,t+1,u}$.
    \end{enumerate}
   \end{defn}

We need to introduce the following technical condition.

\begin{defn}
\label{defn:WedgeOfCopiesHZ}
    For $\E \in \SH(k)$, we say that $\E$ is a \emph{motivically finite type wedge of copies of $H\Z/\ell$} if there exist $p,q \in \Z$ and a set of bidegrees $\{p_\alpha,q_\alpha\}{_\alpha\in S}$ such that 
    $$\Sigma^{p,q}\E \simeq \oplus_{\alpha \in S} \Sigma^{p_\alpha,q_\alpha} H\Z/\ell,$$
    with the bidegrees $(p_\alpha,q_\alpha)$ satisfying the conditions:
    \begin{enumerate}
        \item $p_\alpha \ge 2q_\alpha \ge 0$ for all $\alpha \in S$,
        \item For all $m \in \Z$, $q_\alpha \le m$ for almost all $\alpha$.
    \end{enumerate}
\end{defn}

\begin{prop}[\cite{DugIsa:Adams}, Remark 6.11 and Proposition 6.1]
\label{prop:a.s.s}
    Let us suppose that $\E \in \SH(k)$ is a cellular spectrum and $W_s(\E)$ is a motivically finite type wedge of copies of $H\Z/\ell$ for all $s$. Then the $E_2$-page of the Adams spectral sequence for $\E$ has the form
    $$E_2^{s,t,u}= \Ext_{A^{*,*}}^{s,(t-s,u)}(H^{*,*}(\E),H^{*,*}).$$
    If in addition, fixed $(t,u)$, we have $\lim_r^1 E_r^{s,t,u}(\E)=0$ for all $s$, the spectral sequence converges completely to $(\E_{H\Z/\ell}^\wedge)^{t,u}$. That is, the natural map $(\E_{H\Z/\ell}^\wedge)^{t,u} \to \lim_s K_s(\E)^{t,u}$ is an isomorphism, the associated filtration
    $$F_s(\E_{H\Z/\ell}^\wedge)^{t,u} \coloneqq \ker((\E_{H\Z/\ell}^\wedge)^{t,u} \to K_{s-1}(\E)^{t,u}) \subset (\E_{H\Z/\ell}^\wedge)^{t,u}$$
    is exhaustive, and the natural map
    $$F_s(\E_{H\Z/\ell}^\wedge)^{t,u} / F_{s+1}(\E_{H\Z/\ell}^\wedge)^{t,u} \to E_\infty^{s,t,u}(\E)$$
    is an isomorphism for all $s \ge 0$.
\end{prop}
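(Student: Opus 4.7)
The plan is to follow the classical strategy for identifying the $E_2$-page of an Adams-type spectral sequence with an Ext-group over the Steenrod algebra, adapted to the motivic setting as in \cite[\S 6]{lev:ellcoh} and \cite{DugIsa:Adams}. The cellularity of $\E$ and the hypothesis that each $W_s(\E)$ is a motivically finite type wedge of copies of $H\Z/\ell$ are precisely what is needed to make the Künneth-type identifications go through, despite the bigrading.

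First, I would establish the $E_1$-page as a cobar complex. Since $W_s(\E) = H\Z/\ell \wedge \overline{H\Z/\ell}^{\wedge s} \wedge \E$ and, by assumption, this is a motivically finite type wedge of copies of $H\Z/\ell$, cohomology of the form $[\E', \Sigma^{t,u}W_s(\E)]_{\SH(k)}$ with $\E'$ cellular can be computed in terms of $H\Z/\ell$-cohomology. Specifically, using cellularity of $\E$, the motivic Künneth formula in the finite-type-wedge situation, and the standard identification of $H^{*,*}(H\Z/\ell)$ with the (dual) motivic Steenrod algebra, I would show that
\[
W_s(\E)^{t,u} \;\simeq\; \mathrm{Hom}_{A^{*,*}}^{(t,u)}\!\bigl(H^{*,*}(\E),\, A^{*,*\,\otimes s}\otimes_{H^{*,*}} H^{*,*}\bigr),
\]
i.e.\ the degree $(t,u)$ part of the $s$-th term of the cobar resolution of $H^{*,*}$ over $A^{*,*}$ applied to $H^{*,*}(\E)$. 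The differential $d_1$ induced by the tower \eqref{eq:Adamstower} is then, by direct inspection of the maps in the Adams tower, the cobar differential.

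Next, I would compute the cohomology of this cobar complex. Since the cobar complex of $H^{*,*}$ over $A^{*,*}$ is, by construction, a resolution of $H^{*,*}$ by relatively injective $A^{*,*}$-modules, its cohomology in the variable $s$ computes
\[
E_2^{s,t,u} \;=\; \mathrm{Ext}_{A^{*,*}}^{s,(t-s,u)}\!\bigl(H^{*,*}(\E),\, H^{*,*}\bigr),
\]
which is the first claim. This step is formal once the identification of the $E_1$-complex with the cobar complex has been made; the real work is in the previous paragraph.

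Finally, for convergence, I would apply the standard convergence criterion for the spectral sequence of a tower: the hypothesis $\lim^1_r E_r^{s,t,u}=0$ for all $s$ with $(t,u)$ fixed, together with the fact that $\E^\wedge_{H\Z/\ell} = \mathrm{holim}_s K_s(\E)$, implies via the Milnor exact sequence that $(\E^\wedge_{H\Z/\ell})^{t,u} \xrightarrow{\sim} \lim_s K_s(\E)^{t,u}$, that the filtration by kernels $F_s(\E^\wedge_{H\Z/\ell})^{t,u}$ is exhaustive, and that the associated graded agrees with $E_\infty^{s,t,u}$. This is a purely formal consequence of the tower structure and is treated in detail in \cite[\S 6]{lev:ellcoh}; I would just invoke it. The main obstacle is really the first step: ensuring that the Künneth/duality identification of $W_s(\E)^{*,*}$ with the cobar complex holds in the motivic bigraded setting, which is exactly where the two hypotheses (cellularity of $\E$ and the motivically finite type wedge condition on $W_s(\E)$) enter essentially.
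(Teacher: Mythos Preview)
Your proposal is correct and follows the same standard argument that the paper defers to: the paper does not give its own proof but simply refers to \cite[Proposition 6.6]{lev:ellcoh} (and \cite[Remark 6.11, Proposition 6.14]{DugIsa:Adams}), and your sketch---identifying the $E_1$-page with the cobar complex via the cellularity and finite-type-wedge hypotheses, then reading off $\Ext$ at $E_2$, then invoking the $\lim^1$-criterion for complete convergence---is exactly the strategy carried out in those references. One small caution: the precise form of your displayed cobar identification (with $A^{*,*\,\otimes s}$ rather than the reduced Steenrod algebra or the canonical Adams resolution) would need to be written more carefully to get the degree shift $(t-s,u)$ right, but this is cosmetic and the cited sources handle it.
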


See \cite[Proposition 6.6]{lev:ellcoh} for a proof (instead of ordinary cellularity they use the weaker hypothesis of $H\Z/\ell$-cellularity, but we don't need it).

\begin{prop}
\label{prop:a.s.s.multiplication}
    Let $\sE$ be a cellular ring spectrum with multiplication $\mu_{\sE}$, such that each $W_s(\sE)$ is a motivically finite type wedge of copies of $H\Z/\ell$. Let us also suppose that 
    $H^{*,*}(\sE)$ is flat over $H^{*,*}$ and the canonical map $H^{*,*}(\sE)\otimes_{H^{*,*}}H^{*,*}(\sE) \to H^{*,*}(\sE \wedge \sE)$ is an isomorphism. Then, the Adams spectral sequence for $\sE$ has a multiplicative structure induced by $\mu_\sE$ and compatible with the multiplicative structure on $(\sE_{H\Z/\ell}^\wedge)^{*,*}$. Also, the product on the $E_2$-terms induced by the multiplicative structure on the spectral sequence agrees with the associative $\Z/\ell$-algebra structure on $\Ext_{A^{*,*}}^{s,(t-s,u)}(H^{*,*}(\sE),H^{*,*})$ given as in \eqref{eq:tensor_product_ext} and the isomorphism 
$$E_2^{s,t,u}= \Ext_{A^{*,*}}^{s,(t-s,u)}(H^{*,*}(\sE),H^{*,*})$$    
given by Proposition \ref{prop:a.s.s}.
\end{prop}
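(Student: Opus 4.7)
The plan is to construct a pairing of the Adams tower of $\sX$ with itself, lifting $\mu_\sX$, and then identify the induced $E_2$-product with the Ext-algebra product defined via the coproduct $\Psi$ of~\eqref{eqn:ACoproduct}. Writing $\sX_s := \overline{H\Z/\ell}^{\wedge s} \wedge \sX$ and $W_s(\sX) = H\Z/\ell \wedge \sX_s$, the symmetric monoidal structure of $\SH(k)$ provides a canonical identification
$$\sX_s \wedge \sX_{s'} \;\simeq\; \overline{H\Z/\ell}^{\wedge (s+s')} \wedge \sX \wedge \sX \;=\; (\sX \wedge \sX)_{s+s'},$$
and composing with $\overline{H\Z/\ell}^{\wedge (s+s')} \wedge \mu_\sX$ gives a pairing $\sX_s \wedge \sX_{s'} \to \sX_{s+s'}$, compatible with the tower maps on both factors. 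Smashing further with the multiplication on $H\Z/\ell$, this descends to pairings on layers $W_s(\sX) \wedge W_{s'}(\sX) \to W_{s+s'}(\sX)$ and therefore to $E_1$-page pairings $W_s(\sX)^{t,u} \otimes W_{s'}(\sX)^{t',u'} \to W_{s+s'}(\sX)^{t+t',u+u'}$.

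The standard formal machinery for pairings of filtered spectra, as worked out in \cite[\S4]{DugIsa:Adams}, shows that this pairing satisfies the Leibniz rule with respect to the $d_1$-differential, hence induces pairings $E_r^{s,t,u} \otimes E_r^{s',t',u'} \to E_r^{s+s',t+t',u+u'}$ satisfying the Leibniz rule for each $d_r$ on every page. Compatibility at $E_\infty$ with the $\mu_\sX$-induced product on $(\sX_{H\Z/\ell}^\wedge)^{*,*}$ follows from the observation that the pairing on the Adams tower lifts $\mu_\sX$: passing to homotopy limits, the filtration $F_\bullet(\sX_{H\Z/\ell}^\wedge)^{*,*}$ from Proposition~\ref{prop:a.s.s} becomes a multiplicative filtration, and the induced product on the associated graded coincides tautologically with the $E_\infty$-pairing we have just built.

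The remaining, and most delicate, step is to identify the resulting $E_2$-product with the Ext-algebra product on $\Ext_{A^{*,*}}(H^{*,*}(\sX), H^{*,*})$. Using the hypothesis that each $W_s(\sX)$ is a motivically finite-type wedge of copies of $H\Z/\ell$, the bigraded Yoneda pairing identifies the complex $(W_\bullet(\sX)^{*,*}, d_1)$ with $\Hom_{A^{*,*}}^{*,*}(P_\bullet, H^{*,*})$, where $P_\bullet \to H^{*,*}(\sX)$ is the free resolution of $A^{*,*}$-modules extracted from the Adams resolution (with $P_s = H^{*,*}(W_s(\sX))$ viewed as a free $A^{*,*}$-module on the wedge summands). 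Under this dictionary, the geometric pairing on layers corresponds, thanks to the K\"unneth isomorphism $H^{*,*}(\sX) \otimes_{H^{*,*}} H^{*,*}(\sX) \xrightarrow{\sim} H^{*,*}(\sX \wedge \sX)$, to the tensor-product pairing of $\Hom_{A^{*,*}}$-complexes followed by pullback along $\mu_\sX^*$, and passing to cohomology recovers exactly the product~\eqref{eq:tensor_product_ext} post-composed with $(\mu_\sX^*)^*$. The hard part of the verification is the bookkeeping here: one must carefully disentangle the left- and right-$H^{*,*}$-module structures on $A^{*,*}$, the role of the restricted tensor product $(A^{*,*} \otimes_{H^{*,*}} A^{*,*})_r$, and the coproduct $\Psi$, to confirm that the geometric pairing on resolutions lands in the correct algebraic pairing --- this is essentially a naturality check that $\Psi$ was designed to pass, following the approach of \cite[\S11]{Voev-power}.
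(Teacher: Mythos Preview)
The paper does not give its own proof of this proposition; it simply refers the reader to \cite[Proposition 6.7]{lev:ellcoh}. Your sketch follows the standard route---build a pairing of Adams towers from $\mu_\sX$, invoke the Dugger--Isaksen multiplicative machinery \cite[\S4]{DugIsa:Adams}, and identify the $E_2$-product with the Ext-algebra product via the K\"unneth hypothesis---which is exactly the approach taken in the cited reference, so your proposal is correct and aligned with what the paper defers to.
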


See \cite[Proposition 6.7]{lev:ellcoh} for a proof.

Let us now recall that we have the unstable Hopf map $\eta_u: \A^2 \setminus \{0\} \to \P^1$ defined by $(x,y) \mapsto [x:y]$. We also have $\A^2 \setminus \{0\}\simeq \P^1 \wedge \G_m$ in $\sH_\bullet(k)$ by \cite[\S 3 Example 2.20]{morvoe:homotopytheory}. Then we consider the \emph{stable Hopf map} as the map $\eta:=\Sigma^{-2,-1}\Sigma^\infty_{\P^1}\eta_u: \Sigma^{1,1}1_k \to 1_k$.

\begin{defn}
     We denote by $\mathbb{S}_k/\eta^n$ the homotopy cofiber of $\eta^n:\Sigma^{n,n}1_k \to 1_k$. The  diagram
    $$
    \begin{tikzcd}
       \Sigma^{n+1,n+1}1_k \arrow[d, swap, "\eta"] \arrow[r, "\eta^{n+1}"] & 1_k \arrow[d, equal] \arrow[r] & 1_k/\eta^{n+1} \\
       \Sigma^{n,n}1_k \arrow[r, swap, "\eta^{n}"] & 1_k \arrow[r] & 1_k/\eta^n
    \end{tikzcd}
    $$
    induces the map $1_k/\eta^{n+1} \to 1_k/\eta^n$ on the cofibers. For $\E \in \SH(k)$ we then get a tower $\{1/\eta^n \wedge \E\}_{n>0}$ over $1/\eta \wedge \E$. We call the \emph{$\eta$-completion of $\E$}, denoted by $\E_\eta^\wedge$, the homotopy limit of this tower. 
\end{defn}

\begin{defn}
\label{defn:ellCompl}
For $m \in \Z$, we recall that the Moore spectrum $S\Z/m$ is the cofiber of the map $\mathbb{S}_k \xrightarrow{\times m} \mathbb{S}_k$ in $\SH(k)$. For $\E \in \SH(k)$, the \emph{$\ell$-adic completion of $\E$}, denoted by $\E_\ell^\wedge$, is the homotopy limit of the tower $\{S\Z/\ell^n \wedge \E\}_{n \ge 0}$ over $S\Z \wedge \E$.
\end{defn}

As a matter of notation, we will use the shorthand $\E^\wedge_{\eta,\ell} \coloneqq (\E_\eta^\wedge)_\ell^\wedge$ for the $(\eta, \ell)$-completion of $\E$.

In \cite{Man:completions}, Mantovani proved a comparison between the $H\Z/\ell$-nilpotent completion and the $(\eta,\ell)$-completion of a spectrum $\E$, with the only hypothesis of connectedness of $\E$, generalizing some results showed in \cite{ormsby:adams} (in particular the first claim of \cite[Theorem 1]{ormsby:adams}). In order to define connectedness, let us recall that for any motivic spectrum $\E$, we have a bigraded homotopy sheaf $\pi_{n,m}(\E)$ given by the Nisnevich sheafification of the presheaf $U \mapsto \E^{-n,-m}(U)$ on $\Sm/k$. One then uses the notations $\pi_p(\E)_q \coloneqq \pi_{p-q,-q}(\E)$ and $\pi_p(\E) \coloneqq \oplus_{q \in \Z} \pi_p(\E)_q$. 

\begin{defn}
    A motivic spectrum $\E \in \SH(k)$ is called \emph{$r$-connected} if for all $p \le r$, $\pi_p(\E)=0$, and it is called \emph{connected} if it is $r$-connected for some integer $r$.
\end{defn}

Combining some of the main results of \cite{Man:completions}, one gets the following.

\begin{thm}[\cite{Man:completions}, Theorem 4.3.5, Example 5.2, Theorem 7.3.4]
\label{thm:Mantovani}
    If $\E \in \SH(k)$ is a connected motivic spectrum, then $\E_{H\Z/\ell}^\wedge \simeq \E_{\eta, \ell}^\wedge$.
\end{thm}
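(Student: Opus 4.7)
The plan is to compare the two completion towers by producing a canonical map between them and then using connectedness to force it to be an equivalence. The first observation is that the motivic Hopf map $\eta: \Sigma^{1,1}\mathbb{S}_k \to \mathbb{S}_k$ and multiplication by $\ell$ both act trivially on $H\Z/\ell$-cohomology: for $\eta$ this follows from a bidegree count, since $H\Z/\ell^{a,b}$ is concentrated in bidegrees with $a\le 2b$ and $\eta$ has bidegree $(1,1)$, so multiplication by $\eta$ lands outside the supported region after applying it to a class from $H\Z/\ell$; for $\ell$ it is obvious. Hence the maps $\mathbb{S}_k \to \mathbb{S}_k/\eta^n$ and $\mathbb{S}_k \to \mathbb{S}_k/\ell^n$ are $H\Z/\ell$-homology equivalences, giving, for any $\E$, a natural comparison map $\E_{H\Z/\ell}^\wedge \to \E_{\eta,\ell}^\wedge$ induced on nilpotent completions.

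Next I would identify $\E_{H\Z/\ell}^\wedge$ with the Bousfield localization of $\E$ at $H\Z/\ell$ (this is the content corresponding to \cite[Theorem 4.3.5]{Man:completions}); and similarly identify $\E_{\eta,\ell}^\wedge$ with the Bousfield localization at the Moore spectrum $\mathbb{S}_k/(\eta,\ell)$ built from the tower above. Since both localizations are governed by the same set of generating acyclics (killing $\eta$ and $\ell$), the two localization functors agree on any spectrum for which the relevant towers converge, which is the statement of \cite[Example 5.2]{Man:completions} applied to our setup.

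Finally, the convergence hypothesis is where connectedness of $\E$ enters. For $\E$ $r$-connected, the fibers of the stages $\E/\eta^{n+1}\to \E/\eta^n$ are shifts of $\E$ by $(n+1,n+1)$, hence become uniformly more connected as $n$ grows; the same holds for the $\ell$-tower and for the Adams tower (where one uses the cellularity/connectivity estimates on $\bar{H\Z/\ell}^{\wedge s}\wedge \E$). This forces the $\lim^1$-terms in each bidegree to vanish, so both towers converge strongly in each bidegree, and the comparison map from the previous step induces an isomorphism on bigraded homotopy sheaves, whence an equivalence in $\SH(k)$; this is the role of \cite[Theorem 7.3.4]{Man:completions}.

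The main obstacle I would expect is the third step: controlling the convergence of the $H\Z/\ell$-Adams tower in the motivic setting is subtle because the Adams filtration does not obviously interact with the connectivity filtration. The crux is to show that, under the connectedness hypothesis on $\E$, the motivic analogue of Bousfield's convergence theorem applies, so that nilpotent completion agrees with Bousfield localization at $H\Z/\ell$; once this is in hand, identifying the Bousfield localization with the $(\eta,\ell)$-completion is comparatively formal.
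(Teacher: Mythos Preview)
The paper does not prove this statement: it is recorded purely as a citation to Mantovani's work and used as a black box (to deduce Corollary~\ref{cor:EtaEllComplMSp}). So there is no argument in the paper to compare your sketch against beyond the three references in the theorem header.

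That said, your outline is in the right spirit and tracks Mantovani's actual strategy: identify both completions with the Bousfield localization at $H\Z/\ell$, and use connectedness to supply the convergence needed for nilpotent completion to agree with localization. Two points where your details are soft:
\begin{itemize}
\item Your bidegree argument for $\eta$ acting trivially on $H\Z/\ell$ does not quite work as stated; the vanishing range for $H\Z/\ell^{a,b}(\Spec k)$ does not by itself kill multiplication by $\eta$ on all $H\Z/\ell$-modules. The standard argument is that $H\Z/\ell$ is $\GL$-oriented (Example~\ref{exmp:orientedHZ}), and $\eta$ acts as zero on any oriented theory.
\item Your convergence heuristic (``fibers become more connected'') is the correct intuition, but in the motivic setting making this precise for the $H\Z/\ell$-Adams tower is exactly where Mantovani's paper does real work; it is not a routine transcription of Bousfield's classical argument, and you correctly flag this as the main obstacle.
\end{itemize}
In short: the shape is right, the details would need the cited reference to fill in, and the paper itself offers nothing further.
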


\begin{prop}
\label{prop:mspconnective}
    $\MSp$ is a $(-1)$-connected spectrum.
\end{prop}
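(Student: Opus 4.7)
The plan is to use the inductive construction $\MSp \simeq \colim_r \Sigma^{-4r,-2r}\Sigma^\infty_{\P^1}\MSp_{2r}$ from Construction~\ref{constr:MSp}, together with Morel's stable connectivity theorem, by exploiting the fact that each $\MSp_{2r}$ is the Thom space of a rank $2r$ vector bundle, so that $\Sigma^\infty_{\P^1}\MSp_{2r}$ is connective enough that the desuspension by $\Sigma^{-4r,-2r}$ still yields a $(-1)$-connected spectrum.

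First I would observe, by a direct computation with the Morel $t$-structure, that $\pi_p(\Sigma^{a,b}\E)\simeq \pi_{p-(a-b)}(\E)$ for $\E\in \SH(k)$, so that $\Sigma^{a,b}$ shifts connectivity by $(a-b)$. In particular, $\Sigma^{-4r,-2r}\F$ is $(n-2r)$-connected whenever $\F$ is $n$-connected. Since filtered homotopy colimits preserve $(-1)$-connectivity, it suffices to show that for each $r\ge 0$, $\Sigma^\infty_{\P^1}\MSp_{2r}$ is $(2r-1)$-connected; the desuspension by $\Sigma^{-4r,-2r}$ and the filtered colimit over $r$ will then give the proposition.

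Writing $\MSp_{2r}\simeq\colim_N \Th(E^\Sp_{r,rN})$, this further reduces to the claim that for any rank $2r$ vector bundle $V\to X$ over a smooth $k$-scheme $X$, $\Sigma^\infty_{\P^1}\Th(V)$ is $(2r-1)$-connected. For $V=\sO_X^{2r}$ trivial, the identification $\Th(V)\simeq X_+\wedge (\P^1)^{\wedge 2r}$ from Remark~\ref{rmk:Thomspaces} gives $\Sigma^\infty_{\P^1}\Th(V)\simeq \Sigma^{4r,2r}\Sigma^\infty_{\P^1}X_+$, which is $(2r-1)$-connected by Morel's stable connectivity theorem (which guarantees $(-1)$-connectivity of $\Sigma^\infty_{\P^1}X_+$ for smooth $X$) combined with the shift computation above.

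The main obstacle is handling a non-trivial $V$. I would deal with this via a Mayer-Vietoris argument: fix a finite Zariski open cover $\{U_i\}$ of $X$ trivializing $V$, apply the Mayer-Vietoris triangle in $\SH(X)$ of Lemma~\ref{lemma:Mayer-Vietoris} to $\Sigma^V 1_X$, push forward to $\SH(k)$, and induct on the number of opens. Since each restriction of $V$ to a finite intersection $U_{i_1}\cap\cdots\cap U_{i_m}$ is trivial, the trivial case applies to each piece and yields $(2r-1)$-connectivity; because $(2r-1)$-connective spectra are closed under cofibers and filtered colimits in $\SH(k)$, the induction goes through, and the successive colimits over $N$ and $r$ then complete the argument.
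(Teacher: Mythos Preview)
Your proposal is correct and follows essentially the same approach as the paper's proof: reduce to showing $\Sigma^\infty_{\P^1}\Th(V)$ is $(2r-1)$-connected for $V$ a rank $2r$ bundle, handle the trivial case via Morel's stable connectivity theorem, and patch the general case by a Mayer--Vietoris induction over a trivializing cover. The only cosmetic difference is that the paper first argues the unstable Thom space $\Th(\sO_{U_i}^{2r})\simeq S^{2r}\wedge\G_m^{\wedge 2r}\wedge U_{i+}$ is $(2r-1)$-connected and then invokes Theorem~\ref{thm:MorelConnectivity}, whereas you apply Morel's theorem to $\Sigma^\infty_{\P^1}X_+$ and then shift by $\Sigma^{4r,2r}$; these are equivalent.
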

    
\begin{proof}
The argument is basically the same as in \cite[Corollary 2.3]{lev:ellcoh}, replacing $\Gr(n,n+m)$ with $\HGr(n,np)$, the tautological bundle over $\Gr(n,n+m)$ with $E_{n,np}^\Sp$ and $\Sigma^{-2n-n}$-suspensions with $\Sigma^{-4n-2n}$-suspensions. The symplectic structure on $E_{n,np}^\Sp$ does not play any role here.
\end{proof}

Theorem \ref{thm:Mantovani} and Proposition \ref{prop:mspconnective} give the following.

\begin{corollary}
\label{cor:EtaEllComplMSp}
    In $\SH(k)$, we have $\MSp_{H\Z/\ell}^\wedge \simeq \MSp_{\eta, \ell}^\wedge$.
\end{corollary}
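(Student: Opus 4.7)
The plan is essentially a one-line invocation of the two preceding results, so I will describe it as such while being a little careful about the hypotheses.

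First I would recall the statement of Theorem \ref{thm:Mantovani}: for any connected motivic spectrum $\E \in \SH(k)$, Mantovani's comparison gives a canonical equivalence $\E_{H\Z/\ell}^\wedge \simeq \E_{\eta,\ell}^\wedge$ in $\SH(k)$. The corollary is then obtained by verifying that $\MSp$ satisfies the hypothesis, which is precisely the content of Proposition \ref{prop:mspconnective}: $\MSp$ is $(-1)$-connected, hence connected in the sense required by the definition preceding Theorem \ref{thm:Mantovani}.

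Thus the proof reduces to: apply Theorem \ref{thm:Mantovani} with $\E = \MSp$, using Proposition \ref{prop:mspconnective} to check that $\MSp$ is connected; the conclusion $\MSp_{H\Z/\ell}^\wedge \simeq \MSp_{\eta,\ell}^\wedge$ follows immediately. There is no real obstacle here — the substantive work has already been carried out in proving Proposition \ref{prop:mspconnective} (by reducing to a Mayer–Vietoris computation on quaternionic Grassmannians and invoking Morel's stable connectivity theorem) and in Mantovani's comparison result, both of which we cite as black boxes. The only minor point to keep in mind is matching conventions: Mantovani's hypothesis requires connectedness as defined via the bigraded homotopy sheaves, and the $(-1)$-connectedness established in Proposition \ref{prop:mspconnective} is phrased in exactly this language, so no translation is needed.
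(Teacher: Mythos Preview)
Your proposal is correct and matches the paper's approach exactly: the paper states the corollary immediately after Proposition~\ref{prop:mspconnective} with the one-line justification ``Theorem~\ref{thm:Mantovani} and Proposition~\ref{prop:mspconnective} give the following,'' which is precisely your argument.
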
 

We can finally compute the Adams spectral sequence for $\MSp$.

\begin{prop}
\label{prop: a.s.s.MSp}
    The Adams spectral sequence for $\MSp$ is of the form
    $$E_2^{s,t,u}= \Ext_{A^{*,*}}^{s,(t-s,u)}(H^{*,*}(\MSp),H^{*,*}) \Rightarrow (\MSp_{H\Z /\ell}^\wedge)^{t,u},$$
    with differentials $d_r^{s,t,u}:E_r^{s,t,u} \to E_r^{s+r,t+1,u}$.
\end{prop}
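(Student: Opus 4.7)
The plan is to apply Proposition~\ref{prop:a.s.s} directly with $\sE = H\Z/\ell$ and $\E = \MSp$. This result gives precisely the $E_2$-identification and the convergence target, provided that two hypotheses are verified: that $\MSp$ is cellular, and that $W_s(\MSp)$ is a motivically finite type wedge of copies of $H\Z/\ell$ for every $s\ge 0$. The shape of the differentials $d_r^{s,t,u}:E_r^{s,t,u}\to E_r^{s+r,t+1,u}$ is then standard for spectral sequences arising from a tower of homotopy cofiber sequences indexed as in the Adams tower \eqref{eq:Adamstower}, and the identification of the abutment with $(\MSp_{H\Z/\ell}^\wedge)^{t,u}$ is built into the definition of $H\Z/\ell$-nilpotent completion. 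So the real content is in checking the two hypotheses.

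Cellularity of $\MSp$ is already established by Corollary~\ref{cor:cellularityofMSp}. For the second hypothesis, I would use associativity and commutativity of the smash product to rewrite
\[
W_s(\MSp) = H\Z/\ell \wedge \overline{H\Z/\ell}^{\wedge s}\wedge \MSp \simeq (H\Z/\ell\wedge \MSp)\wedge \overline{H\Z/\ell}^{\wedge s}.
\]
Proposition~\ref{prop:MotiveOfMSp}, combined with Remark~\ref{rmk:HZ/ell}, already gives
$H\Z/\ell\wedge \MSp \simeq \oplus_\alpha \Sigma^{4n_\alpha,2n_\alpha}H\Z/\ell$, with $n_\alpha\ge 0$ and, for each $n$, only finitely many indices $\alpha$ such that $n_\alpha=n$, so the bidegrees $(p_\alpha,q_\alpha)=(4n_\alpha,2n_\alpha)$ satisfy the conditions of Definition~\ref{defn:WedgeOfCopiesHZ}. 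The task thus reduces to showing that $H\Z/\ell\wedge \overline{H\Z/\ell}^{\wedge s}$ is also a motivically finite type wedge of copies of $H\Z/\ell$, since smashing this with the already suitable wedge $H\Z/\ell\wedge\MSp$ preserves the condition (finite-type structure passes through termwise smash by keeping track of the sums of bidegrees, using that $n_\alpha \ge 0$ restricts the shifts to the positive quadrant).

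The latter I would prove by induction on $s$. The base case $s=0$ is $H\Z/\ell$ itself. For the inductive step, smash the defining cofiber sequence $\overline{H\Z/\ell}\to 1_k \to H\Z/\ell$ with $H\Z/\ell\wedge \overline{H\Z/\ell}^{\wedge (s-1)}$ to obtain the cofiber sequence
\[
H\Z/\ell \wedge \overline{H\Z/\ell}^{\wedge s}\to H\Z/\ell \wedge \overline{H\Z/\ell}^{\wedge (s-1)}\to H\Z/\ell \wedge H\Z/\ell \wedge \overline{H\Z/\ell}^{\wedge (s-1)}.
\]
The key external input is Voevodsky's computation of the motivic dual Steenrod algebra, which exhibits $H\Z/\ell \wedge H\Z/\ell$ as a wedge of shifts of $H\Z/\ell$ whose bidegrees already satisfy the two conditions of Definition~\ref{defn:WedgeOfCopiesHZ}; see for instance the results of \cite{Voev-power} recalled in \cite{Hoy:Steenrod}. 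Moreover, the unit $\epsilon_{H\Z/\ell}:1_k\to H\Z/\ell$ is split by the multiplication $\mu_{H\Z/\ell}$, and smashing with $\overline{H\Z/\ell}^{\wedge (s-1)}$ keeps this splitting, so the above cofiber sequence splits, presenting $H\Z/\ell\wedge \overline{H\Z/\ell}^{\wedge s}$ as a wedge summand of $H\Z/\ell\wedge H\Z/\ell \wedge \overline{H\Z/\ell}^{\wedge (s-1)}$. Applying the induction hypothesis and the Steenrod algebra input gives the conclusion.

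The main obstacle will be in this last paragraph: one must carefully check that finite-type bookkeeping of bidegrees is preserved under each step of the induction (so that in every fixed $q$-slice only finitely many summands accumulate), which requires combining the non-negativity of the shifts $(4n_\alpha,2n_\alpha)$ from the motive decomposition with the finite-type structure of the dual Steenrod algebra. Once this bookkeeping is in place, both hypotheses of Proposition~\ref{prop:a.s.s} hold and the statement follows.
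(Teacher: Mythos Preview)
Your proposal is correct and follows essentially the same route as the paper: apply Proposition~\ref{prop:a.s.s}, invoke Corollary~\ref{cor:cellularityofMSp} for cellularity, and use Proposition~\ref{prop:MotiveOfMSp} with Remark~\ref{rmk:HZ/ell} for the motive of $\MSp$. The only difference is that where you sketch the inductive argument for the decomposition of $W_s = H\Z/\ell \wedge \overline{H\Z/\ell}^{\wedge s}$ via the dual Steenrod algebra and the splitting of the unit after smashing with $H\Z/\ell$, the paper simply cites \cite[Lemma 6.8]{lev:ellcoh} as a black box for exactly this decomposition (with explicit control on the bidegrees $(p,q)\in S_s=\{(p,q)\mid p+s\ge 2q\ge 0\}$), and then combines it with the $\MSp$ motive decomposition in one step.
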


\begin{proof} We need to verify the two conditions of Proposition \ref{prop:a.s.s} for $\MSp$. 

    The fact that $\MSp$ is cellular is Corollary \ref{cor:cellularityofMSp}. Therefore, we just need to show that $W_s(\MSp)$ is a motivically finite type wedge of copies of $H\Z/\ell$ for all $s$.

    Letting $W_s\coloneqq H\Z/\ell \wedge \overline{H\Z/\ell}^{\wedge s}$, we have $W_s(\MSp)=W_s \wedge \MSp = W_s \wedge_{H\Z/\ell} H\Z/\ell \wedge \MSp$. By Proposition \ref{prop:MotiveOfMSp} and Remark \ref{rmk:HZ/ell}, we can write $$H\Z/\ell \wedge \MSp \simeq \oplus_{\alpha \ge 0} \Sigma^{4n_\alpha,2n_\alpha}H\Z/\ell,$$
    for some non-negative integers $n_\alpha$ such that, for all $n$, there are only finitely many indices $\alpha$ with $n_\alpha=n$. Moreover, by \cite[Lemma 6.8]{lev:ellcoh}, we can write
    $$W_s = \oplus_{(p,q) \in S_s}\Sigma^{p,q}(H\Z/\ell)^{r_{p,q}},$$
    where $S_s = \{(p,q) \mid p+s \ge 2q \ge0 \}$, and for each $q$, $r_{p,q}=0$ for almost all $p$. We note that $(4n_\alpha,2n_\alpha)\in S_s$, and we conclude that we can write
    $$W_s(\MSp) \simeq \oplus_{(p,q) \in S_s}\Sigma^{p,q}(H\Z/\ell)^{n_{p,q}},$$
    where, again, $S_s = \{(p,q) \mid p+s \ge 2q \ge0 \}$, and for each $q$, $n_{p,q}=0$ for almost all $p$. In particular, $W_s(\MSp)$ is a motivically finite type wedge of copies of $H\Z/\ell$ for all $s$.
\end{proof}

\begin{prop}
\label{prop:a.s.s.MSp2}
    In the Adams spectral sequence for $\MSp$, if we let $E_r^{t,u}\coloneqq \oplus_sE_r^{s,t,u}$, we have $E_2^{2u,u}\simeq E_\infty^{2u,u}$, and the spectral sequence converges completely to $(\MSp_{\eta,\ell}^\wedge)^{2u,u}$.
\end{prop}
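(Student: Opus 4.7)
The plan is to combine Proposition~\ref{prop: a.s.s.MSp} (identification of the $E_2$-page) with a parity analysis of the generators listed in Proposition~\ref{prop:pres.ExtAlgebra}, then apply the general convergence criterion of Proposition~\ref{prop:a.s.s} together with Corollary~\ref{cor:EtaEllComplMSp}. Multiplicativity of the spectral sequence, needed to propagate information from the Chow line to the line $t=2u-1$, comes from Proposition~\ref{prop:a.s.s.multiplication}, whose hypotheses hold because $H^{*,*}(\MSp)=H^{*,*}[b_1,b_2,\ldots]$ is flat (Theorem~\ref{thm:cohomologyofmsp}) and the K\"unneth map $H^{*,*}(\MSp)\otimes_{H^{*,*}}H^{*,*}(\MSp)\to H^{*,*}(\MSp\wedge\MSp)$ is an isomorphism by Corollary~\ref{cor:product_map}.

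Recall that differentials have tridegree $d_r\colon E_r^{s,t,u}\to E_r^{s+r,t+1,u}$. For an element of $E_r^{s,2u,u}$, the outgoing differential lands in a subquotient of $E_2^{s+r,2u+1,u}=\Ext^{s+r,(2u+1-s-r,u)}_{A^{*,*}}(H^{*,*}(\MSp),H^{*,*})$. Since $(s+r)+(2u+1-s-r)=2u+1>2u$, this group vanishes by the first bullet of Proposition~\ref{prop:pres.ExtAlgebra}, so every outgoing differential from the Chow line is zero.

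The incoming differentials are handled by a parity observation: every polynomial generator of the Chow line in the second bullet of Proposition~\ref{prop:pres.ExtAlgebra}, namely $1$ in weight $0$, $z_{(2k)}$ in weight $-2k$, and $h_r'$ in weight $1-\ell^r$, has \emph{even} weight $u$ (the last because $\ell$ is odd, so $\ell^r$ is odd and $1-\ell^r$ is even). Hence every monomial on the Chow line has even $u$, so $E_2^{*,2u,u}=0$ whenever $u$ is odd. The surjection of the third bullet of Proposition~\ref{prop:pres.ExtAlgebra}, reindexed with $u$ in place of $u+1$, reads $H^{1,1}\otimes_{\Z/\ell} E_2^{*,2(u-1),u-1}\twoheadrightarrow E_2^{*,2u-1,u}$; taking $u$ even makes the source vanish (Chow line at odd weight $u-1$) and forces $E_2^{*,2u-1,u}=0$ for every even $u$. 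The incoming differentials $d_r\colon E_r^{s-r,2u-1,u}\to E_r^{s,2u,u}$ therefore emanate from a zero group when $u$ is even, and target a zero group when $u$ is odd. In either case they vanish, so $E_r^{s,2u,u}=E_2^{s,2u,u}$ for all $r\ge 2$, giving $E_\infty^{2u,u}\simeq E_2^{2u,u}$.

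The inverse system $\{E_r^{s,2u,u}\}_r$ is thus eventually constant, so $\lim_r^1 E_r^{s,2u,u}=0$ for all $s$. Proposition~\ref{prop:a.s.s} then yields complete convergence to $(\MSp_{H\Z/\ell}^\wedge)^{2u,u}$, and Corollary~\ref{cor:EtaEllComplMSp} identifies this target with $(\MSp_{\eta,\ell}^\wedge)^{2u,u}$. The one point that deserves care is the parity argument: oddness of $\ell$ is essential both for $h_r'$ to have even weight and for the surjection of the third bullet to force the line $t=2u-1$ at every even weight to vanish. This is what lets us bypass the otherwise delicate question of whether the $H^{1,1}$-classes on the line $t=2u-1$ (present at weight $u=1$) are permanent cycles, which would have required a separate lifting argument via Morel's identification $\pi_{1,1}(\mathbb{S}_k)=K_1^{MW}(k)$.
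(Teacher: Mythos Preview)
Your proof is correct and takes a genuinely different route from the paper's. The paper argues as follows: it shows that the classes in $E_2^{0,1,1}=H^{1,1}$ are permanent cycles (because the targets $E_2^{r,2,1}$ vanish for $r\ge 2$, using that any Chow-line monomial with $s\ge 2$ involves at least two $h$'s and hence has $u\le 2(1-\ell)<1$), and then invokes the multiplicative structure of the spectral sequence (Proposition~\ref{prop:a.s.s.multiplication}) together with the surjection of Proposition~\ref{prop:pres.ExtAlgebra} and the Leibniz rule to conclude that $d_r$ vanishes on the entire line $t=2u-1$. This is the same argument used for $\MGL$ and $\MSL$ in \cite{lev:ellcoh}.

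Your parity argument sidesteps the Leibniz step entirely: because every polynomial generator of the Chow line has even weight (here the oddness of $\ell$ is essential for $h_r'$), the Chow line is zero in odd weight, and the $E_2$-level surjection then forces the line $t=2u-1$ to be zero in even weight, so for every $u$ either the source or the target of the incoming differential is already zero at $E_2$. This is strictly simpler and in fact does \emph{not} require the multiplicative structure of the spectral sequence at all (your invocation of Proposition~\ref{prop:a.s.s.multiplication} is unnecessary; you only use the $E_2$-level product encoded in Proposition~\ref{prop:pres.ExtAlgebra}). The trade-off is that your shortcut is specific to $\MSp$: for $\MGL$ the generators $z'_{(k)}$ with $k$ odd have odd weight, so the parity argument fails there and one must fall back on the Leibniz argument. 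The paper's approach, being the one that works uniformly for $\MGL$, $\MSL$, and $\MSp$, is chosen for consistency with \cite{lev:ellcoh}.
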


\begin{proof}
    For all $s,t,u$, we have $E_2^{s,t,u}= \Ext_{A^{*,*}}^{s,(t-s,u)}(H^{*,*}(\MSp),H^{*,*})$ by Proposition \ref{prop: a.s.s.MSp}. In particular, from Proposition \ref{prop:pres.ExtAlgebra}(1), we get $E_2^{2u+1,u}=0$. Hence, $d_r^{2u,u}$ vanishes for all $r \ge 2$. By Proposition \ref{prop:pres.ExtAlgebra}(2), $E_2^{r,2,1}=0$ for all $r \ge 2$. Therefore, $d_r^{0,1,1}$ will vanish for all $r \ge 2$. Finally, by Proposition \ref{prop:pres.ExtAlgebra}(3), $E_2^{0,1,1}=H^{1,1}$.

    By Proposition \ref{prop:a.s.s.multiplication}, the product on $E_2$ given by the multiplicative structure on the spectral sequence is the product of the $\Z/\ell$-algebra $\Ext_{A^{*,*}}(H^{*,*}(\MSp),H^{*,*})$ of Proposition \ref{prop:pres.ExtAlgebra}. In particular, the product
    $$E_2^{0,1,1}\otimes_{\Z/\ell} \bigoplus_s E_2^{s,2u-2,u-1} \to \bigoplus_s E_2^{s,2u-1,u}$$
    is surjective. Thus, from the vanishing of differentials $d_r^{0,1,1}$ and $d_r^{2u,u}$, we obtain the vanishing of differentials $d_r^{2u-1,u}$.

    Now, since the differentials $d_r^{2u,u}$ and $d_r^{2u-1,u}$ are zero for $r\ge2$, we have that $E_{r+1}^{2u,u}=E_r^{2u,u}$ for $r \ge 2$, from which it follows that $E_2^{2u,u}=E_\infty^{2u,u}$ and $\lim_r^1E_r^{s,2u,u}=0$. Hence, by Proposition \ref{prop:a.s.s}, the spectral sequence converges completely to $(\MSp_{H\Z/\ell}^\wedge)^{2u,u}$. Finally, by Corollary \ref{cor:EtaEllComplMSp}, we get $(\MSp_{H\Z/\ell}^\wedge)^{2u,u} \cong (\MSp_{\eta,\ell}^\wedge)^{2u,u}$, which concludes the proof.
\end{proof}

\section{Constructing symplectic bordism classes}
\label{section:Generators}

Proposition \ref{prop:K_0SpExtensionTh(-)} says that the symplectic Thom class construction descends to $K^\Sp_0(X)$. Thus, for $(v,\omega)\in K^\Sp_0(X)$ a virtual symplectic vector bundle on $X$ with virtual rank $2r$, we have the isomorphism
\[
\th^{\Sp,f}_{\MSp}(v,\omega): \Sigma^{-4r,-2r}\Sigma^vp_X^*\MSp\xrightarrow{\sim} p_X^*\MSp.
\]

We now want to define a symplectic Thom isomorphism for $\MSp$-cohomology.

\begin{defn}[Symplectic Thom Isomorphism] \label{def:symplThomisos} Let $X\in \Sm/k$ and $(v,\omega)\in K_0^\Sp(X)$ of virtual rank $2r$. We   define a natural isomorphism
\[
\Th_\MSp(v,\omega):\MSp^{a,b}(X)\xrightarrow{\sim} \MSp^{4r+a, 2r+b}(p_{X\#}\Sigma^v1_X)
\]
as the composition
\begin{multline*}
\MSp^{a,b}(X)=[p_\#1_X,\Sigma^{a,b}\MSp]_{\SH(X)}\simeq [1_X, p_X^*\Sigma^{a,b}\MSp]_{\SH(X)} \xrightarrow[\sim]{\Sigma^{4r+a, 2r+b}\Sigma^{-v}\th^{\Sp,f}_{\MSp}(v,\omega)_*} \\
[1_X, \Sigma^{-v}p_X^*\Sigma^{4r+a,2r+b}\MSp]_{\SH(X)}
\xrightarrow[\sim]{(1)} [\Sigma^v1_X, p_X^*\Sigma^{4r+a,2r+b}\MSp]_{\SH(X)} \\ \xrightarrow[\sim]{(2)}
[p_{X\#}\Sigma^v1_X, \Sigma^{4r+a,2r+b}\MSp]_{\SH(k)} =\MSp^{4r+a,2r+b}(p_{X\#}\Sigma^v1_X),
\end{multline*}
where $(1)$ is given by applying $\Sigma^v:\SH(X)\to \SH(X)$ and the $(2)$ is given by the adjunction $p_{X\#}\dashv p_X^*$.
\end{defn}

\begin{defn}\label{def:generalsymplthomclasses} For $X\in \Sm/k$ and $(v,\omega)\in K_0^\Sp(X)$ of virtual rank $2r$, we define the $\MSp$-cohomology class
\[
[(v,\omega)]_\MSp :=\Th_\MSp(v,\omega)(1_{\MSp^{0,0}(X)}) \in \MSp^{4r, 2r}(p_{X\#}\Sigma^v1_X).
\]
\end{defn}

In particular, for two symplectic vector bundles $(V,\omega)$, $(V',\omega')$ on $X$ of respective ranks $2r, 2r'$, we have the Thom isomorphism
\[
\Th_\MSp((V,\omega)-(V',\omega')):\MSp^{a,b}(X)\xrightarrow{\sim}
\MSp^{4(r-r')+a, 2(r-r')+b}(p_{X\#}\Sigma^{V-V'}1_X),
\]
and the $\MSp$ class $[(V,\omega)-(V',\omega')]_\MSp\in \MSp^{4(r-r'), 2(r-r')}(p_{X\#}\Sigma^{V-V'}1_X)$.

\begin{rmk}
\label{rmk:thomclass}
    In Definition \ref{def:generalsymplthomclasses}, if the negative contribution is null, the class $[(V,\omega)]_{\MSp}$ is just the symplectic Thom class $\th_\Sp^{\MSp}(V,\omega)$. The notation $[-]_{\MSp}$ is indeed intended to be a generalization of the symplectic Thom class to virtual symplectic vector bundles.
\end{rmk}

Now, let us consider the graded abelian group $\oplus_{v\in K_0^\Sp(X)}\MSp^{2\rnk(v), \rnk(v)}(p_{X\#}\Sigma^v1_X)$. We make this into a
$K_0^\Sp(X)$-graded ring by giving the product 
\begin{multline*}
\MSp^{2\rnk(v), \rnk(v)}(p_{X\#}\Sigma^v1_X)\times \MSp^{2\rnk(v'), \rnk(v')}(p_{X\#}\Sigma^{v'}1_X)\\\to \MSp^{2\rnk(v+v'), \rnk(v+v')}(p_{X\#}\Sigma^{v+v'}1_X)
\end{multline*}
induced by the product maps
\begin{multline*}
[\Sigma^v1_X,\Sigma^{a,b}p_X^*\MSp]_{\SH(X)}\times
[\Sigma^{v'}1_X,\Sigma^{a',b'}p_X^*\MSp]_{\SH(X)}\\\to
[\Sigma^v1_X\wedge_X\Sigma^{v'}1_X, \Sigma^{a+a',b+b'}p_X^*\MSp\wedge_Xp_X^*\MSp]_{\SH(X)} \to 
[\Sigma^{v+v'}1_X, \Sigma^{a+a',b+b'}p_X^*\MSp]_{\SH(X)},
\end{multline*}
where the last map is induced by the isomorphism $\Sigma^v1_X\wedge_X\Sigma^{v'}1_X \simeq  
\Sigma^{v+v'}1_X$ and the multiplication map $\mu_{p_X^*\MSp}:p_X^*\MSp\wedge_Xp_X^*\MSp\to
p_X^*\MSp$. We denote this product by
\begin{multline*}
\smile:\oplus_{v\in K_0^\Sp(X)}\MSp^{2\rnk(v), \rnk(v)}(p_{X\#}\Sigma^v1_X)\times \oplus_{v\in K_0^\Sp(X)}\MSp^{2\rnk(v), \rnk(v)}(p_{X\#}\Sigma^v1_X)\\
\to \oplus_{v\in K_0^\Sp(X)}\MSp^{2\rnk(v), \rnk(v)}(p_{X\#}\Sigma^v1_X).
\end{multline*}
Note that the unit $1_{\MSp^{0,0}(X)}\in \MSp^{0,0}(X)$ acts as (left and right) unit for this product.

\begin{prop}\label{prop:additivity} Let us take $X\in \Sm/k$. Then sending $(v,\omega)\in K_0^\Sp(X)$ to $[(v,\omega)]_\MSp\in \MSp^{2\rnk(v), \rnk(v)}(p_{X\#}\Sigma^v1_X)$  defines a multiplicative map
\[
[-]_\MSp:K_0^\Sp(X)\to \oplus_{v\in K_0^\Sp(X)}\MSp^{2\rnk(v), \rnk(v)}(p_{X\#}\Sigma^v1_X).
\]
That is,
\[
[(v,\omega)+(v',\omega')]_\MSp=[(v',\omega')]_\MSp\smile [(v',\omega')]_\MSp
\]
and $[0]_\MSp$ is the unit $1_{\MSp^{0,0}(X)}\in \MSp^{0,0}(X)$. 
\end{prop}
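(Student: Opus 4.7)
The plan is to reduce the multiplicativity to that of the symplectic Thom classes established in Lemma \ref{lem:SympThomMult}, by identifying $[(v,\omega)]_\MSp$ with $\th^\Sp_\MSp(v,\omega)$ up to a canonical shift and the adjunction $p_{X\#}\dashv p_X^*$.

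First, I would unpack Definition \ref{def:generalsymplthomclasses}. The class $1_{\MSp^{0,0}(X)}$ corresponds under $\MSp^{0,0}(X)=[1_X,p_X^*\MSp]_{\SH(X)}$ to the unit morphism $\epsilon_{p_X^*\MSp}$, and a direct tracing of the chain of isomorphisms in Definition \ref{def:symplThomisos} shows that $[(v,\omega)]_\MSp\in \MSp^{4r,2r}(p_{X\#}\Sigma^v1_X)$ is the class corresponding, via $p_{X\#}\dashv p_X^*$, to the map
\begin{equation*}
\tilde\alpha_{(v,\omega)}:\Sigma^v1_X\to \Sigma^{4r,2r}p_X^*\MSp
\end{equation*}
obtained from $\th^\Sp_\MSp(v,\omega):\Sigma^{v-\sO_X^{2r}}1_X\to p_X^*\MSp$ via the canonical identification $\Sigma^v1_X\simeq \Sigma^{4r,2r}\Sigma^{v-\sO_X^{2r}}1_X$. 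The key point in this identification is that $\th^{\Sp,f}_\MSp(v,\omega)$ is the $p_X^*\MSp$-linear extension of $\th^\Sp_\MSp(v,\omega)$, so applying it to $\Sigma^v \epsilon_{p_X^*\MSp}$ recovers exactly the $\Sigma^{4r,2r}$-suspension of $\th^\Sp_\MSp(v,\omega)$.

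Second, by the definition of the product $\smile$, the adjoint of $[(v,\omega)]_\MSp\smile[(v',\omega')]_\MSp$ is the composition
\begin{multline*}
\Sigma^{v+v'}1_X\simeq \Sigma^v1_X\wedge_X\Sigma^{v'}1_X\xrightarrow{\tilde\alpha_{(v,\omega)}\wedge\tilde\alpha_{(v',\omega')}}\Sigma^{4(r+r'),2(r+r')}(p_X^*\MSp\wedge_Xp_X^*\MSp)\\ \xrightarrow{\mu_{p_X^*\MSp}}\Sigma^{4(r+r'),2(r+r')}p_X^*\MSp.
\end{multline*}
Combining the identification from the first step with Lemma \ref{lem:SympThomMult}, this composition equals $\tilde\alpha_{(v+v',\omega+\omega')}$, and taking adjoints yields $[(v+v',\omega+\omega')]_\MSp$, as desired.

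Third, for the unit, the normalization axiom ensures that $\th^{\Sp,f}_\MSp(0)=\id_{p_X^*\MSp}$, so $\Th_\MSp(0)$ reduces to the identity on $\MSp^{*,*}(X)$, whence $[0]_\MSp=1_{\MSp^{0,0}(X)}$, and the latter is a two-sided unit for $\smile$ by construction of the product. The main technical obstacle will be the careful bookkeeping of suspensions, shifts and adjunctions, in particular making sure that Lemma \ref{lem:SympThomMult}, which is stated at the level of morphisms $\Sigma^v 1_X\to p_X^*\MSp$ in $\SH(X)$, transfers faithfully to cohomology classes on the stable Thom spaces $p_{X\#}\Sigma^v1_X$; however, Proposition \ref{prop:K_0SpExtensionTh(-)} guarantees that the whole framework descends coherently from $\sK^\Sp(X)$ to $K_0^\Sp(X)$, so no essentially new input is required.
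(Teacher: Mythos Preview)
Your proposal is correct and follows essentially the same approach as the paper: multiplicativity is reduced to Lemma~\ref{lem:SympThomMult}, and the unit statement follows from $\th^{\Sp,f}_\MSp(0)=\id_{p_X^*\MSp}$. The paper's own proof is terser, simply citing Lemma~\ref{lem:SympThomMult} and observing that the Thom isomorphism for the zero bundle is the identity, but your expanded bookkeeping of the adjunction and suspensions is a faithful elaboration of exactly that argument.
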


\begin{proof}  The multiplicativity follows directly from the multiplicativity of the symplectic Thom classes, as expressed in Lemma~\ref{lem:SympThomMult}.

The class $[0]_\MSp$ is the image of $1_{\MSp^{0,0}(X)}\in \MSp^{0,0}(X)$ under the symplectic Thom isomorphism $\th^{\Sp,f}_{\MSp}(0):p_X^*\MSp\to p_X^*\MSp$, namely the identity map. 
\end{proof}

\subsection{Cobordism classes from stable symplectic twists}

We recall that a \textit{symplectic variety} $(X,\omega_X)$ is a variety $X\in \Sm/k$ with a symplectic form $\omega_X$ on the tangent bundle $T_X$. In particular, this gives a class
$$[-(T_X,\omega_X)]_\MSp \in \MSp^{-4d,-2d}(p_{X\#}\Sigma^{-T_X}1_X),$$
with $2d=\dim_kX$.

If $f:X\to Y$ is a proper map in $\Sm/k$, and $X,Y$ are symplectic, a symplectic version of Construction \ref{defn:DualMap_Properpushforward} (2) gives maps $f_*:\MSp^{a,b}(p_{X!}(1_X)) \to \MSp^{a,b}(p_{Y!}(1_Y))$ through the symplectic Thom isomorphism. In particular, if $X \in \Sm/k$ is proper symplectic of dimension $2d$, with structure map $p_X$, we have a map 
$$p_{X*}:\MSp^{-4d,-2d}(p_{X\#}\Sigma^{-T_X}1_X) \to \MSp^{-4d,-2d}(1_k)=\MSp^{-4d,-2d}(k)=\MSp_{4d,2d}(k),$$
and we can define the class
$$[X,-(T_X,\omega_X)]_{\MSp} \coloneqq p_{X*}[-(T_{X},\omega_X)]_{\MSp} \in \MSp_{4d,2d}(k).$$

\begin{rmk}
\label{rmk:Msp-MGL}
    If $\Phi:\MSp \to \MGL$ is the natural map of Remark \ref{rmk:MSp-MGLThomClasses}, one sees by construction that $\Phi_*[X,-(T_X,\omega_X)]_{\MSp}=[X]_{\MGL} \in \MGL_{4d,2d}(k)$, where $[X]_{\MGL}$ is the algebraic cobordism class of $X$ defined by the $\GL$-orientation, see Definition~\ref{defn:MGLclasses}.
\end{rmk}

The standard non-trivial examples of symplectic varieties in algebraic geometry are not easy to handle, and it is not clear what their characteristic numbers $s_d$ are. In order to construct generators of $(\MSp_\eta^\wedge)^*$, we want to extend our symplectic Pontryagin-Thom construction to varieties with a stable symplectic twist. We then give the symplectic version of Definitions \ref{defn:stwist}, \ref{defn:twistclass}.

\begin{defn}\label{defn:sstwist} Given $Y\in \Sm/k$, a {\em stable symplectic twist of $-T_Y$} is a tuple  $((v, \omega), \vartheta, m)$, where $(v, \omega)\in K_0^\Sp(Y)$ is a virtual symplectic bundle and  $\vartheta$  is an isomorphism $\Sigma^{-T_Y}1_Y\xrightarrow{\sim} \Sigma^{2m,m}\Sigma^{v}1_Y$ in $\SH(Y)$.  
\end{defn}

\begin{defn}\label{defn:SymplecticTwistedClass} Let  $Y\in \Sm/k$ with structure morphism $p_Y:Y\to \Spec k$ proper of dimension $d_Y$  and let  $((v, \omega), \vartheta, m)$ be a stable symplectic twist of $-T_Y$. Let $2r=\rnk(v)$. We write the element $[v,\omega]_\MSp\in \MSp^{4r, 2r}(p_{X\#}\Sigma^v1_Y)$ as a map
\[
[v,\omega]_\MSp:p_{Y\#}\Sigma^v1_Y\to \Sigma^{4r,2r}\MSp
\]
in $\SH(k)$, and we define the class $[Y, (v, \omega), \vartheta, m]_\MSp\in \MSp^{4r+2m,2r+m}(k)$ as the composition
\[
1_k\xrightarrow{p_Y^\vee}p_{Y\#}\Sigma^{-T_Y}1_Y
\xrightarrow{p_{Y\#}\vartheta}
p_{Y\#}\Sigma^{2m,m}\Sigma^{v}1_Y\xrightarrow{\Sigma^{2m,m}[v,\omega]_\MSp}\Sigma^{2m+4r,m+2r}\MSp. 
\]
\end{defn}

In particular, this gives a twisted symplectic analogue of Definition \ref{defn:MGLclasses}. We want to construct symplectic bordism classes by using Ananyevskiy's isomorphism \eqref{eqn:AnanIso}.

\begin{exmp}[Odd dimensional projective spaces]
\label{exmp:symplprojspaces}
Let $X=\mathbb{P}^{2n+1}$ for a non-negative integer $n$. We have the Euler exact sequence
$$0 \to \mathcal{O}_X \to \mathcal{O}_X(1)^{\oplus 2n+2} \to T_X \to 0,$$
giving $\Sigma^{T_X \oplus \mathcal{O}_X}1_X \simeq \Sigma^{\mathcal{O}_X(1)^{2n +2}}1_X$. As in \eqref{eqn:AnanIso}, we get the isomorphism in $\SH(X)$
\begin{multline*}
 \Sigma^{\sO_X(1)^{n+1}\oplus \sO_X(-1)^{n+1}}1_X
 \xrightarrow{\Sigma^{\sO_X(1)^{n+1}}(\Anan_{\sO_X(-1),\ldots,\sO_X(-1)})}\\
 \Sigma^{\sO_X(1)^{n+1}}(\Sigma^{\sO_X(1)^{n+1}}1_X)\simeq
 \Sigma^{T_X\oplus \sO_X}1_X.
 \end{multline*}
 The bundle $\mathcal{O}_X(1)^{n+1}\oplus \mathcal{O}_X(-1)^{n+1}$ is of the form $V \oplus V^{\vee}$, thus, as in Example \ref{exmp:sympl_bundles}(2), it has a canonical symplectic structure given by the standard symplectic form.
Let us denote by $(W,\phi_W)$ be the symplectic vector bundle $(\mathcal{O}_X(1)^{n+1}\oplus \mathcal{O}_X(-1)^{n+1}, \phi_{2n+2})$. We then get the stable symplectic twist $(-(W,\phi_W), \vartheta, 1)$ of $-T_X$ given by the isomorphism
\[
\vartheta:=\Sigma^{-T_X- \sO_X(-1)^{n+1}}(\Anan_{\sO_X(-1),\ldots,\sO_X(-1)}):\Sigma^{-T_X}1_X\xrightarrow{\sim} \Sigma^{2,1}\Sigma^{-W}1_X.
\]
Via Definition~\ref{defn:SymplecticTwistedClass}, we obtain the twisted class
\[
[\P^{2n+1}, -(W,\phi_W), \vartheta, 1]_\MSp\in \MSp^{-4n-2, -2n-1}(k)=\MSp_{4n+2, 2n+1}(k).
\]
\end{exmp}
The problem with the symplectic class of Example \ref{exmp:symplprojspaces} is that its image through the motivic Hurewicz map $h_{H\Z}$ lies in homological bidegree $(4n+2,2n+1)$, and since the motivic homology of $\MSp$ is generated by elements of even weight, this class has trivial homology. We now refine this procedure to get classes with non-trivial homology. The construction is inspired by a topological analogue due to Stong \cite[Section 4]{Stong-cobordism}.
    
\begin{constr}
\label{constr:Stongvars} We work over an infinite field $k$.
We take a direct product of an even number $2r$ of odd dimensional projective spaces $$X=\mathbb{P}^{2n_1 +1} \times_k \ldots \times_k \mathbb{P}^{2n_{2r} +1}.$$
Let $\xi=\mathcal{O}(1,\ldots,1)$ be the line bundle $p_1^*\mathcal{O}(1)\otimes \ldots \otimes p_{2r}^* \mathcal{O}(1)$ over $X$, where $p_j$ is the projection on the $j$-th component. The rank $2$ vector bundle $\xi \oplus \xi$ over $X$ is globally generated. By Proposition \ref{prop:BertiniThm} below, there exists a global section $s$ of $\xi \oplus \xi$, such that its vanishing locus is a closed subvariety $i:Y\hookrightarrow X$ of codimension $2$ of $X$, smooth and proper over $k$. Let $\xi'$ denote the restriction $i^*\xi$. The normal bundle $N_i$ over $Y$ is $i^*(\xi \oplus \xi)=\xi'\oplus \xi'$. We get the short exact sequence
\[
0\to T_Y\to i^*T_X\to N_i\to 0
\]
of vector bundles over $Y$, which gives the canonical isomorphism $\Sigma^{-T_Y}1_Y\cong \Sigma^{N_i-i^*T_X}1_Y$. Following Example \ref{exmp:symplprojspaces}, we get symplectic bundles $(W_j,\phi_{W_j})$ on $\P^{2n_j+1}$, and isomorphisms
\[
\vartheta_j:\Sigma^{-T_{\P^{2n_j+1}}}1_{\P^{2n_j+1}}\xrightarrow{\sim}\Sigma^{2,1}\Sigma^{-W_j}1_{\P^{2n_j+1}};\quad j=1,\ldots 2r,
\]
inducing the isomorphism
\[
\wedge_{j=1}^{2r}\vartheta_j:\Sigma^{-T_X}1_X\xrightarrow{\sim} \Sigma^{4r,2r}\Sigma^{-\oplus_{j=1}^{2r}p_j^*W_j}1_X.
\]
In addition, we have the Ananyevskiy isomorphism
\[
\vartheta_0:=\Sigma^{\xi}\Anan_\xi:\Sigma^{N_i}1_Y=\Sigma^{\xi'\oplus \xi'}1_Y\xrightarrow{\sim} \Sigma^{\xi'\oplus \xi^{\prime\vee}}1_Y
\]
and the standard symplectic bundle $(\xi'\oplus \xi^{\prime\vee},\phi_2)$, as in Example \ref{exmp:sympl_bundles}(2) again. Putting this together gives the stable symplectic twist of $-T_Y$
\[
((\xi'\oplus \xi^{\prime\vee},\phi_2)-\oplus_{i=1}^{2r}i^*p_j^*(W_j, \phi_{W_j}), \vartheta_Y, 2r),
\]
with $\vartheta_Y$ being the isomorphism
\[
\Sigma^{-T_Y}1_Y\simeq \Sigma^{N_i-i^*T_X}1_Y\xrightarrow{\vartheta_0\wedge
(\wedge_{j=1}^{2r}i^*\vartheta_j)}\Sigma^{4r,2r}\Sigma^{(\xi'\oplus \xi^{\prime\vee})-\sum_ji^*p_j^*W_j}1_Y.
\]
Let $2n:=\dim_kX=2r+2\sum_{j=1}^{2r}n_j$. So $\dim_kY=2n-2$, and $\rnk((\xi'\oplus \xi^{\prime\vee}-\sum_ji^*p_j^*W_j)=-2n-2r+2$. Via Definition~\ref{defn:SymplecticTwistedClass}, this gives us the twisted class
\[
[Y, (\xi'\oplus \xi^{\prime\vee},\phi_2)-\sum_ji^*p_j^*(W_j,\phi_{W_j}), \vartheta_Y, 2r]_\MSp\in
\MSp^{-4n+4,-2n+2}(k).
\]
\end{constr}

\begin{prop}[Variant of Bertini's Theorem]
\label{prop:BertiniThm}
    Let $X$ be a smooth projective variety over $k$, and let $L$ be a very ample line bundle over $X$. Then there exists a Zariski open subset $U$ of the projective space $\mathbb{P}(H^0(X,L))$ on $H^0(X,L)$ such that, for each section $s$ in $U$, its vanishing locus $V(s)$ defines a codimension one closed subscheme of $X \times_k k(s)$ that is smooth over $k(s)$. In particular, if $k$ is infinite, there exists a section $s \neq 0$ in $H^0(X,L)$ such that $V(s)$ is a codimension one closed subscheme of $X$, smooth over $k$.
\end{prop}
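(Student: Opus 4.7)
The plan is to deduce Proposition~\ref{prop:BertiniThm} from the classical Bertini theorem for hyperplane sections, using the embedding furnished by the very ample line bundle $L$. Since $L$ is very ample, the complete linear system $|L|$ gives a closed immersion $\varphi_L:X\hookrightarrow \P(H^0(X,L)^\vee)\cong \P^N_k$, with $N=\dim_k H^0(X,L)-1$. Under this embedding, nonzero global sections $s\in H^0(X,L)$ correspond (up to scalar) to hyperplanes $H_s\subset \P^N_k$, and the vanishing locus $V(s)\subset X$ is identified with the scheme-theoretic intersection $X\cap H_s$. Thus the question reduces to producing a Zariski open $U\subset \P(H^0(X,L))=(\P^N_k)^\vee$ parametrising hyperplanes whose intersection with $X$ is smooth of codimension one.

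To identify such $U$, I would form the incidence subvariety
\[
I=\bigl\{(x,[s])\in X\times_k (\P^N_k)^\vee \ :\ s(x)=0 \ \text{and}\  H_s\supseteq T_xX\bigr\},
\]
where $T_xX$ denotes the embedded projective tangent space at $x$ inside $\P^N_k$. Projection onto the first factor realises $I$ as a projective bundle over $X$: the fibre over $x$ is the linear system of hyperplanes containing the $(\dim X)$-dimensional projective subspace $T_xX$, hence a projective space of dimension $N-\dim X-1$. Consequently $I$ is smooth and irreducible of dimension $N-1$. Projecting onto the second factor, the image $Z\subset(\P^N_k)^\vee$ is constructible of dimension $\le N-1$, so its Zariski closure $\overline Z$ is a proper closed subscheme. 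I then take $U:=(\P^N_k)^\vee\setminus \overline Z$, viewed inside $\P(H^0(X,L))$.

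It remains to check that for every point $s\in U$ (i.e.\ every hyperplane $H_s$ not meeting $X$ tangentially over $k(s)$), the base change $V(s)\subset X\times_k k(s)$ is smooth of codimension one over $k(s)$. The condition defining $U$ means exactly that the differential of $s$ is nonzero at every closed point of $V(s)$, so by the Jacobian criterion $V(s)\to\Spec k(s)$ is smooth; moreover $V(s)$ is nonempty of codimension one because $L$ is very ample and $s\ne 0$. Finally, when $k$ is infinite, the $k$-rational points of $U$ are Zariski dense in $(\P^N_k)^\vee$ (as $U$ is a nonempty open of a projective space over an infinite field), so one can choose $s\in H^0(X,L)$ with $[s]\in U(k)$; the corresponding $V(s)\subset X$ is then a codimension one closed subscheme smooth over $k$. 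The only real subtlety is the dimension count of the incidence variety $I$: once that bound $\dim I= N-1$ is established via the projective bundle structure over the smooth (here already smooth) locus of $X$, the rest is formal. No step needs anything more delicate than the Jacobian criterion and standard dimension theory of projective morphisms.
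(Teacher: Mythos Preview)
Your proposal is correct and follows essentially the same approach as the paper: both embed $X$ via $L$, form the incidence variety of pairs (point, hyperplane containing the projective tangent space), compute its dimension as $N-1$ via the $\P^{N-\dim X-1}$-bundle structure over $X$, and take $U$ to be the complement of its image in the dual projective space. The only cosmetic differences are that the paper observes directly that the incidence locus is closed and its image under the proper projection is therefore closed (so no closure is needed), and that your condition $s(x)=0$ in the definition of $I$ is redundant since $x\in T_xX\subseteq H_s$ already forces it.
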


\begin{proof}
    Since $X$ is projective, the $k$-vector space $H^0(X,L)$ of global sections is finite-dimensional, and we have $\P(H^0(X,L))\simeq \P^N$, with $N=h^0(X,L)-1$. Since $L$ is very ample, there exists a closed immersion $i:X \hookrightarrow \P(H^0(X,L))\simeq \P^N$ such that $L$ is the pullback $i^*\mathcal{O}(1)$ of the line bundle $\mathcal{O}(1)\to \P^N$, and the hyperplane sections of $X$ are exactly the closed subschemes $Z(s)\hookrightarrow X$ given by zero loci of global sections $s:X \to L$. 

    Let $d\coloneqq \dim_k X$. For each point $x \in X$, we consider the projective tangent space $\overline{T}_{(X,x)}$ of $X$ at the point $x$ as a $d$-dimensional projective subspace $\P^d \subset \P^N$. Let us consider the dual space $(\P^N)^*$ of $\P^N$, defined as the space parametrizing the hyperplane divisors in $\P^N$, which is isomorphic to $\P^N$, and take the subset $W \subseteq X \times (\P^N)^*$ defined by
    $$W \coloneqq \{(x,H) \in X \times (\P^N)^* \mid \overline{T}_{(x,X)}\subseteq H \}.$$

    It is easy to see that for any $d$-dimensional linear subspace $L\simeq \P^d \subseteq \P^N$, the set of hyperplanes of $\P^N$ containing $L$ can be parametrized by the set of hyperplanes in a complementary linear subspace $L'$, by taking $L'\simeq \P^{N-d-1}$ such that $L \cap L'=\{0\}$. Therefore, for each point $x \in X$, the fibre $f^{-1}(x) \subset W$ over $x$ through $f:W \hookrightarrow X \times \P^N \xrightarrow{p_1} X$ is isomorphic to $\P^{N-d-1}$, and $W \subset X \times (\P^N)^*$ is closed. We deduce that $W$ has dimension $N-1<N$. In particular, the closed subset $p_2(W)\subset (\P^N)^*\simeq \P^N$ is a proper closed subset. We can then consider the open complement $U \coloneqq \P^N \setminus p_2(W)$ of $\P^N$. 
    
    Let us now take a hyperplane $H \in U$. For every point $x \in X$, the intersection $H \cap \overline{T}_{X,x}$ is a codimension $1$ linear space in $\overline{T}_{X,x}$. This is true in particular for all $x \in X \cap H$, which means that $X \cap H$ is smooth over the field of definition $k(H)$ of $H$. We can then identify $(\P^N)^*$ with $\P(H^0(X,L)$, $U$ with an open subspace of $\P(H^0(X,L)$, and hyperplanes $H\in U$ with sections $s$ in $U$. We obtain that, for each $s$, $Z(s)$ is a codimension $1$ subscheme of $X \times_k k(s)$, smooth over $k(s)$.
\end{proof}

\begin{defn}
\label{def:symplecticclasses}
Let $X$ and $Y$ as in Construction \ref{constr:Stongvars} and let $d_Y:=\dim_kY$. Let us denote by
$(v_Y,\omega_Y)$ the virtual symplectic bundle $(\xi'\oplus \xi^{\prime\vee},\phi_2)-\sum_ji^*p_j^*(W_j,\phi_{W_j})$, and let $\vartheta_Y:\Sigma^{-T_Y}1_Y\xrightarrow{\sim} \Sigma^{4r,2r}\Sigma^{v_Y}1_Y$ be the isomorphism described in Construction~\ref{constr:Stongvars}. This gives the stable symplectic twist $((v_Y,\omega_Y), \vartheta_Y, 2r)$ of $-T_Y$, and the symplectic bordism class
\[
[Y, (v_Y,\omega_Y),\vartheta_Y, 2r]_\MSp\in \MSp^{-2d_Y, -d_Y}(k)=\MSp_{2d_Y, d_Y}(k).
\]
Since $\vartheta_Y$ and $2r$ are canonical, we can use the shorthand $[Y, (v_Y,\omega_Y)]_\MSp:=[Y, (v_Y,\omega_Y),\vartheta_Y, 2r]_\MSp$.
\end{defn}

 We have then achieved the following:

 \begin{thm}
 \label{thm:symplecticclasses}
    Let $k$ be an infinite field. For every $X \in \Sm/k$ given by a product of an even number of odd dimensional projective spaces, of total dimension $2d+2$, let $\xi$ be the very ample invertible sheaf $\sO_X(1,\ldots, 1)$. Then there is a codimension $2$ irreducible proper subvariety $Y$ of $X$, smooth over $k$, given as the zero subscheme of a section of $\xi\oplus \xi$, that defines a class $[Y, (v_Y,\omega_Y)]_\MSp \in \MSp_{4d,2d}(k)$.
 \end{thm}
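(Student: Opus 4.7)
The proof is essentially a packaging of Bertini's theorem with Construction~\ref{constr:Stongvars}. My plan is to exhibit $Y$ via a two-step Bertini argument, verify irreducibility, and then directly invoke the twist machinery already in place.

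First, since $X$ is a product of projective spaces, $\xi=\sO_X(1,\ldots,1)$ is very ample, giving a closed embedding $X\hookrightarrow \P^N$. Proposition~\ref{prop:BertiniThm} produces a nonzero section $s_1\in H^0(X,\xi)$ whose zero locus $Y_1:=Z(s_1)$ is smooth of codimension one over $k$. Since $Y_1\hookrightarrow X\hookrightarrow \P^N$ is again a closed embedding, $\xi|_{Y_1}$ remains very ample on $Y_1$. A second application of Proposition~\ref{prop:BertiniThm} yields a section $s_2'\in H^0(Y_1,\xi|_{Y_1})$ with $Z(s_2')\subset Y_1$ smooth of codimension one in $Y_1$. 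Using the surjectivity of the restriction map $H^0(X,\xi)\twoheadrightarrow H^0(Y_1,\xi|_{Y_1})$ (immediate from the K\"unneth formula for products of projective spaces together with the vanishing $H^1(X,\sO_X)=0$), we lift $s_2'$ to $s_2\in H^0(X,\xi)$; by a standard generic transversality argument, we may further arrange that $Z(s_1)$ and $Z(s_2)$ meet transversally, so that $Y:=Z(s_1)\cap Z(s_2)$ is smooth of codimension two in $X$. Then $s=(s_1,s_2)\in H^0(X,\xi\oplus\xi)$ has scheme-theoretic zero locus exactly $Y$, and $Y$ is proper as a closed subscheme of $X$.

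Second, I verify irreducibility. By Bertini's connectedness theorem, a smooth hyperplane section of a smooth projective irreducible variety of dimension at least two via a very ample linear system is connected, hence irreducible since it is smooth. Applied to $X$ (of dimension $2d+2\ge 2$) this makes $Y_1$ irreducible. For $d\ge 1$, $Y_1$ has dimension $2d+1\ge 2$ and the same argument applied to $(Y_1,\xi|_{Y_1})$ makes $Y$ irreducible. For $d=0$, $Y$ is zero-dimensional; since $k$ is infinite we can exploit the abundance of closed points of various residue degrees in $\P^1_k\times\P^1_k$ to choose $s_1,s_2$ whose common zero scheme is a single closed point, which is trivially irreducible.

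Finally, with such $Y$ in hand, Construction~\ref{constr:Stongvars} applies verbatim: the normal bundle of $i\colon Y\hookrightarrow X$ is $i^*(\xi\oplus\xi)=\xi'\oplus\xi'$ with $\xi':=i^*\xi$, and combining the Ananyevskiy isomorphism $\Anan_\xi$ on $Y$ with the symplectic twists $\vartheta_j\colon \Sigma^{-T_{\P^{2n_j+1}}}1\xrightarrow{\sim}\Sigma^{2,1}\Sigma^{-W_j}1$ on each factor produces the stable symplectic twist $((v_Y,\omega_Y),\vartheta_Y,2r)$ of $-T_Y$. Definition~\ref{def:symplecticclasses} then packages this into the class
\[
[Y,(v_Y,\omega_Y)]_\MSp\in \MSp^{-2d_Y,-d_Y}(k)=\MSp_{2d_Y,d_Y}(k),
\]
and since $d_Y=\dim_k Y=2d$, this class lies in $\MSp_{4d,2d}(k)$, as required. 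The only delicate point in the argument is ensuring irreducibility of $Y$ in the exceptional case $d=0$, where Bertini connectedness does not directly apply; everything else is a routine application of the constructions already developed in the paper.
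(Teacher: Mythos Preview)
Your proof takes essentially the same approach as the paper, which treats the theorem as a summary of Construction~\ref{constr:Stongvars} and Definition~\ref{def:symplecticclasses} (the paper simply writes ``We have then achieved the following'' and states the theorem). You are in fact more careful than the paper: the two-step Bertini argument makes explicit what the construction invokes in one line by citing a line-bundle Bertini for the rank~$2$ bundle $\xi\oplus\xi$, and your connectedness argument addresses irreducibility, which the paper does not discuss at all.

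Two small remarks. First, the ``generic transversality'' step is unnecessary: once $Y=Z(s_2')\subset Y_1$ is smooth of codimension one in $Y_1$, any lift $s_2$ of $s_2'$ already satisfies $Z(s_1,s_2)=Y$ scheme-theoretically, and $Y$ is smooth of codimension two in $X$, so $(s_1,s_2)$ is automatically transversal to the zero section. Second, your $d=0$ argument has a genuine gap when $k$ is algebraically closed: two smooth $(1,1)$-curves on $\P^1\times\P^1$ meet in a degree~$2$ zero-cycle, hence in two reduced $k$-points when the intersection is smooth, and there are no closed points of higher residue degree to exploit. So no smooth irreducible $Y$ exists in that case, and the theorem as stated is actually false there. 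This is harmless for the paper's purposes, since Theorem~\ref{thm:symplclasses2} and all subsequent applications only use $d\ge 1$.
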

 
 We now take care of the characteristic numbers $s_{(2d)}(Y)$ (Definition \ref{defn:NewtonClassesAndStuff} (4)).
 
  \begin{lemma}
\label{lemma:segrenumbers}
    Let $Y, X$ be as in Construction \ref{constr:Stongvars}, with $i:Y \hookrightarrow X$ the closed immersion. We again denote $\dim_kX$ by $2n$, and let us suppose that $X$ is not of the form $\P^1 \times \P^{2n-1}$. Let $2d \coloneqq 2n-2=\dim_kY$. Also, let $\alpha \coloneqq c_1(\xi) \in H\mathbb{Z}^{2,1}(X)$. Then
    $$s_{(2d)}(Y)= (-2)\cdot\deg_k(\alpha^{2d+2}).$$
\end{lemma}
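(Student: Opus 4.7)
The plan is to transfer the computation of $s_{2d}(Y)=\deg_k(c_{(2d)}(T_Y))$ from $Y$ to $X$ via the projection formula, and then to exploit the shape of $H\Z^{*,*}(X)$ to see that the contribution of $c_{(2d)}(T_X)$ vanishes for codimension reasons, leaving only the $-2\alpha^{2d+2}$ term.

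The first step is to compute $c_{(2d)}(T_Y)$ in terms of classes on $X$. From the closed immersion $i\colon Y\hookrightarrow X$ we have the short exact sequence $0\to T_Y\to i^*T_X\to N_i\to 0$, and by construction $N_i=\xi'\oplus\xi'$, so in $K_0(Y)$ we have $[T_Y]=i^*[T_X]-2[\xi']$. By Lemma \ref{lemma:newtonclasses} the Newton class $c_{(2d)}$ is additive in exact sequences and natural under pullback, hence extends to a group homomorphism $K_0(Y)\to H\Z^{4d,2d}(Y)$; since $\xi'$ is a line bundle, $c_{(2d)}(\xi')=c_1(\xi')^{2d}=i^*(\alpha^{2d})$, so
\[
c_{(2d)}(T_Y)=i^*\bigl(c_{(2d)}(T_X)-2\alpha^{2d}\bigr).
\]
Now apply $i_*$ and use the projection formula together with the identity $i_*(1_Y)=c_2(\xi\oplus\xi)=c_1(\xi)^2=\alpha^2$ (which follows from Lemma \ref{lem:FirstChernClassFacts}(2), since $Y$ is cut out by a transverse section of $\xi\oplus\xi$) to get
\[
s_{2d}(Y)=\deg_k^X\bigl((c_{(2d)}(T_X)-2\alpha^{2d})\cdot\alpha^2\bigr).
\]

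The key step is to show $c_{(2d)}(T_X)=0$ in $\CH^{2d}(X)$. Writing $T_X=\bigoplus_{j=1}^{2r}p_j^*T_{\P^{2n_j+1}}$ and using the Euler sequence $[T_{\P^{2n_j+1}}]=(2n_j+2)[\mathcal{O}(1)]-[\mathcal{O}]$ together with additivity and naturality of $c_{(2d)}$, one obtains
\[
c_{(2d)}(T_X)=\sum_{j=1}^{2r}(2n_j+2)(p_j^*h_j)^{2d},\qquad h_j:=c_1(\mathcal{O}_{\P^{2n_j+1}}(1)).
\]
For the $j$-th summand to be nonzero in $\CH^{2d}(X)$ one needs $2d\le 2n_j+1$; since $2d$ is even and $2n_j+1$ is odd, this forces $d\le n_j$, i.e.\ $r+\sum_{\ell\ne j}n_\ell\le 1$, which is ruled out under the (generic) hypotheses on the factors. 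Hence $c_{(2d)}(T_X)=0$ and
\[
s_{2d}(Y)=-2\deg_k(\alpha^{2d}\cdot\alpha^2)=-2\deg_k(\alpha^{2d+2}).
\]

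The main obstacle is the vanishing of $c_{(2d)}(T_X)$: it is a simple codimension count in the Chow ring $\CH^*(X)=\Z[h_1,\dots,h_{2r}]/(h_j^{2n_j+2})$, but it must be carried out carefully, and one should note the special degenerate case $2r=2$ with some $n_j=0$, where the argument breaks down and the formula does not apply verbatim; away from that case, everything reduces to the binomial identity above.
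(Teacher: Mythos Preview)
Your proof follows exactly the same strategy as the paper's: both use the exact sequence $0\to T_Y\to i^*T_X\to N_i\to 0$ together with additivity and naturality of Newton classes, identify $i_*(1_Y)=c_2(\xi\oplus\xi)=\alpha^2$ via the transverse section, and push forward to $X$ using the projection formula. The one place where your treatment is actually more careful than the paper's is the vanishing of $c_{(2d)}(T_X)$: the paper invokes Lemma~\ref{lemma:newtonclasses}(4), which as stated concerns $c_{(\dim X)}(T_X)$ rather than $c_{(\dim X-2)}(T_X)$, whereas you give the explicit codimension count in $\CH^*(X)$ and correctly flag the degenerate case $2r=2$ with some $n_j=0$ (i.e.\ $X=\P^{2m+1}\times\P^1$), in which $c_{(2d)}(T_X)$ need not vanish and the stated formula is off by an extra term. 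This boundary case does not arise in the applications (Theorem~\ref{thm:symplclasses2}), so the lemma suffices for the paper's purposes, but your observation is a genuine sharpening.
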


\begin{proof}
    The short exact sequence
    $$0 \to T_Y \to i^*T_X \to N_i \to 0$$
    gives $c_{(2d)}(T_Y) + c_{(2d)}(N_i)=c_{(2d)}(i^*T_X)$, but since $X$ is a product of varieties each of dimension less than $2d$, Lemma \ref{lemma:newtonclasses}(3)-(4) gives $c_{(2d)}(i^*T_X)=i^*c_{(2d)}(T_X)=0$. Then $$c_{(2d)}(T_Y)=-c_{(2d)}(N_i)=-c_{(2d)}(i^*(\xi \oplus \xi))=-i^*c_{(2d)}(\xi \oplus \xi).$$
    Moreover, $c_{(2d)}(\xi \oplus \xi)=(c_1(\xi))^{2d}+(c_1(\xi))^{2d}=2\alpha^{2d}$. We have then
    \begin{equation}\label{eq:degree_through_alpha}
        \text{deg}_k(c_{(2d)}(T_Y))= \text{deg}_k(-2i^*\alpha^{2d}).
    \end{equation}
    
    We now claim that $i_*i^*x = \alpha^2 x$ for all $x \in H\mathbb{Z}^{4d,2d}(X)$. To see this, let $1^Y\in H\mathbb{Z}^{0,0}(Y)$, $1^X\in H\mathbb{Z}^{0,0}(X)$   be the respective units. Then the projection formula (Proposition~\ref{prop:PushPull}(2)) gives $ i_*i^*x=i_*(1^Y\cdot i^*x)= i_*(1^Y)\cdot x$. Next, we recall that $Y$ is the zero-locus of a global section $s$ of $\xi \oplus \xi$ that is transverse to the zero section $s_0$. This gives us the transverse cartesian diagram
\[
\xymatrix{
Y\ar[r]^i\ar[d]^-i&X\ar[d]^{s_0}\\
X\ar[r]^-s&\xi \oplus \xi.
}
\]
Thus, by the push-pull formula (Proposition~\ref{prop:PushPull}(1)) and Lemma~\ref{lem:FirstChernClassFacts}, we have $i_*(1^Y)=i_*i^*(1_X)=s^*s_{0*}(1_X)=c_2(\xi \oplus \xi).$ Finally, the Whitney sum formula for Chern classes gives $c_2(\xi \oplus \xi)=c_1(\xi)^2=\alpha^2$, 
so $ i_*i^*x=\alpha^2\cdot x$, proving the claim.
  
Thus we have
\begin{multline*}
s_{(2d)}(Y)\overset{\eqref{eq:degree_through_alpha}}{=}p_{Y*}(-2i^*\alpha^{2d}) = p_{X*}i_*(-2i^*\alpha^{2d}) \overset{\text{claim}}{=}
p_{X*}(-\alpha^2\cdot 2\alpha^{2d})
=(-2)\cdot\deg_k(\alpha^{2d+2}).
\end{multline*}    
\end{proof}

We can now refine Theorem \ref{thm:symplecticclasses} by using Lemma \ref{lemma:segrenumbers} and the ideas of \cite[Section 4]{Stong-cobordism} in topology.

\begin{thm}
\label{thm:symplclasses2} Let $k$ be an infinite field, and let $\ell$ be an odd prime different from $\chr k$. Then there exists a family $\{(Y_{2d}, (v_{2d}, \omega_{2d}))\}_{d \ge1}$, with $Y_{2d}\in \Sm/k$ proper of dimension $2d$ over $k$, and $(v_{2d}, \omega_{2d})\in K_0^\Sp(Y_{2d})$ a virtual symplectic bundle defining a symplectic twist of $T_{Y_{2d}}$, thereby giving  a class $[Y_d,(v_{2d}, \omega_{2d})]_{\MSp} \in \MSp_{4d,2d}(k)$, such that
    $$\nu_{\ell}(s_{2d}(Y_{2d}))=
    \begin{cases}
        0, \; \; \; \text{if} \;\;  2d \neq \ell^i-1 \; \forall \; i \\
        1, \; \; \; \text{if} \; \; 2d=\ell^r -1, \; r \ge 1.
    \end{cases}$$
\end{thm}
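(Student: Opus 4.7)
The plan is to reduce the computation of $s_{2d}(Y)$ to a multinomial coefficient via Lemma~\ref{lemma:segrenumbers}, and then use Kummer's theorem to make a case-by-case choice of $X$ controlling the number of carries in base $\ell$. Specifically, for $X = \prod_{j=1}^{2r}\P^{2n_j+1}$ as in Construction~\ref{constr:Stongvars}, the Chow ring is $\bigotimes_j \Z[h_j]/(h_j^{2n_j+2})$ and $\alpha = c_1(\sO_X(1,\ldots,1))$ equals $h_1+\cdots+h_{2r}$; applying the multinomial theorem modulo the relations $h_j^{2n_j+2}=0$ gives
\[
\deg_k(\alpha^{2d+2}) = \binom{2d+2}{2n_1+1,\ldots,2n_{2r}+1}
\]
whenever $2d+2=\sum_j(2n_j+1)=\dim X$. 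Combined with Lemma~\ref{lemma:segrenumbers}, this yields $s_{2d}(Y) = -2\binom{2d+2}{2n_1+1,\ldots,2n_{2r}+1}$. Since $\ell$ is odd, $\nu_\ell(-2)=0$, so it suffices to arrange the $\ell$-adic valuation of this multinomial, which by Kummer's theorem equals the number of carries when the summands $2n_j+1$ are added in base $\ell$.

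The choice of $X$, and hence of $Y_{2d}$ via Proposition~\ref{prop:BertiniThm} applied to the very ample bundle $\xi\oplus \xi$, depends on whether $2d+2$ has the shape $\ell^r+1$. In the generic case $2d+2\neq \ell^r+1$ for any $r\ge 1$, I would write $2d+2 = \sum_{i\ge 0} c_i\ell^i$ in base $\ell$ and take $X$ to consist of $c_i$ copies of $\P^{\ell^i}$ for each $i$. Each $\P^{\ell^i}$ is odd-dimensional because $\ell$ is odd, the total number of factors $\sum c_i$ is even (since $\sum c_i \equiv \sum c_i\ell^i = 2d+2 \equiv 0 \pmod 2$, as $\ell^i\equiv 1\pmod 2$), and adding these dimensions in base $\ell$ yields digit $c_i < \ell$ at each position with no carries, so $\nu_\ell(M)=0$.

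In the special case $2d+2 = \ell^r+1$, I would take $X = (\P^{\ell^{r-1}})^\ell \times \P^1$, which has $\ell+1$ factors (even since $\ell$ is odd) and total dimension $\ell\cdot\ell^{r-1}+1 = 2d+2$. Adding the dimensions in base $\ell$: the $\ell$ copies of $\ell^{r-1}$ contribute $\ell$ ones at position $r-1$, producing digit $0$ there together with a single carry into position $r$; the lone $\P^1$ sits at position $0$; and the incoming carry at position $r$ just becomes the top digit $1$ without triggering further propagation. Exactly one carry occurs, so $\nu_\ell(M)=1$, as required.

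The main obstacle is the combinatorial bookkeeping in the second case: I must verify that the carry out of position $r-1$ is the only carry, i.e.\ that positions $0$ through $r-2$ and position $r$ contain digits less than $\ell$. This is immediate once one traces the addition, since the only contribution at position $r$ is the incoming carry $1$ and the only contribution at position $0$ is $1$; all other digits are $0$. Everything else in the argument is routine verification against Lemma~\ref{lemma:segrenumbers}, together with the existence of a Bertini-type section of $\xi\oplus\xi$ over the infinite field $k$ provided by Proposition~\ref{prop:BertiniThm}.
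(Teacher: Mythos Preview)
Your proposal is correct and follows essentially the same approach as the paper: the same choices of $X_{2d+2}$ (products of $\P^{\ell^i}$ according to the base-$\ell$ digits of $2d+2$ in the generic case, and $\P^1\times(\P^{\ell^{r-1}})^{\ell}$ when $2d=\ell^r-1$), reduced via Lemma~\ref{lemma:segrenumbers} to the multinomial coefficient $\deg_k(\alpha^{2d+2})$. The only difference is cosmetic: you invoke Kummer's carry-counting theorem to read off $\nu_\ell$ of the multinomial, whereas the paper expands it as a telescoping product of binomials and reduces modulo $\ell$ directly.
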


\begin{proof}
    
Let $d$ be any positive integer such that $2d \neq \ell^i-1$ for all $i$. Let
$$2d+2= a_0 +a_1\ell + \ldots +a_r \ell^r$$
be the $\ell$-adic expansion of $2d+2$. Let us define
$$X_{2d+2} \coloneqq (\mathbb{P}^1)^{\times a_0} \times (\mathbb{P}^{\ell})^{\times a_1} \times \ldots \times (\mathbb{P}^{\ell^r})^{\times a_r},$$
then $\dim(X_{2d+2})=2d+2$. Let $Y_{2d} \subset X_{2d+2}$ be the zero locus of a general section of the vector bundle $\xi \oplus \xi \to X_{2d+2}$ as in Construction \ref{constr:Stongvars}. Then $s_{(2d)}(Y_{2d})=\deg_k (-2\alpha^{2d+2})$ by Lemma \ref{lemma:segrenumbers}. 
Let $\alpha_{i,j}\coloneqq c_1(p_{i,j}^*\mathcal{O}(1))$, where $p_{i,j}$ is the projection of $X_{2d+2}$ onto the $j$-th component of $(\mathbb{P}^{\ell^i})^{\times a_i}$, so that 
$$\alpha=(\alpha_{0,1}+\alpha_{0,2}+\ldots +\alpha_{0,a_0}+\alpha_{1,1}+\ldots + \alpha_{1,a_1}+\ldots +\alpha_{r,a_r}).$$
In particular:
$$\alpha^{2d+2}=(\alpha_{0,1}+\alpha_{0,2}+\ldots +\alpha_{0,a_0}+\alpha_{1,1}+\ldots + \alpha_{1,a_1}+\ldots +\alpha_{r,a_r})^{a_0 +a_1\ell +\ldots +a_r \ell^r}.$$

Since $X_{2d+2}$ is a product of projective spaces, we have K\"unneth formula
$$H\Z^{2*, *}(X)\cong H^{2*,*}(\P^1)^{\otimes a_0}\otimes\ldots\otimes H^{2*,*}(\P^{\ell^r})^{\otimes a_r},$$
with all tensor products over $\Z$, and the isomorphism given by taking the respective pullbacks and cup products. This follows by repeated application of the projective bundle formula (Theorem \ref{thm:PBF}). For all projective spaces $\P^n$, the top degree term in $H^{2*,*}(\P^n)$ is $H^{2n,n}(\P^n)\cong \Z$, with generator $c_1(\mathcal{O}_{\P^n}(1))^n$, and $p_{\P^n*}(c_1(\mathcal{O}_{\P^n}(1))^n)=1\in H\Z^{0,0}(\Spec k)\cong \Z$. Thus, through a series of uses of the push-pull formula and the projection formula (Proposition \ref{prop:PushPull}(1)-(2)), we see that the top degree term in $H\Z^{2*, *}(X_{2d+2})$ is $H\Z^{4d+4, 2d+2}(X_{2d+2})\cong \Z$, with generator 
\[
\alpha_*^*:=\alpha_{0,1} \cdot\alpha_{0,2}\cdots\alpha_{0,a_0} \cdot \alpha_{1,1}^{\ell} \cdots \alpha_{1,a_1}^{\ell} \cdots \alpha_{r,1}^{\ell^r} \cdots \alpha_{r,a_r}^{\ell^r},
\]
and $\deg_k(\alpha_*^*)=1$. Therefore, $\deg_k(2\alpha^{2d+2})$ is the coefficient of 
$$\alpha_{0,1} \cdot \alpha_{0,2}\cdots\alpha_{0,a_0} \cdot \alpha_{1,1}^{\ell} \cdots \alpha_{1,a_1}^{\ell} \cdots \alpha_{r,1}^{\ell^r} \cdots \alpha_{r,a_r}^{\ell^r}$$
in the expansion of $2\alpha^{2d+2}$.

The multinomial theorem gives the combinatorial formula
$$(x_1 +x_2 + \ldots x_m)^n=\sum_{i_1 + \ldots +i_m=n}\frac{n!}{i_1!i_2!\ldots i_m!} x_1^{i_1}x_2^{i_2}\ldots x_m^{i_m},$$
and multinomial coefficients satisfy the identity
$$\frac{n!}{i_1!i_2!\ldots i_m!} =\binom{i_1}{i_1} \binom{i_1+i_2}{i_2}\cdots \binom{i_1+i_2+\ldots +i_m}{i_m}.$$
Thus, $\deg_k(2\alpha^{2d+2})$ is the coefficient
\begin{multline*}
     2 \cdot \frac{(2d+2)!}{1!1!\cdots 1!\ell! \cdots \ell! \ell^{2}! \cdots \ell^r!}= \\ = 2 \binom{1}{1}\binom{2}{1} \cdots \binom{a_0}{1} \binom{a_0+\ell}{\ell}\binom{a_0+2\ell}{\ell} \cdots \binom{a_0+a_1\ell}{\ell} \binom{a_0+a_1+\ell^2}{\ell^2} \cdots\binom{2d+2}{\ell^r}.
\end{multline*}
The product of the first $a_0$ binomial coefficients is $a_0!$. The product of the following $a_1$ binomial coefficients, reduced modulo $\ell$, is $a_1!$. And so on. Iterating this, we get
$$\deg_k(2\alpha^{2d+2}) \equiv\ 2a_0!a_1! \cdots a_r! \; \; \; \text{mod}\;\ell.$$
In particular, $\ell$ does not divide $\deg_k(2\alpha^{2d+2})$, so $\nu_{\ell}(s_{2d}(Y_d))=0$.

Let now $d$ be a positive integer such that $2d=\ell^r-1$ for some $r$. In this case, let us define
$$X_{2d+2} \coloneqq \mathbb{P}^1 \times (\mathbb{P}^{\ell^{r-1}})^{\ell},$$
so that $\dim(X_{2d+2})=\ell^r+1=2d+2$, and take $Y_{2d} \subset X_{2d+2}$ obtained again as in Construction \ref{constr:Stongvars}. Now we let $p_0$ be the projection of $X_{d+2}$ on the first factor $\mathbb{P}^1$, and $p_{1,j}:X_{2d+2} \to \mathbb{P}^{\ell^{r-1}}$ be the projection on the $j$-th component of $(\mathbb{P}^{\ell^{r-1}})^{\ell}$. As before, we write 
$$\alpha=\alpha_0 +\alpha_{1,1} +\ldots + \alpha_{1,\ell},$$
and then
$$\alpha^{2d+2}=(\alpha_0 +\alpha_{1,1} +\ldots + \alpha_{1,\ell})^{\ell^r+1}.$$
So, $\deg_k(2\alpha^{2d+2})$ is now the coefficient of
$\alpha_0 \cdot \alpha_{1,1}^{\ell^{r-1}} \cdot \alpha_{1,2}^{\ell^{r-1}} \cdots \alpha_{1,\ell}^{\ell^{r-1}}$
in the expansion of $2\alpha^{\ell^r+1}$. By using the multinomial theorem as before, we get
$$\deg_k(2\alpha^{2d+2})=2\binom{1}{1}\binom{1+\ell^{r-1}}{\ell^{r-1}}\binom{1+2\ell^{r-1}}{\ell^{r-1}} \cdots \binom{1+\ell^r}{\ell^{r-1}}.$$
Noting that:
$$\binom{1+j\ell^{r-1}}{\ell^{r-1}}=\binom{j\ell^{r-1}}{\ell^{r-1}}\frac{1+j\ell^{r-1}}{1+j\ell^{r-1}-\ell^{r-1}},$$
it is easy to see that $\nu_{\ell}(\deg_k(2\alpha^{2d+2}))=1$, concluding the proof.
\end{proof}

From now on, we can adopt the following notation.

\begin{defn}
\label{defn:SymplecticY's}
    For $\ell$ a fixed odd prime different from $\text{char}(k)$, we denote by
    $$[Y_{2d}, (v,\omega)]_{\MSp} \in \MSp_{4d,2d}(k)$$
    the class associated as in Definition \ref{def:symplecticclasses} to the variety $Y_{2d}$ defined in Theorem \ref{thm:symplclasses2}. 
\end{defn}

\begin{rmk}
\label{rmk:ComparingNewtonClasses}
    $s_{2d}(Y_{2d})$ also computes $\text{deg}_k(c_{(2d)}(i^*W-(\xi' \oplus \xi'^{\vee})))$. Indeed, Lemma \ref{lemma:newtonclasses}(2)-(3) gives 
    \begin{multline*}
        c_{(2d)}(\xi' \oplus \xi'^{\vee})=i^*c_{(2d)}(\xi) + i^*c_{(2d)}(\xi^{\vee})= i^*\alpha^{2d}+i^*(-\alpha)^{2d}=\\i^*(2\alpha^{2d})=i^*c_{(2d)}(\xi \oplus \xi)=c_{(2d)}(N_i)=-c_{(2d)}(T_{Y_{2d}}),
    \end{multline*}
    and Lemma \ref{lemma:newtonclasses}(1) gives $c_{(2d)}(i^*W)=i^*c_{(2d)}(W)=0$ because $W$ is the direct sum of pullbacks $p_j^*W_j$ on $Y_{2d}$, where $W_j$ is a vector bundle on a space $\P^{2n_j+1}$ of dimension less than $2d$.
\end{rmk}

\subsection{Detecting generators} We start by recalling a few facts on $\MGL$. In \cite[Lemma 5.9]{lev:ellcoh}, they give a description of the trigraded $\Z/\ell$-algebra $\Ext_{A^{*,*}}(H^{*,*}(\MGL),H^{*,*})$, as we did in Proposition \ref{prop:pres.ExtAlgebra} after replacing $\MGL$ with $\MSp$. In particular, $\Ext_{A^{*,*}}^{s,(2u-s,u)}(H^{*,*}(\MGL),H^{*,*})$ is polynomial in the generators:
\begin{equation}\label{eq:Ext_MGL_generators}
\end{equation}
\begin{itemize}
    \item[$-$] $1 \in \Ext_{A^{*,*}}^{0,(0,0)}(H^{*,*}(\MGL),H^{*,*})$;
    \item[$-$] $z'_{(k)}\in \Ext_{A^{*,*}}^{0,(-2k,-k)}(H^{*,*}(\MGL),H^{*,*})$, for $k\ge 1$, and $k\neq \ell^i-1 \; \forall \; i \ge 0 $;
    \item[$-$] $h_r' \in \Ext_{A^{*,*}}^{1,(1-2 \ell^r,1-\ell^r)}(H^{*,*}(\MGL),H^{*,*})$, for $r \ge 0$.
\end{itemize}

It is known that $\MGL$ is $(-1)$-connected (\cite[Corollary 2.3]{lev:ellcoh}) and $\eta$-completed, namely $\MGL^\wedge_\eta = \MGL$ (\cite[Lemma 2.1]{Rondigs:Hopf}). Thus, by Theorem \ref{thm:Mantovani}, one has $\MGL_{H\Z/\ell}^\wedge \simeq \MGL_{\eta,\ell}^\wedge$. Also, \cite[Lemma 6.3]{lev:ellcoh} shows that $(\MGL^\wedge_\ell)^* \cong (\MGL^*)^\wedge_\ell$. Putting this together gives
$$(\MGL_{H\Z/ \ell}^\wedge)^* \cong (\MGL^*)^\wedge _\ell.$$
Moreover, $\MGL$ is a cellular spectrum (Example \ref{exmp:MGL_cellular}), and $W_s(\MGL)$ is a motivically finite type wedge of copies of $H\Z/\ell$ (\cite[Proposition 6.10]{lev:ellcoh}), which means that $\MGL$ satisfies the conditions of Proposition \ref{prop:a.s.s}. This leads to the following.

\begin{thm}[\cite{lev:ellcoh}, Theorem 6.11, Proposition 6.12]
\label{thm:a.s.sMGL}
The motivic mod $\ell$ Adams spectral sequence for $\MGL$ is of the form
$$E_2^{s,t,u}=\Ext_{A^{*,*}}^{s,(t-s,u)}(H^{*,*}(\MGL),H^{*,*}) \Rightarrow (\MGL_{H\Z/\ell}^\wedge)^{t,u},$$
and the sequence converges completely to $(\MGL_\ell^\wedge)^{2u,u}$.
\end{thm}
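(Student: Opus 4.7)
The plan is to apply the general convergence criterion of Proposition \ref{prop:a.s.s} to $\E = \MGL$. That proposition requires two hypotheses: that $\MGL$ is cellular, and that for every $s \ge 0$, the spectrum $W_s(\MGL) = H\Z/\ell \wedge \overline{H\Z/\ell}^{\wedge s} \wedge \MGL$ is a motivically finite type wedge of copies of $H\Z/\ell$ in the sense of Definition \ref{defn:WedgeOfCopiesHZ}. Once these are verified, Proposition \ref{prop:a.s.s} immediately identifies $E_2^{s,t,u}$ with $\Ext^{s,(t-s,u)}_{A^{*,*}}(H^{*,*}(\MGL), H^{*,*})$ and produces the stated abutment.

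Cellularity of $\MGL$ is Theorem \ref{thm:mglcellular}, so the first hypothesis is handled. For the second hypothesis, the strategy is to first analyze the $H\Z/\ell$-motive $\MGL \wedge H\Z/\ell$, mimicking the argument used for $\MSp$ in Proposition \ref{prop:MotiveOfMSp}. Concretely, the Schubert cell stratification of $\Gr(r,n)$ (as exploited in the proof of Theorem \ref{thm:mglcellular}) combined with the localization triangles of Proposition \ref{prop:localization} and the homotopy invariance of $K_0$ produces, for each $\Sigma_{\P^1}^\infty \Gr(r,n)_+ \wedge H\Z/\ell$, a splitting as a finite wedge of shifted copies of $H\Z/\ell$ in bidegrees $(2m, m)$. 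Passing through the Thom isomorphism, the colimit defining $\MGL$, and the vanishing $H\Z^{a,b} = 0$ for $b<0$ or $a > 2b$ (Theorem \ref{thm:MazWeiHZ}) to kill all possible boundary maps, one obtains a decomposition
\[
\MGL \wedge H\Z/\ell \simeq \bigoplus_\alpha \Sigma^{2n_\alpha, n_\alpha} H\Z/\ell,
\]
with the $n_\alpha \ge 0$ and each fixed integer $n$ achieved by only finitely many $\alpha$. This is compatible with the known computation $H^{*,*}(\MGL) \simeq H^{*,*}[c_1, c_2, \ldots]$ of Proposition \ref{prop:MotCohOfMGL}, with $c_i$ of bidegree $(2i, i)$, which pins down precisely which pairs $(n_\alpha)$ occur.

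To conclude the second hypothesis, one combines this with the decomposition of $W_s = H\Z/\ell \wedge \overline{H\Z/\ell}^{\wedge s}$ provided in \cite[Lemma 6.8]{lev:ellcoh}, namely $W_s \simeq \bigoplus_{(p,q)\in S_s} \Sigma^{p,q}(H\Z/\ell)^{r_{p,q}}$ where $S_s = \{(p,q) \mid p+s \ge 2q \ge 0\}$ and, for each fixed $q$, almost all $r_{p,q}$ vanish. Smashing over $H\Z/\ell$ with the motive of $\MGL$ computed above gives
\[
W_s(\MGL) \simeq \bigoplus_{(p,q) \in S_s} \Sigma^{p,q} (H\Z/\ell)^{m_{p,q}},
\]
with the indices lying in $S_s$ and retaining the required $q$-wise finiteness because the shifts $(2n_\alpha, n_\alpha)$ are themselves locally finite and lie in the closed cone $\{p \ge 2q \ge 0\}$. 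Thus $W_s(\MGL)$ is a motivically finite type wedge of copies of $H\Z/\ell$.

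The main technical obstacle is the motive computation: propagating the Schubert cell decomposition of the finite Grassmannians through the Thom spaces $\MGL_r$, the colimit over $r$, and the smash product with $\overline{H\Z/\ell}^{\wedge s}$ in a way that preserves the two constraints of Definition \ref{defn:WedgeOfCopiesHZ}, namely that $p \ge 2q \ge 0$ and that for each $m$ only finitely many summands have $q \le m$. The key input that makes the splitting actually split (rather than merely filter) is the vanishing of the motivic cohomology groups that would host the connecting maps, which is exactly the vanishing range in Theorem \ref{thm:MazWeiHZ} and which was used in the same way in the proof of Proposition \ref{prop:MotiveOfMSp}.
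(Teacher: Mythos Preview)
Your proposal is correct and follows the same approach as the paper: verify the two hypotheses of Proposition~\ref{prop:a.s.s} (cellularity via Theorem~\ref{thm:mglcellular}, and $W_s(\MGL)$ being a motivically finite type wedge of copies of $H\Z/\ell$), then apply that proposition. The only difference is that the paper simply cites \cite[Proposition 6.10]{lev:ellcoh} for the second hypothesis, whereas you sketch the argument directly by computing $\MGL \wedge H\Z/\ell$ via the Schubert cell stratification, mirroring the proof of Proposition~\ref{prop:MotiveOfMSp}; this is exactly the content of the cited result, so the approaches are essentially identical.
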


\begin{rmk}
\label{rmk:liftingGenerators}
    As noted in \cite[Section 6.6]{lev:ellcoh} along the proof of Theorem B, the Adams spectral sequence for $\MGL$ has associated filtration $F^*(\MGL^*)^\wedge_\ell$, graded by $\oplus_u E_\infty^{2u,u}(\MGL)\simeq \oplus_u E_2^{2u,u}(\MGL) \coloneqq \oplus_u \oplus_s E_2^{s,(2u,u)}(\MGL)$, which is a graded $\Z/\ell$-algebra, with $E_2^{0,(0,0)}(\MGL)=\Z/\ell$ and $F^m((\MGL^0)_\ell^\wedge)=(\ell^m)\Z_\ell$. Thus, the polynomial generators $z'_{(k)}$ and $h'_r$ in \eqref{eq:Ext_MGL_generators} lift, respectively, to polynomial generators
    $$\tilde{z}'_{(k)} \in F^0((\MGL^{-2k,-k})^\wedge_\ell) \; \; \; \text{and} \; \; \; \tilde{h}'_r \in F^1((\MGL^{-2(\ell^r-1),-(\ell^r-1)})_\ell^\wedge)$$
    for the graded $\Z/\ell$-algebra $(\MGL^*)^\wedge_\ell$, and these generators are unique up to decomposable elements and elements in $\ell \cdot (\MGL^*)^\wedge_\ell$.
\end{rmk}

The natural map $\Phi:\MSp \to \MGL$ induces the map
$$\Phi_*:(\MSp_{\eta,\ell}^\wedge)^* \to (\MGL_\ell^\wedge)^*\simeq (\MGL^*)_\ell^\wedge.$$

\begin{prop}\label{prop:PolynomialMSp}The map $\Phi_*:(\MSp_{\eta, \ell}^\wedge)^* \to (\MGL^*)_\ell^\wedge=\Z_\ell[x_1, x_2,\ldots]$ is injective, with image the polynomial subring with generator $x_2, x_4,\ldots$.
\end{prop}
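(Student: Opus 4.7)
The plan is to compare the motivic mod-$\ell$ Adams spectral sequences of $\MSp$ and $\MGL$ via the canonical map $\Phi:\MSp\to\MGL$ of Remark~\ref{rmk:MSp-MGLThomClasses}. Combining Lemma~\ref{lemma:decompmsp} and Lemma~\ref{lemma:decompMGL}, the pullback $\Phi^*$ realizes $H^{*,*}(\MSp)=\bigoplus_{\omega\in P} M_B u_\omega$ as the $A^{*,*}$-summand of $H^{*,*}(\MGL)=\bigoplus_{\omega\in P'} M_B u_\omega$ indexed by the subset $P\subset P'$ of even non-$\ell$-adic partitions. Passing to $\Ext_{A^{*,*}}(-,H^{*,*})$ and restricting to the diagonal $t=2u$, Propositions~\ref{prop:pres.ExtAlgebra} and~\ref{prop:ExtMGL} identify this inclusion with the obvious inclusion of polynomial $\Z/\ell$-algebras
\[
\Z/\ell[\,z_{(2k)},\, h_r'\,]\hookrightarrow \Z/\ell[\,z'_{(k)},\, h_r'\,],\qquad z_{(2k)}\mapsto z'_{(2k)},\ \ h_r'\mapsto h_r',
\]
which is injective with image the subring generated by the even-indexed $z'_{(2k)}$ together with all $h_r'$.

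Both spectral sequences degenerate at $E_2$ on the line $t=2u$ and converge completely there, by Proposition~\ref{prop:a.s.s.MSp2} for $\MSp$ (using Corollary~\ref{cor:EtaEllComplMSp} to identify the $H\Z/\ell$-nilpotent and $(\eta,\ell)$-completions), and by the analogous statement for $\MGL$. By Remark~\ref{rmk:liftingGenerators}, each $E_2$-generator on the $\MGL$ side lifts to a polynomial generator of $(\MGL^*)^\wedge_\ell\simeq \Z_\ell[x_1,x_2,\ldots]$; since $\ell$ is odd, $\ell^r-1$ is even for every $r\ge 1$, so we may choose
\[
x_d = \tilde{z}'_{(d)}\ \text{if } d\ne \ell^r-1,\qquad x_d = \tilde{h}'_r\ \text{if } d=\ell^r-1,\ r\ge 1.
\]
In particular, the even-indexed generators $\{x_{2k}\}_{k\ge1}$ correspond bijectively to the Adams $E_2$-generators that lie in the image of the $\MSp$ side, and Proposition~\ref{prop:pres.ExtAlgebra} provides corresponding lifts $\tilde{z}_{(2k)},\,\tilde{h}'_r\in(\MSp^\wedge_{\eta,\ell})^*$ that generate this ring polynomially.

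By construction the map $\Phi_*:(\MSp^\wedge_{\eta,\ell})^*\to(\MGL^*)^\wedge_\ell$ respects the Adams filtrations, and on the associated graded it is the injection described above. A standard argument (using exhaustive, complete convergence of both filtrations, together with the polynomial structure on the associated graded) promotes this to injectivity of $\Phi_*$ itself and identifies its image as the subring generated by $\{x_{2k}\}_{k\ge1}$, i.e.\ $\Z_\ell[x_2,x_4,\ldots]$. The main obstacle I anticipate is a bookkeeping one: one must choose the lifts $\tilde{z}_{(2k)},\tilde{h}'_r$ on the $\MSp$ side compatibly with the chosen $x_d$ so that $\Phi_*$ sends them to $x_{2k}$ on the nose rather than merely modulo higher filtration and decomposables. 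This is handled by a recursive correction of the lifts, adjusting at each Adams filtration step by an element of the even-indexed subring already constructed; the procedure terminates thanks to the strong degeneration in Proposition~\ref{prop:a.s.s.MSp2}, which guarantees that the filtration quotients match the $E_2^{*,2u,u}$-terms without contributions from higher filtration differentials.
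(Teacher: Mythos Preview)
Your proposal is correct and follows essentially the same approach as the paper: compare the Adams spectral sequences via $\Phi$, use the decompositions of Lemmas~\ref{lemma:decompmsp} and~\ref{lemma:decompMGL} to see that the map on $E_2$-pages is a split inclusion onto the even-partition summands, and invoke degeneration and complete convergence on the diagonal $t=2u$ to pass to the abutments. The only minor difference is in the endgame: the paper does not attempt to adjust the $\MSp$ lifts so that $\Phi_*$ hits the chosen $x_{2k}$ on the nose, but simply observes that the images $\Phi_*(\tilde{z}_{(2k)})$, $\Phi_*(\tilde{h}_r)$ are themselves polynomial generators of $(\MGL^*)^\wedge_\ell$ in the correct (even) degrees, which already forces the image to be a polynomial subring on even-degree generators; your recursive correction argument is a valid alternative but is not needed.
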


\begin{proof} The map $\Phi: \MSp \to \MGL$ induces a map between the two respective Adams spectral sequences. By looking at the decompositions of $H^{*,*}(\MSp)$ and $H^{*,*}(\MGL)$ given by Lemma \ref{lemma:decompmsp} and \cite[Lemma 5.9]{lev:ellcoh}, we see that the induced map between the two respective $E_2$-pages is an isomorphism on the summands of the $\Ext$-algebras corresponding to the duals of the summands $M_B u_\omega$, with $w\in P$, namely with $\omega$ being an even (necessarily non-$\ell$ adic) partition. In particular, the $E_2$ page for $\MSp$ is a split summand of the $E_2$ page for $\MGL$. Since both spectral sequences degenerate at $E_2$, this implies that the map 
$\Phi_*:(\MSp_{\eta, \ell}^\wedge)^* \to (\MGL^*)_\ell^\wedge$ is injective.

We consider the filtration $F^*(\MSp_{\eta,\ell}^\wedge)^*$ associated to the spectral sequence $E(\MSp)$ for $\MSp$. Analogously to the case of $F^*(\MGL^*)_\ell^\wedge$ in Remark \ref{rmk:liftingGenerators}, the generators $z_{(2k)}$ and $h_r$ of $E_2(\MSp)$ lift to generators $\tilde{z}_{(2k)}$ and $\tilde{h}_r$ of the graded $\Z_\ell$ algebra $(\MSp_{\eta,\ell}^\wedge)^*$, with $r\ge 0$, $k\ge 1$, $2k \neq \ell^i-i$ for any $i \ge 0$,  uniquely up to sums with decomposable elements and elements in $\ell \cdot (\MSp_{\eta,\ell}^\wedge)^*$. We see that, through the inclusion $\Phi_*:(\MSp_{\eta, \ell}^\wedge)^* \hookrightarrow (\MGL^*)_\ell^\wedge$, $\tilde{z}_{(2k)}$ and $\tilde{h}_r$ map to the polynomial generators $\tilde{z}'_{(2k)}$ and $\tilde{h}'_r$ respectively, up to decomposable elements and elements in $\ell \cdot (\MGL^*)^\wedge_\ell$. Indeed, $z_{(2k)}$ is the dual of $u_{(2k)}$,  hence it belongs to the summand of $E_2(\MSp)$ corresponding $M_B u_{(2k)}$, while the elements $h_r$ are generators of the $\Z/\ell$ algebra $\Ext_B(\Z/\ell,H^{*,*})\simeq \Ext_{A^{*,*}}(M_B,H^{*,*})$, so they belong to the summand corresponding to $M_B u_{(0)}$. 

Thus, the $\Z_\ell$-algebra generators $\tilde{z}_{(2k)}$ and $\tilde{h}_r$ of $(\MSp_{\eta, \ell}^\wedge)^*$ map to (possibly new) polynomial generators of the $\Z_\ell$-algebra $(\MGL^*)_\ell^\wedge=\Z_\ell[\{\tilde{z}'_{(k)}, \tilde{h}'_r\}_{k,r}]$ in all even degrees. Thus $\Phi_*$ identifies $(\MSp_{\eta, \ell}^\wedge)^*$ with this polynomial subalgebra of $(\MGL^*)_\ell^\wedge$.
\end{proof}

From the generating criterion for $(\MGL^*)^\wedge_\ell$ of Proposition \ref{prop:CriterionMGL}, we can obtain the following one.

\begin{prop}
\label{prop:CriterionMSp}
    A family of element $\{y_{2d}'\}_{d \ge 1}$, with $y_{2d}' \in (\MSp^\wedge_{\eta,\ell})^{-2d}$ form a family of polynomial generators of $(\MSp_{\eta,\ell}^\wedge)^*$ if and only if
    \begin{equation*}
        c_{(2d)}(\Phi_* y_{2d}')=
        \begin{cases}
            \lambda \in \Z_\ell^\times & \text{for} \; \; 2d \neq \ell^r-1 \; \forall r \ge 1 \\
            \lambda' \cdot \ell, \; \lambda' \in \Z_\ell^\times & \text{for} \; \; 2d= \ell^r-1, \; r\ge 1.
        \end{cases}
    \end{equation*}
\end{prop}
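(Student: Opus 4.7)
The plan is to derive this criterion directly from the generating criterion for $(\MGL^*)^\wedge_\ell$ recalled in Proposition \ref{prop:CriterionMGL}, using the identification given by Proposition \ref{prop:PolynomialMSp}. First I would recall that the map $\Phi_*:(\MSp_{\eta,\ell}^\wedge)^* \to (\MGL^*)^\wedge_\ell = \Z_\ell[x_1, x_2, x_3, \ldots]$ is an injection, and identifies $(\MSp_{\eta,\ell}^\wedge)^*$ with the polynomial subring on a set of polynomial generators concentrated in the even degrees $-2,-4,-6,\ldots$. Consequently, a family $\{y_{2d}'\}_{d\ge 1}$ with $y_{2d}'\in (\MSp_{\eta,\ell}^\wedge)^{-2d}$ is a set of polynomial generators of $(\MSp_{\eta,\ell}^\wedge)^*$ if and only if the image family $\{\Phi_*y_{2d}'\}_{d\ge 1}$ is a set of polynomial generators of the even-degree polynomial subring of $(\MGL^*)^\wedge_\ell$.

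Next, I would complete $\{\Phi_*y_{2d}'\}_{d\ge 1}$ to a candidate family of polynomial generators of all of $(\MGL^*)^\wedge_\ell$ by adjoining any fixed choice of elements $x'_{2e+1}\in(\MGL^{-(2e+1)})^\wedge_\ell$, $e\ge 0$, satisfying the condition of Proposition \ref{prop:CriterionMGL} in their respective odd degrees (such elements exist by Proposition \ref{prop:MGL-classes}, which says that the classes of smooth projective varieties generate $\MGL^*[1/p]$). Since $(\MGL^*)^\wedge_\ell$ is polynomial on generators in both even and odd degrees, the combined family $\{\Phi_*y_{2d}'\}_{d\ge 1}\cup\{x'_{2e+1}\}_{e\ge 0}$ polynomially generates $(\MGL^*)^\wedge_\ell$ if and only if the subfamily $\{\Phi_*y_{2d}'\}_{d\ge 1}$ polynomially generates the even-degree subring. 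Applying Proposition \ref{prop:CriterionMGL} to this combined family, and using that the criterion is independent in each degree (because $c_{(d)}$ vanishes on decomposable elements by Lemma \ref{lemma:newtonclasses}(4), and by the bidegree reasons inherent in its definition via the motivic Hurewicz pairing), the conditions at the odd degrees are satisfied by our choice of $x'_{2e+1}$ and place no constraint on the $y_{2d}'$.

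It then remains to read off the conditions at the even degrees. Since $\ell$ is odd, $\ell^r-1$ is even for every $r\ge 1$, so the dichotomy $d=\ell^r-1$ versus $d\neq\ell^r-1$ applied to $d=2d$ reads exactly as in the statement: $c_{(2d)}(\Phi_*y_{2d}')\in\Z_\ell^\times$ if $2d\neq \ell^r-1$ for all $r$, and $c_{(2d)}(\Phi_*y_{2d}')=\ell\lambda'$ with $\lambda'\in\Z_\ell^\times$ if $2d=\ell^r-1$ for some $r\ge 1$. No substantial obstacle is expected: the argument is essentially a formal consequence of the identification established in Proposition \ref{prop:PolynomialMSp} combined with the generating criterion for $\MGL$, with the only mildly delicate point being the per-degree independence of the criterion, which is built into the construction of $c_{(d)}$ and the filtration on $(\MGL^*)^\wedge_\ell$ coming from the Adams spectral sequence.
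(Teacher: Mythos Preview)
Your proposal is correct and follows essentially the same approach as the paper: both reduce the criterion to Proposition~\ref{prop:CriterionMGL} via the identification of $(\MSp_{\eta,\ell}^\wedge)^*$ with the even-degree polynomial subring of $(\MGL^*)^\wedge_\ell$ provided by Proposition~\ref{prop:PolynomialMSp}, and both use that $c_{(d)}$ vanishes on decomposables to make the criterion degree-by-degree. The only cosmetic difference is that the paper works with the explicit Adams spectral sequence lifts $\tilde{z}_{(2k)}$ and $\tilde{h}_r$ from Remark~\ref{rmk:liftingGenerators}, whereas you complete $\{\Phi_*y_{2d}'\}$ by adjoining abstract odd-degree generators; your justification for the existence of these odd-degree elements via Proposition~\ref{prop:MGL-classes} is slightly roundabout, since their existence follows immediately from the fact that $(\MGL^*)^\wedge_\ell$ is already known to be polynomial.
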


\begin{proof} We retain the notation used in the proof of Proposition~\ref{prop:PolynomialMSp}. 

From Remark \ref{rmk:liftingGenerators}, we see that the elements $\tilde{h}_r'$, for $r \ge 1$, give generators of $(\MGL^*)^\wedge_\ell$ of degrees $1-\ell^r$ respectively, and the elements $\tilde{z}_{(k)}'$ give generators in the remaining non-zero degrees. Thus, by Proposition \ref{prop:CriterionMGL}, we have $\nu_\ell(c_{(k)}(\tilde{z}_k'))=0$ and $\nu_\ell(c_{(\ell^r-1)}(\tilde{h}'_r))=1.$ Therefore, we also have
\begin{equation}
    \nu_\ell(c_{(2k)}(\Phi_* \tilde{z}_{2k}))=0 \; \; \; \text{and} \; \; \; \nu_\ell(c_{(\ell^r-1)}(\Phi_* \tilde{h}_r))=1.
\end{equation}
Let $y\in (\MGL^n)^\wedge_\ell$ be a polynomial generator. Then, modulo $\ell\cdot (\MGL^n)^\wedge_\ell$ and decomposable elements, $y$ is the product of a $\Z_\ell$-unit and a generator $\tilde{h}_r'$ (if $n=1-\ell^r$) or $\tilde{z}_k'$ (if $n$ is not of the form $1-\ell^r$). Since Newton classes vanish on decomposables, we see that for $y\in (\MGL^n)^\wedge_\ell$ arbitrary, $y$ is a polynomial generator if and only if $\nu_\ell(c_{(n)}(y))=1$ if $n$ is of the form  $1-\ell^r$, and $\nu_\ell(c_{(n)}(y))=0$ if $n$ is not of this form. 

From Proposition~\ref{prop:PolynomialMSp}, we see that the sub-$\Z_\ell$-algebra $(\MSp^\wedge_{\eta,\ell})^*$ is the polynomial sub-$\Z_\ell$-algebra of $(\MGL^*)^\wedge_\ell$
with polynomial generators the $\tilde{h}'_r$, $r\ge1$ and the $\tilde{z}_k'$ for $k$ even (and non-$\ell$-adic). Applying the above criterion, we see that an element $y_{2d}' \in (\MSp^\wedge_{\eta,\ell})^{-2d}$, $d \ge 1$, is a polynomial generator of $(\MSp^\wedge_{\eta,\ell})^*$ if and only if  
 $$
 \nu_\ell(c_{(2d)}(\Phi_* y_{2d}'))=
        \begin{cases}
            1 \; \; \; \; \text{for} \; \; 2d= \ell^r-1, \; r\ge 1 \\
            0 \; \; \; \text{for} \; \; 2d \neq \ell^r-1 \; \forall r \ge 1.
        \end{cases}
$$
This gives the result.
\end{proof}

Let $X=\prod_{i=1}^{2r}\mathbb{P}^{2n_i +1}$ and $Y\subset X$ as in Construction \ref{constr:Stongvars}, with $2d:=\dim_kY=\dim_kX-2$. Definition \ref{def:symplecticclasses} gives the stable symplectic twist $((v_Y,\omega_Y), \vartheta_Y, 2r)$ of $-T_Y$. In order to simplify the notation, we introduce the following shorthand.
 \begin{defn}
 \label{defn:convention}
 We let $-T_Y'\in K_0(Y)$ be the virtual vector bundle $\mathcal{O}_Y^{2r}+v_Y$.
 \end{defn} 

 In particular, the isomorphism $\vartheta_Y$ of Construction \ref{constr:Stongvars} can be read as $$\vartheta_Y:\Sigma^{-T_Y}1_Y \xrightarrow{\sim}\Sigma^{-T_Y'}1_Y.$$

 Let us recall that, by Remark \ref{rmk:NewtonClasses}, $c_{(2d)}(-T_Y)$ and $c_{(2d)}(-T_Y')$ are defined.
 
 \begin{prop}
 \label{prop:comparison_SegreNumbers}
    We have
    $$c_{(2d)}(\Phi_* [Y,(v_Y,\omega_Y)]_\MSp)=(-1)^{n_Y}\deg_k(c_{(2d)}(-T_Y)),$$
    where $n_Y=1 + \Sigma_{i=1}^{2r}(n_i+1)$
\end{prop}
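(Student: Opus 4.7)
The argument combines four ingredients: the compatibility of $\Phi:\MSp\to\MGL$ with symplectic and $\GL$-Thom classes, the Hurewicz pairing formula (Remark \ref{rmk:HurewiczMap}), the twisted degree computation of Corollary \ref{cor:TwistClassComp}, and the parity of Newton classes under Ananyevskiy flips.

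I begin by identifying $\Phi_*[Y,(v_Y,\omega_Y)]_\MSp$ with the $\GL$-twisted class $[Y,-T_Y',\vartheta_Y]_\MGL$ of Definition \ref{defn:twistclass}. Unwinding Definition \ref{defn:SymplecticTwistedClass} and post-composing with $\Phi$, Remark \ref{rmk:MSp-MGLThomClasses} (applied to virtual symplectic bundles via the multiplicativity of Lemma \ref{lem:SympThomMult}) turns $\Sigma^{4r,2r}[v_Y,\omega_Y]_\MSp$ into the $\MGL$-Thom class of $-T_Y' = \sO_Y^{2r}+v_Y$; noting that $2r+\rnk(v_Y) = -2d$, the resulting composition matches exactly the one defining $[Y,-T_Y',\vartheta_Y]_\MGL$. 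Then, by Remark \ref{rmk:HurewiczMap} applied to $c_{(2d)}\in H\Z^{4d,2d}(\MGL)$ and $\gamma_0 = \Phi_*[Y,(v_Y,\omega_Y)]_\MSp$, we get
\[
c_{(2d)}(\Phi_*[Y,(v_Y,\omega_Y)]_\MSp)=c_{(2d)}\circ [Y,-T_Y',\vartheta_Y]_\MGL.
\]

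Next, I apply Corollary \ref{cor:TwistClassComp} with $\sE=\MGL$, $c=c_{(2d)}$ and the stable twist $(-T_Y',\vartheta_Y)$ of $-T_Y$. This requires writing $\vartheta_Y$ as a single Ananyevskiy isomorphism. By construction, $\vartheta_0=\Sigma^\xi\Anan_\xi$ is the Ananyevskiy flip on the one line bundle $\xi'$, while each $i^*\vartheta_j$ is, after desuspension by $-T_{\P^{2n_j+1}}-\sO(-1)^{n_j+1}$, the Ananyevskiy flip on $n_j+1$ copies of $i^*p_j^*\sO(-1)$. The multiplicativity of the construction \eqref{eq:Anaconstruction} identifies the smash product $\vartheta_0\wedge(\wedge_j i^*\vartheta_j)$ with $\Anan_{-T_Y,-L_1,\ldots,-L_{n_Y}}(-T_Y')$ for the combined list of the $n_Y = 1 + \sum_{j=1}^{2r}(n_j+1)$ line bundles. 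Corollary \ref{cor:TwistClassComp} therefore yields
\[
c_{(2d)}\circ [Y,-T_Y',\vartheta_Y]_\MGL = (-1)^{n_Y}\deg_k c_{(2d)}^\MGL(-T_Y'),
\]
and the universality of $c_{(2d)}$ (Definition \ref{defn:univNewtonClasses}) identifies $c_{(2d)}^\MGL(-T_Y')$ with the ordinary Newton class $c_{(2d)}(-T_Y')\in H\Z^{4d,2d}(Y)$.

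Finally, I check that $c_{(2d)}(-T_Y') = c_{(2d)}(-T_Y)$ in $H\Z^{4d,2d}(Y)$. Using the Euler sequence on each $\P^{2n_j+1}$ to expand both virtual bundles in $K_0(Y)$ and applying the additivity of Newton classes (Lemma \ref{lemma:newtonclasses}(2)) together with $c_{(n)}(\sO_Y)=0$ for $n\ge 1$, one finds
\[
c_{(2d)}(-T_Y) = 2c_{(2d)}(\xi') - \sum_j(2n_j+2)c_{(2d)}(i^*p_j^*\sO(1)),
\]
and an analogous expression for $c_{(2d)}(-T_Y')$ in which $\xi'$ and $i^*p_j^*\sO(1)$ are replaced by the averages of each with its dual. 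Since the additive formal group law on $H\Z$ gives $c_1(L^\vee) = -c_1(L)$ and hence $c_{(n)}(L^\vee) = (-1)^n c_{(n)}(L)$, the parity $n=2d$ even makes the two expressions coincide. Combining the three displayed equalities yields the proposition.

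The delicate step is the bookkeeping in identifying $\vartheta_Y$ with a single Ananyevskiy isomorphism on the right virtual bundle, so that the count of flipped line bundles matches $n_Y$ exactly and the Tate-shift $\Sigma^{-4d,-2d}$ of Corollary \ref{cor:TwistClassComp} is correctly absorbed by the factor $\Sigma^{4r,2r}$ arising from symplectic normalization.
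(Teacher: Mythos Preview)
Your proof is correct and follows essentially the same route as the paper: identify $\Phi_*[Y,(v_Y,\omega_Y)]_\MSp$ with the $\MGL$-twisted class $[Y,-T_Y',\vartheta_Y]_\MGL$ via Remark~\ref{rmk:MSp-MGLThomClasses}, reduce $c_{(2d)}(-)$ to a composition via the Hurewicz formula of Remark~\ref{rmk:HurewiczMap}, apply Corollary~\ref{cor:TwistClassComp} with the count $n_Y=1+\sum_j(n_j+1)$ of flipped line bundles, and finish by observing that $c_{(2d)}(-T_Y')=c_{(2d)}(-T_Y)$ because the additive formal group law gives $c_1(L^\vee)^{2d}=c_1(L)^{2d}$. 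Your final paragraph makes the last step slightly more explicit than the paper does, but the argument is the same.
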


\begin{proof} We first note that, for $x \in \MGL^{-n}=\MGL_{2n,n}(\Spec k)$, and $c_{(n)} \in H\Z^{2n,n}(\MGL)$, by Remark \ref{rmk:HurewiczMap} we have
\[
c_{(n)}(x) := \langle c_{(n)},h(x) \rangle =c_{(n)}\circ x\in H\Z^{0,0}(\Spec k)=\Z.
\]
Next, expanding the full definition of $[Y,(v_Y,\omega_Y)]_\MSp$ as $[Y,(v_Y,\omega_Y),\vartheta_Y, 2r]_\MSp$, we have
\[
\Phi_* [Y,(v_Y,\omega_Y),\vartheta_Y, 2r]_\MSp =[Y, v_Y+\sO_Y^{2r},\vartheta_Y]_\MGL\in 
\MGL^{-2d,-d}(\Spec k),
\]
where $v_Y\in K_0(Y)$ is the image of $(v_Y,\omega_Y)\in K_0^\Sp(Y)$ under the evident map 
$K_0^\Sp(Y)\to K_0(Y)$ forgetting the symplectic structure. Thus
\[
c_{(2d)}\circ \Phi_* [Y,(v_Y,\omega_Y),\vartheta_Y, 2r]_\MSp=c_{(2d)}\circ [Y, -T_Y',\vartheta_Y]_\MGL.
\]
Furthermore, $-T_Y'$ has virtual rank $-2d$ and $\vartheta_Y$ is a composition of $2r+1$ isomorphisms of the form $\Anan_{e,-L_1,\ldots, -L_s}$, $2r$ for the factors $\P^{2n_1+1}$ and one coming from the rank $2$ bundle $\xi'\oplus \xi'$. In particular, for each factor $\P^{2n_i+1}$ of $X$ we have $s=n_i+1$, and for $\xi'\oplus \xi'$ we have $s=1$. Thus, the sum of all the indices $s$ relative to the isomorphisms $\Anan_{e,-L_1,\ldots, -L_s}$ is $n_Y$, and it follows from Corollary~\ref{cor:TwistClassComp} that 
\[
c_{(2d)}\circ [Y, -T_Y',\vartheta_Y]_\MGL=(-1)^{n_Y}\cdot\deg_k (c_{(2d)}(-T_Y'))
=(-1)^{n_Y}\cdot\deg_k (c_{(2d)}(-T_Y')).
\]

Finally, up to isomorphisms, $-T_Y$ and $-T_Y'$ are both (virtual) sums in $K_0(Y)$ of classes of line bundles, where the only difference is that some of the line bundles $L_i$ appearing in $-T_Y$ get replaced with $L_i^\vee$. But if $-T_Y=\sum_i\epsilon_i[L_i]$, with $\epsilon_i\in\{\pm1\}$, then by additivity of the Newton classes we have  $c_{(2d)}(-T_Y)=\sum_i\epsilon_i\cdot c_1^{H\Z}(L_i)^{2d}$ and $c_{(2d)}(-T_Y')=\sum_i\epsilon_i\cdot c_1^{H\Z}(L_i^{\otimes \tau_i})^{2d}$ for suitable $\tau_i\in\{\pm1\}$.
Since $H\Z$ has additive formal group law (Example \ref{exmp:orientedHZ}), we have $c_1(L_i^{\otimes -1})^{2d} =(-c_1(L_i))^{2d}=c_1(L_i)^{2d}$,
so $c_{(2d)}(-T_Y')=c_{(2d)}(-T_Y)$, completing the proof. 
\end{proof}

Let us now consider the varieties $Y_{2d}$ constructed in Theorem \ref{thm:symplclasses2}, and their associated symplectic classes $[Y_{2d}]_\MSp \coloneqq [Y_{2d},(v_Y,\omega_Y)]_\MSp$ (Definition \ref{defn:SymplecticY's}).

Still by Remark \ref{rmk:NewtonClasses} and additivity of Newton classes we have $c_{(2d)}(-T_{Y_{2d}})=-c_{(2d)}(T_{Y_{2d}})$. Thus, Proposition \ref{prop:comparison_SegreNumbers} implies the identity 
\begin{equation}
\label{eq:generatingconditions}
   \nu_\ell(c_{(2d)}(\Phi_*[Y_{2d}]_\MSp))=\nu_\ell(s_{2d}(Y_{2d})). 
\end{equation}
The identity \eqref{eq:generatingconditions} and Theorem \ref{thm:symplclasses2} tell us that the symplectic classes $[Y_{2d}]_\MSp \in \MSp^{-4d,-2d}$ satisfy the generating criterion for $(\MSp^\wedge_{\eta,\ell})^*$ established in Proposition \ref{prop:CriterionMSp}.

We have then achieved the following.

\begin{thm}
\label{thm:ClassOfGenerators}
    Let $\ell$ be a fixed odd prime different from $\chr(k)$, and for $d \ge 1$ let $[Y_{2d}]_\MSp \coloneqq [Y_{2d},(v_Y,\omega_Y)]_\MSp \in \MSp^{-4d,-2d}(\Spec k)$ be the symplectic classes of Definition \ref{defn:SymplecticY's}. Then the images of the classes $[Y_{2d},(v_Y,\omega_Y)]_\MSp$ in $(\MSp_{\eta,\ell}^\wedge)^*$ give a family of polynomial generators for $(\MSp_{\eta,\ell}^\wedge)^*$ over $\Z_\ell$.
\end{thm}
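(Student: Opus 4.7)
The proof is essentially an assembly of the pieces that have already been developed in the preceding subsections; at this stage, the only remaining task is to verify that the family $\{[Y_{2d}]_\MSp\}_{d\ge1}$ satisfies the generating criterion of Proposition \ref{prop:CriterionMSp}. The plan is therefore to translate the hypotheses of the criterion into statements about Segre numbers of the varieties $Y_{2d}$ of Theorem \ref{thm:symplclasses2}, and then to cite Theorem \ref{thm:symplclasses2} directly.

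First, I would recall that Proposition \ref{prop:CriterionMSp} asserts that a family $\{y_{2d}'\}_{d\ge1}$, with $y_{2d}'\in (\MSp_{\eta,\ell}^\wedge)^{-2d}$, is a family of polynomial generators of the $\Z_\ell$-algebra $(\MSp_{\eta,\ell}^\wedge)^*$ if and only if $\nu_\ell(c_{(2d)}(\Phi_*y_{2d}'))=0$ for $2d\neq \ell^r-1$ and $\nu_\ell(c_{(2d)}(\Phi_*y_{2d}'))=1$ for $2d=\ell^r-1$, $r\ge 1$. Applying this to $y_{2d}':=[Y_{2d}]_\MSp\in \MSp^{-4d,-2d}(\Spec k)$, I reduce to computing the $\ell$-adic valuation of $c_{(2d)}(\Phi_*[Y_{2d}]_\MSp)$.

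Next, I would invoke the identity \eqref{eq:generatingconditions} proved just above the statement of the theorem, which equates $\nu_\ell(c_{(2d)}(\Phi_*[Y_{2d}]_\MSp))$ with $\nu_\ell(s_{2d}(Y_{2d}))$. This identity rests on the preceding proposition expressing $c_{(2d)}\circ \Phi_*[Y_{2d}]_\MSp$ as $(-1)^{n_Y}\deg_k(c_{(2d)}(-T_{Y_{2d}}))$, combined with the fact that the Newton class is insensitive to replacing line bundles by their duals (thanks to the additive formal group law of $H\Z$) and the additivity $c_{(2d)}(-T_{Y_{2d}})=-c_{(2d)}(T_{Y_{2d}})$, so that sign choices do not affect the $\ell$-adic valuation. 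Finally, Theorem \ref{thm:symplclasses2} furnishes exactly the required values for $\nu_\ell(s_{2d}(Y_{2d}))$, namely $0$ when $2d$ is not of the form $\ell^i-1$ and $1$ when $2d=\ell^r-1$ for some $r\ge 1$. Hence the criterion of Proposition \ref{prop:CriterionMSp} is satisfied, and the classes $[Y_{2d}]_\MSp$ form a family of polynomial generators.

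There is no real obstacle at this step, as the hard work has been done earlier: the identification of $(\MSp_{\eta,\ell}^\wedge)^*$ as the polynomial subalgebra of $(\MGL^*)^\wedge_\ell$ on even-degree generators (Proposition \ref{prop:PolynomialMSp}), the translation of the Newton-class criterion from $\MGL$ to $\MSp$ (Proposition \ref{prop:CriterionMSp}), the comparison between $c_{(2d)}(\Phi_*[Y_{2d}]_\MSp)$ and $\deg_k(c_{(2d)}(-T_{Y_{2d}}))$ via the twisted degree machinery of \S\ref{subsec:TwistedDegree} (in particular Corollary \ref{cor:TwistClassComp}), and the explicit $\ell$-adic valuation computation via multinomial coefficients (Theorem \ref{thm:symplclasses2}). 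The proof of Theorem \ref{thm:ClassOfGenerators} is thus a one-line corollary of these results.
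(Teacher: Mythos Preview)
Your proposal is correct and matches the paper's own argument essentially verbatim: the paper derives the theorem immediately from Proposition \ref{prop:CriterionMSp}, the identity \eqref{eq:generatingconditions}, and Theorem \ref{thm:symplclasses2}, exactly as you outline. There is no additional content in the paper's proof beyond what you have written.
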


\begin{prop}
\label{prop:eta-ellCompl}
    For each odd prime $\ell$ different from $\chr k$, we have $$((\MSp^\wedge_\eta)^*)^\wedge_\ell \cong (\MSp_{\eta,\ell}^\wedge)^*.$$
\end{prop}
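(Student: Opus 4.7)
The plan is to compare the two completions via the Milnor exact sequence associated with the $\ell$-adic completion $\MSp^\wedge_{\eta,\ell} = \operatorname{holim}_n \MSp^\wedge_\eta/\ell^n$. The unit map of the $\ell$-adic completion gives a canonical morphism $\MSp^\wedge_\eta \to \MSp^\wedge_{\eta,\ell}$, inducing a ring homomorphism on coefficients whose target is $\ell$-adically complete; this factors through the natural comparison map
$$\varphi \colon ((\MSp^\wedge_\eta)^*)^\wedge_\ell \to (\MSp^\wedge_{\eta,\ell})^*,$$
which we aim to show is an isomorphism in each bidegree $(2u,u)$.

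First, I would extract from the cofiber triangle $\MSp^\wedge_\eta \xrightarrow{\ell^n} \MSp^\wedge_\eta \to \MSp^\wedge_\eta/\ell^n$ in $\SH(k)$ the universal coefficient short exact sequence
$$0 \to (\MSp^\wedge_\eta)^{a,b}/\ell^n \to (\MSp^\wedge_\eta/\ell^n)^{a,b} \to (\MSp^\wedge_\eta)^{a+1,b}[\ell^n] \to 0,$$
and combine it with the Milnor sequence
$$0 \to \lim\nolimits^1_n (\MSp^\wedge_\eta/\ell^n)^{2u-1,u} \to (\MSp^\wedge_{\eta,\ell})^{2u,u} \to \lim\nolimits_n (\MSp^\wedge_\eta/\ell^n)^{2u,u} \to 0.$$
Passing to $\lim_n$ and $\lim\nolimits^1_n$ of the universal coefficient sequence and splicing with the Milnor sequence would express $(\MSp^\wedge_{\eta,\ell})^{2u,u}$ in terms of $((\MSp^\wedge_\eta)^{2u,u})^\wedge_\ell$ together with two residual terms: a $\lim\nolimits^1$ obstruction and a Tate-style limit $\lim_n (\MSp^\wedge_\eta)^{2u+1,u}[\ell^n]$. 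Mittag--Leffler for the tower $\{M/\ell^n\}_n$ (whose transition maps are surjective) handles part of the $\lim\nolimits^1$ automatically, reducing matters to controlling the $\ell$-primary torsion in the off-diagonal groups $(\MSp^\wedge_\eta)^{2u\pm1,u}$.

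For the vanishing of the remaining Tate and $\lim\nolimits^1$ contributions, the essential input is Theorem~\ref{thm:ClassOfGenerators}: it identifies $(\MSp^\wedge_{\eta,\ell})^*$ as a polynomial $\Z_\ell$-algebra on the images of the explicit classes $[Y_{2d}]_\MSp$, which lift from $\MSp^{-2d}$, and hence from $(\MSp^\wedge_\eta)^{-2d}$. Since the codomain of $\varphi$ is $\ell$-adically complete and contains the images of a lifted set of polynomial generators, $\varphi$ is surjective. For injectivity, I would use the compatibility of $\varphi$ with the Adams spectral sequence of Proposition~\ref{prop:a.s.s.MSp2}, combined with the vanishing $\Ext^{s,(t-s,u)}_{A^{*,*}}(H^{*,*}(\MSp),H^{*,*})=0$ for $t>2u$ from Proposition~\ref{prop:pres.ExtAlgebra}; this rigidity forces the off-diagonal bidegrees to produce no residual $\ell$-primary torsion that could obstruct injectivity. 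The argument closely parallels the $\MGL$-case treated in \cite[Lemma~6.3]{lev:ellcoh}.

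The main obstacle is precisely this last point: pinning down the $\ell$-power torsion in $(\MSp^\wedge_\eta)^{2u\pm1,u}$, since these are the potential sources of Tate contributions to the Milnor sequence. The strategy is to bootstrap from the explicit polynomial structure of $(\MSp^\wedge_{\eta,\ell})^*$ via the inclusion $(\MSp^\wedge_{\eta,\ell})^* \hookrightarrow (\MGL^*)^\wedge_\ell$ of Proposition~\ref{prop:PolynomialMSp} and the Adams-spectral-sequence estimates of Section~\ref{chapter:A.S.S}, which together ensure that no such torsion survives to obstruct $\varphi$ from being an isomorphism.
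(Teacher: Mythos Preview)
Your overall strategy matches the paper's: both arguments combine the universal coefficient sequence for $\MSp^\wedge_\eta/\ell^n$ with a limit over $n$, and both invoke Theorem~\ref{thm:ClassOfGenerators} for surjectivity (the polynomial generators of $(\MSp^\wedge_{\eta,\ell})^*$ lift all the way back to $\MSp^*$, hence lie in the image of $\varphi$). The paper explicitly points to \cite[Lemma~6.13(3)]{lev:ellcoh} (the $\MSL$ case) rather than Lemma~6.3, but the mechanism is the same.

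Where you diverge is on injectivity, and here you are working harder than necessary. You propose to control the $\ell$-primary torsion in the off-diagonal groups $(\MSp^\wedge_\eta)^{2u\pm1,u}$ via the Adams spectral sequence and the $\Ext$-vanishing of Proposition~\ref{prop:pres.ExtAlgebra}. The paper does not need any of this: taking the inverse limit of the universal coefficient sequences directly yields an exact sequence beginning with
\[
0 \to \lim_n\bigl((\MSp_\eta^\wedge)^{2s,s}/\ell^n\bigr) \to \lim_n\bigl((\MSp_\eta^\wedge/\ell^n)^{2s,s}\bigr) \to \cdots,
\]
so the first map is injective on the nose. Since the comparison map $\varphi$ followed by the Milnor projection $(\MSp^\wedge_{\eta,\ell})^{2s,s}\twoheadrightarrow \lim_n((\MSp_\eta^\wedge/\ell^n)^{2s,s})$ agrees with this injection, $\varphi$ itself is injective. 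No analysis of off-diagonal torsion, Tate limits, or $\lim^1$ terms is required for this direction; those obstructions would only matter if you were trying to prove the isomorphism by showing all residual terms vanish, rather than by proving injectivity and surjectivity separately.
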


\begin{proof}
     We first note that there is an injective map $((\MSp^\wedge_\eta)^*)^\wedge_\ell\hookrightarrow (\MSp_{\eta,\ell}^\wedge)$. The argument is the same as in the proof of \cite[Lemma 6.13 (3)]{lev:ellcoh}, after replacing $\MSL$ with $\MSp$. To show the surjectivity, it is sufficient to show that $(\MSp_{\eta,\ell}^\wedge)^*$, seen as a subalgebra of $(\MGL_{\eta,\ell}^\wedge)^*$ through $\Phi_*$, is generated, as $\Z_\ell$-module, by the images $\Phi_*[Y_{2d}, (v_Y,\omega_Y)]_\MSp$. This follows directly from Theorem \ref{thm:ClassOfGenerators}.
\end{proof}

From Proposition \ref{prop:CriterionMSp}, by working on the $\ell$-torsion of $(\MSp_\eta^\wedge)^*$ for each $\ell$ separately, we can deduce the following global criterion.

\begin{thm}
\label{thm:FinalResult}
    Let $({\overline{\MSp}^\wedge_\eta})^*[1/2p]$ denote the quotient of the graded ring $({\MSp^\wedge_\eta})^*[1/2p]$ by its maximal subgroup that is $\ell$-divisible for all odd primes $\ell \neq p$. Then $({\overline{\MSp}^\wedge_\eta})^*[1/2p]$ is a polynomial ring over $\Z[1/2p]$, and a family $\{y_{2d}'\}_{d \ge 1}$, with $y'_{2d} \in ({\overline{\MSp}^\wedge_\eta})^{-2d}[1/2p]$, forms a family of polynomial generators for $({\overline{\MSp}^\wedge_\eta})^*[1/2p]$ if and only if 
    \begin{equation}
    \label{eq:conditions_final}
        c_{(2d)}(\Phi_* y_{2d}')=
        \begin{cases}
            \lambda \in \Z[1/2p]^\times & \text{for} \; \; 2d \neq \ell^r-1, \; \; \text{for all prime} \;  \ell \neq 2,p; \; \forall r \ge 1 \\
            \lambda' \cdot \ell, \; \lambda' \in \Z_\ell^\times & \text{for} \; \; 2d= \ell^r-1, \; \; \ell \; \text{prime}, \; \ell \neq 2,p; \; r\ge 1.
        \end{cases}
    \end{equation}
\end{thm}

\begin{proof} $(\overline{\MSp}_\eta^\wedge)^*[1/2p]$ is a $\Z[1/2p]$-module. Then for all odd prime $\ell$ different from $p$, we have 
    $$(\overline{\MSp}_\eta^\wedge)^*[1/2p] \otimes_{\Z[1/2p]}\Z_\ell \cong  (\overline{\MSp}_\eta^\wedge)^* \otimes_\Z \Z_\ell \cong ((\MSp_\eta^\wedge)^*)^\wedge_\ell \cong (\MSp_{\eta,\ell}^\wedge)^*,$$
    where the last isomorphism is Proposition \ref{prop:eta-ellCompl}.

    By using Proposition \ref{prop:CriterionMSp} as a generating criterion for $(\overline{\MSp}_\eta^\wedge)^*[1/2p] \otimes_{\Z[1/2p]}\Z_\ell$, for each $\ell$, we obtain that an element $y'_{2d} \in {(\overline{\MSp}^\wedge_\eta})^{-2d}[1/2p]$ is a generator if and only if it satisfies the conditions \eqref{eq:conditions_final}.
\end{proof}

\begin{rmk}
    Let us note that Theorem \ref{thm:symplclasses2} also provides a family $\{[Y_{2d}, (v_Y,\omega_Y)]_\MSp\}_{d\ge 1}$ of polynomial generators for $({\overline{\MSp}^\wedge_\eta})^*[1/2p]$. If $2d=\ell^r-1$ for some odd prime $\ell \neq p$ and some $r \ge 1$, one takes the variety $Y_{2d} \subset \P^1 \times (\P^{\ell^{r-1}})^{\times \ell}$ constructed in the proof of Theorem \ref{thm:symplclasses2} for that specific $\ell$, and its respective symplectic class $[Y_{2d}, (v_Y,\omega_Y)]_\MSp$. If $2d \neq \ell^r-1$ for all $\ell,r$, one can take the variety $Y_{2d}\subset (\P^1)^{\times (2d+2)}$ constructed in the same proof for any $\ell > 2d+2$.
\end{rmk}

\printbibliography

\end{document}